\documentclass[11pt]{amsart}
\usepackage{amsmath}
\usepackage{amssymb}
\usepackage{mathrsfs}
\usepackage{pst-node}
\usepackage{tikz-cd}
\usepackage{comment}
\usetikzlibrary{shapes.geometric}
\usepackage{enumerate}
\usepackage{amsthm}
\usepackage{enumitem}
\usepackage{fullpage}
\usepackage{euscript}
\usepackage{mathtools}
\usepackage{appendix}
\usepackage{MnSymbol}
\usetikzlibrary{calc}
\usepackage{marginnote}
\usepackage[breaklinks=true]{hyperref}
\usepackage{xurl}

\newtheoremstyle{introthms}
	{}{}{\itshape}{}{\bfseries }{}{ }
	{\thmname{#1} \thmnumber{#2}. \thmnote{\bfseries{(#3)}}}

\newtheoremstyle{introcors}
	{}{}{\itshape}{}{\bfseries }{}{ }
	{\thmname{#1}. \thmnote{\bfseries{(#3)}}}
	
\theoremstyle{introthms}
\newtheorem{introthm}{Theorem}

\theoremstyle{introcors}
\newtheorem{introcor}{Corollary}

\numberwithin{equation}{section}

\DeclareMathAlphabet{\mathbbm}{U}{bbm}{m}{n}

\usepackage[T1]{fontenc}

\usepackage{mathrsfs}
\DeclareMathAlphabet{\mathpzc}{OT1}{pzc}{m}{it}

\usepackage{hyperref}

\hypersetup{
    colorlinks=true,
    citecolor=red,
    linktoc=all,
    linkcolor=black,
    backref=page,
}

\let\amsamp=&

\usepackage[margin=1in,footskip=.85in]{geometry}

\usepackage[
backend=biber,
style=alphabetic,
sorting=nyt,maxbibnames=9,backref
]{biblatex}
\addbibresource{bib.bib}

\usetikzlibrary{decorations.markings}
\tikzset{
  closed/.style = {decoration = {markings, mark = at position 0.5 with { \node[transform shape, xscale = .8, yscale=.4] {/}; } }, postaction = {decorate} },
  open/.style = {decoration = {markings, mark = at position 0.5 with { \node[transform shape, scale = .7] {$\circ$}; } }, postaction = {decorate} }
}

\setlength{\parindent}{0pt}

\theoremstyle{plain}
\newtheorem{theorem}{Theorem}[subsection]
\newtheorem{proposition}[theorem]{Proposition}
\newtheorem{lemma}[theorem]{Lemma}

\newtheorem{corollary}[theorem]{Corollary}
\theoremstyle{definition}
\newtheorem{definition}[theorem]{Definition}
\newtheorem{example}[theorem]{Example}
\newtheorem{remark}[theorem]{Remark}
\newtheorem{warning}[theorem]{Warning}
\newtheorem{recollection}[theorem]{Recollection}
\newtheorem{construction}[theorem]{Construction}
\theoremstyle{remark}

\newtheorem{notation}[theorem]{Notation}

\DeclareMathOperator\Hom{Hom}
\DeclareMathOperator\Map{\mathrm{Map}}
\DeclareMathOperator\spec{\mathrm{Spec}}
\DeclareMathOperator\map{\mathrm{map}}
\DeclareMathOperator\twar{\mathrm{TwAr}}
\DeclareMathOperator\cob{\mathrm{Cob}}
\DeclareMathOperator\op{\mathrm{op}}
\DeclareMathOperator{\qdb}{Q^{\Delta^\bullet}}
\DeclareMathOperator{\tqdb}{Q_\mathrm{ch}^{\Delta^\bullet}}
\DeclareMathOperator\hzmod{Mod_{H\kern-1.5pt\zz}}
\DeclareMathOperator\whzmod{Mod_{H\kern-1.5pt\zz, \ge 0}}
\DeclareMathOperator\hzmodw{Mod_{H\kern-1.5pt\zz, \ge 0}}
\DeclareMathOperator\wquad{w\kern-1pt\mathcal{Q}\mathrm{uad}}
\DeclareMathOperator\hz{H\kern-1.5pt\zz}
\DeclareMathOperator{\ev}{ev}

\DeclareMathOperator\wformcat{w\kern-.5pt\cal{F}\mathrm{orm}\kern-1.5pt\cat}
\DeclareMathOperator\wcompformcat{w\kern-.5pt\cal{C}\mathrm{omp}\cal{F}\mathrm{orm}\kern-1.5pt\cat}
\DeclareMathOperator\formcat{\kern-.5pt\cal{F}\mathrm{orm}\kern-1.5pt\cat}
\DeclareMathOperator\relcat{\cal{R}\mathrm{el}\kern-1.75pt\cat}
\DeclareMathOperator\excat{\cal{E}\mathrm{x}\kern-1.75pt\cat}
\DeclareMathOperator\wexcat{w\kern-1pt\cal{E}\mathrm{x}\kern-1.75pt\cat}
\DeclareMathOperator\wcompcat{w\cal{C}\mathrm{omp}\cal{E}\mathrm{x}\kern-1.75pt\cat}
\DeclareMathOperator\frobcat{\cal{F}\mathrm{rob}\cal{E}\mathrm{x}\kern-1.75pt\cat}
\DeclareMathOperator\compcat{\cal{C}\mathrm{omp}\cal{E}\mathrm{x}\kern-1.75pt\cat}
\DeclareMathOperator\chb{\mathrm{Ch}_b}
\DeclareMathOperator\db{\mathrm{D}_b}
\DeclareMathOperator\kb{\mathrm{K}_b}
\DeclareMathOperator\dperf{\mathrm{D}^p}
\DeclareMathOperator\projz{\mathrm{Proj^{fg}}_{\zz}}

\DeclareMathOperator\wchb{\mathrm{Ch}_{b, \le 0}}
\DeclareMathOperator\acb{\mathrm{Ac}_b}
\DeclareMathOperator{\zz}{\mathbb{Z}}
\DeclareMathOperator{\D}{\mathbb{D}}
\DeclareMathOperator\hct{hC_2}
\DeclareMathOperator\fct{C_2}
\DeclareMathOperator\tct{tC_2}
\DeclareMathOperator{\hocolim}{hocolim}

\DeclareMathOperator{\coker}{coker}
\DeclareMathOperator{\arr}{\mathrm{Ar}}
\DeclareMathOperator{\colim}{colim}
\DeclareMathOperator\elex{\cal{E}^\mathrm{lex}}
\DeclareMathOperator{\epi}{\twoheadrightarrow}
\DeclareMathOperator{\mono}{\rightarrowtail}
\DeclareMathOperator\fun{\mathrm{Fun}}
\DeclareMathOperator{\sset}{s\cal{S}\kern -.5pt et}
\DeclareMathOperator{\cat}{\cal{C}\kern -.5pt at}
\DeclareMathOperator{\Ab}{\cal{A}\kern -.5pt \text{b}}
\DeclareMathOperator\sab{s\cal{A}\kern -.5pt b}
\DeclareMathOperator{\id}{id}
\DeclareMathOperator{\Sp}{\cal{S}\kern-.5pt p}
\DeclareMathOperator\stab{Stab}
\DeclareMathOperator{\Kan}{\cal{K}\kern -.5pt an}
\DeclareMathOperator{\set}{\cal{S}\kern -.5pt et}
\DeclareMathOperator\fib{\mathrm{fib}}
\DeclareMathOperator\cofib{\mathrm{cofib}}
\DeclareMathOperator{\sh}{\cal{S}\kern -.5pt h}
\newcommand{\cal}[1]{\EuScript{#1}}
\newcommand{\xto}[1]{\xrightarrow{#1}}

\DeclareMathOperator\hofib{\mathrm{hofib}}

\usepackage[bbgreekl]{mathbbol}
\DeclareMathSymbol\bbDelta  \mathord{bbold}{"01}

\DeclareFontFamily{U}{cbgreek}{}
\DeclareFontShape{U}{cbgreek}{m}{n}{
        <-6>    grmn0500
        <6-7>   grmn0600
        <7-8>   grmn0700
        <8-9>   grmn0800
        <9-10>  grmn0900
        <10-12> grmn1000
        <12-17> grmn1200
        <17->   grmn1728
      }{}
\DeclareFontShape{U}{cbgreek}{bx}{n}{
        <-6>    grxn0500
        <6-7>   grxn0600
        <7-8>   grxn0700
        <8-9>   grxn0800
        <9-10>  grxn0900
        <10-12> grxn1000
        <12-17> grxn1200
        <17->   grxn1728
      }{}

\DeclareRobustCommand{\Qoppa}{%
  \text{\usefont{U}{cbgreek}{\normalorbold}{n}\symbol{21}}%
}

\makeatletter
\newcommand{\normalorbold}{%
  \ifnum\pdf@strcmp{\math@version}{bold}=\z@ bx\else m\fi
}
\makeatother

\title{Higher $K$-theory of forms III: from chain complexes to derived categories}
\author{Daniel Marlowe and Marco Schlichting}

\begin{document}
\begin{abstract}
    We exhibit a canonical equivalence between the hermitian $K$-theory (alias Grothendieck-Witt) spectrum of an exact form category and that of its derived Poincar\'e $\infty$-category, with no assumptions on the invertibility of $2$. Along the way, we obtain a model for the nonabelian derived functor of a nondegenerate quadratic functor on an exact category.
\end{abstract}
\maketitle
\tableofcontents
\section{Introduction}
Hermitian $K$-theory is at heart the study of the stable algebra of projective modules equipped with a generalised notion of quadratic form. It is the algebraic analogue of Real topological $K$-theory, and a quadratic refinement of algebraic $K$-theory. The construction and properties of higher hermitian $K$-groups for rings were initially laid out by Karoubi \cite{Kar73}, culminating in particular in the proof of the celebrated \textit{fundamental theorem} \cite{Kar80} relating hermitian and ordinary algebraic $K$-theory. Through work of Giffen, Hornbostel \cite{Hor05}, and the second author \cite{HS04}, \cite{Sch10a}, \cite{Sch10b}, among others, these results were generalised to the setting of exact categories with duality, and later to dg-categories with duality and uniquely 2-divisible mapping complexes \cite{Sch17}. As the latter restriction suggests, the prime 2 is of particular importance in governing the behaviour of hermitian $K$-theory: given a ring $R$ in which 2 is a unit, the map associating to a quadratic form on a finitely generated projective $R$-module $P$ its bilinear polarisation furnishes an isomorphism between the sets of quadratic and symmetric bilinear forms on $P$, but this fails in general, as does the invariance of hermitian $K$-theory under a na\"ive version of derived equivalence (see \cite[Prop.\ 2.1]{Sch17}). The foundations of hermitian $K$-theory were revisited accordingly for more general notions of form and with no assumption on the invertibility of 2 in \cite{Sch21}, \cite{Sch24}, and more recently, in pioneering work \cite{CD23a}, \cite{CD23b}, \cite{CD21} of Calm\`es-Dotto-Harpaz-Hebestreit-Land-Moi-Nardin-Nikolaus-Steimle building on Lurie's formalism of Poincar\'e $\infty$-categories \cite{Lur11}.\\
The Poincar\'e formalism has proved hugely fruitful in developing a framework for quadratic invariants such as hermitian $K$-theory, $L$-theory, and Real algebraic $K$-theory; importantly, in this setting the hermitian $K$-functor $\mathrm{GW}$ enjoys a universal property as the initial grouplike additive spectrum-valued functor on the $\infty$-category $\cat^p_\infty$ of Poincar\'e categories, whose underlying infinite loopspace receives a natural transformation from $\mathrm{Pn}:\cat^p_\infty \to \cal{S}$, where here $\mathrm{Pn}(\cal{C}, \Qoppa)$ is in an appropriate sense the moduli space of nondegenerate forms on a Poincar\'e category $(\cal{C}, \Qoppa)$, playing a role analogous to that of the core groupoid for algebraic $K$-theory.\\
As an invariant of algebraic varieties, hermitian $K$-theory has come to occupy a celebrated place in motivic homotopy theory \cite{MV99}: fundamental work of Morel \cite{Mor12} (see also \cite{RSO19}) identifies $\mathrm{GW}_0(k)$ with the endomorphisms ring $\mathrm{End}_{\mathrm{SH}(k)}(\mathbb{S}_k)$ of the motivic sphere spectrum over a perfect field $k$ of characteristic not equal to $2$. As such, $\mathrm{GW}_0(k)$ plays a role in motivic homotopy theory over $\spec(k)$ analogous to that of the integers in usual homotopy theory, and is the universal receptacle for $\mathbb{A}^1$-Euler characteristics \cite{ML19} and Brouwer degrees \cite{Bra23}. Computations of hermitian $K$-groups have been fundamental in the Asok-Fasel vector bundle classification program \cite{AF14}, \cite{AF23}, allowing the importation of classical obstruction-theoretic techniques into the motivic setting, and more recently in work of Sch\"appi \cite{Scha24} providing a counterexample to the Hermite ring conjecture. As early as the '70s, hermitian $K$-theory computations were used by Karoubi to give a lower bound on the order of $K_3(\zz) \cong \zz\kern-1.5pt/48\kern-1.5pt\zz$, refuting (and refining) a conjecture of Lichtenbaum. By work of Hornbostel when 2 is invertible, and Calmès-Harpaz-Nardin \cite{CHN24} in the general setting, (homotopy) symmetric hermitian $K$-theory (over some regular noetherian case scheme $S$) satisfies Nisnevich descent, $\mathbb{A}^1$-invariance, and a projective bundle formula, and is accordingly represented by a motivic spectrum $\mathrm{KQ}_S$ . \\
The development of the hermitian formalism largely follows the standard course of $K$-theoretic invariants: one studies initially the zeroth Grothendieck-Witt group $\mathrm{GW}_0(k)$ attached to a field (and later commutative ring with involution) $k$, obtained as the group-completion of a certain abelian monoid of isometry classes of forms under orthogonal sum, and via a plus-construction style definition extends this to higher groups amenable to study by $K$-theoretic techniques. As a categorified invariant, one then notes that in an appropriate sense hermitian $K$-theory depends only upon the derived $\infty$-category, and as such enjoys a universal property most naturally phrased in the higher categorical language. Successive generalisations of the $K$-theoretic apparatus have been accompanied by comparison results, namely Quillen's `$+ =\mathcal{Q}$' theorem \cite{Gra76}, the Gillet-Waldhausen theorem \cite[Th.\ 1.11.7]{TT90}, and in the higher categorical setting \cite[Cor.\ 10.10, 10.16]{Bar13} and \cite[Cor.\ 7.12]{BGT13}. This note addresses the lack of a comparison in full generality between the Quillen-Waldhausen-style hermitian $K$-theory of exact categories \cite{Sch21} and its higher categorical analogue \cite{CD23a}. The main result is the following (Theorem \ref{main-comp-sp} in the main text).
\begin{introthm}\label{thma}
	For $(\cal{E}, Q, \mathbb{D}, \eta, w)$ a complicial exact form category with weak equivalences, the $\infty$-categorical localisation $\cal{E} \to L_w(\cal{E})$ induces a functorial equivalence of hermitian $K$-theory spectra
	\[
		\mathrm{GW}(\cal{E}, Q, w) \xto{\simeq} \mathrm{GW}(L_w(\cal{E}), \mathbf{R}Q),
	\]
	where the left- and right-hand sides denote respectively the form categorical and Poincar\'e-categorical hermitian $K$-theory spectra.
\end{introthm}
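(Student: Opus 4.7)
My plan is to construct the comparison map using the nonabelian derived functor $\mathbf{R}Q$ (a subsidiary result of this paper, alluded to in the abstract) and then prove it is an equivalence by a d\'evissage through $K$- and $L$-theoretic pieces.

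First, the map. The nonabelian left derived quadratic functor $\mathbf{R}Q$ on $L_w(\cal{E})$ comes equipped with a natural transformation $Q \to \mathbf{R}Q \circ \gamma$, where $\gamma \colon \cal{E} \to L_w(\cal{E})$ is the localization, exhibiting $(L_w(\cal{E}), \mathbf{R}Q)$ as the universal Poincar\'e structure receiving $(\cal{E}, Q)$. Feeding this into Schlichting's hermitian $S_\bullet$-construction on the one hand, and the Poincar\'e $S_\bullet$-construction of Calm\`es et al.\ on the other, yields the comparison map of GW-spectra; functoriality in the form category follows from the universal property of $\mathbf{R}Q$.

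To show the map is an equivalence, I would exploit a Karoubi-style fibre sequence of the form
$$K(\cdot)_{hC_2} \longrightarrow \mathrm{GW}(\cdot) \longrightarrow L(\cdot),$$
available on both sides in comparable form. The $K$-theoretic piece is handled by the classical Gillet-Waldhausen theorem $K(\cal{E}, w) \simeq K(L_w(\cal{E}))$, compatibly with the duality-induced $C_2$-action. The $L$-theoretic piece is typically invariant under localization along the lines of the cofinality theorems of Schlichting and of Calm\`es et al., so agreement reduces to the identification of $\mathbf{R}Q$ as the correct Poincar\'e structure, which is already part of its construction. A five-lemma argument then concludes. As a contingency, if the fibre sequences do not align cleanly, I would instead argue additivity-theoretically: both sides satisfy a hermitian additivity theorem (Schlichting's in the complicial setting, Calm\`es et al.'s in the Poincar\'e setting), reducing the comparison to a statement about the moduli space $\mathrm{Pn}$ of nondegenerate forms at $S_0$, which one then attacks directly via cofibrant resolution.

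The main obstacle, and most of the real work, will be proving that $\mathbf{R}Q$ correctly captures the nondegenerate form theory of $(\cal{E}, Q, w)$: specifically, that Poincar\'e objects in $(L_w(\cal{E}), \mathbf{R}Q)$ are, up to zig-zags of weak equivalences, representable by chain-level forms in $\cal{E}$ whose polarisation is a quasi-isomorphism. This relies essentially on the complicial hypothesis to perform cofibrant resolutions compatible with the strict duality $\D$ and the quadratic functor $Q$. Matching the bilinear and $C_2$-equivariant data between the strict duality on the left and the $\infty$-categorical duality on the right is the other delicate bookkeeping step, for which I expect the additivity theorem on both sides will be invoked to propagate from the $\pi_0$ comparison to higher simplicial degrees, and ultimately to the level of spectra.
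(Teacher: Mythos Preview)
Your primary strategy---the Karoubi-type fibre sequence $K_{hC_2} \to \mathrm{GW} \to L$ and a five-lemma argument---is circular as stated. To compare the $L$-theories you would first need to know that the quadratic/Poincar\'e structures match under localisation, and that the form-categorical $L$-theory of \cite{Sch24} coincides with the Poincar\'e $L$-theory for $(L_w(\cal{E}),\mathbf{R}Q)$; but this is essentially the content of the theorem you are trying to prove. There is no ambient $L$-theoretic comparison available independently of the $\mathrm{GW}$-comparison. Moreover, on the form-categorical side the fundamental fibre sequence is not set up in a form that is manifestly compatible with the Poincar\'e one without already having the identification of derived quadratic functors in hand.

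Your contingency is closer to what the paper does, but still not quite right: the paper does \emph{not} reduce to an $S_0$-comparison and then propagate by additivity. Instead it works level-by-level through the hermitian $S_\bullet$-construction. The key technical ingredient is an explicit simplicial model $Q^{\Delta^\bullet}(x)=Q(\Delta^\bullet x)$ for the connective cover of $\mathbf{R}Q$ (using the complicial structure), together with a cofinality argument showing this computes the left Kan extension along the Frobenius localisation. From this one proves directly (Proposition~\ref{lvl}) that $|\wquad(\cal{E},Q,w)|\simeq \mathrm{Pn}(L_w(\cal{E}),\mathbf{R}Q)$, and then (Proposition~\ref{qncomp}) that deriving commutes with the $S_n$-quadratic functor, $\mathbf{R}(Q_n)\simeq(\mathbf{R}Q)_n$, by a filtration argument on $S_n$-diagrams and $2$-excisivity. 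These assemble into a levelwise equivalence of simplicial spaces, giving the $\cal{GW}$-space comparison. The spectrum-level upgrade then matches the delooping constructions on both sides by identifying Schlichting's arrow-category form category with the metabolic Poincar\'e category and checking the bonding maps agree. Your identification of the ``main obstacle'' (representing Poincar\'e objects by strict forms) is exactly Proposition~\ref{lvl}, but you should expect to redo this analysis at every $S_n$-level rather than bootstrapping from $S_0$.
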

As a consequence, we obtain the following, generalising \cite[Cor.\ B.2.5]{CD23b} and the main result of \cite{HS23}.
\begin{introthm}\label{thmb}
	For $(\cal{E}, \D, \eta, Q)$ an exact form category, the canonical embedding $\cal{E} \hookrightarrow \db(\cal{E})$ into the bounded derived $\infty$-category induces an equivalence of spectra
	\[
		\mathrm{GW}(\cal{E}, Q) \xto{\simeq} \mathrm{GW}(\db(\cal{E}), \Qoppa),
	\]
	for $(\db(\cal{E}),\Qoppa)$ the derived Poincar\'e category associated to $(\cal{E}, Q)$.
\end{introthm}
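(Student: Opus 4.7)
The plan is to derive Theorem \ref{thmb} from Theorem \ref{thma} by applying the latter to the category of bounded chain complexes in $\cal{E}$, endowed with quasi-isomorphisms as weak equivalences.

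First, I would promote the exact form category $(\cal{E}, \D, \eta, Q)$ to a complicial exact form category with weak equivalences $(\chb(\cal{E}), Q^{\chb}, \D^{\chb}, \eta^{\chb}, \mathrm{qis})$: the duality $\D$ extends componentwise with the customary signs, and the quadratic functor $Q$ extends to a chain-level quadratic functor $Q^{\chb}$ via the totalisation construction $\tqdb$ developed earlier in the paper. Theorem \ref{thma} then furnishes an equivalence
\[
    \mathrm{GW}(\chb(\cal{E}), Q^{\chb}, \mathrm{qis}) \xto{\simeq} \mathrm{GW}(\db(\cal{E}), \mathbf{R}Q^{\chb}),
\]
since the $\infty$-categorical localisation of $\chb(\cal{E})$ at quasi-isomorphisms is by definition $\db(\cal{E})$.

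Second, I would establish an equivalence $\mathrm{GW}(\cal{E}, Q) \simeq \mathrm{GW}(\chb(\cal{E}), Q^{\chb}, \mathrm{qis})$ of Gillet-Waldhausen type, induced by sending an object of $\cal{E}$ to the complex concentrated in degree zero. In the form-categorical setting, with no assumption on the invertibility of $2$, this is the hermitian Gillet-Waldhausen theorem of \cite{Sch21}; the key input is that every object of the bounded derived category is quasi-isomorphic to a bounded complex in $\cal{E}$, together with a resolution argument for forms. Combining this with the previous display yields $\mathrm{GW}(\cal{E}, Q) \simeq \mathrm{GW}(\db(\cal{E}), \mathbf{R}Q^{\chb})$.

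The main obstacle is the final identification of the derived quadratic functor $\mathbf{R}Q^{\chb}$ on $\db(\cal{E})$ with the Poincar\'e structure $\Qoppa$ canonically attached to $(\cal{E}, Q)$. This requires comparing the totalisation-based model for the nonabelian derived functor of $Q$ with the Poincar\'e structure produced by the Calm\`es-Dotto-Harpaz-Hebestreit-Land-Moi-Nardin-Nikolaus-Steimle formalism, and appears to be precisely the model for the nonabelian derived functor flagged in the abstract. I would expect to verify this by checking that both structures agree on $\cal{E} \subset \db(\cal{E})$, have the correct cross-effect (so that the underlying bilinear part of $\mathbf{R}Q^{\chb}$ matches the shifted mapping spectrum built from $\D$), and satisfy a universal property—exactness or reduced excision in the Poincar\'e-categorical sense—that propagates the agreement from $\cal{E}$ to all of $\db(\cal{E})$.
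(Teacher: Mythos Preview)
Your proposal is correct and follows essentially the same route as the paper: Gillet--Waldhausen for form categories (from \cite{Sch24}, not \cite{Sch21}) gives $\mathrm{GW}(\cal{E}, Q) \simeq \mathrm{GW}(\chb(\cal{E}), Q_\mathrm{ch}, \mathbf{qis})$, and Theorem~\ref{thma} applied to the complicial form category $(\chb(\cal{E}), Q_\mathrm{ch}, \mathbf{qis})$ yields the rest. Two small clarifications: the chain-level extension $Q_\mathrm{ch}$ is given by an explicit formula (equation~(\ref{qtil})), not by the totalisation $\tqdb$ (which is instead the model for the \emph{derived} functor of $Q_\mathrm{ch}$); and what you flag as the ``main obstacle''---identifying $\mathbf{R}Q_\mathrm{ch}$ with $\Qoppa$---is handled exactly as you anticipate, by showing $\Qoppa|_{\cal{E}^{\op}} \simeq HQ$ and invoking the uniqueness of quadratic extensions from \cite[Th.~2.19]{BGMN22} (this is Corollary~\ref{uni}).
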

As a corollary of this, we exhibit a canonical equivalence between the 1-categorical symmetric Grothendieck-Witt theory of an exact category with duality $\cal{E}$ and the genuine symmetric Grothendieck-Witt theory of the derived $\infty$-category $\db(\cal{E})$. This may we well-known to experts, but to our knowledge has been recorded only in the split-exact case.
\begin{introcor}\label{thmc}
	Let $(\cal{E}, \mathbb{D}, \eta)$ be an exact category with strong duality. Then the derived Poincar\'e category of the symmetric form category $(\cal{E}, Q^s)$ canonically identifies with the genuine symmetric structure $(\db(\cal{E}), \Qoppa^\mathrm{gs})$, and the embedding $\cal{E} \hookrightarrow \db(\cal{E})$ induces an equivalence of hermitian $K$-theory spectra
	\[
		\mathrm{GW}(\cal{E}, Q^s) \to \mathrm{GW}(\db(\cal{E}), \Qoppa^\mathrm{gs}).
	\]
\end{introcor}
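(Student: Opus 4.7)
The plan is to deduce the corollary from Theorem B together with an identification of the resulting Poincaré structures. Applying Theorem B directly to the symmetric form category $(\cal{E}, Q^s)$ immediately produces a canonical equivalence
\[
\mathrm{GW}(\cal{E}, Q^s) \xto{\simeq} \mathrm{GW}(\db(\cal{E}), \mathbf{R}Q^s),
\]
so it remains only to exhibit a canonical equivalence $(\db(\cal{E}), \mathbf{R}Q^s) \simeq (\db(\cal{E}), \Qoppa^{\mathrm{gs}})$ of Poincaré $\infty$-categories; this is precisely the first assertion of the corollary, and the $\mathrm{GW}$ equivalence then follows formally from Theorem B by functoriality.

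For the identification of Poincaré structures I would invoke the model of $\mathbf{R}Q^s$ as the nonabelian derived functor of $Q^s$ promised in the abstract, together with the defining characterisation of the genuine symmetric structure: $\Qoppa^{\mathrm{gs}}$ is the Poincaré structure on $\db(\cal{E})$ whose restriction to the subcategory $\cal{E} \hookrightarrow \db(\cal{E})$ recovers the classical quadratic functor $X \mapsto \Hom_{\cal{E}}(X, \mathbb{D}X)^{C_2}$ and whose underlying bilinear functor is the canonical flip-equivariant mapping spectrum on $\db(\cal{E})$. By the universal property of the nonabelian left derived functor applied to $\cal{E} \hookrightarrow \db(\cal{E})$, these two pieces of data determine a canonical comparison map $\mathbf{R}Q^s \to \Qoppa^{\mathrm{gs}}$. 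To show it is an equivalence of Poincaré structures I would compare bilinear parts---both give the same flip-equivariant mapping spectrum by construction---and then argue that both functors are the same reduced $2$-excisive extension of the discrete functor $Q^s$ from $\cal{E}$.

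The main obstacle is carrying out this last step without assuming $2$ is invertible. In this regime the homotopy, genuine, and Tate $C_2$-fixed points of $\Hom(X, \mathbb{D}X)$ are genuinely distinct, and $\Qoppa^{\mathrm{gs}}$ admits an essential description in terms of a pullback against the Tate construction. One must argue carefully---using the explicit resolution model for $\mathbf{R}Q^s$ supplied in the body of the paper, together with the norm fibre sequence applied to a projective resolution---that the nonabelian derivation of the naive fixed-points functor $Q^s$ yields the genuine rather than the purely homotopical symmetric structure. This is precisely where the resolution model has an advantage over a purely $\infty$-categorical approach, and I expect the bulk of the technical work to concentrate here.
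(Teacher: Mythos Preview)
Your high-level plan is sound and matches the paper: once Theorem~B is in hand, the entire content is the identification $\mathbf{R}Q^s \simeq \Qoppa^{\mathrm{gs}}$, and you correctly locate the mechanism for this in the uniqueness of $2$-excisive extensions of $Q^s$ from $\cal{E}$ to $\db(\cal{E})$ (the paper's Corollary~\ref{uni}, i.e.\ \cite[Th.~2.19]{BGMN22}). Where you diverge from the paper is in the execution of that identification, and there is one genuine gap in your setup.

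The gap is that your ``defining characterisation'' of $\Qoppa^{\mathrm{gs}}$ --- restriction to $\cal{E}$ recovers $Q^s$, bilinear part is the mapping spectrum --- is not the definition but precisely the statement to be proved. In the paper (following \cite{CHN24}) the genuine symmetric structure is \emph{defined} as the pullback $\Qoppa^{\ge 0} = \Qoppa^s \times_{\Lambda^s} \tau_{\ge 0}\Lambda^s$, where $\Lambda^s = \hom(-,\D(-))^{\tct}$ and the truncation is taken with respect to a specific $t$-structure on $\mathrm{Ind}(\db(\cal{E})) \simeq \sh^{\Sp}_\Sigma(\cal{E})$. The work is to show that this truncation-pullback, restricted to $\cal{E}$, is discrete with value $\Hom_{\cal{E}}(x,\D x)^{C_2}$; once that is known, uniqueness of the $2$-excisive extension finishes immediately.

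The paper carries this out from the $\Qoppa^{\mathrm{gs}}$ side rather than from the $\mathbf{R}Q^s$ side, and never touches the resolution model or the norm sequence you propose. It first reduces to weakly idempotent complete $\cal{E}$, then treats the split-exact case (the $t$-structure is pointwise, and a short long-exact-sequence argument shows $\Qoppa^{\ge 0}(x)$ is concentrated in degree $0$ for $x\in\cal{E}$), and finally passes to general $\cal{E}$ by left Kan extending along the Verdier projection $\pi:\kb(\cal{E})\to\db(\cal{E})$. This last step needs two ingredients you do not mention: that $\pi$ is a Poincar\'e--Verdier projection for the symmetric structures (Lemma~\ref{bddverdier}, a boundedness argument), and that the sheaf-theoretic truncation $\tau_{\ge 0}$ commutes with the filtered colimits computing $\pi_!$ (Lemma~\ref{filt}). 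These replace, and are rather cleaner than, a direct comparison of $\mathbf{R}Q^s$ with the Tate pullback via resolutions.
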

\subsubsection{Relation to other work}
Partial comparison results exist in the literature in the setting of additive form categories \cite{HS23}, and in the symmetric forms case for $2$ invertible \cite[App.\ B]{CD23b}.
The former comparison follows from a treatment of algebraic surgery on cobordism categories, incorporating in an essential manner the notion of a weight structure on a stable $\infty$-category. The corresponding theorem of the weighted heart \cite[Cor.\ A.2.8]{HS23} states that for a Poincar\'e category $(\cal{C}, \Qoppa)$ equipped with an exhaustive weight structure of dimension 0, the inclusion of the heart (an additive form category in the sense of \cite{Sch21}) induces an equivalence of Grothendieck-Witt spaces:
\begin{equation}\label{harpcomp}
	\cal{GW}^\oplus(\cal{C}^\mathrm{ht}, \Qoppa\mid_{\cal{C}^\mathrm{ht}}) \xto{\simeq} \cal{GW}(\cal{C}, \Qoppa),
\end{equation}
where here $\cal{GW}^\oplus(\cal{A}, Q)$ denotes the direct-sum Grothendieck-Witt space of an exact form category $(\cal{A}, Q)$, which by the main result of \cite{Sch21} is homotopy equivalent to $\cal{GW}(\cal{A}, Q)$ for $\cal{A}$  split exact.\\
That these techniques are limited to split-exact categories follows from the observation that an exact sequence in the heart of a weight structure necessarily splits. It is worth mentioning that the additive comparison (\ref{harpcomp}) suffices to compare, for instance, the Grothendieck-Witt spaces associated to affine schemes, and via Zariski descent to divisorial schemes \cite[Prop.\ 4.6.1]{CHN24}.\\
Our argument proceeds instead along the lines of \cite[App.\ B]{CD23b}, starting with a complicial exact form category with weak equivalences $(\cal{E}, Q, w, \D, \eta)$ in the sense of \cite{Sch21} and deriving the relevant structures to obtain a Poincar\'e category $(L_w(\cal{E}), \mathbf{R}Q)$ in the sense of \cite{CD23a}. In doing so, we obtain an explicit description of the underlying infinite loopspace of the nonabelian derived functor associated with the quadratic functor $Q$, upon which the comparison theorem hinges. The stipulation that $\cal{E}$ be complicial is essentially a homotopical soundness condition, ensuring that $\cal{E}$ is an $\infty$-category of fibrant objects in the sense of \cite{Cis19}; any exact category admits a fully faithful exact functor into a complicial exact category with weak equivalences under which the hermitian $K$-theory space is invariant, cf.\ \S\ref{excat-to-chain-complexes}.
\subsubsection{A note on organisation}
We give a brief summary of the contents of the current paper: \S\ref{from-additivity-to-homotopy-coherence} is concerned with the study of nonabelian derived functors on complicial exact categories, serving as a technical foundation for the comparison of Grothendieck-Witt spaces in \S\ref{comp}. In \S\ref{gwsp}, we upgrade the comparison to the level of Grothendieck-Witt spectra in the sense of \cite[\S11]{Sch24} and \cite[\S4.2]{CD23b}. In \S\ref{genuine-structures}, we give a brief treatment of genuine Poincar\'e structures, showing that the definition in \cite{CHN24} recovers precisely the derived Poincar\'e structures of exact form categories. For the reader unfamiliar with complicial exact categories, we include a detailed survey in Appendix \ref{appa}.
\subsection{Notation and conventions}
This paper presents a recipe for the extraction of homotopy-coherent data from algebraic structures. To give a uniform treatment of the various homotopical ideas, we make use of the quasi-categorical model of $(\infty,1)$-categories as developed by Joyal \cite{Joy08} and Lurie \cite{HTT}, \cite{HA}. We shall refer to quasi-categories as $\infty$-categories, drawing a distinction between these and ordinary categories when necessary (implicitly identifying an ordinary category with its nerve throughout). We refer to higher-categorical (co)limits as (co)limits, using the term homotopy (co)limit when we make explicit reference to model-categorical models for these. \textit{Essentially unique} should be read as \textit{unique up to a contractible space of choices}. For $\infty$-categories $\cal{C}, \cal{D}, \cal{E}$ and functors $i:\cal{C} \to \cal{D}$, $f:\cal{C} \to \cal{E}$, we denote by $i_!f$ the left Kan extension of $f$ along $i$, when this exists. For a functor $f: \cal{C} \to \cal{D}$ of $\infty$-categories, we write the $\infty$-categorical slices $\cal{D}_{/f}$ and $\cal{D}_{f/}$ as $(\cal{D} \downarrow f)$ and $(f \downarrow \cal{D})$, noting that when $\cal{C}, \cal{D}$ are ordinary categories, these coincide with the nerve of the 1-categorical slice categories by \cite[Rem.\ 1.2.9.6]{HTT}. More prosaically, we grade our chain complexes homologically.
\subsection{Acknowledgements}
The first author would like to thank Emanuele Dotto for sharing his expertise in hermitian $K$-theory, and Martin Gallauer and Victor Saunier for several fruitful exchanges.
\section{From additivity to homotopy coherence}\label{from-additivity-to-homotopy-coherence}
The 1-categorical setting of Grothendieck-Witt theory \cite{Sch21} is that of an exact category $\cal{E}$ with duality, equipped with a quadratic abelian group-valued functor generalising the classical notion of quadratic, symmetric, hermitian forms on objects of $\cal{E}$. The recent treatment \cite{CD23a}, \cite{CD23b} \cite{CD21} is a homotopy-coherent incarnation of these ideas, comprising the data of a stable $\infty$-category equipped with a nondegenerate quadratic functor taking values in spectra.\\
We begin by making some recollections on the formalism of \cite{Sch21}, \cite{Sch24}, and \cite{CD23a}-\cite{CD21}. In \S\ref{dquad} we compute the (right) derived functor of a quadratic left-exact functor associated to an exact form category with weak equivalences $(\cal{E}, Q, w, \D, \eta)$ in the sense of Schlichting \cite{Sch21}, arriving in \S\kern-1pt\S\ref{dadd}-\ref{poistr} at a canonical Poincar\'e $\infty$-category $(L_w(\cal{E}), \mathbf{R}Q)$ in the sense of \cite{CD23a}.
\subsection{Recollections on form categories}\label{rec}
We begin by making the following recollections from \cite[\S\kern-1pt\S2.1-2.3]{Sch10a}.
\begin{definition}\label{exdual}
	An exact category with weak equivalences and duality $(\cal{E}, \D, \eta, w)$ is the data of a Quillen exact category \cite{Qui73}, equipped with the following data:
	\begin{enumerate}[label=(\roman*)]
		\item A wide subcategory $w\cal{E} \subset \cal{E}$ of \textit{weak equivalences}, satisfying 2-of-3, and closed under isomorphism and retracts in $\mathrm{Ar}(\cal{E})$, and under pushouts along inflations and pullbacks along deflations.
		\item An exact functor $\D:\cal{E}^{\op} \to \cal{E}$, the \textit{duality}, which preserves weak equivalences, and a natural transformation $\eta:1_{\cal{E}} \to \D\D^{\op}$, the \textit{double dual identification}, such that for each $x \in \cal{E}$, $\D(\eta_x)\circ\eta_{\D(x)} = 1_{\D(x)}$. 
	\end{enumerate}
    The duality is said to be \textit{strong} if $\eta$ is a natural weak equivalence, and \textit{strict} if it is the identity transformation. If $w$ is the class of isomorphisms, we simply say $(\cal{E}, \D, \eta)$ is an \textit{exact category with duality}. Moreover, in the case $\cal{E}$ is an additive category equipped with the split-exact structure, we say that $(\cal{E}, \D, \eta)$ is an additive category with duality.\\
    A \textit{form functor} $(F, \varphi):(\cal{E}, \D, \eta, w) \to (\cal{E}', \D', \eta', w')$ between exact categories with weak equivalences and duality is the data of an exact functor $F:\cal{E} \to \cal{E}'$ (preserving conflations and weak equivalences), and a \textit{duality compatibility transformation} $\varphi: F\circ\D \Rightarrow \D'\circ F^{\op}$ such that the diagram
	\[\begin{tikzcd}
		F(x) \ar[r, "\eta'_{F(x)}"] \ar[d, "F(\eta_x)"] & (\D')^2F(x) \ar[d, "\D'(\varphi_x)"] \\
		F(\D^2(x)) \ar[r, "\varphi_{\D(x)}"] & \D'F(\D(x))
	\end{tikzcd}\]
	commutes for each $x \in \cal{E}$. The pair $(F, \varphi)$ is said to be \textit{nonsingular} if $\varphi$ is a natural weak equivalence.
\end{definition}
\begin{remark}
    Call an object $x \in \cal{E}$ \textit{acyclic} if the unique map $0 \to x$ is in $w\cal{E}$. Then for $(\cal{E}, w)$ an exact category with weak equivalence, by \cite[Lem.\ 7.1]{Sch24}, an inflation $x \mono y$ in $\cal{E}$ is a weak equivalence if and only if its cokernel is acyclic, and dually a deflation $x \epi y$ is a weak equivalence if and only if its kernel is acyclic. Accordingly, we see that $\cal{E}^w \subset \cal{E}$ is an exact subcategory closed under retracts in $\cal{E}$.
\end{remark}
For $(\cal{E}, \D, \eta)$ an exact category with duality and objects $x, y \in \cal{E}$, there is a natural isomorphism
\[
	\sigma_{x,y}:\Hom_{\cal{E}}(x, \D(y)) \to \Hom_{\cal{E}}(y, \D(x)), \quad f \mapsto \D(f)\circ\eta_y
\]
satisfying $\sigma_{y, x}\sigma_{x, y}(f) = f$. In particular, the mapping groups $\Hom_{\cal{E}}(x, \D(x))$ carry a canonical $C_2$-action, a fixed point for which is said to be a \textit{symmetric form} on $x$. The assignment $x \mapsto \Hom_{\cal{E}}(x, \D(x))^{\fct}$ assembles into a quadratic left-exact functor, the \textit{symmetric forms functor}, which equips any exact category with duality with the canonical structure of an \textit{exact form category}. The following is \cite[Def.\ 7.2]{Sch24}.
\begin{definition}\label{addq}
	An \textit{exact form category with weak equivalences} is the data of an exact category with weak equivalences\footnote{If $w$ is the class of isomorphisms, we simply refer to the tuple $(\cal{E}, Q, \D, \eta)$ as an \textit{exact form category}, and if moreover $\cal{E}$ is an additive category equipped with the split-exact structure, we refer to $(\cal{E}, Q, \D, \eta)$ as an \textit{additive form category}.} and duality $(\cal{E}, \D, \eta, w)$ equipped with a functor $Q:\cal{E}^{\op} \to \Ab$ and natural transformations $\tau, \rho$ of functors $\cal{E}^{\op} \to \Ab$,
	\[
		\Hom_{\cal{E}}(-, \D(-)) \xto{\tau} Q(-) \xto{\rho} \Hom_{\cal{E}}(-, \D(-)),
	\]
	satisfying the following conditions:
	\begin{enumerate}[label=(\roman*)]
		\item \label{c2mack} For each $x \in \cal{E}$, the diagrams
		\[
			\Hom_{\cal{E}}(x, \D(x)) \xto{\tau_x} Q(x) \xto{\rho_x} \Hom_{\cal{E}}(x, \D(x))
		\]
		are $C_2$-equivariant, and satisfy $\rho_x\tau_x = 1 + \sigma_{x, x}$.
		\item For each $f, g \in \Hom_{\cal{E}}(x, y)$ and $\xi \in Q(y)$,
		\[
			(f+g)^\bullet(\xi) = f^\bullet(\xi) + g^\bullet(\xi) + \tau_x(\D(g)\rho_y(\xi)f),
		\]
		where we write $f^\bullet := Q(f)$.
		\item $Q$ is additionally \textit{quadratic left exact}: for each conflation $x \stackrel{i}\mono y \stackrel{p}\epi z$ in $\cal{E}$, the sequence
		\[
			0 \to Q(z) \xto{p^\bullet} Q(y) \xto{(i^\bullet, \D(i)_*\rho_y)} Q(x) \oplus \Hom_{\cal{E}}(y, \D(x))
		\]
		of abelian groups is exact.
	\end{enumerate}
\end{definition}
\begin{remark}
    For $\cal{E}$ an exact category and $Q:\cal{E}^{\op} \to \Ab$ a reduced functor, the \textit{polarisation} $B_Q$ is defined as the kernel
    \[
        B_Q(x, y) := \ker\left(Q(x \oplus y) \to Q(x) \oplus Q(y)\right).
    \]
    As per \cite[App.\ A]{Sch21} (see also \cite[Def.\ 2.4]{BGMN22}), $Q$ is said to be \textit{quadratic} if its polarisation preserves direct sums in either variable; such a pair $(\cal{E}, Q)$ then defines an exact form category if there is a natural isomorphism $\theta_{x, y}:B_Q(x, y) \cong \Hom_{\cal{E}}(x, \D(y))$ of functors $\cal{E}^{\op} \times \cal{E}^{\op} \to \Ab$, for $\D:\cal{E}^{\op} \to \cal{E}$ a duality determined up to isomorphism by $B_Q$, and additionally $Q$ is quadratic left exact. The maps $\tau, \rho$ are determined in this setup by $\theta$, as the composites
    \[
        \Hom_{\cal{E}}(x, \D(x)) \xto{\theta_{x,x}^{-1}} B_Q(x, x) \hookrightarrow Q(x \oplus x) \xto{\Delta^\bullet_x} Q(x)
    \]
    and
    \[
        Q(x) \xto{\nabla_x^\bullet} Q(x \oplus x) \epi B_Q(x, x) \xto{\theta_{x, x}} \Hom_{\cal{E}}(x, \D(x)).
    \]
\end{remark}
\begin{remark}
	If $(\cal{E}, Q, \D, \eta)$ is an additive form category and $x \stackrel{i}\mono y \stackrel{p}\epi z$ is a split exact sequence in $\cal{E}$, then under the first two hypotheses of Definition \ref{addq}, the sequence
\[
	0 \to Q(z) \xto{Q(p)} Q(y) \xto{(Q(i), -\D(i)_*\rho_y)} Q(x)\oplus \Hom_{\cal{E}}(y, \D(x)) \xto{(\rho_x, i^*)} \Hom_{\cal{E}}(x, \D(x)) \to 0
\]
is exact, i.e.\ $Q(z) \to Q(y)$ exhibits $Q(z)$ as the total kernel in abelian groups of the diagram
\[\begin{tikzcd}
	\
	Q(y) \ar[r] \ar[d] & Q(x) \ar[d] \\
	B_Q(y, x) \ar[r] & B_Q(x, x);
\end{tikzcd}\]
see \cite[Lem.\ A.12]{Sch21}.
\end{remark}
\begin{example}\label{bq}
	As remarked above, for $(\cal{E}, \D, \eta)$ an exact category with duality, the functor
	\[
		Q^s:\cal{E}^{\op} \to \Ab, \quad x \mapsto \Hom_{\cal{E}}(x, \D(x))^{\fct}
	\]
	equips $\cal{E}$ with the structure of an exact form category (see \cite[Ex.\ 2.24]{Sch21}): the polarisation $B_{Q^s}$ is given by
	\begin{align*}
		& \ker\left[\Hom_{\cal{E}}(x \oplus y, \D(x \oplus y))^{\fct} \to \Hom_{\cal{E}}(x, \D(x))^{\fct} \oplus \Hom_{\cal{E}}(y, \D(y))^{\fct}\right] \\ \cong & \ker\left[\Hom_{\cal{E}}(x \oplus y, \D(x \oplus y)) \to \Hom_{\cal{E}}(x, \D(x)) \oplus \Hom_{\cal{E}}(y, \D(y))\right]^{\fct} \\
		\cong & \left[\Hom_{\cal{E}}(x, \D(y)) \oplus \Hom_{\cal{E}}(y, \D(x))\right]^{\fct} \cong \Hom_{\cal{E}}(x, \D(y)).
	\end{align*}
        There is an analogous construction of the so-called \textit{quadratic forms functor} $Q^q$ on $\cal{E}$, given by taking $C_2$-orbits (and using that $B_Q(x, y)$ is equivalently the cokernel of the split inclusion $Q(x) \oplus Q(y) \to Q(x \oplus y)$). The quadratic forms functor is quadratic, but in general fails to be quadratic left exact if the exact structure on $\cal{E}$ is not split.
\end{example}
A \textit{form functor} between exact form categories with weak equivalences and duality
\[
	(F, \varphi_q, \varphi):(\cal{A}, Q_{\cal{A}}, \D_{\cal{A}}, \eta_{\cal{A}}, w_{\cal{A}}) \to (\cal{B}, Q_{\cal{B}}, \D_{\cal{B}}, \eta_{\cal{B}}, w_{\cal{B}})
\]
is the data of a form functor $(F, \varphi):(\cal{A}, \D_{\cal{A}}, \eta_{\cal{A}}, w_{\cal{A}}) \to (\cal{B}, \D_{\cal{B}}, \eta_{\cal{B}}, w_{\cal{B}})$ of exact categories with weak equivalences and duality in the sense of Definition \ref{exdual}, and $\varphi_q:Q_{\cal{A}} \to Q_{\cal{B}} \circ F^{\op}$ a natural transformation such that for each $x \in \cal{A}$, the diagram
\begin{equation}\begin{tikzcd}\label{phi_x}
	\Hom_{\cal{A}}(x, \D_{\cal{A}}(x)) \ar[r, "\tau_x"] \ar[d, "f \mapsto \varphi_xF(f)"'] & Q_{\cal{A}}(x) \ar[d, "\varphi_{q, x}"] \ar[r, "\rho_x"] & \Hom_{\cal{A}}(x, \D_{\cal{A}}(x)) \ar[d, "f \mapsto \varphi_xF(f)"] \\
	\Hom_{\cal{B}}(F(x), \D_{\cal{B}}(F(x))) \ar[r, "\tau_{F(x)}"] & Q_{\cal{B}}(F(x)) \ar[r, "\rho_{F(x)}"] & \Hom_{\cal{B}}(F(x), \D_{\cal{B}}(F(x)))
\end{tikzcd}\end{equation}
commutes. If both $Q_{\cal{A}}$ and $Q_{\cal{B}}$ encode symmetric forms, a form functor reduces to the notion of a form functor between exact categories with weak equivalences and duality in the sense of Definition \ref{exdual}.\\
Write $\wformcat$ for the category spanned by the exact form categories with weak equivalences and strong duality, and nonsingular form functors between them, and $\formcat$ for the full subcategory of exact form categories with strong duality.
\begin{remark}\label{oversp}
	The data of a nonsingular form functor $(F, \varphi, \varphi_q):(\cal{E}, Q, \D, \eta) \to (\cal{E}', Q', \D', \eta')$ between form categories is overspecified in the sense that the duality compatibility is determined by the pair $(F, \varphi_q)$: there is a map
	\[
		\psi_{x, y}: \Hom_{\cal{E}}(x, \D(y)) \to \Hom_{\cal{E}'}(F(x), \D'F(y))
	\]
	induced by the naturality of $\varphi_q$ and the identification $\ker\left(Q(x \oplus y) \to Q(x) \oplus Q(y)\right) \cong \Hom_{\cal{E}}(x, \D(y))$. Setting $\varphi_x := \psi_{\D(x),x}\left(1_{\D(x)}\right)$ for each $x \in \cal{E}$, we see that for each map $f:x\to y$, the diagram
	\[\begin{tikzcd}
		\Hom_{\cal{E}}(\D(y), \D(y)) \ar[r, "\D(f)_*"] \ar[d, "\psi_{\D(y), y}"] & \Hom_{\cal{E}}(\D(y), \D(x)) \ar[d, "\psi_{\D(y), x}"] & \Hom_{\cal{E}}(\D(x), \D(x)) \ar[d, "\psi_{\D(x),x}"] \ar[l, "\D(f)^*"'] \\
		\Hom_{\cal{E}'}(F\D(y), \D'F(y)) \ar[r, "\D'F(f)_*"] & \Hom_{\cal{E}'}(F\D(y), \D'F(x)) & \Hom_{\cal{E}'}(F\D(x), \D'F(x)) \ar[l, "F\D(f)^*"']
	\end{tikzcd}\]
	witnesses $\D'F(f)\circ \varphi_y = \psi_{\D(y),x}(\D(f)) = \varphi_x \circ F\D(f)$, i.e.\ $\varphi$ furnishes a natural transformation $F\D \Rightarrow \D'F^{\op}$. Commutativity of the diagram (\ref{phi_x}) follows from that of
	\[\begin{tikzcd}
		\Hom_{\cal{E}}(x, \D(x)) \ar[r, "F"] \ar[rd, "\psi_{x, x}"'] & \Hom_{\cal{E}'}(F(x), F\D(x)) \ar[d, "\varphi_{x, *}"] \\
		& \Hom_{\cal{E}'}(F(x), \D'F(x)),
	\end{tikzcd}\]
	which in turn follows from the Yoneda lemma (the natural transformations $\varphi_{x, *} \circ F$ and $\psi_{-, x}$, $\Hom_{\cal{E}}(-, \D(x)) \Rightarrow \Hom_{\cal{E}'}(F(-),\D'F(x)))$ each send $1_{\D(x)}$ to $\varphi_x$). The condition that $\D'(\varphi_x)\circ \eta_{F(x)} = \varphi_{\D(x)}\circ F(\eta_x)$ of \cite[\S2]{Sch21} is equivalent to commutativity of the diagram
	\[\begin{tikzcd}
		\Hom_{\cal{E}}(\D(x), \D(y)) \ar[r, "\psi_{\D(x),y}"] \ar[d, "\rotatebox{270}{$\cong$}"] & \Hom_{\cal{E}'}(F\D(x), \D'F(y)) \ar[d, "\rotatebox{270}{$\cong$}"] \\
		\Hom_{\cal{E}}(y, \D^2(x)) \ar[r, "\psi_{y, \D(x)}"] & \Hom_{\cal{E}'}(F(y), \D'F\D(x)),
	\end{tikzcd}\]
	where the vertical isomorphisms come from self-adjointness of $\D$.
\end{remark}
\begin{remark}\label{formcat}
	It will be useful in the sequel to have another construction of $\wformcat$. Write $\wexcat$ for the ordinary category of small exact categories with weak equivalences and exact functors between them, and $\mathrm{CAT}_1$ for the (large) category of small categories. Consider the functor
	\[
		\wexcat^{\op} \to \mathrm{CAT}_1, \quad \cal{E} \mapsto \fun^\mathrm{q}(\cal{E}),
	\]
	where $\fun^\mathrm{q}(\cal{E}) \subset \fun(\cal{E}^{\op}, \Ab)$ denotes the full subcategory spanned by quadratic functors in the sense of \cite[App.\ A]{Sch21}. The unstraightening $\cat^{h, w}_1 := \int_{\wexcat}\fun^\mathrm{q}$ has objects pairs $(\cal{E}, Q, w)$ for $(\cal{E}, w)$ an exact category with weak equivalences and $Q$ a quadratic functor on $\cal{E}$, and maps $(F, \varphi_q):(\cal{E}, Q, w) \to (\cal{E}', Q', w')$ the data of an exact functor $F:(\cal{E}, w) \to (\cal{E}', w')$ along with a natural transformation $\varphi_q:Q \to Q'\circ F^{\op}$. Define $\wformcat \subset \cat^{h, w}_1$ to be the (non-full) subcategory consisting of tuples $(\cal{E}, Q, w)$ and maps $(F, \varphi_q)$ which together satisfy the following:
	\begin{enumerate}[label=(\roman*)]
		\item The polarisation $B_Q(-,-)$ is naturally isomorphic to $\Hom_{\cal{E}}(-, \D(-))$ for some duality $(\D, \eta)$ on $\cal{E}$, and the associated double dual identification $1 \to \D\circ\D^{\op}$ is a natural weak equivalence.
		\item $Q$ is quadratic left exact.
		\item The natural transformation $\varphi_q:Q \to Q'\circ F^{\op}$ induces a duality compatibility $\varphi$ as in Remark \ref{oversp} which is a natural weak equivalence, and such that for each $x \in \cal{E}$, the diagram
		\[\begin{tikzcd}
			F(x) \ar[r, "\eta_{f(x)}"] \ar[d, "f(\eta_x)"] & \D_{Q'}^2(F(x)) \ar[d, "\D_{Q'}(\varphi_x)"] \\
			F(\D_Q^2(x)) \ar[r, "\varphi_{\D_Q(x)}"] & \D_{Q'}(F(\D_Q(x)))
		\end{tikzcd}\]
		commutes.
	\end{enumerate}
	Restricting along the full inclusion $\excat \subset \wexcat$, $\cal{E} \mapsto (\cal{E}, \mathbf{iso})$ we likeqwise construct the category $\formcat \subset \cat_1^{h, w}$ of exact form categories and nonsingular form functors; we likewise write $\wcompformcat \subset \cat^{h, w}_1$ for the pullback along the forgetful functor $\wcompcat \subset \wexcat$, for $\wcompcat$ the subcategory of (small) complicial exact categories with weak equivalences and exact functors between them (see below).
\end{remark}
Write $\mathscr{C}_{\zz}$ for the category of bounded chain complexes of finitely generated free abelian groups. Equipped with the degreewise-split exact structure, the class of chain homotopy equivalences, and the duality $X \mapsto [X, \mathbb{1}]$ coming from the closed symmetric monoidal structure furnished by the tensor product of bounded chain complexes, $\mathscr{C}_{\zz}$ carries the canonical structure of an exact category with weak equivalences and duality $(\mathscr{C}_{\zz}, \mathbf{ch.htp.}, [-,\mathbb{1}], \mathrm{can})$, for $\mathrm{can}_X$ the adjunct of the evaluation morphism $X \otimes [X, \mathbb{1}] \to \mathbb{1}$ (see Appendix \ref{excat}). The Dwyer-Kan localisation $L_\mathrm{ch.htp.}(\mathscr{C}_{\zz})$ at the chain homotopy equivalences is canonically equivalent to the perfect derived $\infty$-category $\dperf(\zz)$.\\
Recall that a \textit{complicial exact category} (see Appendix \ref{excat}) is an exact category equipped with a bi-exact action
\[
	\otimes : \mathscr{C}_{\zz} \times \cal{E} \to \cal{E}
\]
which is associative and unital in that the usual diagrams commute (see \cite{Gra76}). An exact category with weak equivalences and duality $(\cal{E}, w, \D, \eta)$ has the structure of a complicial exact category with weak equivalences and duality if $\cal{E}$ is complicial, such that the action of $\mathscr{C}_{\zz}$ preserves weak equivalences in each variable, i.e.\ for $f:A \to B$ a chain homotopy equivalence in $\mathscr{C}_{\zz}$ and $g:X \to Y$ in $w\cal{E}$, the map
\[
	f \otimes g : A \otimes X \to B \otimes Y
\]
is a weak equivalence; and if $\otimes$ refines to a nonsingular form functor between exact categories with duality, i.e.\ that we have a natural \textit{isomorphism} $\varphi$ rendering the diagram 
\[\begin{tikzcd}
	\mathscr{C}_{\zz}^{\op} \times \cal{E}^{\op} \ar[d, "\otimes^{\op}"] \ar[r, "{[-, \mathbb{1}] \times \D}"] & |[alias=cze]| \mathscr{C}_{\zz} \times \cal{E} \ar[d, "\otimes"] \\
	|[alias=e]| \cal{E}^{\op} \ar[r, "\D"] & \cal{E}
	\ar[Rightarrow,from=cze, to=e,shorten >=5mm,shorten <=.5mm, "\varphi"', near start]
\end{tikzcd}\]
commutative.
\begin{remark}
    We will often abuse notation and write $\cal{E}$ for an exact category with a complicial structure, leaving the tensor product and higher coherences implicit.
\end{remark}
\begin{remark}
     Write $C$ for the complex $(0 \to \zz =\joinrel= \zz \to 0)$ concentrated in degrees $[0,1]$, sitting in the conflation $\mathbb{1} \mono C \epi T$,
     and $P := [C, \mathbb{1}]$, sitting in the shifted conflation $\Omega \mono P \epi \mathbb{1}$. Then the tuple $(C \otimes -, P \otimes -, \varphi)$ is the data of a strong symmetric cone on $\cal{E}$ in the sense of \cite[Def.\ 9.1]{Sch24}, since each of $C, P$ is Frobenius contractible in $\mathscr{C}_{\zz}$.
\end{remark}
\begin{warning}
	The action of $\mathscr{C}_{\zz}$ on $\cal{E}$ in the definition of a complicial exact form category is \textit{not} required to promote to a form functor
	\[
		(\mathscr{C}_{\zz}, Q^s, [-, \mathbb{1}], \mathrm{can}) \otimes (\cal{E}, Q, \D, \eta) \to (\cal{E}, Q, \D, \eta),
	\]
	where $\otimes$ denotes the tensor product of form categories of \cite[Def.\ 2.34]{Sch21}; that is, we require no compatibility between the quadratic functor $Q$ and the action of $\mathscr{C}_{\zz}$.
\end{warning}
\begin{example}
	For $(\cal{E}, \D, \eta)$ an exact category with duality, the category of bounded chain complexes $\chb(\cal{E})$ canonically acquires the structure of a complicial exact category with weak equivalences (the quasi-isomorphisms) and strong duality; see Appendix \ref{excat}. If $\cal{E}$ is complicial exact with weak equivalences and $I$ is a small category, the functor category $\fun(I, \cal{E})$ carries a canonical (pointwise) structure of a complicial exact category with weak equivalences. Finally, the opposite of a complicial exact category with weak equivalences canonically has the structure of a complicial exact category with weak equivalences (see (\ref{opcomp})).
\end{example}
A complicial exact category $\cal{E}$ comes equipped with an internal class of \textit{Frobenius equivalences} $w_\mathrm{Frob}$, the complicial analogue of chain homotopy equivalences. Any class $w$ of weak equivalences on $\cal{E}$ contains the Frobenius equivalences (\ref{Frob-in-w}), and we write $\gamma:\cal{E} \to L_w(\cal{E})$ for the Dwyer-Kan localisation at $w$; note that $L_w(\cal{E})$ is by Proposition \ref{comstab} is a stable $\infty$-category. If $w$ is the class of Frobenius equivalences, we write $L_{\mathrm{Frob}}(\cal{E}) := L_{w_{\mathrm{Frob}}}(\cal{E})$.
\subsection{From exact categories to chain complexes}\label{excat-to-chain-complexes}
Given an exact form category with strong duality $(\cal{E}, Q, \D, \eta)$, there is a fully faithful exact form functor \cite{Sch24}
\begin{equation}\label{gillet-waldhausen}
	(\cal{E}, Q, \D, \eta, \mathbf{iso}) \to (\chb(\cal{E}), Q_\mathrm{ch}, \D, \eta, \mathbf{qis}),
\end{equation}
of exact form categories with strong duality and weak equivalences. Here we abusively write $(\D, \eta)$ for the canonical extensions of the duality on $\cal{E}$ to $\chb(\cal{E})$ (see Appendix \ref{excat}), and $Q_\mathrm{ch}$ for the quadratic functor defined on a chain complex $X$ by
\begin{equation}\label{qtil}
	Q_\mathrm{ch}(X) := \{(\xi, \varphi) \mid \varphi \in \Hom_{\chb(\cal{E}}(X, \D(X))^{\fct},  \xi \in Q(X[0]), d_1^\bullet(\xi) = 0, \rho_{X[0]}(\xi) = \varphi_0\};
\end{equation}
see \cite[Def.\ 10.2]{Sch24}. The following Gillet-Waldhausen-type theorem is \cite[Thm.\ 10.5]{Sch24}.
\begin{theorem}
	For $\cal{E}$ an exact form category with strong duality, the form functor (\ref{gillet-waldhausen}) induces a natural weak equivalence of Grothendieck-Witt spaces
	\[
		\cal{GW}(\cal{E}, Q) \to \cal{GW}(\chb(\cal{E}), Q_\mathrm{ch}, \mathbf{qis}).
	\]
\end{theorem}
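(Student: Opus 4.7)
The plan is to follow the Thomason-Trobaugh strategy for $K$-theory, adapted to the hermitian setting using Schlichting's form-categorical additivity and cobordism machinery. Factor the comparison map as
\[
    (\cal{E}, Q, \mathbf{iso}) \hookrightarrow (\chb(\cal{E}), Q_\mathrm{ch}, w_{\mathrm{Frob}}) \to (\chb(\cal{E}), Q_\mathrm{ch}, \mathbf{qis}),
\]
where $w_{\mathrm{Frob}}$ is the class of Frobenius (in this case chain homotopy) equivalences. The theorem follows if both arrows induce equivalences of $\cal{GW}$-spaces; the two cases require different techniques.

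For the first map, the plan is to argue a hermitian $+\!=\!\mathcal{Q}$-type statement: any object $X$ of $\chb(\cal{E})$ with a nondegenerate $Q_\mathrm{ch}$-form admits a canonical symmetric filtration coming from the stupid truncations, whose associated graded is concentrated in a single degree and hence lies (up to shift) in $\cal{E}$. Applying the hermitian additivity theorem for exact form categories with weak equivalences inductively along this filtration collapses the $\cal{GW}$-class of $X$ onto a sum of classes contributed by objects in $\cal{E}$ equipped with their induced forms, precisely as in the form-categorical Gillet-Waldhausen calculation for chain homotopy equivalences. One needs to check that the $Q_\mathrm{ch}$-data (\ref{qtil}) is compatible with this filtration — the component $\xi \in Q(X[0])$ with $d_1^\bullet(\xi) = 0$ is exactly what allows the inductive step on the bottom stratum.

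For the second map, the plan is to invoke a localization (fibration) theorem for $\cal{GW}$ of exact form categories with weak equivalences, furnishing a fiber sequence
\[
    \cal{GW}(\acb(\cal{E}), Q_\mathrm{ch}, w_{\mathrm{Frob}}) \to \cal{GW}(\chb(\cal{E}), Q_\mathrm{ch}, w_{\mathrm{Frob}}) \to \cal{GW}(\chb(\cal{E}), Q_\mathrm{ch}, \mathbf{qis}),
\]
where $\acb(\cal{E})$ is the form subcategory of Frobenius-acyclic bounded complexes. The fiber term then vanishes by an Eilenberg-swindle argument: an acyclic bounded complex $X$ sits in a Frobenius-trivial cofibration $X \mono CX$ with cone whose symmetric form data is built from standard mapping-cylinder shifts, making the hyperbolic construction of $X$ null-homotopic in the cobordism category of $(\chb(\cal{E}), Q_\mathrm{ch}, w_{\mathrm{Frob}})$.

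The hardest step will be the second: unlike in $K$-theory, where Waldhausen's fibration theorem applies out of the box, the hermitian localization theorem requires a delicate compatibility between the Poincaré/cobordism structure and the exact structures on the subcategory of acyclics. One must verify that the pair $(\chb(\cal{E}), \acb(\cal{E}))$ forms a \emph{Karoubi sequence} of exact form categories with weak equivalences — equivalently, that every quasi-isomorphism can be factored, modulo chain homotopy, as a composition of an inflation with acyclic cokernel and a deflation with acyclic kernel, compatibly with duality. Granting this, additivity and the swindle above complete the argument; the rest reduces to a careful bookkeeping of the naturality of all constructions, ensuring the resulting equivalence is functorial in $(\cal{E}, Q, \D, \eta)$.
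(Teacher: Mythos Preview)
The paper does not prove this statement; it is quoted verbatim as \cite[Thm.\ 10.5]{Sch24} and no argument is given. So there is no ``paper's own proof'' to compare against, and I can only assess your proposal on its own merits.

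Your overall architecture --- factor through $(\chb(\cal{E}), Q_\mathrm{ch}, w_{\mathrm{Frob}})$, handle the first step by hermitian additivity along the stupid-truncation filtration, and the second by a change-of-weak-equivalences fibration --- is the standard shape for a Gillet--Waldhausen argument and is essentially how \cite{Sch24} proceeds. But step~2 as you have written it has a genuine gap. An Eilenberg swindle requires countable direct sums, which are not available in the category $\acb(\cal{E})$ of \emph{bounded} acyclic complexes; the sentence ``$X \mono CX$ makes the hyperbolic construction of $X$ null-homotopic'' is not a swindle at all --- it only says that every object embeds into a contractible one, which holds in any complicial exact category and by itself proves nothing about $\cal{GW}$.

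The correct mechanism for the vanishing of the fibre term is different. After passing to the weak idempotent completion (which leaves $\cal{GW}$ unchanged by \cite[Lem.\ 10.6]{Sch24}), every bounded acyclic complex is \emph{strictly} acyclic and hence a finite iterated extension of discs $D_i(x)$, each of which is Frobenius-contractible; so every object of $\acb(\cal{E})$ is $w_{\mathrm{Frob}}$-equivalent to $0$ and the fibre $\cal{GW}$ vanishes for that reason, not by a swindle. You also have a terminological slip: the fibre category consists of $\mathbf{qis}$-acyclic (i.e.\ genuinely acyclic) complexes equipped with $w_{\mathrm{Frob}}$ as weak equivalences, not ``Frobenius-acyclic'' complexes --- the latter would be the contractible ones, and that is the conclusion you want, not the hypothesis.
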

Many of the formal properties of the category $\chb(\cal{E})$ (simplicial enrichment, stability of the localisation at the chain homotopy equivalences) ultimately derive from its complicial structure. We couch the discussion below in terms of complicial exact categories with weak equivalences, showing below that derived functors of presheaves on these admit explicit models. In turn, complicial exact form categories with weak equivalences and strong duality are tractable 1-categorical models for Poincar\'e categories.
\subsection{Hermitian and Poincar\'e $\infty$-categories}\label{hermpoi}
In this brief interlude we recall some of the technical background for Grothendieck-Witt theory in the higher-categorical setting; for a detailed exposition, see \cite[\S\kern-1pt\S 1-2]{CD23a}. Recall \cite[Def.\ 1.1.1.9]{HA} that an $\infty$-category is stable if it is pointed (i.e., has a zero object), admits all fibres and cofibres, and has the property that a nullcomposite sequence
\[\begin{tikzcd}
    x \ar[r, "i"] \ar[d] & y \ar[d, "p"] \\
    0 \ar[r] & z
\end{tikzcd}\]
is a fibre sequence if and only if it is a cofibre sequence. Fix a small stable $\infty$-category $\cal{C}$. To any reduced functor $\Qoppa:\cal{C}^{\op} \to \Sp$ we may associate its \textit{polarisation}
\[
	B_\Qoppa(x, y) := \fib(\Qoppa(x \oplus y) \to \Qoppa(x) \oplus \Qoppa(y));
\]
this assignment is functorial in $\Qoppa$, and exhibits $B_\Qoppa$ as the universal bi-reduced\footnote{A functor $B:\cal{C}^{\op} \times \cal{C}^{\op} \to \Sp$ is bi-reduced if for each $x \in \cal{C}$, each restriction $B(x, -), B(-,x):\cal{C}^{\op} \to \Sp$ is reduced.} replacement of the bifunctor $\Qoppa(- \oplus -): \cal{C}^{\op} \times \cal{C}^{\op} \to \Sp$ (see \cite[\S1]{CD23a}. Write $\mathrm{BiFun}_*(\cal{C}^{\op}; \Sp)$ for the $\infty$-category of bi-reduced functors $\cal{C}^{\op} \times \cal{C}^{\op} \to \Sp$. Then we have natural transformations
\begin{equation}\label{rhotau}
	\Delta^*B_\Qoppa \Rightarrow \Qoppa \Rightarrow \Delta^*B_\Qoppa,
\end{equation}
which are counit and unit in adjunctions exhibiting $\Qoppa \mapsto B_\Qoppa$ as respectively right and left adjoint to restriction along the diagonal $\Delta^*: \mathrm{BiFun}_*(\cal{C}^{\op}; \Sp) \to \fun_*(\cal{C}^{\op}, \Sp)$. For $\Qoppa$ reduced, $B_\Qoppa$ refines to a $C_2$-homotopy fixed point in $\mathrm{BiFun}_*(\cal{C}^{\op}, \Sp)$, where $C_2$ acts by flipping the input variables; we call such a functor \textit{symmetric}. By \cite[Lem.\ 1.1.10]{CD23a}, the maps (\ref{rhotau}) refine to $C_2$-equivariant transformations, where $C_2$ acts trivially on $\fun_*(\cal{C}^{\op}, \Sp)$, and hence to maps
\[
	(\Delta^*B_\Qoppa)_{\hct} \Rightarrow \Qoppa \Rightarrow (\Delta^*B_\Qoppa)^{\hct}.
\]
Call a bi-reduced functor $B_\Qoppa:\cal{C}^{\op} \times \cal{C}^{\op} \to \Sp$ \textit{bilinear} if it is $(1, 1)$-excisive, i.e.\ 1-excisive in each variable. Then by \cite[Prop.\ 1.1.13]{CD23a}, a reduced functor $\Qoppa:\cal{C}^{\op} \to \Sp$ is 2-excisive if and only if its polarisation $B_\Qoppa$ is bilinear, and if the functor
\[
	\Lambda_\Qoppa : \cal{C}^{\op} \to \Sp, \quad x \mapsto \fib\left(\Qoppa(x) \to B_\Qoppa(x, x)^{\hct}\right)
\]
is 1-excisive. We use also the term \textit{quadratic} to mean reduced and 2-excisive, and call the pair $(\cal{C}, \Qoppa)$ a \textit{hermitian $\infty$-category}. Analogously to the case of form categories, we wish to study quadratic functors on $\cal{C}$ satisfying an additional nondegeneracy condition: call a quadratic functor $\Qoppa$ \textit{nondegenerate} if there is a natural equivalence
\[
	B_\Qoppa(x, y) \simeq \hom_{\cal{C}}(x, \D_\Qoppa(y))
\]
for some (necessarily essentially unique) duality $\D_\Qoppa:\cal{C}^{\op} \to \cal{C}$. The equivalences
\[
	\map_{\cal{C}}(x, \D_\Qoppa(y)) \simeq B_\Qoppa(x, y) \simeq B_\Qoppa(y, x) \simeq \map_{\cal{C}}(y, \D_\Qoppa(x)) \simeq \map_{\cal{C}^{\op}}(\D_\Qoppa^{\op}(x), y)
\]
exhibit $\D_\Qoppa$ as self-adjoint, and we call $B_\Qoppa$ \textit{perfect} if the unit (and hence also counit) $\ev:\id_{\cal{C}} \Rightarrow \D_\Qoppa\D_\Qoppa^{\op}$ of this adjunction is a natural equivalence.
\begin{definition}[{\cite[Def.\ 1.2.8]{CD23a}}]
	A hermitian $\infty$-category $(\cal{C}, \Qoppa)$ is \textit{Poincar\'e} if $B_\Qoppa$ is nondegenerate and perfect; this is the higher-categorical analogue of a form category with strong duality.
\end{definition}
Write $\fun^\mathrm{q}(\cal{C})$ for the subcategory of $\fun(\cal{C}^{\op}, \Sp)$ spanned by reduced 2-excisive functors, and write $\cat_\infty^h$ for the cartesian unstraightening of
\[
	(\cat^{\mathrm{st}}_\infty)^{\op} \to \cal{C}\mathrm{AT}_\infty, \quad \cal{C} \mapsto \fun^\mathrm{q}(\cal{C}),
\]
the (large) $\infty$-category of small hermitian $\infty$-categories. A map $(\cal{C}, \Qoppa) \to (\cal{C}', \Qoppa')$ in $\cat_\infty^h$ is a pair $(f, \eta)$, for $f:\cal{C} \to \cal{C}'$ an exact functor, and $\eta : \Qoppa \Rightarrow \Qoppa'\circ f^{\op}$. Such an $\eta$ induces a map (see \cite[Lem.\ 1.2.4]{CD23a})
\[
	\theta_{f, \eta}:f\D_\Qoppa \Rightarrow \D_{\Qoppa'}\circ f^{\op},
\]
and we call the pair $(f, \eta)$ \textit{duality preserving} if $\theta_{f, \eta}$ is an equivalence. Denote by $\cat^p_\infty \subset \cat^h_\infty$ the subcategory spanned by Poincar\'e $\infty$-categories and duality-preserving functors. $\cat_\infty^p$ and $\cat^h_\infty$ are compactly generated presentable $\infty$-categories, by the results of \cite{CD24}.
\subsection{Deriving presheaves on complicial exact categories}\label{dquad}
Let $(\cal{E}, w)$ be a complicial exact categories with weak equivalences; recall that the identity functor gives rise to an exact inclusion of complicial exact categories with weak equivalences $(\cal{E}_{\mathrm{Frob}}, w_{\mathrm{Frob}}) \subset (\cal{E}, w)$.
\begin{definition}[{\cite[Def.\ 7.4.12, 7.5.7]{Cis19}}]\label{wef}
	An \textit{$\infty$-category with weak equivalences and fibrations} is a tuple $(\cal{C}, w, \mathrm{Fib})$, with $\cal{C}$ an $\infty$-category with final object $*$, $w \subset \cal{C}$\footnote{We may occasionally write $w\cal{C}$ for $w$.} a wide subcategory satisfying the 2-of-3 property, and $\mathrm{Fib} \subset \cal{C}$ a class of fibrations, such that the following properties are satisfied.
	\begin{enumerate}[label=(\roman*)]
		\item \label{i} For any cartesian square
		\[\begin{tikzcd}
			x' \ar[r] \ar[d, "q"'] & x \ar[d, "p"] \\
			y' \ar[r] & y
		\end{tikzcd}\]
		in $\cal{C}$ with $p$ a fibration between fibrant objects and $y'$ fibrant, if $p$ belongs to $w$, so does $q$.
		\item \label{ii} For any map $f:x \to y$ with $y$ fibrant, there exists a map $i:x \to x'$ in $w$ and $p:x' \to y$ in $\mathrm{Fib}$ such that $f$ is a composition of $p$ and $i$.
	\end{enumerate}
	An object $x \in \cal{E}$ is \textit{fibrant} if the essentially unique map $x \to *$ lies in $\mathrm{Fib}$, and $\cal{C}$ is moreover an \textit{$\infty$-category of fibrant objects} if each object $x \in \cal{C}$ is fibrant.
\end{definition}
\begin{lemma}\label{fib}
	(The nerve of) any complicial exact category with weak equivalences $(\cal{E}, w)$ is an $\infty$-category of fibrant objects upon setting $\mathrm{Fib}$ to be the class of deflations.
\end{lemma}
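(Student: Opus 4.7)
The plan is to verify directly the two axioms of Definition \ref{wef} together with the basic data: that the nerve of $\cal{E}$ has a terminal object with every object fibrant. Since $\cal{E}$ is additive, it has a zero object $0$, which serves as the terminal object in the nerve; every $x \in \cal{E}$ is fibrant, since the split conflation $x \stackrel{1}\mono x \oplus 0 \stackrel{\pi_2}\epi 0$ exhibits $x \to 0$ as a deflation. The class $w$ satisfies 2-of-3 by assumption on the weak equivalences.

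For axiom \ref{i}, consider a cartesian square with $p: x \to y$ a deflation in $w$ and $y' \to y$ any map. By the exact category axioms, the base change $q: x' \to y'$ exists and is a deflation, and iterating pullbacks gives $\ker q \cong \ker p$. By the Remark in \S\ref{rec}, a deflation is in $w$ if and only if its kernel is acyclic; since $\ker p$ is acyclic, so is $\ker q$, whence $q \in w$.

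For axiom \ref{ii}, I would use the path object $P = [C, \mathbb{1}]$, which furnishes via the conflation $\Omega \otimes y \mono P \otimes y \epi y$ a canonical deflation $d: P \otimes y \epi y$. Given $f: x \to y$, factor $f$ as
\[
	x \xto{i} x \oplus (P \otimes y) \xto{(f,\, d)} y,
\]
with $i$ the canonical split inclusion. Then $i$ is a split inflation with cokernel $P \otimes y$; since the chain complex $P$ is contractible in $\mathscr{C}_\zz$, the biexactness of the $\mathscr{C}_\zz$-action forces $P \otimes y$ to be Frobenius contractible in $\cal{E}$, hence $w$-acyclic (as $w_\mathrm{Frob} \subset w$), so $i \in w$ by the Remark in \S\ref{rec}. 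The map $(f, d)$ decomposes as
\[
	x \oplus (P \otimes y) \xto{1 \oplus d} x \oplus y \to y,
\]
where the first arrow is the pullback of $d$ along $\pi_2$ (hence a deflation) and the second is the automorphism $(a, b) \mapsto (a, f(a) + b)$ followed by $\pi_2$ (also a deflation). The main obstacle is identifying the right factorization, but with the path object in hand, the $w$-acyclicity of $P \otimes y$ follows transparently from the contractibility of $P$ in $\mathscr{C}_\zz$ together with $w_\mathrm{Frob} \subset w$.
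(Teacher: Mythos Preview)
Your proof is correct and follows essentially the same approach as the paper: both verify axiom \ref{i} via the invariance of kernels under pullback, and both establish axiom \ref{ii} via the path-object factorisation $x \to x \oplus Py \to y$. The one minor difference is in showing that $(f,d)$ is a deflation: the paper invokes Quillen's obscure axiom (observing that $(f,d)$ restricts to the deflation $d$ on the second summand and has a kernel), whereas you decompose $(f,d)$ explicitly as the composite of $1 \oplus d$ with a shear-then-projection, each visibly a deflation --- your argument is slightly more elementary here.
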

\begin{proof}
    Since the sequences $x =\joinrel= x \to 0$ are conflations, each object is fibrant. Given a cartesian square as in \ref{i}, if $p$ is a deflation then so is $q$; a deflation is then trivial if and only if its kernel is acyclic \cite[Lem.\ 7.1]{Sch24}, and since this is so for $p$, it follows for $q$. For \ref{ii}, given $f:x \to y$, we note that in the diagram
    \[\begin{tikzcd}
        x \ar[r, "{\begin{psmallmatrix} 1 \\ 0 \end{psmallmatrix}}"] \ar[rd, "f"'] & x \oplus Py \ar[d, "{\begin{psmallmatrix} f \amsamp \pi \end{psmallmatrix}}"] \\
        & y,
    \end{tikzcd}\]
    for $\pi:Py \to y$ the path fibration of Appendix \ref{excat}, the inflation $x \to x \oplus Py$ has (Frobenius) contractible cokernel $Py$, and so the former is a (Frobenius) weak equivalence. Now the composite $Py \xto{\begin{psmallmatrix}0 \\ 1\end{psmallmatrix}} x \oplus Py \to y$ is a fibration, and since the kernel of the latter map exists as the pullback
    \[\begin{tikzcd}
        \ker({\begin{psmallmatrix} f \amsamp \pi \end{psmallmatrix}}) \ar[r] \ar[d, twoheadrightarrow] & Py \ar[d, twoheadrightarrow, "\pi"] \\
        x \ar[r, "-f"] & y,
    \end{tikzcd}\]
    ${\begin{psmallmatrix} f \amsamp \pi \end{psmallmatrix}}$ is a deflation, by Quillen's obscure axiom \cite[App.\ A.1]{Kel90}.
\end{proof}
For $(\cal{C}, w, \mathrm{Fib})$ an $\infty$-category with weak equivalences and fibrations, write $\gamma:\cal{C} \to L_w(\cal{C})$ for the Dwyer-Kan localisation at the class of weak equivalences. For $x \in \cal{C}$ fibrant, the slice category $(\cal{C} \downarrow x)$ inherits the structure of an $\infty$-category with weak equivalences and fibrations in which the weak equivalences resp.\ fibrations are those maps which are sent to such under the projection $(\cal{C} \downarrow x) \to \cal{C}$ \cite[\S7.6.12]{Cis19}. Fibrant objects in this inherited structure are precisely the fibrations $y \epi x$. We note the following for the record.
\begin{proposition}[{\cite[Cor.\ 7.6.13]{Cis19}}]\label{loc}
	Let $\cal{C}$ be an $\infty$-category with weak equivalences and fibrations. For any fibrant object $x \in \cal{C}$, the canonical functor $(\cal{C} \downarrow x) \to (L_w(\cal{C})\downarrow \gamma(x))$ induces an equivalences of $\infty$-categories
	\[
		L_w(\cal{C}\downarrow x) \simeq (L_w(\cal{C}) \downarrow \gamma(x)).
	\]
\end{proposition}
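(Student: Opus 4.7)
The plan is to invoke the framework of $\infty$-categories of fibrant objects from \cite[\S 7.6]{Cis19}. By construction, the projection $p:(\cal{C}\downarrow x) \to \cal{C}$ preserves weak equivalences in the inherited structure, so postcomposition with $\gamma$ factors through the Dwyer-Kan localization to yield $L_w(\cal{C}\downarrow x) \to L_w(\cal{C})$. The canonical natural transformation from the domain of $p$ to the constant functor at $x$ localizes to one into $\gamma(x)$, giving the comparison functor $\Phi: L_w(\cal{C} \downarrow x) \to L_w(\cal{C}) \downarrow \gamma(x)$ that we want to show is an equivalence.

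For essential surjectivity, take $(c, \alpha: c \to \gamma(x))$ on the right. Since $\gamma$ is essentially surjective, we may assume $c = \gamma(c_0)$ for some $c_0 \in \cal{C}$. By the homotopy calculus of fractions in $\cal{C}$, the morphism $\alpha$ is represented by a span $c_0 \xleftarrow{\sim} c'_0 \xrightarrow{f} x$ in $\cal{C}$ with the leftward map in $w$. Applying axiom \ref{ii} of Definition \ref{wef} to $f$ factors it as a weak equivalence $c'_0 \xrightarrow{\sim} c''_0$ followed by a fibration $c''_0 \epi x$, producing an object $(c''_0, f'') \in (\cal{C} \downarrow x)$ with $\Phi(c''_0, f'') \simeq (c, \alpha)$.

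For full faithfulness, we reduce to computing mapping spaces of fibrant-over-$x$ pairs $(y_i, f_i: y_i \epi x)$ on both sides. On the right, $\Map_{L_w(\cal{C}) \downarrow \gamma(x)}((y_0,f_0), (y_1, f_1))$ is by definition the fiber of
\[
\Map_{L_w(\cal{C})}(\gamma(y_0), \gamma(y_1)) \to \Map_{L_w(\cal{C})}(\gamma(y_0), \gamma(x))
\]
at the point $\gamma(f_0)$. On the left, the mapping space in $L_w(\cal{C} \downarrow x)$ is modeled, via the hammock calculus for $\infty$-categories of fibrant objects, by the nerve of the category of spans $y_0 \xleftarrow{\sim} y \to y_1$ over $x$, where the left leg is a trivial fibration in the slice (equivalently, a weak equivalence in $\cal{C}$ whose composite with $f_0$ agrees with $f_1$ times the right leg).

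The main obstacle is matching these two descriptions. This reduces to verifying that fibrant replacement, and more generally path-object constructions, in the slice $(\cal{C} \downarrow x)$ are controlled by factorizations through fibrations in $\cal{C}$ sitting over $x$. Concretely, given a span $y_0 \xleftarrow{\sim} y \to y_1$ representing a morphism together with a homotopy witnessing commutativity over $\gamma(x)$, one rigidifies the homotopy to an actual factorization through $x$ using axiom \ref{ii} applied to the ambient map $y \to x$. Formalizing this rigidification is precisely the content of \cite[Cor.\ 7.6.13]{Cis19}, which we invoke for the final statement; the preceding steps follow the outline given there, showing that the inherited structure on the slice satisfies the hypotheses of the general theorem.
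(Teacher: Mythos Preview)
The paper does not prove this proposition; it is simply recorded with the attribution \cite[Cor.\ 7.6.13]{Cis19} and used as a black box. Your proposal therefore has nothing to be compared against in the paper itself.

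That said, your sketch is circular as written: after outlining essential surjectivity and setting up the full-faithfulness reduction, you conclude by saying that ``formalizing this rigidification is precisely the content of \cite[Cor.\ 7.6.13]{Cis19}, which we invoke for the final statement.'' But \cite[Cor.\ 7.6.13]{Cis19} \emph{is} the statement you are trying to prove, so invoking it at the crux defeats the purpose. If you intend to give an independent argument, the substantive step you have not supplied is the identification of mapping spaces in $L_w(\cal{C}\downarrow x)$ with fibers of the post-composition map in $L_w(\cal{C})$; this is where Cisinski's machinery (the calculus of fractions of \cite[\S7.2]{Cis19} together with the right-derivability results of \cite[\S7.5]{Cis19}) does real work, and your hammock description would need to be made precise and shown to agree with the slice description on the right. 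If instead you are content to cite the result, then the entire sketch is superfluous and you should simply record the citation, as the paper does.
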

\begin{corollary}\label{cof}
	Let $(\cal{E}, w)$ be a complicial exact category with weak equivalences. For $x \in \cal{E}$ and $\gamma:\cal{E} \to L_w(\cal{E})$ the localisation, the functors
	\[
		\gamma_{x/}:(x \downarrow \cal{E}) \to (\gamma(x) \downarrow L_w(\cal{E})), \quad \gamma_{/x}:(\cal{E} \downarrow x) \to (L_w(\cal{E}) \downarrow \gamma(x))
	\]
	are both final and cofinal.
\end{corollary}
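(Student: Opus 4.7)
The plan is to identify $\gamma_{/x}$ and $\gamma_{x/}$ with Dwyer-Kan localisations (up to equivalence) and then invoke the general fact that such localisations are both cofinal and final.

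By Lemma \ref{fib}, taking the fibrations to be the deflations makes $\cal{E}$ into an $\infty$-category of fibrant objects, so every object is fibrant. Proposition \ref{loc} then supplies a canonical equivalence $L_w(\cal{E} \downarrow x) \xrightarrow{\simeq} (L_w(\cal{E}) \downarrow \gamma(x))$ factoring $\gamma_{/x}$, thereby realising the latter, up to equivalence, as the Dwyer-Kan localisation of $(\cal{E} \downarrow x)$ at its inherited class of weak equivalences. For any Dwyer-Kan localisation $F: \cal{C} \to L_W \cal{C}$ of $\infty$-categories, the universal property supplies natural equivalences
\[
	\Map_{\fun(\cal{C}, \cal{D})}(GF, \text{const}_d) \simeq \Map_{\fun(L_W\cal{C}, \cal{D})}(G, \text{const}_d)
\]
for every $G: L_W\cal{C} \to \cal{D}$ and $d \in \cal{D}$ (since a constant functor automatically inverts $W$); by the Yoneda lemma this gives $\colim_{\cal{C}} GF \simeq \colim_{L_W\cal{C}} G$ whenever either side exists, proving cofinality of $F$. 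Finality follows by the same argument after passing to opposites, using that $(L_W\cal{C})^{\op} \simeq L_{W^{\op}}\cal{C}^{\op}$. Hence $\gamma_{/x}$ is both cofinal and final.

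The corresponding statement for $\gamma_{x/}$ is then obtained by applying the preceding argument to the opposite complicial exact category with weak equivalences $(\cal{E}^{\op}, w^{\op})$, to which Lemma \ref{fib} equally applies. Using the canonical identifications $((\cal{E}^{\op} \downarrow x))^{\op} \simeq (x \downarrow \cal{E})$ and $(((L_w\cal{E})^{\op} \downarrow \gamma^{\op}(x)))^{\op} \simeq (\gamma(x) \downarrow L_w\cal{E})$, combined with the fact that cofinality and finality are interchanged under $(-)^{\op}$, we conclude. I expect the main subtlety to lie in confirming that Proposition \ref{loc} does apply to the inherited weak-equivalences-and-fibrations structure on the slice $(\cal{E} \downarrow x)$ (i.e.\ that the inherited fibrant objects are precisely the deflations $y \epi x$ and that the slice inherits an $\infty$-category with weak equivalences and fibrations structure in the sense of Definition \ref{wef}); once that is verified, the rest of the argument is essentially formal.
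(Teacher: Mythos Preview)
Your proposal is correct and follows essentially the same route as the paper: identify $\gamma_{/x}$ as a Dwyer-Kan localisation via Proposition \ref{loc}, use that any such localisation is both final and cofinal, and then pass to the opposite complicial exact category for $\gamma_{x/}$. The only cosmetic difference is that you spell out the finality/cofinality argument via the universal property and Yoneda, whereas the paper simply cites \cite[Prop.\ 7.1.10]{Cis19}; your worry about the slice inheriting the required structure is already handled in the paragraph preceding Proposition \ref{loc} (via \cite[\S7.6.12]{Cis19}).
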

\begin{proof}
	By Proposition \ref{loc}, $\gamma_{/x}$ is a localisation. The result for $\gamma_{/x}$ then follows since any Dwyer-Kan localisation is both final and cofinal \cite[Prop.\ 7.1.10]{Cis19}. That for $\gamma_{x/}$ follows upon noting that $(\cal{E}\downarrow x)^{\op} \cong (x \downarrow \cal{E}^{\op})$ and that the opposite of a complicial exact category with weak equivalences is a complicial exact category with weak equivalences.
\end{proof}
Write $\sset$ for the ordinary category of simplicial sets. There is a functor
\[
	N\zz[-]:\sset^{\mathrm{fin}} \to \mathscr{C}_{\zz},
\]
sending a finite simplicial set to the normalised chain complex on the associated free simplicial abelian group. $N_n\zz[K]$ identifies with the free abelian subgroup $\zz[K_n^{\mathrm{nd}}]$ on the nondegenerate $n$-simplices of $K$, with differential
\[
	\partial_n : N_n\zz[K] \to N_{n-1}\zz[K], \quad \sigma \mapsto \sum_{0 \le i \le n}\begin{cases}(-1)^id^n_i(\sigma), & d^n_i(\sigma) \text{ nondegenerate,} \\ 0, & \text{ else.}\end{cases}
\]
The canonical isomorphisms $\zz[K \times L] \cong \zz[K] \otimes \zz[L]$ of simplicial abelian groups, and the Alexander-Whitney and Eilenberg-Zilber maps $\Delta_{A, B} : N(A \otimes B) \to N(A) \otimes N(B)$ and $\nabla_{A, B}:N(A) \otimes N(B) \to N(A \otimes B)$ respectively, for simplicial abelian groups $A$ and $B$, render $N\zz[-]$ a lax and oplax monoidal functor of symmetric monoidal categories, where the source is given the cartesian symmetric monoidal structure, and the target the symmetric monoidal structure induced by the tensor product of chain complexes (see for instance \cite[\S29]{May92}). $\Delta_{A, B}$ and $\nabla_{A, B}$ are moreover chain homotopy equivalences: we have a diagram
\[\begin{tikzcd}
	N(A) \otimes N(B) \ar[rd, equal]\ar[r, "\nabla_{A, B}"] & N(A \otimes B) \ar[rd, equal, ""{name=eq}] \ar[d, "\Delta_{A, B}"'] \\
	& |[alias=nanb]| N(A) \otimes N(B) \ar[r, "\nabla_{A, B}"'] & N(A \otimes B),
	\ar[Rightarrow,from=nanb, to=eq,shorten >=5mm,shorten <=.025mm]
\end{tikzcd}\]
where the left triangle commutes on the nose, and the right up to a specified chain homotopy \cite[Cor.\ 29.10]{May92}. Given a finite simplicial set $K$, the composite
\[
	N(\zz[K]) \xto{N(\delta_{K, K})} N(\zz[K] \otimes \zz[K]) \xto{\nabla_{K, K}} N(\zz[K]) \otimes N(\zz[K])
\]
equips $N(\zz[K])$ with a cocommutative counital coalgebra structure, where we abuse notation in writing $\delta_{K, K} := \delta_{\zz[K], \zz[K]}$ for the diagonal map in simplicial abelian groups, etc. The counit
\[
	N(\zz[K]) \to N(\zz[\Delta^0]) = \zz[0]
\]
is induced by the unique map $K \to \Delta^0$ of simplicial sets. For $\Delta^\bullet$ the standard cosimplicial simplicial set, $N(\zz[\Delta^\bullet])$ is by naturality a cosimplicial counital cocommutative coalgebra.\\
Fix for the rest of this section a complicial exact category with weak equivalences and strong duality $(\cal{E}, w, D, \eta)$, and write $w_{\mathrm{Frob}} \subset w$ for the subcategory of Frobenius weak equivalences. For a finite simplicial set $K$ and $x \in \cal{E}$, we write $Kx := N\zz[K] \otimes x$.
\begin{construction}\label{qdb}
	Given a complicial exact category $(\cal{E}, \otimes)$, a presheaf $F$ of abelian groups on $\cal{E}$, write $F^{\Delta^\bullet}:\cal{E}^{\op} \to \sab$ for the presheaf of simplicial abelian groups
	\[
		F^{\Delta^\bullet}(x) := F(\Delta^\bullet x).
	\]
    $F^{\Delta^\bullet}$ receives a natural transformation from $F$ induced by the unique map of cosimplicial simplicial sets $\Delta^\bullet \to \Delta^0$.
\end{construction}
\begin{proposition}\label{ftil}
	The functor
		\[
			F^{\Delta^\bullet}:\cal{E}^{\op} \to \sab
		\]
		sends Frobenius homotopies to simplicial homotopies.
\end{proposition}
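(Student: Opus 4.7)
The strategy is to convert the Frobenius homotopy in $\cal{E}$ into an explicit simplicial homotopy of the underlying simplicial abelian groups, exploiting the cosimplicial cocommutative counital coalgebra structure on $N\zz[\Delta^\bullet]$ just recorded. Concretely, suppose $f, g: x \to y$ are Frobenius homotopic in $\cal{E}$, which (modulo the translation furnished by Appendix \ref{appa}) I take to mean that there is a map $H: N\zz[\Delta^1] \otimes x \to y$ satisfying $H \circ (i_\epsilon \otimes 1_x) = f$ and $g$ for $\epsilon = 0, 1$, where $i_0, i_1: \mathbb{1} \to N\zz[\Delta^1]$ are the vertex inclusions. For each $n \ge 0$ and each $\sigma \in \Delta^1_n$, viewed as a map $\sigma: \Delta^n \to \Delta^1$ of simplicial sets, define the composite
\[
T_\sigma: N\zz[\Delta^n] \otimes x \xto{\mu_n \otimes 1} N\zz[\Delta^n]^{\otimes 2} \otimes x \xto{(1 \otimes N\zz[\sigma]) \otimes 1} N\zz[\Delta^n] \otimes N\zz[\Delta^1] \otimes x \xto{1 \otimes H} N\zz[\Delta^n] \otimes y,
\]
where $\mu_n$ is the comultiplication on the coalgebra $N\zz[\Delta^n]$. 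Applying $F$ and extending $\zz$-linearly in the basis $\Delta^1_n$ assembles the maps $F(T_\sigma)$ into a candidate simplicial homotopy $\tilde{H}: F^{\Delta^\bullet}(y) \otimes \zz[\Delta^1] \to F^{\Delta^\bullet}(x)$ in $\sab$.

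The remaining verifications are formal. Simplicial naturality of $\tilde{H}$ in the cosimplicial variable follows from naturality of $\mu_n$ in $[n]$ (a direct consequence of its construction from the diagonals and the Alexander-Whitney maps), together with functoriality of $N\zz[-]$ and of $F$; naturality in $\Delta^1_n$ is immediate from the functoriality of $N\zz[\sigma]$ in $\sigma$. The endpoint conditions confirming that $\tilde{H}$ is a homotopy between $F^{\Delta^\bullet}(f)$ and $F^{\Delta^\bullet}(g)$ fall out of the counit axiom: when $\sigma \in \Delta^1_n$ is the constant simplex at $\epsilon \in \{0, 1\}$, $N\zz[\sigma]$ factors as $N\zz[\Delta^n] \xto{\varepsilon_n} \mathbb{1} \xto{i_\epsilon} N\zz[\Delta^1]$, and the counit-comultiplication identity $(1 \otimes \varepsilon_n) \circ \mu_n = 1_{N\zz[\Delta^n]}$ collapses $T_\sigma$ to $1_{N\zz[\Delta^n]} \otimes f$ (resp.\ $1_{N\zz[\Delta^n]} \otimes g$), so that $F(T_\sigma)$ equals $F^{\Delta^\bullet}(f)_n$ (resp.\ $F^{\Delta^\bullet}(g)_n$).

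The principal obstacle is bookkeeping: one must track tensor-factor permutations carefully and repeatedly invoke the coalgebra coherences of $N\zz[\Delta^\bullet]$, though no step is substantively more involved than the verification just sketched. A preliminary to dispatch first is that every Frobenius homotopy in $\cal{E}$ can indeed be represented by a map of the form $H: N\zz[\Delta^1] \otimes x \to y$; by Appendix \ref{appa}, this reduces to the comparison between homotopies defined via the chain-homotopy cylinder $N\zz[\Delta^1]$ and via the Frobenius-contractible cone $C$.
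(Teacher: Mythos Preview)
Your proof is correct and uses the same essential mechanism as the paper: the cosimplicial coalgebra structure on $N\zz[\Delta^\bullet]$, with the comultiplication producing the homotopy and the counit axiom verifying the endpoints. The paper organises the argument slightly differently, first proving the universal statement that $G(d^0), G(d^1): G(\Delta^1 x) \rightrightarrows G(x)$ are simplicially homotopic (for $G = F^{\Delta^\bullet}$) and then specialising along $G(H)$, whereas you build the homotopy directly from the given $H$; but unwinding either construction yields exactly your maps $T_\sigma$.
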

\begin{proof}
	Write $G:= F^{\Delta^\bullet}$. We claim for each $x$ that the composites
	\[\begin{tikzcd}
		G(\Delta^1x) \ar[r, shift left=.6ex, "G(d^1)"] \ar[r, shift right=.6ex, "G(d^0)"'] & G(x) \ar[r, "p"] & G(\Delta^\bullet x),
	\end{tikzcd}\]
	are simplicially homotopic, where $p$ is induced upon taking diagonals by the map $\Delta^\bullet \to \Delta^0$. We have maps
	\[
		\Hom_{\sab}(\zz[\Delta^\bullet], \zz[\Delta^1]) \otimes G(\Delta^1x) \to G(\Delta^\bullet x), \quad f \otimes \xi \mapsto G(f \otimes 1)(\xi),
	\]
	and
	\[
		\Delta^1 \to \Hom_{\sab}(\zz[\Delta^\bullet], \zz[\Delta^1])
	\]
	with the latter the image under Yoneda of $1_{\zz[\Delta^1]} \in \Hom_{\sab}(\zz[\Delta^1], \zz[\Delta^1])$. The composite
	\[
		\zz[\Delta^1] \otimes G(\Delta^1x) \to G(\Delta^\bullet x)
	\]
	then gives the desired homotopy. The comultiplication $\nabla : \Delta^\bullet \to \Delta^\bullet \otimes \Delta^\bullet$ induces a map of simplicial abelian groups
	\[
		G(\Delta^\bullet x) = F((\Delta^\bullet \otimes \Delta^\bullet) x) \xto{F(\nabla \otimes 1)} G(x),
	\]
	and the composite
	\[
		\Delta^1 \otimes G(\Delta^1x) \to G(\Delta^\bullet x) \to G(x)
	\]
	is a homotopy between $G(d^1)$ and $G(d^0)$. Given then a Frobenius homotopy witnessed by the diagram
	\[\begin{tikzcd}
		x \ar[d, "d^1"] \ar[rd, "f"] \\
		\Delta^1x \ar[r, "H"] & y. \\
		x \ar[u, "d^0"'] \ar[ru, "g"']
	\end{tikzcd}\]
	we see that $G(f) = G(Hd^1) \sim G(Hd^0) = G(g)$.
\end{proof}
Write $\mathrm{Ch}(\zz)$ for the abelian category of (unbounded) complexes of abelian groups. By for instance \cite[Prop.\ 1.3.5.3]{HA}, $\mathrm{Ch}(\zz)$ admits a left proper cofibrantly generated combinatorial model structure with cofibrations the levelwise monomorphisms, and weak equivalences the quasi-isomorphisms, and we write $\mathrm{D}(\zz)$ for the underlying derived $\infty$-category, obtained for instance as the dg-nerve of the full subcategory $\mathrm{Ch}(\zz)^\circ \subset \mathrm{Ch}(\zz)$ of fibrant-cofibrant objects. $\mathrm{D}(\zz)$ is stable, presentable, and carries a natural $t$-structure with connectives (coconnectives) those complexes with vanishing homology in negative (positive) degrees; this $t$-structure is moreover accessible, right-complete, and such that $\mathrm{D}_{\le 0}(\zz)$ is stable under small filtered colimits by \cite[Prop.\ 1.3.5.21]{HA}. We may identify $\mathrm{D}_{\ge 0}(\zz)$ with the animation of $\Ab$, i.e.\ with the $\infty$-category $\fun^\Pi(\mathrm{Free}^\mathrm{fg}(\zz)^{\op}, \cal{S})$ of additive presheaves of spaces on the category of compact projective (i.e.\ finitely generated free) abelian groups, which coincides with the localisation of the category $\sab$ of simplicial abelian groups at the weak equivalences \cite[Ex.\ 5.1.6(b)]{CS24}, and also with $\infty$-category of connective $\hz$-modules $\whzmod$ \cite[Th.\ 1.1]{Shi07}.\\
The presheaf $F^{\Delta^\bullet}$ thus descends to a presheaf on the localisation $L_{\mathrm{Frob}}(\cal{E})$, taking values in $\mathrm{D}_{\ge 0}(\zz)$, which we also denote $F^{\Delta^\bullet}$. For $x \in \cal{E}$, write $J_{x, \mathrm{Frob}} \subset (\cal{E}\downarrow x)$ for the full subcategory spanned by the trivial Frobenius deflations over $x$. There is a functor $\iota_x:\bbDelta \to J_{x, \mathrm{Frob}}$, sending $[n]$ to the codegeneracy map
\[
	\Delta^nx \stackrel{\sim}\twoheadrightarrow x,
\]
and an arrow $[n] \xto{\theta} [m]$ in $\bbDelta$ to
\[
	(\Delta^nx \stackrel{\sim}\twoheadrightarrow x) \xto{\theta_* \otimes 1} (\Delta^mx \stackrel{\sim}\twoheadrightarrow x).
\]
\begin{proposition}\label{hoco}
	$\iota^{\op}_x$ is cofinal; in particular for each $x \in \cal{E}$, there is a natural equivalence
	\[
		F^{\Delta^\bullet}(\gamma(x)) \simeq \underset{[n] \in \bbDelta^{\op}}\colim\ F(\Delta^nx)\to \underset{J_{x, \mathrm{Frob}}^{\op}}\colim\ F
	\]
 in $\mathrm{D}_{\ge 0}(\zz)$.
\end{proposition}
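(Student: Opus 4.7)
The plan is to establish cofinality of $\iota_x^{\op}$; the claimed equivalence will then follow since $F^{\Delta^\bullet}(\gamma(x)) = F(\Delta^\bullet x)$ identifies with $\colim_{[n] \in \bbDelta^{\op}} F(\Delta^n x)$ in $\whzmod \simeq \mathrm{D}_{\ge 0}(\zz)$ via Dold-Kan. First I would invoke the $\infty$-version of Quillen's Theorem A (\cite[Thm.\ 4.1.3.1]{HTT}), reducing cofinality of $\iota_x^{\op}$ to the weak contractibility of the slice $\bbDelta^{\op} \times_{J_{x, \mathrm{Frob}}^{\op}} (J_{x, \mathrm{Frob}}^{\op})_{(y \twoheadrightarrow x)/}$ for each $(y \twoheadrightarrow x) \in J_{x, \mathrm{Frob}}$. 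Unwinding definitions identifies this slice with the opposite of the category of simplices $\bbDelta/L$ of the simplicial set $L$ given by
\[L_n := \{\alpha : \Delta^n x \to y \mid f \alpha = \pi_n\};\]
by the last-vertex map theorem $N(\bbDelta/L) \simeq L$, so the problem reduces to contractibility of $|L|$.

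Next, I observe that $L$ is the strict fibre over the $0$-simplex $\mathrm{id}_x$ of the homomorphism
\[
f_* : \mathrm{Hom}_{\cal{E}}(\Delta^\bullet x, y) \to \mathrm{Hom}_{\cal{E}}(\Delta^\bullet x, x)
\]
of simplicial abelian groups (the additivity of $\cal{E}$ supplying the abelian structure). Running the argument of Proposition \ref{ftil} dually in the second variable --- once more exploiting the cocommutative coalgebra structure on $N\zz[\Delta^\bullet]$ and the bi-exact $\mathscr{C}_\zz$-action --- one sees that $\mathrm{Hom}_{\cal{E}}(\Delta^\bullet x, -): \cal{E} \to \sab$ sends Frobenius homotopies to simplicial homotopies. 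Since $f$ is a Frobenius equivalence with Frobenius contractible kernel, it follows that $f_*$ is a simplicial homotopy equivalence and that $\ker f_* = \mathrm{Hom}_{\cal{E}}(\Delta^\bullet x, \ker f)$ is a contractible simplicial abelian group.

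The hard part is upgrading this weak equivalence of simplicial abelian groups to contractibility of the strict fibre $L$; the obstruction being that $f_*$ need not be a Kan fibration (equivalently, degreewise surjective), so its strict and homotopy fibres can differ. To circumvent this, I would exhibit a strict section $s: x \to y$ of $f$, invoking the fact that any conflation with Frobenius contractible kernel splits in a complicial exact category --- a splitting property amounting to vanishing of the relevant Yoneda $\mathrm{Ext}^1_{\cal{E}}$ group, and verifiable directly from the cone/path object data recorded in Appendix \ref{excat}. Given such a section, the assignment $\alpha \mapsto \alpha - s \circ \pi_n$ assembles into a simplicial isomorphism $L \cong \mathrm{Hom}_{\cal{E}}(\Delta^\bullet x, \ker f)$, yielding contractibility of $L$, and hence cofinality of $\iota_x^{\op}$.
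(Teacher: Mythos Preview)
Your proposal is correct and follows essentially the same route as the paper: reduce via Quillen's Theorem A to contractibility of the relevant slice, identify that slice with the category of simplices of the simplicial set $L = \mathrm{Sec}(p)$ of lifts of the collapse maps $\pi_n$, and then use that $p$ admits a strict section (since $\ker p$ is Frobenius injective-projective) to conclude $L$ is contractible. The only cosmetic difference is in this last step: the paper observes that the section makes $p_*$ a degreewise split surjection of simplicial abelian groups, hence a trivial Kan fibration, so the strict pullback $L$ over $1_x$ is contractible (this is recorded as Lemma \ref{ssec}); you instead use the section to translate $L$ isomorphically onto $\ker p_* = \Map_\Delta(x, \ker p)$ and invoke Frobenius contractibility of $\ker p$. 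Both arguments are equivalent, and in particular your worry that $f_*$ ``need not be a Kan fibration'' dissolves once the section is in hand --- a split surjection of simplicial abelian groups is automatically a Kan fibration --- so your translation manoeuvre, while perfectly valid, is not strictly needed.
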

\begin{proof}
	It suffices by Quillen's Theorem A (\cite{Qui73}, \cite[Th.\ 4.1.3.1]{HTT}) to show that the slice categories
	\[
		((y, p) \downarrow \iota_x^{\op}) \simeq (\iota_x \downarrow (y, p))^{\op}
	\]
	have weakly contractible nerves for each trivial fibration $p:y \stackrel{\sim}\twoheadrightarrow x$ in $\cal{E}$. Objects of $(\iota_x \downarrow (y, p))^{\op}$ are pairs $([n] \in \bbDelta, \gamma: \Delta^nx \to y)$, $\gamma$ a map over $x$, with morphisms
	\[
		([n], \gamma) \to ([m], \gamma')
	\]
	arrows $\theta:[m] \to [n]$ in $\bbDelta$ rendering the diagram
	\begin{equation}\label{secn}\begin{tikzcd}
		& y \ar[dd, "p", twoheadrightarrow, "\rotatebox{270}{$\sim$}"', near start] \\
		\Delta^mx \ar[rd, twoheadrightarrow, "\rotatebox{325}{$\sim$}"'] \ar[ru, "\gamma'"]  && \Delta^nx \ar[lu, "\gamma"'] \ar[ld, twoheadrightarrow, "\rotatebox{35}{$\sim$}"] \ar[from=ll, "\theta_* \otimes 1", near start, crossing over] \\
		& x
	\end{tikzcd}\end{equation}
	commutative in $\cal{E}$. Since $p$ is a trivial Frobenius deflation, its kernel is contractible, hence injective-projective for the Frobenius exact structure on $\cal{E}$. The conflation
	\[
		\ker(p) \mono y \epi x
	\]
	then splits, and $p$ admits a section; the space $\mathrm{Sec}(p)$ of such sections is a contractible Kan complex by Lemma \ref{ssec}.\\
	Recall that the category of simplices of $\mathrm{Sec}(p)$ is the Grothendieck construction of the associated functor
	\[
		\mathrm{Sec}(p):\bbDelta^{\op} \to \set \subset \cat, \quad [n] \mapsto \mathrm{Sec}_n(p) \subset \Hom_{\cal{E}}(\Delta^nx, y)
	\]
	with objects pairs $([n], \xi \in \mathrm{Sec}_n(p))$, where an object of $\mathrm{Sec}_n(p)$ is a map $\Delta^nx \to y$ whose composition with $p$ is the collapse map $\Delta^nx \stackrel{\sim}\twoheadrightarrow x$. Maps $([n], \gamma) \to ([m], \gamma')$ are maps $\theta:[m] \to [n]$ in $\bbDelta$ such that the diagram (\ref{secn}) commutes, from which we see that there is a canonical identification of $\int_{\bbDelta^{\op}}\mathrm{Sec}(p)$ with $(\bbDelta \downarrow (y, p))^{\op}$. By Thomason's theorem (\cite[Th.\ 1.2]{Tho79}, \cite[Cor.\ 3.3.4.6]{HTT}), it follows that
	\[
		N((\bbDelta \downarrow (y, p))^{\op}) = N\left(\int_{\bbDelta^{\op}}\mathrm{Sec}(p)\right) \simeq \underset{[n] \in \bbDelta^{\op}}\colim\ \mathrm{Sec}_n(p) \simeq \mathrm{Sec}(p) \simeq \Delta^0.
	\]
\end{proof}
\begin{corollary}\label{LKE}
	$F^{\Delta^\bullet}$ is the left Kan extension of $F$ along (the opposite of) the localisation
	\[
		\gamma:\cal{E} \to L_{\mathrm{Frob}}(\cal{E}).
	\]
\end{corollary}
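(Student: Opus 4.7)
The plan is to exhibit the canonical comparison map $\gamma_!^{\op} F \to F^{\Delta^\bullet}$ supplied by the Kan extension adjunction and verify it is an equivalence pointwise. By Proposition \ref{ftil}, $F^{\Delta^\bullet}:\cal{E}^{\op} \to \mathrm{D}_{\ge 0}(\zz)$ inverts Frobenius weak equivalences and hence factors essentially uniquely through $\gamma^{\op}$ as a functor on $L_{\mathrm{Frob}}(\cal{E})^{\op}$. The canonical natural transformation $F \to F^{\Delta^\bullet}$ of Construction \ref{qdb}, induced by $\Delta^\bullet \to \Delta^0$, then adjoints to the desired comparison map by the universal property of the left Kan extension.

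To check pointwise equivalence at $\gamma(x)$, I would identify the target via Proposition \ref{hoco} as $\colim_{J_{x, \mathrm{Frob}}^{\op}} F$, and identify the source via the pointwise formula as $\colim_{\gamma^{\op} \downarrow \gamma(x)} F$, a colimit of $F$ over pairs $(y \in \cal{E}, \alpha: \gamma(x) \to \gamma(y))$ in $L_{\mathrm{Frob}}(\cal{E})$. The comparison thus reduces to showing that the canonical functor
\[
    J_{x, \mathrm{Frob}}^{\op} \longrightarrow \gamma^{\op} \downarrow \gamma(x), \qquad (p: y \twoheadrightarrow x) \longmapsto (y, \gamma(p)^{-1})
\]
is cofinal, or equivalently (using the cofinality of $\iota_x^{\op}$ from the proof of Proposition \ref{hoco}) that the composite $\bbDelta^{\op} \to J_{x, \mathrm{Frob}}^{\op} \to \gamma^{\op} \downarrow \gamma(x)$ induced by the cosimplicial cylinder $\Delta^\bullet x \twoheadrightarrow x$ is cofinal.

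The main obstacle is this cofinality check. I would attack it via Quillen's Theorem A, arguing that for each object $(y, \alpha) \in \gamma^{\op} \downarrow \gamma(x)$ the corresponding slice has weakly contractible nerve. Morally this encodes the calculus of fractions in an $\infty$-category of fibrant objects (which $\cal{E}$ is, by Lemma \ref{fib}): any morphism $\alpha$ in the localisation $L_{\mathrm{Frob}}(\cal{E})$ should admit a contractible space of simplicial roof representatives $\gamma(x) \stackrel{\sim}{\twoheadleftarrow} \gamma(\Delta^n x) \to \gamma(y)$ through the cosimplicial cylinder, and the argument should proceed by a path-object construction closely paralleling the sections-of-a-trivial-deflation contractibility in the proof of Proposition \ref{hoco}, now applied to the universal roof data rather than to bare sections.
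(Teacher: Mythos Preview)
Your reduction is correct: once $F^{\Delta^\bullet}$ descends to $L_{\mathrm{Frob}}(\cal{E})^{\op}$ and Proposition \ref{hoco} identifies its value at $\gamma(x)$ with $\colim_{J_{x,\mathrm{Frob}}^{\op}} F$, the statement does reduce to cofinality of $J_{x,\mathrm{Frob}}^{\op} \to (\gamma^{\op} \downarrow \gamma(x))$. But this cofinality is precisely the hard part, and you have left it as a gesture. The target slice is a genuinely $\infty$-categorical object: checking Quillen A at a point $(y,\alpha)$ requires controlling the homotopy type of the space of lifts of $\alpha \in \Map_{L_{\mathrm{Frob}}(\cal{E})}(\gamma(x),\gamma(y))$ through trivial deflations, and you do not yet have a model for these mapping spaces (indeed, the identification $L_{\mathrm{Frob}}(\cal{E}) \simeq N_\Delta(\cal{E}_\Delta)$ in the paper is established \emph{after} this corollary, partly using it). The parallel with Proposition \ref{hoco} is imperfect: there the relevant slice was a 1-category identifiable with a category of simplices, whereas here the slice involves derived mapping data.

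The paper avoids this by invoking Cisinski's calculus-of-fractions machinery. One checks two purely 1-categorical facts: the class $v_{\mathrm{Frob}}$ of trivial Frobenius deflations is closed under composition and pullback, and its saturation contains all of $w_{\mathrm{Frob}}$ (via Brown's lemma, using the factorisation of Lemma \ref{fib}). These hypotheses feed into \cite[Th.\ 7.2.16, Cor.\ 7.2.18, Cor.\ 7.2.9]{Cis19}, which assert that $J_{x,\mathrm{Frob}}$ is a right calculus of fractions at $x$ and that the left Kan extension along $\gamma^{\op}$ is therefore computed by $\colim_{J_{x,\mathrm{Frob}}^{\op}}$. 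In effect, Cisinski's theorem packages exactly the cofinality you want as a consequence of elementary conditions on $\cal{E}$; your sketch is missing either this citation or an independent proof of it.
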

\begin{proof}
	Recall (Lemma \ref{fib}) that $(\cal{E}, \mathrm{Fib}, w_{\mathrm{Frob}})$ is an $\infty$-category of fibrant objects. Set $v_{\mathrm{Frob}} \subset w_{\mathrm{Frob}}$ the subcategory of trivial Frobenius deflations, and note that $v_{\mathrm{Frob}}$ is closed under composition and pullbacks in the sense of \cite[Def.\ 7.2.14]{Cis19}. Note moreover that the saturation $\overline{v_{\mathrm{Frob}}}$ of $v_{\mathrm{Frob}}$, defined by the cartesian square
	\[\begin{tikzcd}
		\overline{v_{\mathrm{Frob}}} \ar[r, hookrightarrow] \ar[d] & \cal{E} \ar[d, "\gamma"] \\
		L_{v_{\mathrm{Frob}}}(\cal{E})^{\simeq} \ar[r, hookrightarrow] & L_{v_{\mathrm{Frob}}}\cal{E},
	\end{tikzcd}\]
	contains $w_{\mathrm{Frob}}$: by Brown's lemma \cite[Prop.\ 7.4.13]{Cis19}, any map $f:x \to y$ in $\cal{E}$ admits a factorisation
	\[\begin{tikzcd}
		x \ar[rd, "f"] \ar[r, "j"] & z \ar[d, "g", twoheadrightarrow] \\
		& y,
	\end{tikzcd}\]
	where $j$ admits a retraction which is a trivial fibration, and $g$ is a fibration. If $f$ is a Frobenius equivalence, so are $j$ and $g$, by 2-of-3. Accordingly, if $F:\cal{E}^{\op} \to \mathrm{D}_{\ge 0}(\zz)$ is a functor inverting maps in $v_{\mathrm{Frob}}$, it also inverts maps in $w_{\mathrm{Frob}}$. We then see from \cite[Th.\ 7.2.16, Cor.\ 7.2.18]{Cis19} that $J_{x, \mathrm{Frob}}$ is a right calculus of fractions at $x$ in the sense of Cisinski, and accordingly by \cite[Cor.\ 7.2.9]{Cis19} the left Kan extension of $F:\cal{E}^{\op} \to \sab \to \mathrm{D}_{\ge 0}(\zz)$ along the localisation $\gamma$ satisfies
\[
	\gamma_!F(\gamma(x)) \simeq \underset{J_{x, \mathrm{Frob}}^{\op}}\colim\ F \simeq F^{\Delta^\bullet}(\gamma(x)).
\]
\end{proof}
\begin{remark}
	Recall from Appendix \ref{excat} that any complicial exact category $\cal{E}$ is canonically enriched over simplicial abelian groups via
	\[
		\Map_\Delta(x, y) := \Hom_{\cal{E}}(\Delta^\bullet x, y),
	\]
	with homotopy coherent nerve a model for the localisation $L_{\mathrm{Frob}}(\cal{E})$ at the Frobenius equivalences. Write $\cal{E}_\Delta$ for $\sab$-enriched category thus obtained. Now $\sab$ is canonically closed symmetric monoidal with internal mapping complexes  $\underline{\Hom}(A, B) := \Hom_{\sab}(\zz[\Delta^\bullet] \otimes A, B)$. The functor $F^{\Delta^\bullet}$ canonically enhances to an enriched functor
	\[
		F^{\Delta^\bullet}:\cal{E}_\Delta^{\op} \to \sab
	\]
	via the map
	\[
		\Map_\Delta(x, y) \to \underline{\Hom}(F^{\Delta^\bullet}(y), F^{\Delta^\bullet}(x))
	\]
	given on $n$-simplices by
	\begin{align*}
		\Hom_{\sab}(\zz[\Delta^n], \Map_\Delta(x, y)) & \to \Hom_{\sab}(\zz[\Delta^n] \otimes F^{\Delta^\bullet}(y), F^{\Delta^\bullet}(x)), \\
		\theta & \mapsto (\sigma \otimes \xi \mapsto Q((1 \otimes \theta(\sigma))\circ \nabla_x^n)(\xi).
	\end{align*}
	Compatibility with composition and unit follow ultimately from coassociativity of the coalgebra structure on $\Delta^n$. Taking homotopy-coherent nerves, we obtain a model for the induced presheaf on localisation $L_{\mathrm{Frob}}(\cal{E})$.
\end{remark}
\subsection{The underlying hermitian $\infty$-category of a complicial exact form category}\label{dadd}
Given a complicial exact form category with weak equivalences $(\cal{E}, Q, \D, \eta, w)$, we may apply the construction of the previous section to obtain a model $\qdb:\cal{E}^{\op} \to \sab$ for the right derived functor of $Q$ along the localisation $\cal{E} \to L_{\mathrm{Frob}}(\cal{E})$. Note that for each $n\ge 0$ the $C_2$-Mackey functors
\[
	\Hom_{\cal{E}}(\Delta^nx, \D(\Delta^nx)) \xto{\tau_{\Delta^nx}} Q(\Delta^nx) \xto{\rho_{\Delta^nx}} \Hom_{\cal{E}}(\Delta^nx, \D(\Delta^nx))
\]
of Definition \ref{addq}\ref{c2mack} assemble into $C_2$-equivariant maps of simplicial abelian groups
\[
	\Hom_{\cal{E}}(\Delta^\bullet x, \D(\Delta^\bullet x)) \xto{\tau_{\Delta^\bullet x}} \qdb(x) \xto{\rho_{\Delta^\bullet x}} \Hom_{\cal{E}}(\Delta^\bullet x, \D(\Delta^\bullet x)),
\]
where $C_2$ acts levelwise, so in particular
\[
	\Hom_{\cal{E}}(\Delta^\bullet x, \D(\Delta^\bullet x))^{\fct} = \left([n] \mapsto \Hom_{\cal{E}}(\Delta^nx, \D(\Delta^nx))^{\fct}\right),
\]
and similarly for $C_2$-orbits.
\begin{remark}\label{poi}
	Consider the subfunctor $Q_\mathrm{nd}$ of the restriction
	\[
		w_\mathrm{Frob}\cal{E}^{\op} \to \Ab \to \set
	\]
	with $Q_\mathrm{nd}(x) := \{\xi \in Q(x) \mid \rho_x(\xi) \in w_\mathrm{Frob}\cal{E}\}$. That this assignment is functorial in weak equivalences follows from the commutativity of
	\[\begin{tikzcd}
		Q(x) \ar[r, "\rho_x"] \ar[d, "f^\bullet"] & \Hom_{\cal{E}}(x, \D(x)) \ar[d, "\D(f)_*f^*"] \\
		Q(y) \ar[r, "\rho_y"] & \Hom_{\cal{E}}(y, \D(y))
	\end{tikzcd}\]
	for each map $f:y \to x$, and that $\D$ preserves Frobenius equivalences. The proof of Proposition \ref{hoco} then implies that the map of spaces
	\[
		\underset{J_{x, \mathrm{Frob}}^{\op}}\colim\ Q_\mathrm{nd} \to Q_\mathrm{nd}^{\Delta^\bullet}(x) = Q_\mathrm{nd}(\Delta^\bullet x)
	\]
	is a weak equivalence.
\end{remark}
In the remainder of this section we prove that $\qdb$ gives a model for (the connective cover of) the 2-excisive (right) derived functor of $Q$ on the stable $\infty$-category $L_{\mathrm{Frob}}(\cal{E})$. Recall that for $\cal{C}$ and $\cal{D}$ $\infty$-categories with finite colimits and limits respectively, a functor $\Qoppa:\cal{C} \to \cal{D}$ is said to be 2-excisive if for each strongly cocartesian cube $C:N\mathcal{P}([2]) \to \cal{C}$, the cube $\Qoppa(C)$ in $\cal{D}$ is a limit diagram; see Appendix \ref{n-exc} for details. The equivalent formulation of 2-excisivity for reduced spectrum-valued functors in the stable setting in \cite[Prop.\ 1.1.13]{CD23a} is for our purposes more wieldy: for $\cal{C}$ a stable $\infty$-category, a reduced functor $\Qoppa:\cal{C}^{\op} \to \Sp$ is 2-excisive if and only if the functors $B_\Qoppa$ and $\Lambda_\Qoppa$ informally given by
\[
	B_\Qoppa : \cal{C}^{\op} \times \cal{C}^{\op} \to \Sp, \quad (x, y) \mapsto \fib\left(\Qoppa(x \oplus y) \to \Qoppa(x) \oplus \Qoppa(y)\right)
\]
and
\[
	\Lambda_\Qoppa : \cal{C}^{\op} \to \Sp, \quad x \mapsto \fib\left(\Qoppa(x) \to B_\Qoppa(x, x)^{\hct}\right)
\]
are respectively bilinear (also called $(1,1)$-excisive) and $1$-excisive, i.e.\ preserve fibre-cofibre sequences (in each variable). The same is true for functors taking values in connective spectra (or connective $\hz$-modules): suppose given $\Qoppa:\cal{C}^{\op} \to \Sp_{\ge 0}$ sitting in a fibre sequence
\[
	\Lambda_\Qoppa \to \Qoppa \to \left[\Delta^*B_\Qoppa\right]^{\hct}
\]
with $\Lambda_\Qoppa$ 1-excisive and $B_\Qoppa$ bilinear. By \cite[Cor.\ 6.1.1.14, 6.1.3.5]{HA}, 1-excisive functors and diagonals of bilinear functors are 2-excisive, and the subcategory of 2-excisive functors is closed under limits and in particular homotopy fixed points. Given then a strongly cocartesian cube $C:N(\mathcal{P}([2])) \to \cal{C}$, there is a fibre sequence of total fibres
\[
	\mathrm{fibt}\left(\Lambda_{\Qoppa}(C)\right) \to \mathrm{fibt}\left(\Qoppa(C)\right) \to \mathrm{fibt}\left(\Delta^*(B_{\Qoppa})^{\hct}(C)\right),
\]
for which the first and last terms, and hence the second, are trivial by 2-excisivity, i.e.\ $\Qoppa(C)$ is a limit diagram in $\Sp_{\ge 0}$, and $\Qoppa$ is 2-excisive. The converse is the same as in \cite[Prop.\ 1.1.13]{CD23a}, and since we do not need this, we omit the details.\\
We first consider the 1-categorical analogue of the linear part $\Lambda_\Qoppa$. For $(\cal{E}, Q, \D, \eta, w)$ a complicial exact form category with weak equivalences, set
\[
	L_Q : \cal{E}^{\op} \to \Ab, \quad x \mapsto \ker\left(Q(x) \xto{\rho_x} \Hom_{\cal{E}}(x, \D(x))^{\fct}\right).
\]
\begin{lemma}\label{llex}
	$L_Q$ is left exact, i.e.\ sends each conflation $x \stackrel{i}\mono y \stackrel{p}\epi z$ to a left-exact sequence of abelian groups
	\[
		0 \to L_Q(z) \xto{L_Q(p)} L_Q(y) \xto{L_Q(i)} L_Q(x).
	\]
\end{lemma}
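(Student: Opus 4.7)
The strategy is to deduce left-exactness of $L_Q$ directly from the quadratic left-exactness of $Q$ (condition (iii) of Definition \ref{addq}), using the naturality of $\rho$ and the fact that the duality $\D$ takes deflations to inflations.

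First, injectivity of $L_Q(p) : L_Q(z) \to L_Q(y)$ is immediate: $L_Q$ is by definition a subfunctor of $Q$, and $p^\bullet : Q(z) \to Q(y)$ is already injective by quadratic left-exactness of $Q$.

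For exactness at $L_Q(y)$, I take $\xi \in L_Q(y)$ with $i^\bullet \xi = 0$ in $Q(x)$, and seek $\zeta \in L_Q(z)$ with $p^\bullet \zeta = \xi$. Since $\xi \in L_Q(y)$, one has $\rho_y(\xi) = 0$, and hence $\D(i)_* \rho_y(\xi) = 0$ as well. The pair $(i^\bullet \xi, \D(i)_* \rho_y(\xi))$ thus vanishes in $Q(x) \oplus \Hom_{\cal{E}}(y, \D(x))$, so by the quadratic left-exactness of $Q$, there exists a (necessarily unique) $\zeta \in Q(z)$ with $p^\bullet \zeta = \xi$.

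It remains to show $\zeta \in L_Q(z)$, i.e.\ that $\rho_z(\zeta) = 0$. By naturality of $\rho$ applied to $p : y \to z$, we have
\[
0 = \rho_y(\xi) = \rho_y(p^\bullet \zeta) = \D(p) \circ \rho_z(\zeta) \circ p
\]
as a morphism $y \to \D(y)$. Since $p$ is a deflation, it is an epimorphism in $\cal{E}$, so we may cancel $p$ on the right to conclude $\D(p) \circ \rho_z(\zeta) = 0$. Since $\D : \cal{E}^{\op} \to \cal{E}$ is exact, it carries the deflation $p$ to the inflation $\D(p) : \D(z) \mono \D(y)$, which is in particular a monomorphism; cancelling $\D(p)$ on the left then yields $\rho_z(\zeta) = 0$, as required.

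The argument is essentially formal once quadratic left-exactness is in hand; the only mild subtlety is ensuring that the cancellation works in both directions, which reduces to the observation that deflations and inflations are respectively epimorphisms and monomorphisms in the underlying category, together with exactness of $\D$.
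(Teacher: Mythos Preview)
Your proof is correct and is essentially the same argument as the paper's, just carried out by hand rather than packaged diagrammatically. The paper organises the chase into a three-row diagram with rows $L_Q$, $Q$, and $\Hom_{\cal{E}}(-,\D(-))^{\fct}$ (using that the symmetric forms functor is itself quadratic left exact and that $\rho$ is an isomorphism on polarisations), and then reads off left-exactness of the top row from that of the lower two; your cancellation of $p$ (epi) and $\D(p)$ (mono) is precisely the injectivity of the first map in the third row, so the content is identical.
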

\begin{proof}
	Recall that $Q$ and $\left[\Delta^*\Hom_{\cal{E}}(-, \D(-))\right]^{\fct}$ are quadratic left exact with the map $\rho:Q \Rightarrow \left[\Delta^*\Hom_{\cal{E}}(-, \D(-))\right]^{\fct}$ inducing an isomorphism on polarisations (Example \ref{bq}). Given a conflation as in the statement, the diagram
	\[\begin{tikzcd}
		0 \ar[r] & L_Q(z) \ar[r, "L_Q(p)"] \ar[d, "\ker"] & L_Q(y) \ar[r, "L_Q(i)"] \ar[d, "\ker"] & L_Q(x) \ar[d, "\ker"] \\
		0 \ar[r] & Q(z) \ar[r, "p^\bullet"] \ar[d, "\rho_z"] & Q(y) \ar[r, "{\begin{psmallmatrix} i^\bullet \\ \D(i)_*\rho_y \end{psmallmatrix}}"] \ar[d, "\rho_y"] & Q(x) \oplus \Hom_{\cal{E}}(y, \D(x)) \ar[d, "{\begin{psmallmatrix}\rho_x \amsamp 0 \\ 0 \amsamp 1 \end{psmallmatrix}}"] \\
		0 \ar[r] & \Hom_{\cal{E}}(z, \D(z))^{\fct} \ar[r, "p^*\D(p)_*"] & \Hom_{\cal{E}}(y, \D(y))^{\fct} \ar[r, "{\begin{psmallmatrix} i^*\D(i)_* \\ \D(i)_* \end{psmallmatrix}}"] & \Hom_{\cal{E}}(x, \D(x))^{\fct} \oplus \Hom_{\cal{E}}(y, \D(x))
	\end{tikzcd}\]
	exhibits $L_Q$ as left exact.
\end{proof}
Define functors
\begin{align*}
	& B_{\qdb} : \cal{E}^{\op} \times \cal{E}^{\op} \to \sab, \quad (x, y) \mapsto \hofib\left(\qdb(x \oplus y) \to \qdb(x) \oplus \qdb(y)\right), \\
	& L_{\qdb} : \cal{E}^{\op} \to \sab, \quad x \mapsto \hofib\left(\qdb(x) \to B_{\qdb}(x, x)^{\hct}\right),
\end{align*}
where the homotopy fibres are taken in simplicial abelian groups with respect to the classical model structure, and the map $\qdb(x) \to B_{\qdb}(x,x)^{\hct}$ is the composite of $\rho_{\Delta^\bullet x}$ with the natural map $B_{\qdb}(x, x)^{\fct} \to B_{\qdb}(x, x)^{\hct}$. Since the inclusions $x \to x \oplus y \leftarrow y$ are split monomorphisms, the map $\qdb(x \oplus y) \to \qdb(x) \oplus \qdb(y)$ is a levelwise split surjection of simplicial abelian groups and hence a fibration. Accordingly, we have a natural zig-zag
\[
	B_{\qdb}(x, y) \simeq \Hom_{\cal{E}}(\Delta^\bullet x, \D(\Delta^\bullet y)) \simeq \Map_\Delta(x, \D(y)),
\]
where the second equivalence is induced by the map of bisimplicial abelian groups given in bidegree $(p, q)$ by 
\[
	\Hom_{\cal{E}}(\Delta^px, \D(y)) \to \Hom_{\cal{E}}(\Delta^px, \D(\Delta^qy))
\]
induced by the codegeneracies $\Delta^q \to \Delta^0$. Since $\D$ preserves Frobenius equivalences (Definition \ref{comdual}) and $\Hom_{\cal{E}}(\Delta^\bullet x, -)$ sends Frobenius equivalences to homotopy equivalences, this is a levelwise weak equivalence and hence an equivalence upon taking diagonals.\\
As before, $\qdb$ descends to a functor
\[
	\qdb : L_{\mathrm{Frob}}(\cal{E})^{\op} \to \mathrm{D}_{\ge 0}(\zz),
\]
and similarly $B_{\qdb}$ and $L_{\qdb}$ descend respectively to functors
\begin{align*}
	B_{\qdb} : L_{\mathrm{Frob}}(\cal{E})^{\op} \times L_{\mathrm{Frob}}(\cal{E})^{\op} \to \mathrm{D}_{\ge 0}(\zz), \\
	\Lambda_{\qdb} : L_{\mathrm{Frob}}(\cal{E})^{\op} \to \mathrm{D}_{\ge 0}(\zz).
\end{align*}/
Note that the functors $\cal{E}^{\op} \to L_w(\cal{E})^{\op}$ and $\cal{E}^{\op} \times \cal{E}^{\op} \to L_w(\cal{E})^{\op} \times L_w(\cal{E})^{\op}$ are Dwyer-Kan localisations by \cite[Prop.\ 7.1.7, 7.1.13]{Cis19}, for any set of weak equivalences $w \subset \cal{E}$.
\begin{lemma}\label{a12}
	Suppose given an exact category $\cal{E}$, and a cosimplicial sequence
	\[
		x^\bullet \to y^\bullet \to z^\bullet
	\]
	in $\cal{E}$ which is levelwise split exact. Then for any quadratic functor $Q:\cal{E}^{\op} \to \Ab$ with polarisation $B$, the map $Q(z^\bullet) \to Q(y^\bullet)$ exhibits $Q(z^\bullet)$ as the total homotopy fibre in simplicial abelian groups of the square
	\begin{equation}\begin{tikzcd}\label{tot}
		Q(y^\bullet) \ar[r] \ar[d] & Q(x^\bullet) \ar[d] \\
		B(y^\bullet, x^\bullet) \ar[r] & B(x^\bullet, x^\bullet).
	\end{tikzcd}\end{equation}
\end{lemma}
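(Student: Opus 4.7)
The plan is to apply the 1-categorical total-kernel statement degreewise and then upgrade to a statement about homotopy (rather than strict) fibres of simplicial abelian groups.

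First I would apply the Remark immediately following Definition \ref{addq} (i.e., \cite[Lem.\ A.12]{Sch21}) in each cosimplicial degree: since $x^n \stackrel{i^n}\mono y^n \stackrel{p^n}\epi z^n$ is split exact, quadraticity of $Q$ yields a 4-term exact sequence of abelian groups
\[
0 \to Q(z^n) \xto{Q(p^n)} Q(y^n) \xto{(Q(i^n),\,-\D(i^n)_*\rho_{y^n})} Q(x^n) \oplus B(y^n, x^n) \xto{(\rho_{x^n},\,(i^n)^*)} B(x^n, x^n) \to 0.
\]
By naturality of $\rho$ and functoriality of $Q$ and $B$ in each variable, all of these structure maps are natural in $n$, so the sequence assembles into a 4-term exact sequence in $\sab$.

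Next I would verify that the square (\ref{tot}) has one leg a fibration, so that its strict pullback computes the homotopy pullback. In each degree $n$, choose a splitting $s^n: y^n \to x^n$ of $i^n$; applying $B(-,\,x^n)$ produces a section $(s^n)^*$ of $(i^n)^*: B(y^n, x^n) \to B(x^n, x^n)$. Although these splittings need not be natural in $n$, the map $B(y^\bullet, x^\bullet) \to B(x^\bullet, x^\bullet)$ is levelwise surjective, and any levelwise surjection of simplicial abelian groups is a fibration in the classical model structure (its underlying map of simplicial sets is a Kan fibration since sections of simplicial abelian groups give fillers for all horns). Hence $P := Q(x^\bullet) \times_{B(x^\bullet, x^\bullet)} B(y^\bullet, x^\bullet)$ computes the homotopy pullback.

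Finally I would conclude by combining the two steps. The 4-term exact sequence identifies, at each level $n$, the strict pullback $P_n$ with the image of $Q(y^n) \to Q(x^n) \oplus B(y^n, x^n)$, and exhibits $Q(z^n)$ as the kernel of $Q(y^n) \epi P_n$. Since $Q(y^\bullet) \to P$ is again a levelwise surjection, it is a fibration, and its strict kernel computes its homotopy fibre. Thus $Q(z^\bullet) \to Q(y^\bullet)$ realises the total homotopy fibre of the square (\ref{tot}) in $\sab$, as required. The main piece of bookkeeping is the distinction between sections that are natural in the cosimplicial variable and those that are merely degreewise; this causes no genuine problem because fibrancy and fibrations in $\sab$ are detected on the underlying simplicial sets, hence levelwise.
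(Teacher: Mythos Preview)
Your proof is correct and takes essentially the same approach as the paper: both apply \cite[Lem.\ A.12]{Sch21} degreewise to obtain the levelwise total kernel, then argue that this strict total fibre computes the total homotopy fibre. The only variation is in the second step---you stay in $\sab$ and use that levelwise surjections are fibrations, while the paper passes to Eilenberg--Mac Lane objects in $\mathrm{D}(\zz)$, invokes that geometric realisation commutes with fibre sequences there, and truncates back; your route is marginally more direct.
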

\begin{proof}
	By \cite[Lem.\ A.12]{Sch21}, for each $n$ the sequence of abelian groups
	\[
		0 \to Q(z^n) \to Q(y^n) \to Q(x^n) \oplus B(y^n, x^n) \to B(x^n, x^n) \to 0
	\]
	is exact, and so the map $Q(z^n) \to Q(Y^n)$ exhibits $Q(z^n)$ as the total kernel of the square
	\[\begin{tikzcd}
		Q(y^n) \ar[r] \ar[d] & Q(x^n) \ar[d] \\
		B(y^n, x^n) \ar[r] & B(x^n, x^n).
	\end{tikzcd}\]
	Surjectivity of $Q(x^n) \oplus B(y^n, x^n) \to B(x^n, x^n)$ implies in fact that $HQ(z^n) \to HQ(y^n)$ exhibits the former as the total fibre in $\mathrm{D}(\zz)$ of the corresponding square of Eilenberg-Mac Lane spectra. Since fibre-cofibre sequences of spectra commute with realisation, the map $Q(z^\bullet) \to Q(y^\bullet)$ thus exhibits the former as the total fibre in $\mathrm{D}(\zz)$ of (\ref{tot}), and since truncation preserves fibre sequences, we are done.
\end{proof}
\begin{proposition}
	The functor $B_{\qdb}$ is reduced and bilinear, and $\Lambda_{\qdb}$ is reduced and 1-excisive.
\end{proposition}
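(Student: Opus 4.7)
Both reducedness claims are immediate from the defining formulas: $B_{\qdb}(x, 0) \simeq \hofib(\qdb(x) \to \qdb(x)) \simeq 0$ using $\qdb(0) \simeq 0$, and symmetrically for the other slot; in particular $\Lambda_{\qdb}(0) \simeq \hofib(0 \to 0) \simeq 0$.

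For the bilinearity of $B_{\qdb}$, I would invoke the natural equivalence $B_{\qdb}(x, y) \simeq \Map_\Delta(x, \D(y))$ established in the paragraph immediately preceding the statement. Since the $\sab$-enrichment $\Map_\Delta$ computes the mapping space of the stable $\infty$-category $L_{\mathrm{Frob}}(\cal{E})$ (by the complicial enrichment recalled in Appendix \ref{excat}), and mapping spaces in a stable $\infty$-category are $1$-excisive in each variable, bilinearity in the $x$-slot is immediate. For the $y$-slot, the strong-duality hypothesis ensures that $\D$ descends to an exact self-adjoint equivalence on $L_{\mathrm{Frob}}(\cal{E})$, preserving fibre sequences; combining these yields the required bilinearity.

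The $1$-excisiveness of $\Lambda_{\qdb}$ is the main obstacle. Given a conflation $x \stackrel{i}\mono y \stackrel{p}\epi z$ in $\cal{E}$, the goal is to exhibit $\Lambda_{\qdb}(z) \to \Lambda_{\qdb}(y) \to \Lambda_{\qdb}(x)$ as a fibre sequence in $\mathrm{D}_{\ge 0}(\zz)$. My plan is to first handle split conflations, where $\Delta^\bullet x \mono \Delta^\bullet y \epi \Delta^\bullet z$ is automatically levelwise split (tensoring with $N\zz[\Delta^n]$ preserves splitness), so that Lemma \ref{a12} applies directly to identify $\qdb(z)$ as the total homotopy fibre of the bilinear polarisation square built out of $\qdb(y) \to \qdb(x)$ and $B_{\qdb}(y, x) \to B_{\qdb}(x, x)$. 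Combined with the already-established bilinearity of $B_{\qdb}$, a careful analysis of the $C_2$-equivariance of the polarisation and the natural transformation $B_{\qdb}^{\fct} \to B_{\qdb}^{\hct}$ extracts the desired fibre sequence for $\Lambda_{\qdb}$ on split conflations via the defining fibre sequence $\Lambda_{\qdb} \to \qdb \to B_{\qdb}(-,-)^{\hct}$. For a general (non-split) conflation, I would reduce to the split case via the path-object factorisation from the proof of Lemma \ref{fib}: replacing $p$ by the composite $y \to y \oplus Pz \epi z$ produces a Frobenius-equivalent conflation whose kernel is Frobenius-equivalent to $x$ in $L_{\mathrm{Frob}}(\cal{E})$, and invariance of $\qdb$ and $B_{\qdb}$ under Frobenius equivalences (Proposition \ref{ftil}) transfers the conclusion back to the original.

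The most delicate step will be the passage from the total-fibre presentation of $\qdb(z)$ to the corresponding fibre sequence for $\Lambda_{\qdb}$: while $B_{\qdb}(-,-)^{\hct}$ is only $2$-excisive as a diagonal of a bilinear functor, the claim for $\Lambda_{\qdb}$ is the stronger $1$-excisive property, so the $1$-excisiveness must emerge through a cancellation of $2$-excisive contributions in the total-fibre construction. This will require tracking carefully the $C_2$-action on $B_{\qdb}$ and its interaction with the $\hct$ fixed points, and verifying that the additional "quadratic" contribution from $B_{\qdb}(-,-)^{\hct}$ on non-split conflations is exactly absorbed by the path-object replacement.
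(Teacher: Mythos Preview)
Your treatment of reducedness and of the bilinearity of $B_{\qdb}$ matches the paper. The gap is in the $1$-excisiveness of $\Lambda_{\qdb}$. Your reduction from general Frobenius conflations to split ones via the path-object factorisation does not achieve what you claim: replacing $p:y \epi z$ by $(p,\pi):y \oplus Pz \epi z$ does yield a Frobenius-equivalent middle term and a kernel $y \times_z Pz$ Frobenius-equivalent to $x$, but the new conflation $y \times_z Pz \mono y \oplus Pz \epi z$ is \emph{still not split}. So you have not reduced to the split case, and Lemma~\ref{a12} (which needs a levelwise split cosimplicial sequence) still does not apply to $\Delta^\bullet x \mono \Delta^\bullet y \epi \Delta^\bullet z$. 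Your last paragraph correctly senses that something ``quadratic'' must be absorbed, but the path-object manoeuvre is not the mechanism that does it. Separately, your split-case argument is only a sketch (``a careful analysis \ldots\ extracts the desired fibre sequence''); it can be made to work, but it is not written.

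The paper's argument is structurally different and supplies the missing idea. Rather than modifying the horizontal conflation, it resolves \emph{vertically}: for each of $x$, $y$, $z$ one has the cosimplicial conflation $\tilde{x}^\bullet \mono \Delta^\bullet x \epi cx$ with $\tilde{x}^n := \ker(\Delta^n x \to x)$ Frobenius contractible. These columns are levelwise split because contractible objects are Frobenius injective--projective, and for the same reason the top row $\tilde{x}^\bullet \mono \tilde{y}^\bullet \epi \tilde{z}^\bullet$ is also levelwise split. Lemma~\ref{a12} then applies to the columns and to the top row; the bottom row $L_Q(z) \to L_Q(y) \to L_Q(x)$ is a fibre sequence by Lemma~\ref{llex} (this is where quadratic left-exactness of $Q$ enters, a hypothesis your outline never invokes). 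A cube computation identifies the total fibre of each column with $L_Q(-)$, and a $3 \times 3$ argument forces the middle row $L_{\qdb}(z) \to L_{\qdb}(y) \to L_{\qdb}(x)$ to be a fibre sequence. The levelwise splitness you need is manufactured not by altering the given conflation but by passing to the contractible kernels $\tilde{(-)}^\bullet$.
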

\begin{proof}
	The first claim follows from the identification $B_{\qdb}(x, y) \simeq \Map_{L_{\mathrm{Frob}}(\cal{E})}(x, \D(y))$, where we write $\D$ also for the duality induced on $L_{\mathrm{Frob}}(\cal{E})$. For the second, it suffices to show that $\Lambda_{\qdb}$ sends exact sequences to fibre sequences in $\mathrm{D}_{\ge 0}(\zz)$: it then follows from pasting for pullbacks (the dual of \cite[Lem.\ 4.4.2.1]{HTT}) that it exhibits the correct behaviour on exact squares.\\
Now an exact sequence in $L_{\mathrm{Frob}}(\cal{E})$ is up to equivalence the image of a Frobenius conflation in $\cal{E}$ under the localisation functor $\cal{E} \to L_{\mathrm{Frob}}(\cal{E})$. For such a sequence $x \stackrel{i}\mono y \stackrel{p}\epi z$ in $\cal{E}$, consider the diagram of cosimplicial objects
\begin{equation}\label{sub}\begin{tikzcd}
	\tilde{x}^\bullet \ar[r, rightarrowtail] \ar[d, rightarrowtail] & \tilde{y}^\bullet \ar[r, twoheadrightarrow] \ar[d, rightarrowtail] & \tilde{z}^\bullet \ar[d, rightarrowtail] \\
	\Delta^\bullet x \ar[r, rightarrowtail] \ar[d, twoheadrightarrow] & \Delta^\bullet y \ar[r, twoheadrightarrow] \ar[d, twoheadrightarrow] & \Delta^\bullet z \ar[d, twoheadrightarrow] \\
	cx \ar[r, rightarrowtail] & cy \ar[r, twoheadrightarrow] & cz,
\end{tikzcd}\end{equation}
with $\tilde{x}^n := \ker(\Delta^nx \to x)$, and $cx$ the constant diagram at $x$, and so on. Each column and the top row is a levelwise Frobenius conflation, the latter since $\tilde{x}^n$, $\tilde{y}^n, \tilde{z}^n$ are contractible, and hence Frobenius injective-projective. For any cosimplicial object $w^\bullet:\bbDelta \to \cal{E}$, there is a natural map of simplicial abelian groups
\[
	Q(w^\bullet) \to B_{Q}(w^\bullet, w^\bullet)^{\fct} \to B_{Q}(w^\bullet, w^\bullet)^{\hct}.
\]
The image of (\ref{sub}) in $\sab$ under this natural transformation is
\begin{tiny}\begin{equation}\label{decomp}\begin{tikzcd}
	& cB_{Q}(z, z)^{\fct} \ar[rr] \ar[dd] && cB_{Q}(y, y)^{\fct} \ar[rr] \ar[dd] && cB_{Q}(x, x)^{\fct} \ar[dd] \\
	cQ(z) \ar[ru] \ar[dd] && cQ(y) \ar[from=ll, crossing over] \ar[ru] && cQ(x) \ar[ru] \ar[from=ll, crossing over] \\
	& B_{Q}(\Delta^\bullet z, \Delta^\bullet z)^{\hct} \ar[rr] \ar[dd] && B_{Q}(\Delta^\bullet y, \Delta^\bullet y)^{\hct} \ar[rr] \ar[dd] && B_{Q}(\Delta^\bullet x, \Delta^\bullet x)^{\hct} \ar[dd] \\
	Q(\Delta^\bullet z) \ar[ru] \ar[dd] && Q(\Delta^\bullet y) \ar[from=ll, crossing over] \ar[from=uu, crossing over] \ar[ru] && Q(\Delta^\bullet x) \ar[from=ll, crossing over] \ar[ru] \ar[from=uu, crossing over] \\
	& B_{Q}(\tilde{z}^\bullet, \tilde{z}^\bullet)^{\hct} \ar[rr] && B_{Q}(\tilde{y}^\bullet, \tilde{y}^\bullet)^{\hct} \ar[rr] && B_{Q}(\tilde{x}^\bullet, \tilde{x}^\bullet)^{\hct}\\
	Q(\tilde{z}^\bullet) \ar[ru] \ar[rr] && Q(\tilde{y}^\bullet) \ar[from=uu, crossing over] \ar[ru] \ar[rr] && Q(\tilde{x}^\bullet) \ar[ru] \ar[from=uu, crossing over]\\
\end{tikzcd}\end{equation}\end{tiny}
where we use that the natural map $A^{\fct} \to A^{\hct}$ for any discrete simplicial abelian group $A$ with $C_2$-action is a weak equivalence. Taking homotopy fibres in $\sab$ of the inward-pointing maps in (\ref{decomp}), we obtain a diagram
\begin{equation}\label{LQ}\begin{tikzcd}
	L_{Q}(z) \ar[r] \ar[d] & L_{Q}(y) \ar[r] \ar[d] & L_{Q}(x) \ar[d] \\
	L_{\qdb}(z) \ar[r] \ar[d] & L_{\qdb}(y) \ar[r] \ar[d] & L_{\qdb}(x) \ar[d] \\
	\hofib(\rho_{\tilde{z}^\bullet}) \ar[r] & \hofib(\rho_{\tilde{y}^\bullet}) \ar[r] & \hofib(\rho_{\tilde{x}^\bullet}),
\end{tikzcd}\end{equation}
for $\rho_{\tilde{z}^\bullet}: Q(\tilde{z}^\bullet) \to B_{Q}(\tilde{z}^\bullet,\tilde{z}^\bullet)^{\hct}$ the map induced levelwise by
\[
	Q(\tilde{z}^n) \xto{\rho_{\tilde{z}^n}} B_{Q}(\tilde{z}^n, \tilde{z}^n),
\]
which factors through $C_2$-fixed points. The top row is a homotopy fibre sequence since $\ker(L_{Q}(y) \to L_{Q}(x)) \cong L_{Q}(z)$; the homotopy fibre of $L_{\qdb}(z) \to \hofib(\rho_{\tilde{z}^\bullet})$ is by definition the total homotopy fibre
\begin{equation}\label{totfib}
	\mathrm{hofibt}\left(\begin{tikzcd}
		Q(\Delta^\bullet z) \ar[r] \ar[d] & B_{Q}(\Delta^\bullet z, \Delta^\bullet z)^{\hct} \ar[d] \\
		Q(\tilde{z}^\bullet) \ar[r] & B_{Q}(\tilde{z}^\bullet, \tilde{z}^\bullet)^{\hct}
	\end{tikzcd}\right).
\end{equation}
But $Q(\Delta^\bullet z) \to Q(\tilde{z}^\bullet)$ is a levelwise split surjection and hence a fibration in the classical model structure on simplicial abelian groups; similarly for the right-hand column prior to taking $C_2$-homotopy fixed points. The total homotopy fibre of the cube
\begin{small}\begin{equation}\begin{tikzcd}\label{cube}
	& B_{Q}(\Delta^\bullet z, \Delta^\bullet z)^{\hct} \ar[rr] \ar[dd] && B_{Q}(\tilde{z}^\bullet, \tilde{z}^\bullet)^{\hct} \ar[dd] \\
	Q(\Delta^\bullet z) \ar[ru] \ar[dd] && \ar[from=ll, crossing over] Q(\tilde{z}^\bullet) \ar[ru] \\
	& B_{Q}(\Delta^\bullet z, \tilde{z}^\bullet) \ar[rr] && B_{Q}(\tilde{z}^\bullet, \tilde{z}^\bullet) \\
	B_{Q}(\Delta^\bullet z, \tilde{z}^\bullet) \ar[ru, equal] \ar[rr] && B_{Q}(\tilde{z}^\bullet, \tilde{z}^\bullet) \ar[from=uu, crossing over] \ar[ru, equal]
\end{tikzcd}\end{equation}\end{small}
identifies with (\ref{totfib}), since the bottom face is trivially cartesian, and by commuting homotopy limits, we may compute this as the homotopy fibre of the map between the respective total homotopy fibres of the front and back faces; by Lemma \ref{a12} applied to the quadratic functors $Q$ and $\Delta^*B_Q$, this is
\[
	\hofib\left(Q(z) \to B_{Q}(z, z)^{\fct}\right) \simeq \ker\left(Q(z) \to B_{Q}(z, z)^{\fct}\right) = L_{Q}(z).
\]
Note that the polarisation $B$ of $\Delta^*B_Q$
\[
	B(u, v) = B_{Q}(u, v) \oplus B_{Q}(v, u),
\]
where the $C_2$-action swaps the factors, and the square corresponding to the setup of Lemma \ref{a12} is
\[\begin{tikzcd}
	B_{Q}(\Delta^\bullet z, \Delta^\bullet z) \ar[r] \ar[d] & B_{Q}(\tilde{z}^\bullet, \tilde{z}^\bullet) \ar[d] \\
	B_{Q}(\Delta^\bullet z, \tilde{z}^\bullet) \oplus B_{Q}(\tilde{z}^\bullet, \Delta^\bullet z) \ar[r] & B_{Q}(\tilde{z}^\bullet, \tilde{z}^\bullet) \oplus B_{Q}(\tilde{z}^\bullet, \tilde{z}^\bullet),
\end{tikzcd}\]
which upon taking $C_2$-homotopy fixed points gives
\[\begin{tikzcd}
	B_{Q}(\Delta^\bullet z, \Delta^\bullet z)^{\hct} \ar[r] \ar[d] & B_{Q}(\tilde{z}^\bullet, \tilde{z}^\bullet)^{\hct} \ar[d] \\
	B_{Q}(\Delta^\bullet z, \tilde{z}^\bullet) \ar[r] & B_{Q}(\tilde{z}^\bullet, \tilde{z}^\bullet).
\end{tikzcd}\]
The same argument applied to the other two columns and the bottom row of (\ref{LQ}) gives that these are homotopy fibre sequences, since $\tilde{x}^\bullet \to \tilde{y}^\bullet \to \tilde{z}^\bullet$ is again levelwise a split Frobenius conflation. It then follows that the middle row is a homotopy fibre sequence of simplicial abelian groups, i.e.\ $L_{\qdb}$ sends conflations to fibre sequences of simplicial abelian groups, so by \cite[Th.\ 4.2.4.1]{HTT} $\Lambda_{\qdb}$ sends exact sequences in $L_\mathrm{Frob}(\cal{E})$ to fibre sequences in $\mathrm{D}_{\ge 0}(\zz)$.
\end{proof}
\begin{remark}
	In fact, each column and the bottom row of (\ref{LQ}) is a fibre sequence in $\mathrm{D}(\zz)$; the failure of $\Lambda_{\qdb}$ to be 1-excisive when considered as a functor valued in $\mathrm{D}(\zz)$ is thus controlled by the failure of the top row of (\ref{LQ}) to be right exact in $\Ab$.
\end{remark}
As a consequence of the discussion prior to Lemma \ref{llex}, we see that:
\begin{corollary}\label{qdb2exc}
	The functor $\qdb: L_{\mathrm{Frob}}(\cal{E})^{\op} \to \mathrm{D}_{\ge 0}(\zz)$ is reduced and 2-excisive.
\end{corollary}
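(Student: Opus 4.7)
The proof is essentially a direct application of the criterion outlined in the discussion immediately before Lemma \ref{llex}, combining it with the two assertions of the preceding Proposition. My plan is as follows.

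First, I would verify reducedness, which is immediate: since $\Delta^n \otimes 0 \simeq 0$ in $\cal{E}$ and $Q$ is reduced, $\qdb(0) = Q(\Delta^\bullet \otimes 0) \simeq 0$ in $\sab$, hence also after passing to $\mathrm{D}_{\ge 0}(\zz)$.

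Second, I would assemble the defining fibre sequence
\[
    \Lambda_{\qdb} \to \qdb \to \bigl[\Delta^*B_{\qdb}\bigr]^{\hct}
\]
of functors $L_{\mathrm{Frob}}(\cal{E})^{\op} \to \mathrm{D}_{\ge 0}(\zz)$; this is just the definition of $\Lambda_{\qdb}$ as the homotopy fibre of the natural map $\qdb \to (\Delta^*B_{\qdb})^{\hct}$ together with the fact that in the stable (or connective stable) setting such a fibre sequence is automatic.

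Third, I would invoke the general fact, recorded in the paragraph preceding Lemma \ref{llex}, that $\qdb$ is 2-excisive provided $\Lambda_{\qdb}$ is 1-excisive and $B_{\qdb}$ is bilinear. Both of these are precisely the content of the immediately preceding Proposition. For the reader's convenience, I would spell out once more why this suffices: 1-excisive functors are 2-excisive (\cite[Cor.\ 6.1.1.14]{HA}), diagonals of bilinear functors are 2-excisive (\cite[Cor.\ 6.1.3.5]{HA}), and 2-excisive functors are closed under limits, hence in particular under $C_2$-homotopy fixed points; therefore both $\Lambda_{\qdb}$ and $[\Delta^*B_{\qdb}]^{\hct}$ are 2-excisive. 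Taking total fibres of the image of a strongly cocartesian cube under the fibre sequence above then exhibits the total fibre of $\qdb$ applied to the cube as sandwiched between two trivial total fibres, forcing it to be trivial.

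No step here is substantive; the real work has already been carried out in the preceding Proposition (the 1-excisivity of $\Lambda_{\qdb}$), and the corollary is a formal consequence. If there is a minor obstacle, it is merely making sure the connective-valued version of the 2-excisivity criterion is unambiguously justified — but this is already addressed in the paragraph prior to Lemma \ref{llex}, so no further work is required.
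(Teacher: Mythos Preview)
Your proposal is correct and matches the paper's approach exactly: the paper simply states this as an immediate consequence of the criterion recorded before Lemma \ref{llex} together with the preceding Proposition, and you have spelled out precisely that deduction (including the reducedness check, which the paper leaves implicit).
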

Postcomposing with the inclusion $\mathrm{D}_{\ge 0}(\zz) \hookrightarrow \mathrm{D}(\zz)$, we obtain by abuse of notation a functor
\[
	\qdb : L_{\mathrm{Frob}}(\cal{E})^{\op} \to \mathrm{D}(\zz),
\]
which is reduced but in general fails spectacularly to be 2-excisive. Recall from \cite[Cons.\ 1.1.2.6]{CD23a} that for a stable $\infty$-category $\cal{C}$, the inclusion
\[
	i:\fun_*(\cal{C}^{\op}, \mathrm{D}(\zz)) \hookrightarrow \fun_*^{2\mathrm{-exc}}(\cal{C}^{\op}, \mathrm{D}(\zz))
\]
admits a left adjoint $P_2$, the 2-excisive approximation, where $\fun_*$ denotes the full subcategory of reduced functors. As demonstrated in the following proposition, when applied to a 2-excisive $\mathrm{D}_{\ge 0}(\zz)$-valued functor, this is in some sense a universal 2-excisive delooping. Write $\fun^\mathrm{q} := \fun_*^{2\mathrm{-exc}}$, and $\widetilde{\mathbf{R}}Q := P_2\qdb$. We have adjunctions
    \[\begin{tikzcd}
		\fun_*(\cal{C}^{\op}, \mathrm{D}_{\ge 0}(\zz)) \ar[r, shift left=.4ex, bend left=.75ex, "\rotatebox{90}{$\vdash$}"', "j_*"] & \fun_*(\cal{C}^{\op}, \mathrm{D}(\zz)) \ar[r, shift left=.4ex, bend left=.75ex, "\rotatebox{90}{$\vdash$}"', "P_2"] \ar[l, shift left=.4ex, bend left=.75ex, "(\tau_{\ge 0})_*"] \ar[r, shift right=.4ex, bend right=.75ex, hookleftarrow, "i"'] & \fun^\mathrm{q}(\cal{C}^{\op}, \mathrm{D}(\zz))
	\end{tikzcd}\]
	where we write $j:\mathrm{D}_{\ge 0}(\zz) \hookrightarrow \mathrm{D}(\zz)$ for the inclusion, left adjoint to $\tau_{\ge 0}$. Pointwise truncation sends 2-excisive $\mathrm{D}(\zz)$-valued functors to 2-excisive $\mathrm{D}_{\ge 0}(\zz)$-valued functors, and so this restricts to an adjunction
	\[\begin{tikzcd}
		\fun^\mathrm{q}(\cal{C}^{\op}, \mathrm{D}_{\ge 0}(\zz)) \ar[r, shift left=.4ex, bend left=.75ex, "\rotatebox{90}{$\vdash$}"', "P_2j_*"] \ar[r, shift right=.4ex, bend right=.75ex, hookleftarrow, "\tau_{\ge 0,*}i"'] & \fun^\mathrm{q}(\cal{C}^{\op}, \mathrm{D}(\zz)).
	\end{tikzcd}\]
\begin{proposition}\label{ext}
	For any stable $\infty$-category $\cal{C}$, the above is an adjoint equivalence. In particular, for any complicial exact form category with weak equivalences $(\cal{E}, Q, w, \D, \eta)$, the natural map $\qdb \to \widetilde{\mathbf{R}}Q$ truncates to an equivalence $\qdb \xto{\simeq} \tau_{\ge 0}\widetilde{\mathbf{R}}Q$ of functors $L_\mathrm{Frob}(\cal{E})^{\op} \to \mathrm{D}_{\ge 0}(\zz)$.
\end{proposition}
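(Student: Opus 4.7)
The plan is to reduce the claim to the analogous statements for 1-excisive and bilinear functors by exploiting the canonical fibre sequence
\[
	\Lambda_\Qoppa \to \Qoppa \to (\Delta^* B_\Qoppa)^{\hct}
\]
characterising 2-excisivity in both the $\mathrm{D}(\zz)$- and $\mathrm{D}_{\ge 0}(\zz)$-valued settings (as recalled in the discussion preceding Lemma~\ref{llex}). First I would verify the adjunction is well-defined: $\tau_{\ge 0, *}$ preserves 2-excisivity since $\tau_{\ge 0}$ is a right adjoint and hence preserves the limit diagrams witnessing 2-excisivity, and $P_2 j_*$ lands in $\fun^\mathrm{q}(\cal{C}^{\op}, \mathrm{D}(\zz))$ by construction. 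I would also note that both adjoints preserve fibre sequences: $\tau_{\ge 0, *}$ as a right adjoint, and $P_2 j_*$ because the $T_2$ construction involves only finite limits in the stable target $\mathrm{D}(\zz)$ and the subsequent sequential colimit preserves fibre sequences.

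I would next check the equivalence separately on the linear and bilinear pieces. For a 1-excisive $G: \cal{C}^{\op} \to \mathrm{D}_{\ge 0}(\zz)$, 1-excisivity in $\mathrm{D}_{\ge 0}(\zz)$ yields the canonical identification $\pi_i(G(x)) \simeq \pi_{i+n}(G(\Omega^n_\cal{C} x))$ for $i \ge 0$, and the Goodwillie formula $P_1 j_* G(x) = \colim_n \Omega^n G(\Omega^n_\cal{C} x)$ then gives that $\pi_i(P_1 j_* G(x)) = \pi_i(G(x))$ for $i \ge 0$, yielding the unit equivalence $\tau_{\ge 0} P_1 j_* G \simeq G$. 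Conversely, for 1-excisive $G: \cal{C}^{\op} \to \mathrm{D}(\zz)$, the identity $G(\Omega^n_\cal{C} x) \simeq \Sigma^n G(x)$ allows the direct computation $P_1 j_* \tau_{\ge 0} G(x) = \colim_n \Omega^n \tau_{\ge 0} \Sigma^n G(x) \simeq G(x)$, giving the counit equivalence. A symmetric double-colimit argument for $P_{1,1}$ handles the bilinear piece.

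The main obstacle lies in the interaction between homotopy fixed points $(-)^{\hct}$ and the t-structure: since $j: \mathrm{D}_{\ge 0}(\zz) \hookrightarrow \mathrm{D}(\zz)$ does not preserve infinite limits, the fibre sequence in $\mathrm{D}_{\ge 0}(\zz)$ does not straightforwardly transport under $j_*$ to a fibre sequence in $\mathrm{D}(\zz)$; in particular $j_* (\Delta^* B_\Qoppa)^{\hct}$ computed in $\mathrm{D}_{\ge 0}(\zz)$ agrees only with the connective cover of the corresponding $\mathrm{D}(\zz)$-valued homotopy fixed points. The hard part is therefore identifying the Goodwillie tower of $j_* \Qoppa$ in terms of the derived linear and bilinear parts, and verifying that truncation recovers the original fibre sequence in $\mathrm{D}_{\ge 0}(\zz)$, enabling one to reassemble the linear- and bilinear-piece equivalences into the unit and counit equivalences on all of $\fun^\mathrm{q}$. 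The particular claim about $\qdb$ then follows immediately by applying the unit of the established adjoint equivalence to $\Qoppa = \qdb$, which is 2-excisive by Corollary~\ref{qdb2exc}.
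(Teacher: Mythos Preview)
Your proposal has a self-inflicted gap. You correctly identify that the interaction between $(-)^{\hct}$ and the $t$-structure is problematic, but you do not resolve it; you only state that reassembling the linear and bilinear pieces is ``the hard part''. This obstacle is entirely an artefact of your chosen decomposition via the fibre sequence $\Lambda_\Qoppa \to \Qoppa \to (\Delta^* B_\Qoppa)^{\hct}$: the infinite limit $(-)^{\hct}$ is precisely what fails to commute with $j$, so routing the argument through it creates the difficulty you then cannot close.

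The paper's proof avoids this entirely by working directly with the explicit model $P_2 = \colim_n T_2^n$ from \cite[Cons.\ 1.1.26]{CD23a}. The point is that $T_2\Qoppa(x) = \Omega\fib(\Qoppa(\Omega x) \to B_\Qoppa(\Omega x, \Omega x))$ involves only \emph{finite} limits, which $\tau_{\ge 0}$ preserves as a right adjoint; hence $\tau_{\ge 0} T_2^{\mathrm{D}(\zz)} j \simeq T_2^{\mathrm{D}_{\ge 0}(\zz)}$. Since $\tau_{\ge 0}$ also commutes with filtered colimits, the unit $\Qoppa \to \tau_{\ge 0} P_2 j_* \Qoppa$ is an equivalence, so $P_2 j_*$ is fully faithful. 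The counit is then handled in one line by observing that $\tau_{\ge 0,*}$ is conservative on quadratic functors, which is \cite[Lem.\ 1.1.25]{CD23a}. No decomposition into linear and bilinear parts is needed, and no homotopy fixed points enter the argument.
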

\begin{proof}
        Suppose given some $\Qoppa:\cal{C}^{\op} \to \mathrm{D}_{\ge 0}(\zz)$ $2$-excisive, the unit
	\[
		\Qoppa \to \tau_{\ge 0,*}iP_2j_*\Qoppa
	\]
	is given pointwise by
	\[
		\Qoppa(x) \to \tau_{\ge 0}P_2j\Qoppa(x) \simeq \tau_{\ge 0}\varinjlim_n T_2^nj\Qoppa(x),
	\]
	and since $\tau_{\ge 0}$ commutes with filtered colimits, to show this is an equivalence it suffices to show that $\tau_{\ge 0}T_2^{\mathrm{D}(\zz)}j \simeq T_2^{\mathrm{D}_{\ge 0}(\zz)}\tau_{\ge 0}j \simeq T_2^{\mathrm{D}_{\ge 0}(\zz)}$, where the superscript indicates where the fibre is taken. But
	\[
		T_2\Qoppa(x) = \Omega\fib(\Qoppa(\Omega x) \to B_\Qoppa(\Omega x, \Omega x)),
	\]
	and $\tau_{\ge 0}$, as a right adjoint, preserves fibre sequences. The left adjoint $P_2j_*$ is then fully faithful, and so it suffices to show that $\tau_{\ge 0_*}$ is conservative; this follows from \cite[Lem.\ 1.1.25]{CD23a}.
\end{proof}
\begin{remark}
	We have a canonical natural transformation $Q \Rightarrow \qdb \Rightarrow \widetilde{\mathbf{R}}Q$, with the first map given by the inclusion of $0$-simplices, and the second the unit of the adjunction $P_2 \dashv i$. Accordingly, there are 2-cells in $\cat_\infty$
	\[\begin{tikzcd}
		\cal{E}^{\op} \ar[rd, "\gamma"] \ar[rr, "cQ"{name=cQ}] && \mathrm{D}_{\ge 0}(\zz). \\
		& |[alias=LwE]|L_w(\cal{E})^{\op} \ar[ru, bend right = 8ex, "\widetilde{\mathbf{R}}Q"'{name=Qoppa}] \ar[ru, "\qdb"{name=qdb}]
		\ar[from=cQ, to=LwE, Rightarrow,shorten >=2mm,shorten <=2mm]
		\ar[from=qdb, to=Qoppa, Rightarrow,shorten >=2mm,shorten <=2mm]
	\end{tikzcd}\]
	Writing $Q_\mathrm{nd} \subset Q$ and $\widetilde{\mathbf{R}}Q_\mathrm{nd} \subset \Omega^\infty\widetilde{\mathbf{R}}Q$ for the subfunctors of nondegenerate forms, this restricts to a natural transformation
	\[
		Q_\mathrm{nd} \Rightarrow \gamma^*\widetilde{\mathbf{R}}Q_\mathrm{nd}.
	\]
\end{remark}
Write $B_{\widetilde{\mathbf{R}}Q}$ for the polarisation of $\widetilde{\mathbf{R}}Q$, given objectwise by
\[
	B_{\widetilde{\mathbf{R}}Q}(x, y) := \fib\left(\widetilde{\mathbf{R}}Q(x \oplus y) \to \widetilde{\mathbf{R}}Q(x) \oplus \widetilde{\mathbf{R}}Q(y)\right),
\]
and $\D:L_{\mathrm{Frob}}(\cal{E})^{\op} \to L_{\mathrm{Frob}}(\cal{E})$ for the functor induced by $\D:\cal{E}^{\op} \to \cal{E}$ under localisation (recall that $\D$ preserves Frobenius equivalences). Recall that mapping spaces in any stable $\infty$-category $\cal{C}$ canonically enhance to mapping spectra, via \cite[Cor.\ 1.4.2.23]{HA}: the functor $\Omega^\infty:\Sp \to \cal{S}$ induces an equivalence of $\infty$-categories
\[
	\Omega^\infty_*:\fun^{\mathrm{ex}}(\cal{C}, \Sp) \xto{\simeq} \fun^{\mathrm{lex}}(\cal{C}, \cal{S}),
\]
where we write $\fun^{\mathrm{ex}}$ for the subcategory of exact functors, and $\fun^{\mathrm{lex}}$ for the subcategory of left-exact (finite limit preserving) functors. For $x \in \cal{C}$, write $\map_{\cal{C}}(x, -)$ for the functor corresponding under this equivalence to $\Map_{\cal{C}}(x, -)$. This assignment is natural in $x$ and accordingly assembles into a mapping spectrum bifunctor, $\map_{\cal{C}} : \cal{C}^{\op} \times \cal{C} \to \Sp$. In our case, $L_w(\cal{E})$ is $\zz$-linear, and we have a mapping bifunctor
\[
	L_\mathrm{Frob}(\cal{E})^{\op} \times L_\mathrm{Frob}(\cal{E}) \to \mathrm{D}(\zz).
\]
Recall that we write $\widetilde{\mathbf{R}}Q:=P_2\qdb:L_\mathrm{Frob}(\cal{E})^{\op} \to \mathrm{D}(\zz)$.
\begin{proposition}\label{ndg}
	There are equivalences
	\[
		B_{\widetilde{\mathbf{R}}Q}(x, y) \simeq \map_{L_\mathrm{Frob}(\cal{E})}(x, \D(y)),
	\]
	in $\mathrm{D}_{\zz}$, natural in $x$ and $y$.
\end{proposition}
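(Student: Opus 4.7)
The plan is to reduce the claimed equivalence to a comparison on connective covers, where the computation $B_{\qdb}(x,y) \simeq \Map_\Delta(x, \D(y))$ of \S\ref{dquad} is already available, and then to bootstrap to a full $\mathrm{D}(\zz)$-equivalence by leveraging the bilinearity of both sides together with left-completeness of the standard $t$-structure on $\mathrm{D}(\zz)$.

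First I would observe that since $\widetilde{\mathbf{R}}Q = P_2 j\qdb$ is 2-excisive by construction, its polarisation $B_{\widetilde{\mathbf{R}}Q}$ is bilinear --- so in particular $B_{\widetilde{\mathbf{R}}Q}(-,y)$ is exact for each $y$ --- by \cite[Prop.\ 1.1.13]{CD23a}. Dually, the bifunctor $\map_{L_\mathrm{Frob}(\cal{E})}(-, \D(-))$ is bilinear: it is exact in the first variable by stability, and exact in the second variable since $\D$ is exact and $\map(x, -)$ is exact.

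Next I would compare connective covers. Since $\tau_{\ge 0}:\mathrm{D}(\zz) \to \mathrm{D}_{\ge 0}(\zz)$ is right adjoint to the inclusion $j$, it preserves fibre sequences, hence polarisations; combined with Proposition \ref{ext}, this yields $\tau_{\ge 0}B_{\widetilde{\mathbf{R}}Q} \simeq B_{\tau_{\ge 0}\widetilde{\mathbf{R}}Q} \simeq B_{\qdb}$. Under Shipley's equivalence $\whzmod \simeq \mathrm{D}_{\ge 0}(\zz)$, the $\sab$-enrichment of $L_\mathrm{Frob}(\cal{E})$ corresponds to the connective cover of the $\mathrm{D}(\zz)$-enrichment supplied by $\zz$-linearity, so $\Map_\Delta(x, \D(y)) \simeq \tau_{\ge 0}\map_{L_\mathrm{Frob}(\cal{E})}(x, \D(y))$. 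Combined with the identification $B_{\qdb}(x,y) \simeq \Map_\Delta(x, \D(y))$ of \S\ref{dquad}, this produces a natural equivalence $\tau_{\ge 0}B_{\widetilde{\mathbf{R}}Q}(x,y) \simeq \tau_{\ge 0}\map_{L_\mathrm{Frob}(\cal{E})}(x, \D(y))$ of connective $\mathrm{D}(\zz)$-valued bifunctors.

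Finally, to lift to $\mathrm{D}(\zz)$ I would use $\mathrm{D}(\zz)$-enriched Yoneda: evaluating the connective equivalence at $x = \D(y)$ transports $[1_{\D(y)}] \in \pi_0 \Map_\Delta(\D(y), \D(y))$ to a class in $\pi_0 B_{\widetilde{\mathbf{R}}Q}(\D(y), y)$, which by enriched Yoneda represents a natural transformation $\map_{L_\mathrm{Frob}(\cal{E})}(-, \D(y)) \to B_{\widetilde{\mathbf{R}}Q}(-, y)$ of exact first-variable functors whose connective cover is the given equivalence. Exactness in the first variable gives $\map(x[-k], \D(y)) \simeq \map(x, \D(y))[k]$ and similarly for $B_{\widetilde{\mathbf{R}}Q}$, so evaluating at $x[-k]$ and using $\tau_{\ge 0}(A[k]) \simeq (\tau_{\ge -k}A)[k]$ shows the transformation to be a $\tau_{\ge -k}$-equivalence at $x$ for every $k \ge 0$; left-completeness of the $t$-structure on $\mathrm{D}(\zz)$ then forces it to be an equivalence. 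The main technical hurdle will lie in the careful organisation of naturality in $y$ so that the Yoneda-produced map genuinely lives in the category of bilinear bifunctors rather than as a pointwise family; this reduces to bookkeeping with the $\mathrm{D}(\zz)$-enriched Yoneda applied to bi-exact bifunctors, and can be verified on connective covers, where it is automatic from the construction of the connective comparison as a bifunctorial equivalence.
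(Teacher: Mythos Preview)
Your approach is correct and takes a genuinely different route from the paper's. The paper computes $B_{\widetilde{\mathbf{R}}Q}$ directly from the explicit sequential-colimit formula $P_2\qdb \simeq \colim_n T_2^n\qdb$: it shows by induction that $B_{T_2^n\qdb}(x,y) \simeq \Omega^{2n}\Map_\Delta(\Omega^{2n}x, \D(y))$, so that in the colimit $B_{\widetilde{\mathbf{R}}Q}(x,y) \simeq P_1(\Map_\Delta(x, -))(\D(y))$, and then identifies $P_1\Map_\Delta(x,-)$ with $\map(x,-)$ by a short Yoneda argument. Your route instead invokes Proposition~\ref{ext} to obtain the connective agreement $\tau_{\ge 0}B_{\widetilde{\mathbf{R}}Q} \simeq B_{\qdb} \simeq \Map_\Delta(-, \D(-))$ for free, and then lifts to $\mathrm{D}(\zz)$ via enriched Yoneda plus bilinearity and separation of the $t$-structure. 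The paper's computation is more hands-on and makes the identification with the $1$-excisive approximation of the mapping space explicit; your argument is slicker once Proposition~\ref{ext} is available and isolates the role of exactness in each variable more sharply, at the cost of the bookkeeping you flag for naturality in $y$. One minor correction: the property you use at the end---that $\tau_{\ge -k}(\fib\alpha) \simeq 0$ for all $k$ forces $\fib\alpha \simeq 0$---is right-separatedness (implied by right-completeness, which the paper records for $\mathrm{D}(\zz)$), not left-completeness; the slip is harmless since $\mathrm{D}(\zz)$ satisfies both.
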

\begin{proof}
    Recall that the 2-excisive approximation $P_2\qdb$ is computed pointwise in $\mathrm{D}(\zz)$ as the sequential colimit
    \[
        P_2\qdb(x) \simeq \colim(\qdb(x) \to T_2\qdb(x) \to T_2^2\qdb(x) \to \dots),
    \]
    where $T_2\qdb(x) = \Omega\fib\left(\qdb(\Omega x) \to B_{\qdb}(\Omega x ,\Omega x)\right)$. We compute
    \[
	B_{T_2\qdb}(x, y) \simeq \Omega\kern+1pt\mathrm{fibt}
		\left(\begin{tikzcd}
			\qdb(\Omega x \oplus \Omega y) \ar[r] \ar[d] & B_{\qdb}(\Omega x \oplus \Omega y, \Omega x \oplus \Omega y) \ar[d] \\
			\qdb(\Omega x) \oplus \qdb(\Omega y) \ar[r] & B_{\qdb}(\Omega x, \Omega x) \oplus B_{\qdb}(\Omega y, \Omega y) 
		\end{tikzcd}\right)
    \]
    \begin{align*}
	& \simeq \Omega\kern+1pt\fib\left(B_{\qdb}(\Omega x, \Omega y) \xto{\Delta} B_{\qdb}(\Omega x, \Omega y) \oplus B_{\qdb}(\Omega y, \Omega x)\right) \\
	& \simeq \Omega^2B_{\qdb}(\Omega x, \Omega y) \simeq \Omega^2\Map_\Delta(\Omega x, \Sigma\D(y)),
    \end{align*}
    using that $\D(\Omega y)) \simeq \Sigma\D(y))$. Inductively, we have
    \[
	B_{T_2^n\qdb}(x, y) \simeq \Omega^{2n}B_{\qdb}(\Omega^nx, \Sigma^n\D(y)) \simeq \Omega^{2n}B_{\qdb}(\Omega^{2n}x, \D(y)),
    \]
    and in the colimit,
    \[
	B_{\widetilde{\mathbf{R}}Q}(x, y) \simeq P_1(\Map(x, -))\D(y)),
    \]
    for $P_1$ the $1$-excisive approximation of Appendix \ref{approx}. But this is precisely the mapping spectrum $\map(x, \D(y))$ in $L_{\mathrm{Frob}}(\cal{E})$: for $x \in \cal{E}$ and an exact functor $F : L_{\mathrm{Frob}}(\cal{E}) \to \mathrm{D}(\zz)$ we have a string of equivalences
    \begin{align*}
	& \mathrm{Nat}_{\fun^{\mathrm{ex}}(L_{\mathrm{Frob}}(\cal{E}), \mathrm{D}(\zz))}\left(\map(x, -), F\right) \\
	\simeq & \mathrm{Nat}_{\fun^{\mathrm{lex}}(L_{\mathrm{Frob}}(\cal{E}), \mathrm{D}_{\ge 0}(\zz))}\left(\Map(x, -), \tau_{\ge 0}F\right) \\
	\simeq & \mathrm{Nat}_{\fun(L_{\mathrm{Frob}}(\cal{E}), \mathrm{D}(\zz))}(\Map(x,-), F) \\
	\simeq & \mathrm{Nat}_{\fun^{\mathrm{ex}}(L_{\mathrm{Frob}}(\cal{E}), \mathrm{D}(\zz))}(P_1\Map(x, -), F).
    \end{align*}
\end{proof}
Recall \cite[\S1.1]{CD23b} that the Verdier quotient $\pi:L_{\mathrm{Frob}}(\cal{E}) \to L_w(\cal{E})$ is an exact functor of stable $\infty$-categories, sitting in a fibre-cofibre sequence
\[
	L_{\mathrm{Frob}}(\cal{E}^w) \to L_{\mathrm{Frob}}(\cal{E}) \xto{\pi} L_w(\cal{E}),
\]
 $\cal{E}^w \subset \cal{E}$ the full subcategory of $w$-acyclic objects, which is closed under retracts. By \cite[Lem.\ 1.4.1(iii)]{CD23a}, left Kan extension along an exact functor preserves preserves quadraticity (in the stable setting), and so writing $\mathbf{R}Q := \pi_!\widetilde{\mathbf{R}}Q$, the pair $(L_w(\cal{E}), \mathbf{R}Q)$ is again the data of a hermitian $\infty$-category.
\begin{theorem}\label{comppoi}
	For $(\cal{E}, Q, \D, \eta, w)$ a complicial exact category with weak equivalences and strong duality, the pair $(L_w(\cal{E}), \mathbf{R}Q)$ is the data of a Poincar\'e $\infty$-category. Moreover, for each $x \in \cal{E}$, $\mathbf{R}Q$ satisfies
	\[
		\tau_{\ge 0}\mathbf{R}Q(\gamma(x)) \simeq \underset{J_x^{\op}}\colim\ Q,
	\]
	for $J_x \subset (\cal{E}\downarrow x)$ the full subcategory spanned by trivial deflations over $x$.
\end{theorem}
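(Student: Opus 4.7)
The plan is to verify the Poincar\'e structure on $(L_w(\cal{E}), \mathbf{R}Q)$ first, and then compute the connective truncation of the resulting quadratic functor. Stability of $L_w(\cal{E})$ is Proposition \ref{comstab}. That $\mathbf{R}Q = \pi_!\widetilde{\mathbf{R}}Q$ is quadratic follows from Corollary \ref{qdb2exc} and Proposition \ref{ext} together with the preservation of quadraticity under left Kan extension along an exact functor between stable $\infty$-categories \cite[Lem.~1.4.1(iii)]{CD23a}. For nondegeneracy I would identify the polarisation $B_{\mathbf{R}Q}(x, y) \simeq \map_{L_w(\cal{E})}(x, \D(y))$ by unwinding the pointwise Kan extension of the polarisation, starting from the identification $B_{\widetilde{\mathbf{R}}Q}(x, y) \simeq \map_{L_\mathrm{Frob}(\cal{E})}(x, \D(y))$ furnished by Proposition \ref{ndg}; here $\D$ descends to $L_w(\cal{E})$ since it preserves $w$-equivalences, and the essential compatibility is that $(\pi \times \pi)$-pushforward of mapping spectra on $L_\mathrm{Frob}(\cal{E})$ recovers mapping spectra in the Verdier quotient. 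Perfection is then automatic: the natural $w$-equivalence $\eta: 1 \to \D\D^{\op}$ coming from the strong duality hypothesis on $\cal{E}$ becomes an equivalence in $L_w(\cal{E})$.

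For the colimit formula, the strategy is to identify $\tau_{\ge 0}\mathbf{R}Q$ with the direct left Kan extension $(\gamma_w)_!Q: L_w(\cal{E})^{\op} \to \mathrm{D}_{\ge 0}(\zz)$ of $Q: \cal{E}^{\op} \to \mathrm{D}_{\ge 0}(\zz)$ along $\gamma_w = \pi \circ \gamma_\mathrm{Frob}$, and then apply a Cisinski fractions argument analogous to Corollary \ref{LKE}. Composing Kan extensions via Corollary \ref{LKE} gives $(\gamma_w)_!Q \simeq \pi_!\qdb$ as $\mathrm{D}_{\ge 0}(\zz)$-valued functors; since the inclusion $j: \mathrm{D}_{\ge 0}(\zz) \hookrightarrow \mathrm{D}(\zz)$ is a left adjoint it preserves colimits, so $j_*\pi_!\qdb \simeq \pi_! j_*\qdb$ as $\mathrm{D}(\zz)$-valued functors, and this is 2-excisive by \cite[Lem.~1.4.1(iii)]{CD23a}. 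The universal property of $P_2$ combined with the preservation of 2-excisivity by $\pi^*$ yields $P_2 \pi_! \simeq \pi_! P_2$, so
\[
    \mathbf{R}Q = \pi_!\widetilde{\mathbf{R}}Q = \pi_! P_2 j_* \qdb \simeq P_2 j_* \pi_! \qdb \simeq P_2 j_* (\gamma_w)_!Q,
\]
and Proposition \ref{ext} applied to the quadratic functor $(\gamma_w)_!Q$ then yields $\tau_{\ge 0}\mathbf{R}Q \simeq (\gamma_w)_!Q$.

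To evaluate $(\gamma_w)_!Q(\gamma(x))$ pointwise, I would verify via a direct generalisation of Lemma \ref{fib} that $(\cal{E}, w, \mathrm{Fib})$ is an $\infty$-category with weak equivalences and fibrations in the sense of Definition \ref{wef}. The class of trivial $w$-deflations is closed under composition and pullback, and the saturation argument of Corollary \ref{LKE} via Brown's lemma \cite[Prop.~7.4.13]{Cis19} identifies $J_x$ as a right calculus of fractions at $x$ in the sense of \cite[\S7.2]{Cis19}; \cite[Cor.~7.2.9]{Cis19} then gives the desired formula $(\gamma_w)_!Q(\gamma(x)) \simeq \colim_{J_x^{\op}}Q$. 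The main obstacle I anticipate is the explicit identification of $B_{\mathbf{R}Q}$ with the mapping spectrum bifunctor on $L_w(\cal{E})$, which requires careful bookkeeping of how $(\pi \times \pi)_!$ interacts with $\map_{L_\mathrm{Frob}(\cal{E})}(-, -)$ across the Verdier quotient $\pi$.
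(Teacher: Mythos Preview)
Your treatment of the Poincar\'e structure matches the paper's: the Frobenius case assembles Corollary \ref{qdb2exc}, Proposition \ref{ext}, and Proposition \ref{ndg}, and the general case follows by left Kan extension along the Verdier projection, with nondegeneracy handled via the Verdier-quotient mapping-spectrum formula (the paper cites \cite[Th.~I.3.3]{NS18}) and perfection via the strong duality hypothesis.

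The colimit-formula argument has a genuine gap. You assert that $j_*\pi_!\qdb \simeq \pi_! j_*\qdb$ is 2-excisive in $\mathrm{D}(\zz)$ by \cite[Lem.~1.4.1(iii)]{CD23a}, but that lemma requires its input to already be quadratic as a $\mathrm{D}(\zz)$-valued functor, and $j_*\qdb$ is not: the paper remarks explicitly (immediately before Proposition \ref{ext}) that postcomposing $\qdb$ with $j$ yields a functor which ``in general fails spectacularly to be 2-excisive''. Without this step you have not shown $(\gamma_w)_!Q \in \fun^\mathrm{q}(L_w(\cal{E})^{\op}, \mathrm{D}_{\ge 0}(\zz))$, so Proposition \ref{ext} does not apply to it and you cannot conclude $\tau_{\ge 0}\mathbf{R}Q \simeq (\gamma_w)_!Q$. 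Your identity $P_2\pi_! \simeq \pi_!P_2$ is fine, but it only gives $\mathbf{R}Q \simeq P_2 j_*(\gamma_w)_!Q$; truncating then yields at best the $\mathrm{D}_{\ge 0}(\zz)$-valued 2-excisive approximation of $(\gamma_w)_!Q$, not $(\gamma_w)_!Q$ itself.

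The paper bypasses this by running the commutation the other way: since $\pi_!$ is computed by a filtered colimit and $\tau_{\ge 0}$ commutes with filtered colimits, one has directly
\[
\tau_{\ge 0}\mathbf{R}Q = \tau_{\ge 0}\pi_!\widetilde{\mathbf{R}}Q \simeq \pi_!(\tau_{\ge 0}\widetilde{\mathbf{R}}Q) \simeq \pi_!\qdb,
\]
and Proposition \ref{rf} then supplies the formula $\underset{J_x^{\op}}\colim\ Q$. Your final paragraph, verifying the calculus of fractions for $(\cal{E}, w)$, is essentially a reproof of Proposition \ref{rf}, which is already established in the appendix.
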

\begin{proof}
	Suppose firstly that $w$ is the class of Frobenius equivalences. In this case, $\mathbf{R}Q = \widetilde{\mathbf{R}}Q$ is reduced and 2-excisive by Corollary \ref{qdb2exc}, with nondegenerate polarisation by Proposition \ref{ndg}; it remains to show that the induced duality $\D:L_{\mathrm{Frob}}(\cal{E})^{\op} \to L_{\mathrm{Frob}}(\cal{E})$ is an equivalence. Since the unit $\id_{\cal{E}} \Rightarrow \D\D^{\op}$ is a Frobenius equivalence by assumption, the induced functor $L_{\mathrm{Frob}}(\cal{E})^{\op} \to L_{\mathrm{Frob}}(\cal{E})$ is an equivalence.\\
	For the general case, we note that the polarisation $B_{\mathbf{R}Q} \simeq (\pi_! \times \pi_!)B_{\widetilde{\mathbf{R}}Q}$ is nondegenerate by the formula for mapping spaces in a Verdier quotient \cite[Th.\ I.3.3]{NS18}, and that the induced duality $\D:L_w(\cal{E})^{\op} \to L_w(\cal{E})$ is an equivalence since again the unit and counit are natural weak equivalences. The last claim then follows from Proposition \ref{rf} (or Proposition \ref{hoco} if $w=w_{\mathrm{Frob}}$), since the truncation $\tau_{\ge 0}$ commutes with the filtered colimit appearing in the formula for left Kan extension along $L_\mathrm{Frob}(\cal{E}) \to L_w(\cal{E})$.
\end{proof}
\begin{remark}\label{ddual}
	If the double dual identification $\eta$ on $\cal{E}$ is a natural Frobenius equivalence\footnote{This is the case for instance for the exact form category $(\chb(\cal{E}), Q_\mathrm{ch}, \mathbf{qis}, \D, \eta)$ extended from $(\cal{E}, Q, \D, \eta)$}, the claim for general $w$ follows immediately from that for $w_{\mathrm{Frob}}$: by \cite[Ex.\ 1.1.7]{CD23b}, for $(\cal{C}, \Qoppa)$ a Poincar\'e category and $p:\cal{C} \to \cal{D}$ a Verdier projection, the pair $(\cal{D}, p_!\Qoppa)$ is Poincar\'e if and only if $\ker(p)$ is closed under the duality $\D_{\Qoppa}$. Note for this that $\cal{E}^w \subset \cal{E}$ is closed under the (exact) duality functor.
\end{remark}
\subsection{Functoriality}\label{functoriality}
Write $\excat$ for the category of small exact categories and exact functors, and $\wcompcat$ for the category of small complicial exact categories with weak equivalences and exact functors between them. Recall from Remark \ref{formcat} the definition of $\wcompformcat$ as the subcategory of the Grothendieck construction on the functor
\[
	\wcompcat^{\op} \to \mathrm{CAT}_1, \quad (\cal{E}, w) \mapsto \fun^\mathrm{q}(\cal{E})
\]
spanned by those tuples $(\cal{E}, w, Q)$ for which the polarisation $B_Q(-,-) \cong \Hom(-, \D(-))$ induces a strong duality $(\D, \eta)$ on $\cal{E}$, with maps the nonsingular exact form functors. There are functors
\begin{align*}
	\excat \to \wcompcat \xto{L_{(-)}} \cat_\infty, \\
	\cal{E} \mapsto (\chb(\cal{E}), \mathbf{qis}), (\cal{E}, w) \mapsto L_w(\cal{E})
\end{align*}
where the latter is the composite $\wcompcat \subset \relcat \to \cat_\infty$, which by Proposition \ref{comstab} factors through the inclusion $\cat^{\mathrm{st}}_\infty \subset \cat_\infty$. The functor $\relcat \to \cat_\infty$ is the composite
\[
	\relcat \xto{N} \sset^+ \to \cat_\infty
\]
for $\sset^+$ the category of marked simplicial sets with marked model structure \cite[\S3]{HTT}, with underlying $\infty$-category $\cat_\infty$, and $\relcat$ the ordinary category of pairs $(\cal{C}, W)$, for $\cal{C}$ a category and $W \subset \cal{C}$ a wide subcategory, with maps $(\cal{C}, W) \to (\cal{D}, V)$ functors $f:\cal{C} \to \cal{D}$ with $f(W) \subset V$. The functor $N$ sends $(\cal{C}, W)$ to $(N(\cal{C}), N(W))$, and the second is the canonical localisation, which can be modelled for instance as fibrant replacement in $\sset^+$ \cite[\S1.1]{Hin16}.
\begin{proposition}
	There is a canonical functor $\wcompformcat \to \cat^p_\infty$, sending a complicial exact form category with weak equivalences and strong duality $(\cal{E}, Q, w, \D, \eta)$ to the Poincar\'e $\infty$-category $(L_w(\cal{E}), \mathbf{R}Q)$ defined above.
\end{proposition}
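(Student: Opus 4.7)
The plan is to realise the pointwise assignment $(\cal{E}, Q, w) \mapsto (L_w(\cal{E}), \mathbf{R}Q)$ as an induced map of (un)straightenings, and then to restrict to the appropriate Poincar\'e/nonsingular subcategories. Recall from Remark \ref{formcat} that $\wcompformcat$ embeds as a non-full subcategory of the Grothendieck construction of $(\cal{E}, w) \mapsto \fun^\mathrm{q}(\cal{E})$ over $\wcompcat^{\op}$, while $\cat^p_\infty$ sits inside the unstraightening of $\cal{C} \mapsto \fun^\mathrm{q}(\cal{C})$ over $(\cat^{\mathrm{st}}_\infty)^{\op}$. My first task would be to produce the functor $L:\wcompcat \to \cat^{\mathrm{st}}_\infty$, $(\cal{E}, w) \mapsto L_w(\cal{E})$, as in \S\ref{functoriality}, together with a natural transformation
\[
	\mathbf{R} : \fun^\mathrm{q}(-) \Rightarrow \fun^\mathrm{q}(L(-))
\]
of functors $\wcompcat^{\op} \to \cat_\infty$, given fibrewise by $Q \mapsto \mathbf{R}Q$. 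These straighten together to a map of cartesian fibrations over $\wcompcat^{\op}$, and hence to the required functor on total categories.

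The content of the naturality assertion is: for each $F: (\cal{E}, w) \to (\cal{E}', w')$ in $\wcompcat$ and each $Q' \in \fun^\mathrm{q}(\cal{E}')$, there is a canonical equivalence $\mathbf{R}(F^*Q') \simeq L_w(F)^*\mathbf{R}(Q')$, coherent under composition. This decomposes over the three functorial steps $(-)^{\Delta^\bullet}$, $P_2$, and $\pi_!$ building $\mathbf{R}$. For $(-)^{\Delta^\bullet}$, the equality $(F^*Q')(\Delta^\bullet x) = Q'(\Delta^\bullet F(x))$, valid since $F$ intertwines the $\mathscr{C}_\zz$-actions, gives a natural isomorphism $(F^*Q')^{\Delta^\bullet} \cong L_\mathrm{Frob}(F)^*(Q')^{\Delta^\bullet}$. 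The 2-excisive approximation $P_2$, being computed by the pointwise filtered colimit $P_2\Qoppa(x) \simeq \colim_n T_2^n\Qoppa(x)$ of the proof of Proposition \ref{ext}, commutes with pullback along exact functors of stable $\infty$-categories. For left Kan extension along the Verdier projection $\pi: L_\mathrm{Frob}(\cal{E}) \to L_w(\cal{E})$, the square
\[
\begin{tikzcd}
L_\mathrm{Frob}(\cal{E}) \ar[r, "L_\mathrm{Frob}(F)"] \ar[d, "\pi"'] & L_\mathrm{Frob}(\cal{E}') \ar[d, "\pi'"] \\
L_w(\cal{E}) \ar[r, "L_w(F)"'] & L_{w'}(\cal{E}')
\end{tikzcd}
\]
should satisfy Beck-Chevalley on reduced 2-excisive functors, since $F$ preserves $w$-acyclic objects and the Verdier fibre sequence $L_\mathrm{Frob}(\cal{E}^w) \to L_\mathrm{Frob}(\cal{E}) \to L_w(\cal{E})$ of \cite[\S1.1]{CD23b} is functorial in $F$.

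Finally, I would check that the resulting map of total categories restricts to subcategories $\wcompformcat \to \cat^p_\infty$. On objects this is precisely Theorem \ref{comppoi}. On morphisms, given a nonsingular form functor $(F, \varphi_q)$, the associated natural transformation $\theta_{L_w(F), \mathbf{R}\varphi_q}: L_w(F) \circ \D_{\mathbf{R}Q} \Rightarrow \D_{\mathbf{R}Q'} \circ L_w(F)^{\op}$ must be an equivalence in $L_{w'}(\cal{E}')$. By Proposition \ref{ndg}, the polarisation $B_{\mathbf{R}Q}$ coincides on $L_w(\cal{E})$ with the mapping bifunctor twisted by $\D$, so $\theta$ is obtained by localisation from the duality compatibility $\varphi: F\D \Rightarrow \D' F^{\op}$; the latter is a natural $w$-equivalence by the nonsingularity hypothesis and is thus inverted in $L_w(\cal{E}')$. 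The main technical obstacle is the Beck-Chevalley step: while the compatibilities for $(-)^{\Delta^\bullet}$ and $P_2$ are immediate from the pointwise formulas, establishing the base-change identity for left Kan extension along the Verdier projection requires a careful analysis of Verdier quotients and the behaviour of quadratic functors thereon, as developed in \cite[\S1.1]{CD23b}.
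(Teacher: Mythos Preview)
Your proposal follows the same unstraightening strategy as the paper: build the functor from the fibrewise assignment $Q \mapsto \mathbf{R}Q$ over the localisation $L_{(-)}: \wcompcat \to \cat^{\mathrm{st}}_\infty$, then restrict to the Poincar\'e/nonsingular subcategories via Theorem~\ref{comppoi} on objects and the nonsingularity hypothesis on morphisms. The verification that duality-preservation survives localisation is identical in both.

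There is one organisational difference worth noting. You decompose $\mathbf{R}$ as $\pi_! \circ P_2 \circ (-)^{\Delta^\bullet}$, passing through the intermediate stable category $L_{\mathrm{Frob}}(\cal{E})$, whereas the paper writes it as $P_2 \circ \gamma_!(-)^{\Delta^\bullet}$ with $\gamma:\cal{E} \to L_w(\cal{E})$ the full localisation, bundling the two Kan extensions. Your order requires $L_{\mathrm{Frob}}(\cal{E})$ to be functorial in $F$, and you justify the first step by assuming $F$ intertwines the $\mathscr{C}_{\zz}$-actions; but the paper's $\wcompcat$ is defined using merely exact functors, which need not preserve Frobenius equivalences, so this is a slightly stronger hypothesis than what is set up. The paper's bundled formulation sidesteps the intermediate $L_{\mathrm{Frob}}$ altogether. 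On the other hand, the paper does not actually verify that the fibrewise composite assembles to a natural transformation---it simply asserts that the map is ``induced on unstraightenings''---so your explicit decomposition and your flagging of the Beck--Chevalley step for $\pi_!$ as the main technical obstacle are, if anything, more candid than the paper on this point.
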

\begin{proof}
	We have functors
	\begin{align*}
		(\wcompcat)^{\op} \to \mathrm{CAT}_1 \to \mathrm{CAT}_\infty, & \quad (\cal{E}, w) \mapsto \fun^\mathrm{qlex}(\cal{E}^{\op}, \Ab), \\
	\left(\cat^{\mathrm{st}}_\infty\right)^{\op} \to \mathrm{CAT}_\infty, & \quad \cal{C} \mapsto \fun^\mathrm{q}(\cal{C}^{\op}, \mathrm{D}(\zz)),
	\end{align*}
	where $\fun^\mathrm{qlex}$ and $\fun^\mathrm{q}$ denote the full subcategories of the respective functor categories $\fun(\cal{E}^{\op}, \Ab)$ and $\fun(\cal{C}^{\op}, \mathrm{D}(\zz))$ spanned by quadratic left-exact resp.\ quadratic functors. Recall that by definition \cite[Def.\ 1.2.1]{CD23a}
	\[
		\int_{\cat_\infty^{\mathrm{st}}}\fun^\mathrm{q} = \cat_\infty^h,
	\]
	the $\infty$-category of hermitian $\infty$-categories, and write $\cat^{h, w}_{1, \mathrm{comp}}$ for the unstraightening $\int_{\wcompcat}\fun^\mathrm{qlex}$. The localisation functor $L_{(-)}:\wcompcat \to \cat_\infty^{\mathrm{st}}$ induces a map of right fibrations
	\[
		\int_{\wcompcat}L_{(-)}^*\fun^\mathrm{q} \to \cat^h_\infty,
	\]
	and composing with the functor $\cat^{h, w}_{1, \mathrm{comp}} \to \int_{\wcompcat}L_{(-)}^*\fun^\mathrm{q}$ induced on unstraightenings by the composite
	\[
		\fun^\mathrm{qlex}(\cal{E}^{\op}, \Ab) \xto{\gamma_!(-)^{\Delta^\bullet}} \fun^\mathrm{q}(L_w(\cal{E})^{\op}, \mathrm{D}_{\ge 0}(\zz)) \xto{P_2} \fun^\mathrm{q}(L_w(\cal{E})^{\op}, \mathrm{D}(\zz))
	\]
	for $\gamma:\cal{E} \to L_w(\cal{E})$ the Dwyer-Kan localisation, we obtain a functor $\cat^{h, w}_{1, \mathrm{comp}} \to \cat^h_\infty$, which on objects sends $(\cal{E}, Q) \mapsto (L_w(\cal{E}), P_2\gamma_!\qdb =:\mathbf{R}Q)$. We have a non-full embedding $\wcompformcat \subset \cat^{h, w}_{1, \mathrm{comp}}$. Given a tuple $(\cal{E}, Q, w, \D, \eta)$ in $\wcompformcat$, the pair $(L_w(\cal{E}), \mathbf{R}Q)$ is a Poincar\'e $\infty$-category by \ref{comppoi}, and given a nonsingular form functor
	\[
		(f, \varphi_q, \varphi):(\cal{E}, Q, w, \D, \eta) \to (\cal{E}', Q', w', \D', \eta'),
	\]
	the induced functor $F:L_w(\cal{E}) \to L_{w'}(\cal{E}')$ on localisations is then duality preserving since $\varphi$ is a natural weak equivalence.
\end{proof}
\subsection{Poincar\'e structures on derived $\infty$-categories}\label{poistr}
We now specialise the above to the case of form exact categories of chain complexes. Analogously to the story for $K$-theory, the passage from an exact category to a stable $\infty$-category goes via the (bounded) derived $\infty$-category $\db(\cal{E})$, as recalled in Appendix \ref{db}. Fix an exact form category with strong duality $(\cal{E}, Q, \D, \eta)$. Recall from \ref{rec} that the exact embedding $\cal{E} \hookrightarrow \chb(\cal{E})$ enhances to an exact form functor of exact form categories with weak equivalences
\[
	(\cal{E}, Q, \D, \eta, \mathbf{iso}) \to (\chb(\cal{E}), Q_\mathrm{ch}, \D, \eta, \mathbf{qis}),
\]
where the value of $Q_\mathrm{ch}$ at a bounded chain complex $X$ is the abelian group of pairs $(\xi, \varphi)$, for $\xi \in Q(X_0)$ with $d_1^\bullet(\xi) = 0 \in Q(X_1)$, and $\varphi:X \to \D(X)$ a symmetric form satisfying $\varphi_0 = \rho(\xi)$. We denote by $\tau$ and $\rho$ also the corresponding transfer and restriction maps
\[
	\Hom_{\chb(\cal{E})}(X, X)_{\fct} \xto{\tau_X} Q_\mathrm{ch}(X) \xto{\rho_X} \Hom_{\chb(\cal{E})}(X, X)^{\fct}
\]
for a bounded chain complex $X$. The exact category $\chb(\cal{E})$ has a canonical complicial structure given by an extension of the $\projz$-action on $\cal{E}$ arising from additivity, with corresponding Frobenius exact structure consisting of degreewise split monomorphisms and epimorphisms, and Frobenius equivalences the chain homotopy equivalences. The construction of the previous section then gives rise to a functor $Q_\mathrm{ch}^{\Delta^\bullet}:\chb(\cal{E})^{\op} \to \sab$ sending chain homotopies to simplicial homotopies, and we have a diagram of functors and natural transformations
\[\begin{tikzcd}
	\cal{E}^{\op} \ar[rd, hookrightarrow, "\iota_{\cal{E}}"'] \ar[rr, "Q"{name=Q}] && |[alias=ab]|\Ab \ar[r, hookrightarrow, "c"] & \bbDelta^{\op}\Ab. \\
	& |[alias=chb]| \chb(\cal{E})^{\op} \ar[ru, "Q_\mathrm{ch}"] \ar[rru, "Q_\mathrm{ch}^{\Delta^\bullet}"{swap,name=dQ}, bend right=2ex]
	\ar[Rightarrow,from=ab, to=dQ,shorten >=2mm,shorten <=.5mm, "\eta_1"]
	\ar[Rightarrow,from=Q, to=chb,shorten >=2mm,shorten <=3mm, "\eta_0"]
\end{tikzcd}\]
\begin{lemma}
	$\eta_0$ is an isomorphism, as is the whiskered transformation
	\[
		c\circ Q \stackrel{c_{\eta_0}}\Rightarrow c\circ Q_{\mathrm{ch}}\circ \iota_{\cal{E}} \stackrel{(\eta_1)_{\iota_{\cal{E}}}}\Rightarrow Q_{\mathrm{ch}}^{\Delta^\bullet}\circ \iota_{\cal{E}}.
	\]
\end{lemma}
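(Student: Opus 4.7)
The statement splits into two independent claims, and I plan to treat them separately.

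The assertion that $\eta_0$ is an isomorphism is a direct computation from the definition of $Q_\mathrm{ch}$. For $x\in\cal{E}$, the chain complex $\iota_\cal{E}(x)=x[0]$ is concentrated in degree $0$, so the condition $d_1^\bullet(\xi)=0$ in the formula (\ref{qtil}) is automatic, and any chain map $\varphi:x[0]\to\D(x)[0]$ is determined by its component $\varphi_0\in\Hom_\cal{E}(x,\D(x))$. Symmetry of $\varphi$ reduces to that of $\varphi_0$, which is guaranteed by the constraint $\varphi_0=\rho(\xi)$ since $\rho$ lands in the $C_2$-fixed points. The forgetful assignment $(\xi,\varphi)\mapsto\xi$ is thus inverse to $\eta_0(x)$.

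For the whiskered transformation, at simplicial level $n$ the map in question is $s_n^\bullet\circ\eta_0(x):Q(x)\to Q_\mathrm{ch}(\Delta^n\otimes x[0])$, for $s_n:\Delta^n\otimes x[0]\to x[0]$ the collapse induced by $\Delta^n\to\Delta^0$. Any vertex inclusion $i^0_n:x[0]\to\Delta^n\otimes x[0]$ is a section of $s_n$, giving $(i^0_n)^\bullet\circ s_n^\bullet=\id$ and hence injectivity. Naturality of the identification in $[n]$ is automatic since $s_n\circ(\theta_*\otimes 1)=s_m$ for every $\theta:[m]\to[n]$, so surjectivity at each level $n$ is all that remains.

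For surjectivity, I plan to unwind $Q_\mathrm{ch}(Y_n)$ with $Y_n:=\Delta^n\otimes x[0]$. Note $(Y_n)_0=x^{n+1}$ indexed by vertices and $(Y_n)_1=x^{\binom{n+1}{2}}$ indexed by edges, with $d_1$ sending the edge $[ij]$ ($i<j$) to $\iota_j-\iota_i$. Since $\D(Y_n)$ is concentrated in nonpositive degrees, a chain map $\varphi:Y_n\to\D(Y_n)$ consists solely of a degree-$0$ component $\varphi_0:x^{n+1}\to\D(x)^{n+1}$; the chain condition $\D(d_1)\circ\varphi_0=0$ forces all rows (hence by symmetry all columns) of the matrix of $\varphi_0$ to coincide, so $\varphi_0=A\cdot J$ for a single symmetric $A:x\to\D(x)$ and $J$ the all-ones matrix. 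Decomposing $\xi\in Q(x^{n+1})$ via quadraticity into vertex pieces $\xi_i:=\iota_i^\bullet(\xi)$ and polarisation pieces $\beta_{ij}\in\Hom_\cal{E}(x,\D(x))$, the compatibility $\rho(\xi)=\varphi_0$ forces $\rho(\xi_i)=A$ and $\beta_{ij}=A$ for all $i,j$. Finally the edge constraints $d_1^\bullet(\xi)|_{[i,j]}=0$, expanded via Definition \ref{addq}(ii) applied to $\iota_j-\iota_i$, yield $\xi_j-\xi_i+\tau(\rho(\xi_i))-\tau(\beta_{ij})=0$, which collapses under $\beta_{ij}=\rho(\xi_i)$ to $\xi_j=\xi_i$. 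The whole pair $(\xi,\varphi)$ is therefore determined by $\xi_\star:=\xi_0=(i^0_n)^\bullet(\xi,\varphi)$; a direct verification using the expansion of $(\sum_i p_i)^\bullet$ on $x^{n+1}$ identifies the resulting element with $s_n^\bullet\eta_0(\xi_\star)$, completing surjectivity.

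The main friction is in the last displayed computation: the additivity formula for $Q$ generates $\tau$-terms whose cancellation with the symmetry data must be tracked carefully. Once done in the $n=1$ case the general case is a direct transcription, since the edge constraints decouple pairwise across $\Delta^n$ and the analysis of $\varphi_0$ is purely matrix-theoretic. No further input beyond the form category axioms of Definition \ref{addq} is needed.
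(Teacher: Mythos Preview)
Your argument for $\eta_0$ matches the paper's (which simply appeals to the definition of $Q_\mathrm{ch}$), and your argument for the whiskered transformation is correct but proceeds along a genuinely different line.

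The paper argues structurally: it splits $\Delta^n x[0] \cong x[0]\oplus\tilde{x}^n$ with $\tilde{x}^n=\ker(s_n)$ contractible, proves a sub-lemma that $\tilde{x}^n$ is a direct sum of disc complexes $D_i(x^{\oplus\binom{n}{i}})$, and then uses quadraticity to write $Q_\mathrm{ch}(\Delta^n x[0])\cong Q_\mathrm{ch}(x[0])\oplus Q_\mathrm{ch}(\tilde{x}^n)\oplus\Hom_{\chb(\cal{E})}(\tilde{x}^n,\D(x)[0])$; a direct evaluation of $Q_\mathrm{ch}$ and $\Hom$ on discs shows the last two summands vanish. Your approach instead unpacks the defining constraints of $Q_\mathrm{ch}(\Delta^n x[0])$ by hand: the chain-map condition on $\varphi$ forces $\varphi_0$ to be a constant matrix, the compatibility $\rho(\xi)=\varphi_0$ pins down both the diagonal and polarisation components of $\xi$, and the edge constraints from $d_1^\bullet(\xi)=0$ (via the additivity axiom) force all vertex components to coincide. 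The paper's route is cleaner and yields a reusable computation of $Q_\mathrm{ch}$ on discs; yours is more elementary in that it avoids the disc-decomposition sub-lemma and works entirely with the axioms of Definition~\ref{addq}, at the cost of the bookkeeping you flag in your final paragraph. One small point worth making explicit in your write-up: the step ``$(\xi,\varphi)$ is determined by $\xi_\star$'' uses that on $Q(x^{n+1})$ the vertex components together with $\rho$ jointly detect elements (since the polarisation summands embed via the off-diagonal of $\rho$), which is implicit in your identification $\beta_{ij}=A$ but deserves a sentence.
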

\begin{proof}
	The first statement then follows from the definition of $Q_\mathrm{ch}$; for the second, for an object $x$ of $\cal{E}$, consider the map
	\[
		cQ(x) = cQ_\mathrm{ch}(x[0]) \to Q_\mathrm{ch}^{\Delta^\bullet}(x[0])
	\]
	induced by the unique map $\Delta^\bullet \to c\Delta^0$. The codegeneracies $s_n:\Delta^nx[0] \epi x[0]$ induce a splitting
	\[
		\Delta^nx[0] \cong \tilde{x}^n \oplus x[0]
	\]
	for each $n$, where $\tilde{x}^n := \ker(s_n)$ is contractible. For $x \in \cal{E}$ and $i \in \zz$, write $D_i(x) := C \otimes x[i-1]$ for the complex
	\[
		\dots \to 0 \to x =\joinrel=\joinrel= x \to 0 \to \dots,
	\]
	concentrated in degrees $i, i-1$.
		\begin{lemma}
		For each $n \ge 1$ we have $\tilde{x}^n \cong \bigoplus_{1 \le i \le n}D_i\left(x^{\oplus\binom{n}{i}}\right)$.
	\end{lemma}
	\begin{proof}
		This follows from the analogous statement in $\mathscr{C}_{\zz}$, i.e.\ that
		\[
			\tilde{\Delta}^n := \ker\left(N\zz[\Delta^n] \epi N\zz[\Delta^0]\right) \cong \bigoplus_{1 \le i \le n}D_i\left(\zz^{\oplus\binom{n}{i}}\right).
		\]
		The category of finitely generated projective $\zz$-modules is idempotent complete and hence so is $\mathscr{C}_{\zz}$, so the contractible chain complex $\tilde{\Delta}^n$ splits as a finite direct sum of discs $D_i(\zz)$, $\tilde{\Delta}^n \cong \bigoplus_{i \in S}D_i(\zz^{n_i})$ for some finite set $S \subset \mathbb{N}$ and $n_i \in \mathbb{N}$. In degrees $0 \le i \le n$, $\Delta^n$ is isomorphic to $\zz[(\Delta^n_i)^{\mathrm{nd}}] \cong \zz^{\binom{n+1}{i+1}}$ and is 0 elsewhere, so by descending induction on the degree $0 \le i \le n$, we have that the $i^{\mathrm{th}}$ component of the direct sum is
		\[
			D_i\left(\zz^{\oplus\sum_{i \le j \le n}(-1)^{j+i}\binom{n+1}{j+1}}\right) \cong D_i\left(\zz^{\oplus\binom{n}{i}}\right).
		\]
	\end{proof}
	Now for each $n \ge 1$ we have
	\begin{align*}
		Q_\mathrm{ch}(\Delta^nx[0]) & \cong Q_\mathrm{ch}\left(x[0] \oplus \bigoplus_{1 \le i \le n}D_i\left(x^{\oplus\binom{n}{i}}\right)\right) \\
		& \cong Q_\mathrm{ch}(x[0]) \oplus Q_\mathrm{ch}\left(\bigoplus_{1 \le i \le n}D_i\left(x^{\oplus\binom{n}{i}}\right)\right) \oplus \bigoplus_{1 \le i \le n}\Hom_{\chb(\cal{E})}\left(D_i\left(x^{\oplus\binom{n}{i}}\right), \D(x)[0]\right),
	\end{align*}
	by quadraticity. The second summand evaluates to
	\[
		Q_\mathrm{ch}\left(\bigoplus_{1 \le i \le n}D_i\left(x^{\oplus\binom{n}{i}}\right)\right) \cong \bigoplus_{1 \le i \le n}Q_\mathrm{ch}\left(D_i\left(x^{\oplus\binom{n}{i}}\right)\right) \oplus \bigoplus_{1 \le i < j \le n}\Hom_{\chb(\cal{E})}\left(D_i\left(x^{\oplus\binom{n}{i}}\right), \D\left(D_j\left(x^{\oplus\binom{n}{j}}\right)\right)\right) = 0,
	\]
	since
	\[
		\Hom_{\chb(\cal{E})}(D_i(y), \D(D_j(z))) = \Hom_{\chb(\cal{E})}(D_i(y), D_{1-j}(\D(z))) =
			\begin{cases}
				\Hom_{\cal{E}}(y, \D(z)), & j = -i, -i+1, \\
				0, & \text{ else,}
			\end{cases}
	\]
	for each $y, z \in \cal{E}$, and
	\begin{align*}
		Q_\mathrm{ch}(D_i(y)) = & \{(\xi, \varphi) \mid \xi \in Q(D_i(y)_0), \varphi \in \Hom_{\chb(\cal{E})}(D_i(y), \D(D_i(y)))^{\fct}, \rho(\xi) = \varphi_0, d_1^\bullet(\xi) = 0\} \\
		\cong & \begin{cases}
			Q(y), & i = 0, \\
			0, & \text{ else.}
		\end{cases}
	\end{align*}
	Similarly, the third term evaluates to
	\[
		\bigoplus_{1 \le i \le n}\Hom_{\chb(\cal{E})}\left(D_i\left(x^{\oplus\binom{n}{i}}\right), \D(x)[0]\right) = 0,
	\]
	since
	\[
		\Hom_{\chb(\cal{E})}\left(D_i\left(x^{\oplus\binom{n}{i}}\right), \D(x)[0]\right) = \begin{cases} \Hom_{\chb(\cal{E})}\left(x, \D(x)\right), & i=0, \\ 0, & \text{ else.}\end{cases}
	\]
	The composite $Q_\mathrm{ch}(x[0]) \to Q_\mathrm{ch}(\Delta^nx[0]) \cong Q_\mathrm{ch}(\tilde{x}^n) \oplus Q_\mathrm{ch}(x[0]) \oplus \Hom_{\chb(\cal{E})}(\tilde{x}^n, \D(x))$ is
	\[
		\begin{psmallmatrix}
			Q_\mathrm{ch}(\iota_0) \\ Q_\mathrm{ch}(\iota_1) \\ \tau(\D(\iota_1)\rho(-)\iota_0)
		\end{psmallmatrix} \circ Q_\mathrm{ch}(\pi_1) = \begin{psmallmatrix}
			0 \\ 1 \\ 0
		\end{psmallmatrix},
	\]
	and clearly this furnishes an isomorphism $Q(x) \cong Q_\mathrm{ch}(x[0]) \cong Q_\mathrm{ch}(\Delta^nx[0])$. But the above map coincides with the degeneracy $Q_\mathrm{ch}^{\Delta^0}(x[0]) \to Q_\mathrm{ch}^{\Delta^n}(x[0])$, and hence the levelwise isomorphism above assembles into an isomorphism of simplicial abelian groups
	\[
		cQ(x) \cong \tqdb(x[0]).
	\]
\end{proof}
In the sequel we write $Q$ for both the functor $\cal{E}^{\op} \to \Ab$ and its extension to $\chb(\cal{E})$. By Corollary \ref{qdb2exc}, we have that $\qdb$ descends to a $\mathrm{D}_{\ge 0}(\zz)$-valued presheaf on the stable $\infty$-category $L_\mathrm{Frob}(\chb(\cal{E})) = \kb(\cal{E})$ which is 2-excisive, and admits an essentially unique 2-excisive $\mathrm{D}(\zz)$-valued delooping which we denote $\Qoppa : \kb(\cal{E})^{\op} \to \mathrm{D}(\zz)$. In general, this delooping will be non-connective; we claim however that for coconnective complexes $X$ the natural map $\qdb(X) \to \Qoppa(X)$, the unit of the adjunction $P_2 \dashv i$ of \S\ref{dadd}, is an equivalence. Observe firstly that for such $X$, the mapping spectrum $\map(X, \D(X))$ is connective, since $\D(X)$ is concentrated in nonnegative degrees.\footnote{This is a consequence of the natural weight structure on $\kb(\cal{E})$; see \cite[Ex.\ 3.1.3(i)]{HS23}.} The fibre $F$ of the map
\[
	\Omega\qdb(\Omega X) \to \Omega \tau_{\ge 0}\Map_\Delta(\Omega X, \D(\Omega X))
\]
in $\mathrm{D}(\zz)$ sits in a long exact sequence
\begin{align*}
	\dots & \to \pi_0F \to \pi_1\qdb(\Omega X) \to \pi_1\Map(\Omega X, \D(\Omega X)) \to \pi_{-1}F \to \pi_0\qdb(\Omega X) \to \dots \\
	\dots & \to \pi_0\Map(\Omega X, \D(\Omega X)) \to \pi_{-2}F \to \pi_{-1}\qdb(X) \to 0.
\end{align*}
Now $\Map(\Omega X, \D(\Omega X)) \simeq \Omega^2\Map(X, \D(X))$ is 2-connective, and $\pi_0\qdb(\Omega X) = 0$ since $Q(\Omega X) = 0$ for coconnective $X$. Accordingly, the map $F \to \qdb(X)$ is a $\pi_*$-isomorphism and hence an equivalence of spectra, and in the sequential colimit, we obtain $\qdb(X) \simeq P_2\qdb(X)$ for coconnective complexes $X$. In particular, $\Qoppa\mid_{\cal{E}^{\op}} \simeq HQ$, for $H : \Ab \to \mathrm{D}(\zz)$ the Eilenberg-MacLane embedding.
\begin{corollary}\label{uni}
	$\Qoppa:\kb(\cal{E})^{\op} \to \mathrm{D}(\zz)$ is the essentially unique quadratic extension of $Q:\cal{E}^{\op} \to \Ab$. Moreover, the connective cover $\tau_{\ge 0}\Qoppa(X)$ satisfies
	\[
		\tau_{\ge 0}\Qoppa(X) \simeq \underset{J^{\op}_{X, \mathrm{Frob}}}\colim\ Q
	\]
	in $\mathrm{D}_{\ge 0}(\zz)$.
\end{corollary}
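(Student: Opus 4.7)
My plan is to handle the two claims in turn, leveraging the preceding discussion together with Propositions \ref{ext} and \ref{hoco}. The fact that $\Qoppa$ extends $Q$ is an immediate consequence of the observations just before the corollary: we have seen that $\qdb \xto{\simeq} \Qoppa$ on coconnective complexes (including objects of $\cal{E}$ viewed as concentrated in degree zero), and the preceding lemma identifies $\qdb\mid_{\cal{E}^{\op}} \cong c Q \simeq HQ$ in $\mathrm{D}_{\ge 0}(\zz) \hookrightarrow \mathrm{D}(\zz)$.

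For essential uniqueness, I would invoke the adjoint equivalence $P_2 j_* \dashv \tau_{\ge 0, *} i$ of Proposition \ref{ext}, reducing to the claim that a quadratic functor $\widetilde{\Qoppa}: \kb(\cal{E})^{\op} \to \mathrm{D}_{\ge 0}(\zz)$ with $\widetilde{\Qoppa}\mid_{\cal{E}^{\op}} \simeq HQ$ is essentially unique, equivalent to $\qdb$. The fibre-sequence decomposition $\Lambda_{\widetilde{\Qoppa}} \to \widetilde{\Qoppa} \to B_{\widetilde{\Qoppa}}^{\hct}$ of a quadratic functor into its linear part and the homotopy fixed points of its polarisation is then the organising principle: since $\cal{E} \hookrightarrow \kb(\cal{E})$ is the inclusion of the heart of a canonical weight structure, the 1-excisive (resp.\ bilinear) part of a quadratic functor on $\kb(\cal{E})$ is determined up to equivalence by its restriction to $\cal{E}$ (resp.\ $\cal{E} \times \cal{E}$), and the transformation between these is similarly controlled. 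This is the content of a weighted-heart-style argument in the vein of \cite[Cor.\ A.2.8]{HS23}.

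The formula for the connective cover then falls out essentially for free: Proposition \ref{ext} identifies $\tau_{\ge 0}\Qoppa \simeq \qdb$ on $\kb(\cal{E})^{\op}$, and Proposition \ref{hoco} applied to the complicial exact category with weak equivalences $(\chb(\cal{E}), w_{\mathrm{Frob}})$ furnishes the identification
\[
    \tau_{\ge 0}\Qoppa(X) \simeq \qdb(X) \simeq \underset{J_{X, \mathrm{Frob}}^{\op}}\colim\ Q_\mathrm{ch},
\]
where the colimit on the right is computed in $\mathrm{D}_{\ge 0}(\zz)$, and is denoted $\colim Q$ under the convention established just before the corollary.

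The main obstacle is the uniqueness assertion, specifically the invocation of a weighted-heart-type equivalence for quadratic functors on $\kb(\cal{E})$. Although the linear and bilinear pieces independently extend uniquely, reconstructing the full quadratic structure from data on the heart requires keeping track of the $\fct$-coherences and their interaction with the weight filtration, which is a nontrivial bookkeeping exercise even granted the cited results. An alternative route, bypassing the weight-structure machinery, would be to show directly that a natural comparison map $\iota_! Q \to \widetilde{\Qoppa}$ (obtained from the universal property of left Kan extension along $\iota: \cal{E}^{\op} \hookrightarrow \kb(\cal{E})^{\op}$) becomes an equivalence after 2-excisive approximation, but this ultimately reduces to the same detection-on-generators question.
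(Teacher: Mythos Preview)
Your treatment of existence and of the connective-cover formula is correct and matches the paper's: the discussion immediately preceding the corollary shows $\Qoppa\mid_{\cal{E}^{\op}} \simeq HQ$, and the identification $\tau_{\ge 0}\Qoppa \simeq \qdb$ from Proposition~\ref{ext} together with Proposition~\ref{hoco} gives the colimit formula exactly as you say.

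For uniqueness, however, you are working much harder than necessary, and the argument you sketch is not completed. The paper simply invokes \cite[Th.~2.19]{BGMN22} (recorded in Appendix~\ref{extq}): restriction along $\cal{A} \hookrightarrow \stab(\cal{A})$ induces an equivalence $\fun^{n\text{-exc}}(\stab(\cal{A}), \cal{C}) \simeq \fun^{n\text{-poly}}(\cal{A}, \cal{C})$ for any stable target $\cal{C}$. Since $\kb(\cal{E}) \simeq \stab(\cal{E}_\oplus)$ and $Q$ is $2$-polynomial, this gives essential uniqueness of the $2$-excisive extension in one line. Your proposed route---reducing via Proposition~\ref{ext} to $\mathrm{D}_{\ge 0}(\zz)$-valued functors, then reconstructing the quadratic functor from its linear and bilinear pieces via a weight-structure argument---is essentially an attempt to reprove this special case of the BGMN theorem by hand. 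You correctly identify that tracking the $C_2$-coherences through the weight filtration is the nontrivial step, but you do not carry it out; and there is no need to, since the result is already available as a black box. The reduction to connective targets via Proposition~\ref{ext} is also unnecessary here: \cite[Th.~2.19]{BGMN22} applies directly with $\cal{C} = \mathrm{D}(\zz)$.
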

\begin{proof}
	By above, $\Qoppa$ is a quadratic extension of $HQ$; uniqueness follows from \cite[Th.\ 2.19]{BGMN22}.
\end{proof}
\begin{remark}[Cf.\ {\cite[Rem.\ 4.2.19]{CD23a}}]
	It can be shown that upon restriction to coconnective chain complexes, the derived functor of $Q$ is given by the Dold-Puppe non-additive derived functor \cite{DP58}
	\[
		\wchb(\cal{E})^{\op} \hookrightarrow (\bbDelta\cal{E})^{\op} \simeq \bbDelta^{\op}\cal{E}^{\op} \to \sab \to \mathrm{D}(\zz),
	\]
	where the first functor is furnished by Dold-Kan, and is fully faithful in general (and upon replacing the source with all connective chain complexes, an equivalence for $\cal{E}$ weakly idempotent complete; see for instance \cite[Th.\ 1.2.3.7]{HA}). A computation shows that for coconnective $X$ there is an equivalence
	\[
		\qdb(X) \simeq \underset{[n] \in \bbDelta^{\op}}\colim\ Q(X^n),
	\]
	for $\underset{[n] \in \bbDelta}\lim\ X^n \simeq X$ a cosimplicial resolution of $X$, and so for $X \in \wchb(\cal{E})$, we recover
	\[
		\Qoppa(X) \simeq \qdb(X).
	\]
\end{remark}
Passage to the derived $\infty$-category of $\cal{E}$ is then given by the Verdier projection $\pi:\kb(\cal{E}) \to \db(\cal{E})$ at the subcategory of acyclics $\acb(\cal{E})$; left Kan extending $\Qoppa$ along $\pi$, we obtain a hermitian $\infty$-category $(\db(\cal{E}), \pi_!\Qoppa)$.
\begin{theorem}\label{dbpoi}
	For $(\cal{E}, Q, \D, \eta)$ an exact form category with strong duality, the pair $(\db(\cal{E}), \pi_!\Qoppa)$ is the data of a Poincar\'e $\infty$-category. Moreover, the derived functor $\pi_!\Qoppa$ satisfies
	\[
		\tau_{\ge0}\pi_!\Qoppa(X) \simeq \underset{J_X^{\op}}\colim\ Q
	\]
	in $\mathrm{D}_{\ge 0}(\zz)$.
\end{theorem}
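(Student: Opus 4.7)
The plan is to derive Theorem \ref{dbpoi} as an immediate consequence of Theorem \ref{comppoi} applied to the complicial exact form category with weak equivalences and strong duality $(\chb(\cal{E}), Q_\mathrm{ch}, \D, \eta, \mathbf{qis})$ constructed in \S\ref{excat-to-chain-complexes}. In particular, the first step is to verify this really is a legitimate input: the complicial structure on $\chb(\cal{E})$ together with its class of quasi-isomorphisms is recalled in Appendix \ref{excat}, the componentwise extensions of $\D$ and $\eta$ remain an exact duality whose unit is a natural isomorphism (hence \emph{a fortiori} a quasi-isomorphism), and $Q_\mathrm{ch}$ is quadratic left exact with polarisation $\Hom_{\chb(\cal{E})}(-, \D(-))$, as recalled in the discussion around (\ref{qtil}).

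Having cleared the hypotheses, Theorem \ref{comppoi} yields that $(L_\mathbf{qis}(\chb(\cal{E})), \mathbf{R}Q_\mathrm{ch}) = (\db(\cal{E}), \mathbf{R}Q_\mathrm{ch})$ is a Poincar\'e $\infty$-category. It therefore remains to identify $\mathbf{R}Q_\mathrm{ch}$ with $\pi_!\Qoppa$. By definition, $\widetilde{\mathbf{R}}Q_\mathrm{ch} = P_2 Q_\mathrm{ch}^{\Delta^\bullet}$ on $L_\mathrm{Frob}(\chb(\cal{E})) = \kb(\cal{E})$, and this is precisely the functor denoted $\Qoppa : \kb(\cal{E})^{\op} \to \mathrm{D}(\zz)$ in the paragraph preceding Corollary \ref{uni}. (More intrinsically, both functors are quadratic extensions of $HQ : \cal{E}^{\op} \to \mathrm{D}(\zz)$ along the canonical embedding $\cal{E}^{\op} \hookrightarrow \kb(\cal{E})^{\op}$ and hence agree by the uniqueness of Corollary \ref{uni}.) Consequently
\[
    \mathbf{R}Q_\mathrm{ch} := \pi_!\widetilde{\mathbf{R}}Q_\mathrm{ch} \simeq \pi_!\Qoppa,
\]
exhibiting $(\db(\cal{E}), \pi_!\Qoppa)$ as the Poincar\'e $\infty$-category furnished by Theorem \ref{comppoi}.

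Finally, the connective cover formula is the second clause of Theorem \ref{comppoi}, bearing in mind the notational convention (stated just before the theorem) of writing $Q$ for the extension $Q_\mathrm{ch}$ to $\chb(\cal{E})$, and that $J_X \subset (\chb(\cal{E}) \downarrow X)$ is the full subcategory of trivial deflations, where trivial now means quasi-isomorphism. I do not anticipate any serious obstacle: the only point requiring care is the compatibility between the two \emph{a priori} different constructions of the quadratic delooping of $Q$ on $\kb(\cal{E})$, one via $P_2$ applied to $Q_\mathrm{ch}^{\Delta^\bullet}$ in \S\ref{dadd} and the other via the coconnective-restriction argument of Corollary \ref{uni}, but this is tautological once one appeals to the universal property of the essentially unique quadratic extension.
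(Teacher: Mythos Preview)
Your proposal is correct and follows essentially the same route as the paper's proof, which is a one-line invocation of Theorem \ref{comppoi} (together with Remark \ref{ddual}) applied to $(\chb(\cal{E}), Q_\mathrm{ch}, \D, \eta, \mathbf{qis})$. Your write-up simply unpacks this: you verify the hypotheses, apply the theorem, and then make explicit the tautological identification $\widetilde{\mathbf{R}}Q_\mathrm{ch} = P_2 Q_\mathrm{ch}^{\Delta^\bullet} = \Qoppa$ on $\kb(\cal{E})$, so that $\mathbf{R}Q_\mathrm{ch} = \pi_!\Qoppa$; this last point is indeed immediate from the definitions (and, as you note, also follows from Corollary \ref{uni}).
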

\begin{proof}
	This follows from Theorem \ref{comppoi} (and Remark \ref{ddual}), in light of the fact that the duality on $\cal{E}$ induces a strong duality on $\chb(\cal{E})$ and hence on both $\kb(\cal{E})$ and $\db(\cal{E})$.
\end{proof}
\section{Grothendieck-Witt spaces}\label{comp}
In this section we construct a natural equivalence
\[
	\cal{GW}(\cal{E}, Q, w) \xto{\simeq} \cal{GW}(L_w(\cal{E}), \mathbf{R}Q),
\]
for $(\cal{E}, Q, \D, \eta, w)$ a complicial exact form category with weak equivalences and strong duality, and associated Poincar\'e category $(L_w(\cal{E}), \mathbf{R}Q)$. The argument builds on that of \cite[App.\ B.2]{CD23a}: since the Grothendieck-Witt space in both the classical and Poincar\'e setting is constructed as the fibre of a map between realisations of certain simplicial spaces, it is enough to exhibit a levelwise equivalence between the latter. The comparison then reduces to an analysis of the underlying infinite loopspace of the right derived quadratic functor $\mathbf{R}Q$ of the previous section. We begin in \S\ref{grothendieck-witt-theory} by recalling the constructions of the Grothendieck-Witt spaces in each setting. We then proceed in \S\ref{poinob} with a comparison of the spaces of nondegenerate forms associated to an exact form category and to a Poincar\'e $\infty$-category, which in \S\kern-1pt\S\ref{sdot} we extend to the hermitian $S_\bullet$- and $\mathcal{Q}_\bullet$-constructions. The comparison then follows easily in \S\ref{gwcomp}.
\subsection{Grothendieck-Witt theory}\label{grothendieck-witt-theory}
\subsubsection{$\cal{GW}$ for exact categories}\label{cgw}
In this section we recall the construction of the Grothendieck-Witt space associated to an exact form category with weak equivalences and strong duality; see \cite[\S6]{Sch21}, \cite[\S7]{Sch24} for details.\\
To an exact form category with weak equivalences $(\cal{E}, Q, \D, \eta, w)$, we may associate the category of quadratic spaces $\wquad(\cal{E}, Q, w)$. This is the Grothendieck construction on the functor $Q_\mathrm{nd}$ of Remark \ref{poi},
\[
	Q_\mathrm{nd}:w\cal{E}^{\op} \to \set \subset \cat, x \mapsto \{\xi \in Q(x) \mid \rho_x(\xi) \in w\cal{E}\},
\]
classifying the right fibration $\wquad(\cal{E}, Q, w) \to w\cal{E}$, $(x, q) \mapsto x$. Explicitly, objects are pairs $(x, \xi)$ for $x$ an object of $\cal{E}$ and $\xi \in Q(x)$ such that $\rho_x(\xi) \in \Hom_{\cal{E}}(x, \D(x))$ is a weak equivalence, and maps $(x, \xi) \to (y, \zeta)$ are weak equivalences $f:x \to y$ in $\cal{E}$ satisfying $f^\bullet(\zeta) = \xi$.\\
Suppose now given a poset $(\mathcal{P}, \le)$ with strict duality $\mathcal{P}^{\op} \to \mathcal{P}$, $i \mapsto \overline{i}$. For an exact form category with weak equivalences and strong duality $(\cal{E}, Q, \D, \eta, w)$, the category $\fun(\mathcal{P}, \cal{E})$ acquires the structure of an exact form category with weak equivalences and strong duality as follows. The dual of a functor $X : \mathcal{P} \to \cal{E}$, $i \mapsto X_i$ is
\[
	\D_\mathcal{P}(X) : i \mapsto \D(X_{\overline{i}}),
\]
with double dual identification given pointwise by $\eta$. A natural transformation $X \to Y$ is a weak equivalence if it is so pointwise. A quadratic form on a functor $X:\mathcal{P} \to \cal{E}$ is the data of a pair $((\xi_i)_{i \le \overline{i}}, \varphi)$, for $(\xi_i)_{i\le \overline{i}} \in \prod_{i \le \overline{i}}Q(X_i)$ and $\varphi : X \to \D_\mathcal{P}(X)$ a symmetric form on $X$, satisfying $X_{i \le j}^\bullet(\xi_j) = \xi_i$ for each pair $i, j$ satisfying $i \le j \le \overline{j} \le \overline{i}$, and $\rho_{X_i}(\xi_i) = \varphi_{\overline{i}} \circ X_{i \le \overline{i}}$. The set $Q_{\mathcal{P}}(X)$ of such forms is an abelian group with $(\{\xi_i\}_{i \le \overline{i}}, \varphi) + (\{\eta_j\}_{j \le \overline{j}}, \psi) = (\{\xi_i + \zeta_i\}_{i \le \overline{i}}, \varphi + \psi)$. Informally, a form on $X$ is a compatible family of pointwise forms indexed by the subposet of elements $i \in \mathcal{P}$ with $i \le \overline{i}$, and a symmetric form on $X$ compatible with the family of symmetric forms coming from the $\xi_i$; we shall see below that in the case $\mathcal{P} = \mathrm{Ar}([n])$, the induced structure of an exact form category on $S_n(\cal{E})$ is such that the map $\varphi$ is determined uniquely by the $\xi_i$. Functoriality is inherited from that of $Q$ on $\cal{E}$: for $\theta:X \to Y$ a natural transformation, we define
\[
	Q_{\mathcal{P}}(Y) \to Q_{\mathcal{P}}(X), \quad (\{\xi_i\}_{i \le \overline{i}}, \varphi) \mapsto (\{\theta^\bullet(\xi_i)\}_{i \le \overline{i}}, \D(\theta)\varphi\theta).
\]
Associated to each functor $X$ is the $C_2$-Mackey functor
\begin{align*}
	& \mathrm{Nat}(X, \D_\mathcal{P}(X)) \xto{\tau_X} Q(X) \xto{\rho_X} \mathrm{Nat}(X, \D_\mathcal{P}(X)) \\
	& \tau_X : \varphi \mapsto (\{\tau(\varphi_{\overline{i}}\circ X_{i \le \overline{i}})\}_{i \le \overline{i}}, \varphi + \sigma(\varphi)), \\
	& \rho_X : (\{\xi_i\}_{i \le \overline{i}}, \varphi) \mapsto \varphi,
\end{align*}
from which one may check that $Q_\mathcal{P}$ is quadratic by the criteria of \cite[Lem.\ A.10(ii)]{Sch21}.\\
Recall that the category $\mathrm{Ar}([n])$ has a unique strict duality $(i \le j) \mapsto (n-j, n-i)$; for $\cal{E}$ as above, the category $\fun(\mathrm{Ar}([n]), \cal{E})$ thus inherits the structure of an exact form category with weak equivalences and strong duality. To ease notation, we write $X^{i \le j}_{k \le l}$ for the image under $X$ of the map $(i \le j) \le (k \le l)$, and $Q_n := Q_{\mathrm{Ar}([n])}$. Restricting to the subcategory of functors $X : \mathrm{Ar}([n]) \to \cal{E}$ such that $X_{i \le i} \cong 0$, and such that for each $0 \le i \le j \le k \le n$ the sequence
\[
	X_{i \le j} \mono X_{i \le k} \epi X_{j \le k}
\]
is a conflation in $\cal{E}$, we obtain an exact form category with weak equivalences and strong duality $(S_n(\cal{E}), Q, \D, \eta, w)$. Denote by $\mathcal{P}_n \subset \mathrm{Ar}([n])$ the subposet of arrows $(i \le j)$ satisfying
\[
	(i \le j) \le \overline{(i \le j)} = (n-j \le n-i),
\]
i.e.\ with $i + j \le n$. A quadratic form on $X \in S_n(\cal{E})$ is a pair $(\{\xi_{i \le j}\}_{i+j \le n}, \varphi)$, with the family $(\xi_{i \le j})$ an object in the limit $\lim_{(i \le j) \in \mathcal{P}_n^{\op}} Q$, satisfying
\[
	\rho_{X_{i \le j}}(\xi_{i \le j}) = \varphi_{n-j \le n-i} \circ X^{i \le j}_{(n-j \le n-i)}.
\]
In particular we have $\rho_{X_{0 \le n}}(\xi_{0\le n}) = \varphi_{0 \le n}$; as noted in \cite[Rem.\ 8.2.5(i)]{HS23}, $\varphi$ is determined by the $\xi_{i \le j}$: for each $0 \le i \le n$ we have a map of conflations
\[\begin{tikzcd}
		X_{0\le i} \ar[rr, rightarrowtail, "X^{0 \le i}_{0 \le n}"] \ar[d, "\varphi_{0 \le i}"] && X_{0\le n} \ar[rr, twoheadrightarrow, "X^{0 \le n}_{i \le n}"] \ar[d, "\varphi_{0 \le n}"] && X_{i\le n} \ar[d, "\varphi_{i \le n}"] \\
		\D(X_{n-i\le n}) \ar[rr, rightarrowtail, "\D(X^{0 \le n}_{n-i \le n})"] && \D(X_{0\le n}) \ar[rr, twoheadrightarrow, "\D(X^{0 \le n-i}_{0 \le n})"] && \D(X_{0\le n-i}),
\end{tikzcd}\]
from which we see that $\varphi_{0 \le i}$ and $\varphi_{i \le n}$ are determined by $\varphi_{0 \le n}$, and repeating this with the map of conflations associated with $X_{0 \le i} \mono X_{0 \le j} \epi X_{i \le j}$, similarly for $\varphi_{i \le j}$. Accordingly, we have an injection
\[
	Q_n(X) \hookrightarrow \lim_{(i \le j) \in \mathcal{P}_n^{\op}} Q \circ X^{\op}, \quad ((\xi_i)_i, \varphi) \mapsto (\xi_i)_i.
\]
\begin{construction}
	Contravariant naturality of the $S_\bullet$-construction yields a simplicial exact category $S_\bullet(\cal{E})$. The simplicial structure maps of $S_\bullet(\cal{E})$ are however not compatible with the dualities, but are if we consider the edgewise subdivision $S^e_\bullet(\cal{E})$, obtained by precomposing with the functor $\mathrm{sd}:\bbDelta^{\op}\to\bbDelta^{\op}$, $[n] \mapsto [n]^{\op} \star [n] \cong [2n+1]$ (this has the effect of symmetrising faces and degeneracies). We thus obtain a simplicial exact form category with weak equivalences and strong duality, and a functor
	\[
		S_\bullet : \wformcat \to \bbDelta^{\op}\kern-1pt\wformcat, \quad (\cal{E}, Q, \D, \eta, w) \mapsto (S^e_\bullet(\cal{E}), Q^e_\bullet, \D^e_\bullet, \eta^e_\bullet, w).
	\]
	Postcomposing with $\wquad$ and taking nerves, we have a simplicial space
	\[
		[p] \mapsto \left|\wquad(S_{2p+1}(\cal{E}), Q_{2p+1}, w)\right|
	\]
    The forgetful functors
	\[
		\wquad(S_n\cal{E}, Q_n, w) \to wS_n(\cal{E})
	\]
	induce a map of simplicial spaces
	\[
		[p] \mapsto \left(\left|\wquad(S_{2p+1}(\cal{E}), Q_{2p+1}, w)\right| \to \left|wS_{2p+1}(\cal{E})\right| \to \left|wS_p(\cal{E})\right|\right),
	\]
    where the last map is induced levelwise by the natural transformation $1_{\bbDelta} \Rightarrow \mathrm{sd}$, $[n] \subset [n]^{\op}\star[n]$, $i \mapsto i$. The Grothendieck-Witt space \cite[Def.\ 6.3]{Sch21} is defined to be the homotopy fibre of the corresponding map of geometric realisations
	\[
		\cal{GW}(\cal{E}, Q, w) \to \left|\wquad(S^e_\bullet(\cal{E}), Q^e_\bullet, w)\right| \to \left|wS_\bullet(\cal{E})\right|.
	\]
\end{construction}
\begin{remark}\label{subd}
	The constructions in \cite{Sch21} and \cite{Sch24} make use of the functor
	\[
		\mathrm{sd}_0:\bbDelta \to \bbDelta, \quad [n] \mapsto [n]^{\op} \star [n],
	\]
	while the authors of \cite{CD23b} work with the $\mathcal{Q}_\bullet$-construction corresponding to the subdivision
	\[
		\mathrm{sd}_1:\bbDelta \to \bbDelta, \quad [n] \mapsto [n] \star [n]^{\op}.
	\]
    Here we write $(-)^{\op}:\bbDelta^{\op} \to \bbDelta^{\op}$ for the involution $[n] \mapsto [n]^{\op}$, $\theta \mapsto \theta^{\op}$, where $[n]^{\op} = (n' < (n-1)' < \dots < 1' < 0')$ and $\theta^{\op}:[m]^{\op} \to [n]^{\op}$ is $i' \mapsto \theta(i)'$. Given a map $\theta : [m] \to [n]$, $\mathrm{sd}_0(\theta)$ (resp.\ $\mathrm{sd}_1(\theta)$) is the map $\theta^{\op}\star \theta$ (resp.\ $\theta \star \theta^{\op}$).
    There is an identification of simplicial spaces
    \[
        \left([n] \mapsto  \left|\wquad(S_{\mathrm{sd}_0[n]}(\cal{E}), Q_{\mathrm{sd}_0[n]})\right|\right) = \left([n] \mapsto  \left|\wquad(S_{\mathrm{sd}_1[n]}(\cal{E}), Q_{\mathrm{sd}_1[n]})\right|\right)^{\op},
    \]
    where the opposite of a simplicial space $X$ is given by precomposition with $(-)^{\op}$. Accordingly, since $(-)^{\op}$ is cofinal (see for instance \cite[\S2]{Bar13}), we have equivalences
    \[\begin{tikzcd}
         \left|\wquad(\mathrm{sd}_0^*(S_\bullet(\cal{E}), Q_\bullet))\right| \ar[r] \ar[d, "\rotatebox{270}{$\simeq$}"] & \left|wS_\bullet(\cal{E})\right| \ar[d, "\rotatebox{270}{$\simeq$}"] \\
         \left|\wquad(\mathrm{sd}_1^*(S_\bullet(\cal{E}), Q_\bullet))\right| \ar[r] & \left|wS_\bullet(\cal{E})^{\op}\right|
    \end{tikzcd}\]
    and an induced equivalence on fibres.
\end{remark}
\subsubsection{$\cal{GW}$ for stable $\infty$-categories}\label{mgw}
Fix a Poincar\'e $\infty$-category $(\cal{C}, \Qoppa)$; we have a presheaf of spaces $\Omega^\infty\Qoppa$ on $\cal{C}$, associating to each $x \in \cal{C}$ the space of forms on $x$, the cartesian unstraightening of which is denoted $\mathrm{He}(\cal{C}, \Qoppa)$. Call a form $\xi\in \Omega^\infty\Qoppa(x)$ \textit{nondegenerate} if the symmetric form associated via the natural transformation $\Qoppa(x) \to B_\Qoppa(x, x)$ is an equivalence. There is a subfunctor inclusion
\begin{equation}\begin{tikzcd}\label{qnd}
	\Qoppa_\mathrm{nd} \subset \Omega^\infty\Qoppa : \cal{C}^{\op} \to \cal{S},
\end{tikzcd}\end{equation}
with $\Qoppa_\mathrm{nd}(x)$ the subspace spanned by the nondegenerate forms on $x$, i.e.\ the pullback
\begin{equation}\begin{tikzcd}\label{qndg}
	\Qoppa_\mathrm{nd}(x) \ar[r] \ar[d] & \Omega^\infty\Qoppa(x) \ar[d] \\
	\Map_{\cal{C}^\simeq}(x, \D(x)) \ar[r] & \Map_{\cal{C}}(x, \D(x)).
\end{tikzcd}\end{equation}
The inclusion $\Qoppa_\mathrm{nd} \subset \Omega^\infty\Qoppa$ gives rise to a map of right fibrations over $\cal{C}$
\[
	\int_{\cal{C}}\Qoppa_\mathrm{nd} \to \mathrm{He}(\cal{C}, \Qoppa),
\]
and pulling back along the inclusion $\cal{C}^\simeq \subset \cal{C}$, naturality of the straightening-unstraightening equivalence yields a square in $\cat_\infty$ 
\begin{equation}\begin{tikzcd}\label{unstrsq}
	\int_{\cal{C}^\simeq}\Qoppa_\mathrm{nd} \ar[r] \ar[d] & \mathrm{He}(\cal{C}^\simeq, \Qoppa) \ar[d] \\
	\int_{\cal{C}}\Qoppa_\mathrm{nd} \ar[r] & \mathrm{He}(\cal{C}, \Qoppa)
\end{tikzcd}\end{equation}
commuting up to canonical equivalence.
\begin{lemma}\label{unstr2}
	Suppose given a functor $F: \cal{D}^{\op} \to \cal{S}$ classifying the right fibration $\int_{\cal{D}}F$, and denote by $\iota : \cal{D}^\simeq \subset \cal{D}$ the inclusion of the groupoid core. Then there is a natural equivalence of spaces
	\[
		\left(\int_{\cal{D}}F\right)^\simeq \xto{\simeq} \iota^*\int_{\cal{D}}F = \int_{\cal{D}^\simeq}\iota^*F.
	\]
\end{lemma}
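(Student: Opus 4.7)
The plan is to identify both sides of the claimed equivalence with the fibre product $\int_{\cal{D}}F \times_{\cal{D}} \cal{D}^\simeq$, using two elementary facts: one about cores of total spaces of right fibrations, and one about base change for unstraightening.

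First I would recall that the cartesian unstraightening $\pi : \int_{\cal{D}}F \to \cal{D}$ is a right fibration, and that in a right fibration every edge is $\pi$-cartesian \cite[Prop.\ 2.4.2.4]{HTT}. By \cite[Prop.\ 2.4.1.5]{HTT}, a $\pi$-cartesian edge is an equivalence in $\int_{\cal{D}}F$ if and only if its image under $\pi$ is an equivalence in $\cal{D}$. Therefore the maximal Kan subcomplex $\left(\int_{\cal{D}}F\right)^\simeq$ consists of all vertices of $\int_{\cal{D}}F$ together with precisely those edges that project into $\cal{D}^\simeq$. In other words, the square
\[\begin{tikzcd}
\left(\int_{\cal{D}}F\right)^\simeq \ar[r] \ar[d] & \int_{\cal{D}}F \ar[d, "\pi"] \\
\cal{D}^\simeq \ar[r, hookrightarrow, "\iota"'] & \cal{D}
\end{tikzcd}\]
is cartesian in $\cat_\infty$.

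Second, right fibrations are stable under pullback, so the left vertical map is a right fibration over the Kan complex $\cal{D}^\simeq$, and hence a left fibration of spaces. By the naturality of the straightening--unstraightening equivalence \cite[Th.\ 3.2.0.1]{HTT}, pulling back the right fibration $\pi$ along $\iota$ corresponds on the straightened side to restricting $F$ along $\iota^{\op}$, so there is a canonical equivalence
\[
\int_{\cal{D}}F \times_{\cal{D}} \cal{D}^\simeq \;\simeq\; \int_{\cal{D}^\simeq}\iota^*F,
\]
natural in the pair $(\cal{D}, F)$. Composing this with the identification from the first paragraph yields the desired natural equivalence. No step here is genuinely delicate --- the only thing to keep track of is the naturality in $(\cal{D}, F)$, which follows formally since both the core construction and the base-change equivalence for unstraightening are functorial.
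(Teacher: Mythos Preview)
Your proof is correct. Both you and the paper identify $\left(\int_{\cal{D}}F\right)^\simeq$ with the pullback $\int_{\cal{D}}F \times_{\cal{D}} \cal{D}^\simeq$ and observe that the latter is a Kan complex, but the verification of this identification differs. The paper runs a Yoneda argument: for any space $X$ one has $\Map_{\cal{S}}\bigl(X, (\int_{\cal{D}}F)^\simeq\bigr) \simeq \Map_{\cat_\infty}\bigl(X, \int_{\cal{D}}F\bigr)$ by the core--inclusion adjunction, and since any map from a space into $\cal{D}$ necessarily factors through $\cal{D}^\simeq$, this mapping space coincides with $\Map_{\cal{S}}\bigl(X, \iota^*\int_{\cal{D}}F\bigr)$. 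You instead work directly with cartesian edges, showing that equivalences in the total space of a right fibration are precisely the edges lying over equivalences in the base, so that the core is literally the strict pullback as a sub-simplicial-set. Your route is more hands-on and makes the underlying mechanism (conservativity of right fibrations on cores) explicit; the paper's route is model-independent and makes naturality automatic from the adjunction. One small remark: your phrase ``in other words, the square \dots\ is cartesian'' silently uses the standard characterisation of the groupoid core as the sub-simplicial-set of simplices all of whose edges are equivalences --- this is what promotes your edge-level statement to one about arbitrary simplices, and is worth making explicit.
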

\begin{proof}
	That $\iota^*\int_{\cal{D}}F$ is a space follows from the fact that a right fibration over a Kan complex is a Kan fibration. The pullback square
	\[\begin{tikzcd}
		\iota^*\int_{\cal{D}}F \ar[r] \ar[d] & \int_{\cal{D}}F \ar[d] \\
		\cal{D}^\simeq \ar[r, hookrightarrow, "\iota"] & \cal{D}
	\end{tikzcd}\]
	induces an essentially unique map $\left(\int_{\cal{D}}F\right)^\simeq \to \iota^*\int_{\cal{D}}F$, and given a space $X$, we observe that this induces an equivalence
	\begin{align*}
		\Map_{\cal{S}}(X, \left(\int_{\cal{D}}F\right)^\simeq) \simeq & \Map_{\cat_\infty}(X, \int_{\cal{D}}F) \\
		\simeq & \Map_{\cat_\infty}(X, \int_{\cal{D}}F) \times_{{\Map_{\cat_\infty}(X, \cal{D})}} \Map_{\cat_\infty}(X, \cal{D}^\simeq) \\
		\simeq & \Map_{\cal{S}}(X, \iota^*\int_{\cal{D}}F),
	\end{align*}
	since the composite $X \to \int_{\cal{D}}F \to \cal{D}$ necessarily factors through $\iota:\cal{D}^\simeq \hookrightarrow \cal{D}$.
\end{proof}
The upper-right corner in (\ref{unstrsq}) thus identifies with the maximal subgroupoid $\mathrm{Fm}(\cal{C}, \Qoppa) \subset \mathrm{He}(\cal{C}, \Qoppa)$ of \cite[Def.\ 2.1.1]{CD23a}, and the upper left with the subgroupoid $\mathrm{Pn}(\cal{C}, \Qoppa) \subset \mathrm{Fm}(\cal{C}, \Qoppa)$ spanned by pairs $(x, \xi)$, for $\xi \in \Omega^\infty\Qoppa(x)$ nondegenerate. $\mathrm{Pn}$ assembles into a functor $\cat^p_\infty \to \cal{S}$ (note that the restriction to duality-preserving functors in $\cat^p_\infty$ is necessary for this) which is moreover corepresentable (\cite[Prop.\ 4.1.3]{CD23a}) and hence preserves limits, and by \cite[Prop.\ 6.1.8]{CD23a} filtered colimits.\\
For an $\infty$-category $K$, denote by $\twar(K)$ the twisted arrow category of $K$, with $n$-simplices
\[
	\twar_n(K) = \Hom_{\sset}(\Delta^n \star (\Delta^n)^{\op}, K). 
\]
We may equip $\fun(\twar(K), \cal{C})$ with a quadratic functor
\[
	\Qoppa_K = \underset{\twar(K)^{\op}}\lim\ \Qoppa,
\]
and restricting this to the subcategory spanned by functors $F$ for which the squares
\[\begin{tikzcd}
	F(i \to l) \ar[r] \ar[d] & F(i \to k) \ar[d] \\
	F(j \to l) \ar[r] & F(j \to k)
\end{tikzcd}\]
are exact in $\cal{C}$ for each map $\Delta^3 \to K$, $i \to j \to k \to l$, we obtain a hermitian $\infty$-category $(\mathcal{Q}_K(\cal{C}, \Qoppa), \Qoppa_K)$. In general, this is not Poincar\'e, but 
upon restricting along the inclusion $\bbDelta \subset \cat_\infty$, we obtain a simplicial Poincar\'e $\infty$-category, denoted $(\mathcal{Q}_\bullet(\cal{C}), \Qoppa_n)$. As in the classical setting, there is a forgetful functor
\[
	\mathrm{Pn}(\mathcal{Q}_n(\cal{C}), \Qoppa_n) \to \mathcal{Q}_n(\cal{C})^{\simeq},
\]
and the Grothendieck-Witt space is equivalent \cite[Cor.\ 4.1.7]{CD23b} to the fibre
\[
	\cal{GW}(\cal{C}, \Qoppa) \to |\mathrm{Pn}(\mathcal{Q}_\bullet(\cal{C}), \Qoppa_\bullet)| \to |\mathcal{Q}_\bullet(\cal{C})^\simeq|.
\]
\begin{remark}
    As in \cite[App.\ B]{CD23b}, it will be convenient to reformulate the above constructions in terms of the hermitian $\mathscr{S}^e_\bullet$-construction: there is a functor
    \[
    	\iota_n:\twar(\Delta^n) \to \mathrm{Ar}(\Delta^n \star (\Delta^n)^{\op})
    \]
    sending $(i \le j)$ to $(i \le j')$, where we write the vertices of $\Delta^n \star (\Delta^n)^{\op}$ as
    \[
    	(0 < \dots < (n-1) < n < n' < (n-1)' < \dots < 0'),
    \]
    which is well known to induce a levelwise equivalence of simplicial stable $\infty$-categories
    \[
    	\iota^*_n:\mathscr{S}^e_\bullet(\cal{C}) \to \mathcal{Q}_\bullet(\cal{C}).
    \]
    We equip $\mathscr{S}^e_n(\cal{C}) = \mathscr{S}_{2n+1}(\cal{C})$ with a hermitian structure as follows. Write $\mathcal{P}_n \subset \mathrm{Ar}(\Delta^n)$ for the subposet on pairs $(i \le j)$ such that $i+j \le n$. Under the isomorphism
    \[
        \mathrm{Ar}(\Delta^{2n+1}) \cong \mathrm{Ar}(\Delta^n \star (\Delta^n)^{\op}),
    \]
    $\mathcal{P}_{2n+1}$ corresponds to the subposet on pairs $i \le j$ or $k \le l'$, for $0 \le k \le l \le n$. The poset inclusion $\twar(\Delta^n) \to \mathrm{Ar}(\Delta^n \star (\Delta^n)^{\op})$ thus factors as
    \[
        \twar(\Delta^n) \stackrel{j_n}\subset \mathcal{P}_{2n+1} \subset \mathrm{Ar}\Delta^n \star (\Delta^n)^{\op})
    \]
    and we claim that $j_n$ is cofinal. Objects of $((i \le j) \downarrow j_n)$ are maps $(i \le j) \to (k \le l')$, for some $0 \le k \le l \le n$, and likewise objects of $((i \le j') \downarrow j_n)$ are maps $(i \le j') \to (k \le l')$. It is easy to see that the former category has initial object $(i \le j) \to (i \le n')$, and the latter $(i \le j') =\joinrel= (i \le j')$. The claim follows by Quillen's theorem A.\\
    For each $n \geq 0$ we equip $\mathscr{S}^e_n(\cal{C})$ with quadratic functor
    \begin{equation}\label{twarqn}
    	X \mapsto \underset{\twar(\Delta^n)^{\op}}\lim\ \Qoppa \circ (\iota_n^*X)^{\op} \simeq \underset{\mathcal{P}_{2n+1}^{\op}}\lim\ \Qoppa \circ X^{\op}
    \end{equation}
    obtaining a Poincar\'e category $(\mathscr{S}_n^e(\cal{C}), (\iota_n^*)^*\Qoppa_n)$ tautologically Poincar\'e equivalent to $(\mathcal{Q}_n(\cal{C}), \Qoppa_n)$ via $\iota^*_n$.
\end{remark}
\subsection{Categories of Poincar\'e objects}\label{poinob}
Given a complicial exact form category with weak equivalences and strong duality $(\cal{E}, Q, \D, \eta, w)$, write $(L_w(\cal{E}), \mathbf{R}Q)$ for the associated Poincar\'e $\infty$-category of \S\ref{dquad}. There is a functor $\wquad(\cal{E}, Q, w) \to \mathrm{Pn}(L_w(\cal{E}), \mathbf{R}Q)$ induced by naturality of unstraightening: the localisation $\gamma:\cal{E} \to L_w(\cal{E})$ induces a map of right fibrations
\[\begin{tikzcd}
	\int_{w\cal{E}}\gamma^*\mathbf{R}Q_\mathrm{nd} \ar[r] \ar[d] & \mathrm{Pn}(L_w(\cal{E}), \mathbf{R}Q) \ar[d] \\
	w\cal{E} \ar[r, "\gamma"] & L_w(\cal{E})^\simeq,
\end{tikzcd}\]
where the left-hand vertical arrow is classified by the restriction $\gamma^*\mathbf{R}Q_\mathrm{nd}:w\cal{E}^{\op} \to \cal{S}$, and composing with the map in $\mathrm{RFib}(\cal{E})$ induced by the natural transformation $Q_\mathrm{nd} \Rightarrow \gamma^*\mathbf{R}Q_\mathrm{nd}$ then gives a map
\[
	\wquad(\cal{E}, Q, w) \to \mathrm{Pn}(L_w(\cal{E}), \mathbf{R}Q).
\]
The following proof is a generalisation of the techniques of \cite[Prop.\ B.2.3]{CD23b}.
\begin{proposition}\label{lvl}
	For $(\cal{E}, Q, \D, \eta, w)$ a complicial exact form category with weak equivalences and strong duality, the functor $\wquad(\cal{E}, Q, w) \to \mathrm{Pn}(L_w(\cal{E}), \mathbf{R}Q)$ induces an equivalence upon realisation.
\end{proposition}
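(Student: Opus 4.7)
The strategy is to identify both realisations as colimits of presheaves of spaces over a common base and to compare them via cofinality and a bisimplicial collapse. The forgetful functor $\wquad(\cal{E}, Q, w) \to w\cal{E}$ is a right fibration classified by $Q_\mathrm{nd}: w\cal{E}^{\op} \to \set$, and $\mathrm{Pn}(L_w(\cal{E}), \mathbf{R}Q) \to L_w(\cal{E})^\simeq$ is the right fibration classified by $\mathbf{R}Q_\mathrm{nd}: L_w(\cal{E})^{\simeq, \op} \to \cal{S}$, with the displayed functor covering the localisation $\gamma: w\cal{E} \to L_w(\cal{E})^\simeq$. Since the realisation of a right fibration computes the colimit of its classifying presheaf, and since $\gamma$ is a Dwyer-Kan localisation into an $\infty$-groupoid (hence an equivalence on realisations and cofinal as an $\infty$-functor), it suffices to show that the canonical natural transformation $Q_\mathrm{nd} \Rightarrow \gamma^* \mathbf{R}Q_\mathrm{nd}$ on $w\cal{E}^{\op}$ induces an equivalence $\colim_{w\cal{E}^{\op}} Q_\mathrm{nd} \simeq \colim_{w\cal{E}^{\op}} \gamma^* \mathbf{R}Q_\mathrm{nd}$.

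Next I would identify $\gamma^* \mathbf{R}Q_\mathrm{nd}(x)$ with the realisation of the simplicial set of nondegenerate forms $[n] \mapsto Q_\mathrm{nd}(\Delta^n x)$, by adapting the arguments of \S\ref{dadd} to the nondegenerate subfunctor. Nondegeneracy is the pullback condition (\ref{qndg}) over the mapping space $\Map_{L_w(\cal{E})}(\gamma(x), \D\gamma(x))$, which under the complicial structure admits the derived description $|\Hom_{\cal{E}}(\Delta^\bullet x, \D(x))|$; combining this with the identifications $\tau_{\ge 0} \mathbf{R}Q(\gamma(x)) \simeq Q(\Delta^\bullet x)$ from Proposition \ref{ext} and the colimit formula of Theorem \ref{comppoi} yields the claim via an argument essentially identical to that of Proposition \ref{hoco} applied to $Q_\mathrm{nd}$.

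The remaining step is the bisimplicial collapse $\colim_{w\cal{E}^{\op}} Q_\mathrm{nd} \simeq \colim_{w\cal{E}^{\op}} |Q_\mathrm{nd}(\Delta^\bullet -)|$. The target is the realisation of the bisimplicial set with $(p, q)$-simplices given by chains $x_0 \to \cdots \to x_p$ in $w\cal{E}$ together with $\xi \in Q_\mathrm{nd}(\Delta^q x_0)$; since each codegeneracy $\Delta^q x_0 \to x_0$ is a Frobenius equivalence and hence lies in $w\cal{E}$, a Quillen theorem A-style cofinality argument (applied fibrewise to the $q$-direction, using the contractibility of sections spaces as in the proof of Proposition \ref{hoco}) collapses the extra simplicial coordinate onto the diagonal, leaving $\colim_{w\cal{E}^{\op}} Q_\mathrm{nd}$.

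The main obstacle is the second step, which requires extending the colimit formula of Theorem \ref{comppoi} from the full spectrum $\mathbf{R}Q$ to the subspace $\mathbf{R}Q_\mathrm{nd}$ of nondegenerate forms. This amounts to verifying that the pullback defining nondegeneracy commutes with the relevant simplicial realisations and $J_x$-colimits; it ultimately reduces to the analogous colimit description of the mapping spectrum $\map_{L_w(\cal{E})}(\gamma(x), \D\gamma(x))$, available since $\D$ preserves weak equivalences and the $\mathscr{C}_{\zz}$-action preserves weak equivalences in each variable, but the compatibilities must be assembled carefully — particularly in the Verdier-quotient case $w \supsetneq w_\mathrm{Frob}$, where one uses Remark \ref{ddual} to reduce the general setting to the Frobenius one.
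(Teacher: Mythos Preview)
Your outline is correct and matches the paper's approach closely: both reduce via cofinality of $\gamma: w\cal{E}^{\op} \to L_w(\cal{E})^{\simeq,\op}$ to comparing $\colim_{w\cal{E}^{\op}} Q_\mathrm{nd}$ with $\colim_{w\cal{E}^{\op}} \gamma^*\mathbf{R}Q_\mathrm{nd}$, and both identify $\gamma^*\mathbf{R}Q_\mathrm{nd}(x)$ with $\colim_{J_x^{\op}} Q_\mathrm{nd}$ (the paper has already recorded this as Remark \ref{poi} in the Frobenius case). The difference lies in your third step. Where you sketch a bisimplicial collapse with a fibrewise Quillen~A argument, the paper packages this as a general lemma (Proposition \ref{global}) valid for any presheaf $F$: one identifies $\colim_{w\cal{E}^{\op}} \mathbf{R}F$ with a colimit over the Grothendieck construction $\int_{w\cal{E}^{\op}} J_\bullet^{\op}$ via Fubini for unstraightening, recognises this category as the full subcategory of $\mathrm{Ar}(w\cal{E})^{\op}$ on trivial deflations, and observes that the degeneracy $s_0^{\op}: w\cal{E}^{\op} \hookrightarrow \int_{w\cal{E}^{\op}} J_\bullet^{\op}$ admits an adjoint (the source functor) and is therefore cofinal. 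This is morally the same content as your collapse but avoids manipulating bisimplicial sets directly and yields a reusable statement.

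One correction: for the passage from $w_\mathrm{Frob}$ to general $w$, your appeal to Remark \ref{ddual} is misplaced --- that remark concerns when the Verdier quotient inherits a Poincar\'e structure, not the behaviour of $Q_\mathrm{nd}$. The paper instead argues exactly as you suggest earlier in the same paragraph: the pullback square (\ref{qndg}) defining nondegeneracy is preserved by the filtered colimit computing left Kan extension along the Verdier projection $\pi$, so $(\pi_!\widetilde{\mathbf{R}}Q)_\mathrm{nd} \simeq \pi_!(\widetilde{\mathbf{R}}Q_\mathrm{nd})$, whence $\mathbf{R}Q_\mathrm{nd}$ is again the derived functor of $Q_\mathrm{nd}$ (now indexed over $J_x$ rather than $J_{x,\mathrm{Frob}}$), and Proposition \ref{global} applies verbatim.
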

\begin{proof}
		Consider the square
	\begin{equation}\label{fibs}\begin{tikzcd}
		\wquad(\cal{E}, Q, w) \ar[r] \ar[d] & \mathrm{Pn}(L_w(\cal{E}), \mathbf{R}Q) \ar[d] \\
		w\cal{E} \ar[r, "\gamma"] & L_w(\cal{E})^{\simeq},
	\end{tikzcd}\end{equation}
	where each vertical map is a right fibration. By \cite[Cor.\ 3.3.4.6]{HTT} the homotopy type of the total space of a right fibration is given by the colimit in spaces over the classified functor, and so we have an induced map
	\[
		\underset{w\cal{E}^{\op}}\colim\ Q_\mathrm{nd} \to \underset{L_w(\cal{E})^{\simeq, \op}}\colim\ \mathbf{R}Q_\mathrm{nd}.
	\]
	The functor $w\cal{E}^{\op} \to L_w(\cal{E})^{\simeq, \op}$ is cofinal by \cite[Cor.\ 7.6.9]{Cis19}, and so it suffices to consider the map
	\[
		\underset{w\cal{E}^{\op}}\colim\ Q_\mathrm{nd} \to \underset{w\cal{E}^{\op}}\colim\ \gamma^*\mathbf{R}Q_\mathrm{nd}
	\]
	induced by the natural transformation $Q_\mathrm{nd} \Rightarrow \gamma^*\mathbf{R}Q_\mathrm{nd}$. Write as before $\widetilde{\mathbf{R}}Q$ resp.\ $\mathbf{R}Q$ for the right derived quadratic $\mathrm{D}(\zz)$-valued functor of $Q$ at $w_\mathrm{Frob}$ resp.\ $w$. In the case $w$ is the class of Frobenius weak equivalences, the identification
	\begin{equation}\label{eq}
		\gamma^*\mathbf{R}Q_\mathrm{nd}(x) = \gamma^*\widetilde{\mathbf{R}}Q_\mathrm{nd}(x) \simeq Q^{\Delta^\bullet}_\mathrm{nd}(x) \simeq \underset{J_{x, \mathrm{Frob}}^{\op}}\colim\ Q_\mathrm{nd}
	\end{equation}
    furnished by the cofinality result Proposition \ref{hoco} exhibits the former as the left Kan extension of $Q_\mathrm{nd}$ along $\gamma$, and the result follows from Proposition \ref{global}. In the general case, we note that the pullback square (\ref{qndg}) defining $\mathbf{R}Q_\mathrm{nd}$ is stable under the filtered colimit defining $\pi_!\widetilde{\mathbf{R}}Q$, for $\pi : L_{\mathrm{Frob}}(\cal{E}) \to L_w(\cal{E})$ the Verdier projection; accordingly, we have a natural equivalence
	\[
		\pi_!\left(\widetilde{\mathbf{R}}Q_\mathrm{nd}\right) \simeq (\pi_!\widetilde{\mathbf{R}}Q)_\mathrm{nd} = (\mathbf{R}Q)_\mathrm{nd}
	\]
	and taking filtered colimits of (\ref{eq}) over $J_x^{\op}$, we obtain again that $\mathbf{R}Q_\mathrm{nd}$ is the right derived functor of $Q_\mathrm{nd}$, so again by Proposition \ref{global} we are done.
\end{proof}
\subsection{The hermitian $S_\bullet$-construction}\label{sdot}
We wish to upgrade Proposition \ref{lvl} to a levelwise equivalence of simplicial spaces
\[
	\left([s] \mapsto |\wquad(S_{2s+1}(\cal{E}), Q_{2s+1}, w)|\right) \simeq \mathrm{Pn}(\mathcal{Q}_\bullet(L_w(\cal{E})), \mathbf{R}Q_\bullet).
\]
To do so, it will suffice to exhibit the latter as the derived Poincar\'e structure of the former (along with the functoriality argument off \ref{functoriality}).
\begin{remark}
    Since the action of $\mathscr{C}(\zz)$ on $\cal{E}$ is bi-exact, for each $n \ge 0$ the pointwise complicial structure on $\fun(\mathrm{Ar}(\Delta^n), \cal{E})$ preserves the subcategory $S_n(\cal{E})$, and the tuple $(S_n(\cal{E}), Q_n, \D_n, \eta_n, w)$ is a complicial exact form category with weak equivalences and strong duality. 
\end{remark}
Write $\mathscr{S}_n(L_w(\cal{E})$ for the $\infty$-categorical $S_\bullet$-construction on $L_w(\cal{E})$, i.e.\ for the subcategory of functors $X:\mathrm{Ar}(\Delta^n) \to L_w(\cal{E})$ with $X_{i, i} \simeq 0$ for each $0 \le i \le n$ and $X_{i,j} \to X_{i,k} \to X_{j,k}$ a fibre sequence in $L_w(\cal{E})$ for each triple $0 \le i \le j \le k \le n$. Since conflations in $\cal{E}$ localise to fibre sequences in $L_w(\cal{E})$, the localisation $\cal{E} \to L_w(\cal{E})$ induces a functor
\[
    S_n(\cal{E}) \to \mathscr{S}_n(L_w(\cal{E})),
\]
which moreover exhibits $\mathscr{S}_n(L_w(\cal{E}))$ as the localisation of $S_n(\cal{E})$ at the pointwise weak equivalences, by \cite[Lem.\ B.2.4]{CD23a}. Write
\[
	(\mathbf{R}Q)_n : \mathscr{S}_n(L_w(\cal{E}))^{\op} \to \mathrm{D}(\zz), \quad X \mapsto \underset{i+j\le n}\lim\ \mathbf{R}Q(X_{i,j}),
\]
and $\mathbf{R}(Q_n)$ for the 2-excisive (right) derived functor of the functor $Q_n:S_n(\cal{E})^{\op} \to \Ab$ of \S\ref{cgw} with respect to the pointwise weak equivalences in $S_n(\cal{E})$. There is a natural transformation $\mathbf{R}(Q_n) \to (\mathbf{R}Q)_n$ induced by the universal property of the limit. Note that the inclusion
\begin{equation}\label{pn}
    \mathcal{T}_n \subset \{(i, j) \in \mathrm{Ar}(\Delta^n) \mid i+j \le n\},
\end{equation}
for $\mathcal{T}_n$ the subposet spanned by $(i, j)$ with $i+j = n-1,n$, is cofinal, by an application of Quillen's theorem A.\\
Consider for now the case $w=w_\mathrm{Frob}$. Then the localisation $S_n(\cal{E}) \to \mathscr{S}_n(L_\mathrm{Frob}(\cal{E}))$ factors as
\[
    S_n(\cal{E}) \to L_\mathrm{Frob}(S_n(\cal{E})) \to \mathscr{S}_n(L_\mathrm{Frob}(\cal{E})),
\]
for $L_\mathrm{Frob}(S_n(\cal{E}))$ the localisation of $S_n(\cal{E})$ at the class of Frobenius equivalences internal to the complicial exact category $S_n(\cal{E})$. Note that in general, the second localisation is not an equivalence; we claim however that for a class of diagrams, the corresponding map of right derived functors $\mathbf{R}_\mathrm{Frob}(Q_n) \to \mathbf{R}(Q_n)$ is an equivalence, where we write $\mathbf{R}_\mathrm{Frob}(Q_n)$ for the derived functor of $Q_n$ with respect to the internal Frobenius equivalences.
\begin{lemma}\label{ptwise}
    The natural map $\mathbf{R}_\mathrm{Frob}(Q_n)(X) \to \mathbf{R}(Q_n)(X)$ is an equivalence for each diagram $X \in S_n(\cal{E})$ in the image of an iterated degeneracy $S_1(\cal{E}) \to S_n(\cal{E})$.
\end{lemma}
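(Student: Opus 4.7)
My strategy is to construct a right adjoint to the iterated degeneracy $s^*:\cal{E} = S_1(\cal{E}) \to S_n(\cal{E})$ and use its cofinality to reduce both $\mathbf{R}_{\mathrm{Frob}}(Q_n)(s^*y)$ and $\mathbf{R}(Q_n)(s^*y)$ to derived functors of $Q$ on $\cal{E}$. Since the discussion has specialised to $w = w_{\mathrm{Frob}}$ on $\cal{E}$, the two derived functors on $\cal{E}$ tautologically coincide, so this reduction concludes the proof.

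Let $k \in \{1,\ldots,n\}$ be the threshold of $s:[n]\to[1]$ with $s(i) = 0$ for $i < k$ and $s(i) = 1$ for $i \ge k$, so $(s^*z)_{i,j} = z$ if $i < k \le j$ and $0$ otherwise. First I would verify that the evaluation $\mathrm{ev}_{0,k}:S_n(\cal{E}) \to \cal{E}$, $W \mapsto W_{0,k}$, is right adjoint to $s^*$: the unit is the identity (as $\mathrm{ev}_{0,k}\circ s^* = \id_{\cal{E}}$), while the counit $s^*W_{0,k} \to W$ is given on the flag by the inflation $W_{0,k} \hookrightarrow W_{0,j}$ for $j \ge k$ and the zero map for $j < k$; naturality is a direct consequence of the flag structure on $W$. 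This adjunction passes to the slice categories over $y$ and $s^*y$.

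Second, since the complicial action on $S_n(\cal{E})$ is pointwise and $s^*$ is precomposition with $s:\mathrm{Ar}([n]) \to \mathrm{Ar}([1])$, $s^*$ commutes with the $\mathscr{C}_\zz$-action; hence it sends Frobenius acyclics in $\cal{E}$ to intrinsic Frobenius acyclics in $S_n(\cal{E})$, which are a fortiori pointwise Frobenius acyclic. The right adjoint $\mathrm{ev}_{0,k}$ preserves Frobenius acyclicity by pointwise-ness. The adjunction thus restricts to
\[
	s^* : J_y \rightleftarrows J_{s^*y, \mathrm{Frob}} : \mathrm{ev}_{0,k} \qquad \text{and} \qquad s^* : J_y \rightleftarrows J_{s^*y} : \mathrm{ev}_{0,k},
\]
and as left adjoints are cofinal, so is $s^*$ in either case. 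A direct inspection using the injection $Q_n(W) \hookrightarrow \lim_{(i,j)\in\mathcal{P}_n^{\op}} Q(W_{i,j})$ together with the reducedness of $Q$ yields a natural isomorphism $Q_n \circ s^* \cong Q$.

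Combining these ingredients with the colimit formulas of Proposition \ref{hoco} and Theorem \ref{comppoi}, at the level of connective covers we obtain
\[
	\tau_{\ge 0}\mathbf{R}_{\mathrm{Frob}}(Q_n)(s^*y) \simeq \colim_{J_{s^*y, \mathrm{Frob}}^{\op}} Q_n \simeq \colim_{J_y^{\op}} Q \simeq \colim_{J_{s^*y}^{\op}} Q_n \simeq \tau_{\ge 0}\mathbf{R}(Q_n)(s^*y),
\]
with the natural comparison map the identity under these identifications; promotion to 2-excisive $\mathrm{D}(\zz)$-valued functors via Proposition \ref{ext} completes the proof. The main obstacle is the verification of the $s^* \dashv \mathrm{ev}_{0,k}$ adjunction, particularly the flag compatibility of the counit and the preservation of Frobenius acyclicity in both the intrinsic and pointwise senses; this is principally a matter of careful bookkeeping in the complicial structure on $S_n(\cal{E})$.
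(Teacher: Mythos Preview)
Your overall strategy is the same as the paper's: exploit the adjunction $s^* \dashv \mathrm{ev}_{0,k}$ (the paper writes the right adjoint as $Y\mapsto Y_{0,1}$, which is a typo for $Y_{0,k}$), pass to slices, and use the resulting cofinality to reduce both $\mathbf{R}_{\mathrm{Frob}}(Q_n)(s^*y)$ and $\mathbf{R}(Q_n)(s^*y)$ to a common colimit over $J_y^{\op}$. The paper handles the Frobenius side by the direct identification $(Q_n)^{\Delta^\bullet}(s^*y)=Q_n(s^*(\Delta^\bullet y))$ (since $s^*$ commutes with the pointwise $\mathscr{C}_\zz$-action) together with Proposition~\ref{hoco}, rather than invoking a second slice adjunction as you do; both routes land at $\colim_{J_y^{\op}}(Q_n\circ s^*)$.

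There is, however, a genuine error in your argument: the claimed isomorphism $Q_n\circ s^*\cong Q$ is false. For instance, take $n=2$ and the degeneracy $s_1^2$. Then $X=s_1^2(z)$ has $X_{0,1}=X_{0,2}=z$ and $X_{1,1}=0$, and the compatibility condition in the definition of $Q_2$ applied to $(0,1)\le(1,1)$ in $\mathcal{P}_2$ forces $\xi_{0,1}=Q(X_{(0,1)\to(1,1)})(\xi_{1,1})=0$, whence $Q_2(s_1^2(z))=0\neq Q(z)$. The mistake in your ``direct inspection'' is that the zero entries of $s^*z$ lying in $\mathcal{P}_n$ do impose constraints on the limit via the structure maps $z\to 0$; the limit $\lim_{\mathcal{P}_n^{\op}}Q((s^*z)_{i,j})$ is not simply $Q(z)$. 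More generally one finds $Q_n(s_k^n(z))=0$ whenever $2k\le n$.

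Fortunately this claim is entirely superfluous. Cofinality of $(s^*)^{\op}$ in each of the two cases already gives
\[
\colim_{J_{s^*y,\mathrm{Frob}}^{\op}}Q_n\ \simeq\ \colim_{J_y^{\op}}(Q_n\circ s^*)\ \simeq\ \colim_{J_{s^*y}^{\op}}Q_n,
\]
and the natural comparison map is the identity on the middle term because $J_y^{\op}\to J_{s^*y,\mathrm{Frob}}^{\op}\hookrightarrow J_{s^*y}^{\op}$ commutes. You should simply delete the sentence about $Q_n\circ s^*\cong Q$ and replace the middle term in your displayed chain by $\colim_{J_y^{\op}}(Q_n\circ s^*)$. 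A minor point of phrasing: ``left adjoints are cofinal'' is not correct as stated in the paper's conventions; what you are using is that $(s^*)^{\op}$ is a \emph{right} adjoint on opposites, and right adjoints are cofinal.
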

\begin{proof}
    Write $\gamma:L_\mathrm{Frob}(S_n(\cal{E})) \to \mathscr{S}_n(L_\mathrm{Frob}(\cal{E}))$ for the localisation at the pointwise Frobenius equivalences. Since $\mathscr{C}_{\zz}$ acts pointwise on $S_n(\cal{E})$, Proposition \ref{hoco} implies that $\mathbf{R}_\mathrm{Frob}(Q_n)(X) \simeq (Q_n)^{\Delta^{\bullet}}(X)$, and the natural map $\mathbf{R}_\mathrm{Frob}(Q_n)(X) \to \mathbf{R}(Q_n)(X)$ identifies with
    \begin{equation}\label{frob-to-pt}
        (Q_n)^{\Delta^\bullet}(X) \to \underset{J_{X}^{\op}}\colim\ (Q_n)^{\Delta^\bullet}(X)
    \end{equation}
    for $J_X \subset (S_n(\cal{E})\downarrow X)$ the subcategory of the overcategory spanned by the pointwise Frobenius deflations over $X$. For $0 \le i \le n+1$, $s^n_i:[n] \to [1]$ for the map in $\bbDelta$ sending $0 \le j < i$ to $0$, and $i \le j \le n$ to $1$. Then for $x \in S_1(\cal{E}) = \cal{E}$, we have
    \[
        s^n_i(x)_{j, k} = \begin{cases}
            x, & 0 \le j < i, k \ge i\\
            0, & \text{ else,}
        \end{cases}
    \]
    with each nonzero map given by $1_x$. In the case $X = s^n_i(x)$ for some $x \in \cal{E}$, $(Q_n)^{\Delta^\bullet}(X)$ identifies with $\underset{\bbDelta^{\op}}\colim\ Q_n(s_i^n(\Delta^\bullet x)) = \underset{y \in J_x^{\op}}\colim\ Q_n(s_i^n(y))$, for $J_x\subset (\cal{E}\downarrow x)$ the subcategory spanned by the trivial Frobenius deflations over $x$, with the map (\ref{frob-to-pt}) induced by the functor
    \[
        (s_i^n)_x^{\op}: J_x^{\op} \to J_X^{\op}, \quad (y \stackrel{\sim}\epi x) \mapsto (s_i^n(y) \stackrel{\sim}\epi s_i^n(x)).
    \]
        For $i=0, n+1$, $X$ is zero, and the result follows trivially since each of $\mathbf{R}_\mathrm{Frob}(Q_n)$ and $\mathbf{R}(Q_n)$ is reduced. For $0 < i \le n$, $(s_i^n)_x^{\op}$ admits a left adjoint
    \[
        J_{s_i^n(x)}^{\op} \to J_x^{\op}, \quad (Y \stackrel{\sim}\epi s_i^n(x)) \mapsto (Y_{0,1} \stackrel{\sim}\epi x),
    \]
    so that $(s_i^n)_x^{\op}$ is cofinal.
\end{proof}
\begin{proposition}\label{qncomp}
	For each $n \geq 1$ the canonical map $\mathbf{R}(Q_n) \to (\mathbf{R}Q)_n$ is an equivalence of quadratic functors.
\end{proposition}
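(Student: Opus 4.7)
Both $\mathbf{R}(Q_n)$ and $(\mathbf{R}Q)_n$ are reduced 2-excisive functors on the stable $\infty$-category $\mathscr{S}_n(L_w(\cal{E}))$: the former by construction (applying Proposition \ref{ext} to the complicial exact form category $(S_n(\cal{E}), Q_n, w)$), and the latter because reducedness and 2-excisivity pass through finite limits in $\mathrm{D}(\zz)$, while $\mathbf{R}Q$ is quadratic by Theorem \ref{comppoi}. The strategy is to verify the comparison map is an equivalence by reducing to the Frobenius case and then checking the induced map on polarisations together with the behaviour on a generating class of objects, from which the rest follows by the fibre sequence $\Lambda_\Qoppa \to \Qoppa \to B_\Qoppa^{\hct}$ controlling any quadratic functor on a stable $\infty$-category.

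\emph{Step 1 (Reduction to the Frobenius case).} The Verdier projection $\pi: L_\mathrm{Frob}(S_n(\cal{E})) \to \mathscr{S}_n(L_w(\cal{E}))$ is compatible with the 2-excisive left Kan extension formula defining the left-hand side (by \cite[Lem.\ 1.4.1(iii)]{CD23a}) and with the pointwise limit formula defining the right-hand side, exactly as in the proof of Theorem \ref{comppoi}. Accordingly, it suffices to handle the case $w = w_\mathrm{Frob}$.

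\emph{Step 2 (Polarisations).} Since fibres commute with limits, the polarisation of $(\mathbf{R}Q)_n$ is $(X,Y) \mapsto \lim_{i+j \le n}\map(X_{i,j}, \D(Y_{i,j}))$, using Proposition \ref{ndg} to identify the polarisation of $\mathbf{R}Q$. On the other hand, Proposition \ref{ndg} applied to the complicial exact form category $(S_n(\cal{E}), Q_n, \D_n, \eta_n, w_\mathrm{Frob})$ gives
\[
    B_{\mathbf{R}(Q_n)}(X, Y) \simeq \map_{\mathscr{S}_n(L_\mathrm{Frob}(\cal{E}))}(X, \D_n Y),
\]
which, since $\mathscr{S}_n(L_\mathrm{Frob}(\cal{E}))$ is a full subcategory of the functor category on $\mathrm{Ar}(\Delta^n)$, unfolds as an end in $\mathrm{D}(\zz)$. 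The duality-equivariance of this end together with the cofinality of $\mathcal{T}_n \subset \{i+j \le n\}$ (\ref{pn}) rewrites it as a limit over the fundamental domain $\mathcal{P}_n$, matching the formula for $B_{(\mathbf{R}Q)_n}$.

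\emph{Step 3 (Comparison on degeneracies and extension).} By Lemma \ref{ptwise}, for $X = s_i^n(x)$ in the image of an iterated degeneracy $\cal{E} = S_1(\cal{E}) \to S_n(\cal{E})$, one has $\mathbf{R}(Q_n)(X) \simeq \mathbf{R}_\mathrm{Frob}(Q_n)(X) = Q_n^{\Delta^\bullet}(X)$; direct inspection identifies this with $\mathbf{R}Q(x)$. Simultaneously, the diagram $(j,k) \mapsto \mathbf{R}Q(s_i^n(x)_{j,k})$ over $\{j+k \le n\}$ consists of copies of $\mathbf{R}Q(x)$ and $0$ joined by identities, so the limit collapses to $\mathbf{R}Q(x)$ as well. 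Combined with Step 2, both the polarisation and the values on degeneracies agree; since linear (1-excisive) functors on $\mathscr{S}_n(L_w(\cal{E}))$ are determined by their values on such degenerate objects (which generate under the conflation structure of the $\mathscr{S}$-construction together with shifts), the induced map on linear parts $\Lambda$ is also an equivalence. The fibre sequence $\Lambda \to \Qoppa \to B_\Qoppa^{\hct}$ then forces the natural transformation $\mathbf{R}(Q_n) \to (\mathbf{R}Q)_n$ to be an equivalence throughout.

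\textbf{Main obstacle.} The subtlest step is Step 2: verifying that the end over $\mathrm{Ar}(\Delta^n)$ computing $\map_{\mathscr{S}_n}(X, \D_n Y)$ identifies homotopy-coherently with the asserted limit over the fundamental domain $\mathcal{P}_n$. This is the $\infty$-categorical refinement of the 1-categorical observation recorded in \S\ref{cgw} that a symmetric form on $X \in S_n(\cal{E})$ is uniquely determined by its values on pairs $(i, j) \in \mathcal{P}_n$, and requires careful tracking of duality equivariance together with cofinality of (\ref{pn}).
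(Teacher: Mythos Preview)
Your strategy differs from the paper's: you aim to compare polarisations globally (Step 2), deduce agreement of linear parts from a generation claim (Step 3), and then conclude via the fibre sequence $\Lambda \to \Qoppa \to (\Delta^*B)^{\hct}$. The paper instead runs a direct induction on an explicit filtration $X^1 \hookrightarrow \dots \hookrightarrow X^n = X$ of an arbitrary diagram in $\mathscr{S}_n(L_\mathrm{Frob}(\cal{E}))$, built from the first-row description of $\mathscr{S}_n$-objects, whose successive cofibres $X^{k,k+1}$ are degenerate.

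Both of your gaps are real and together constitute essentially all of the content. In Step 2, the cofinality you invoke from (\ref{pn}) concerns the inclusion $\mathcal{T}_n \subset \mathcal{P}_n$ and is orthogonal to the identification you need: the end computing $\map_{\mathscr{S}_n}(X, \D_n Y)$ involves $\D(Y_{n-j, n-i})$ at the spot $(i,j)$, whereas $B_{(\mathbf{R}Q)_n}$ involves $\D(Y_{i,j})$, so matching them requires systematically exploiting the exactness relations in $\mathscr{S}_n$ (as one can verify by hand for small $n$, but which you do not carry out). In Step 3, the assertion that degenerate diagrams generate $\mathscr{S}_n$ under fibre sequences and shifts is exactly the paper's filtration argument, which you invoke but do not supply.

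The paper's approach handles both issues at once: having written down the filtration explicitly, 2-excisivity applied to each cofibre sequence $X^k \to X^{k+1} \to X^{k,k+1}$ (together with Lemma \ref{ptwise} for the degenerate cofibre) produces a map of fibre sequences whose outer terms are $\qdb(X_{k,k+1})$ and $\Map_\Delta(X^{k,k+1}, \D(X^k))$, visibly the same on both sides. No global polarisation comparison is needed; the end-versus-limit identification you flag as the main obstacle is bypassed entirely, because the polarisation terms only ever appear for the specific pairs $(X^{k+1}, X^k)$ where they are immediately identified as mapping spectra in $L_\mathrm{Frob}(\cal{E})$.
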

\begin{proof}
    By \cite[Lem.\ 1.1.25]{CD23a}, it suffices to exhibit a natural equivalence between connective covers. The case $n=1$ holds by the identifications $Q_1 \cong Q$, $(\mathbf{R}Q)_1 \simeq \mathbf{R}Q$. Assume firstly that $w=w_\mathrm{Frob}$ is the class of Frobenius equivalences. Then for $n \ge 2$, a diagram $X \in \mathscr{S}_n(L_w(\cal{E}))$ is determined up to contractible choice by the first row $(X_{0,1} \to X_{0,2} \to \dots \to X_{0,n})$: indeed, writing $\mathscr{I} \subset \mathrm{Ar}(\Delta^n)$ for the subcategory spanned by pairs $(i, j)$ with either $i=0$ or $i=j$, $X$ is the left Kan extension along the inclusion $\mathscr{I} \subset \mathrm{Ar}(\Delta^n)$ of the restriction $X\mid_{\mathscr{I}}$. There is a filtration
    \[
        X^1 \hookrightarrow X^2 \hookrightarrow \dots \hookrightarrow X^{n-1} \hookrightarrow X^n = X,
    \]
    for $X^k$ the diagram generated by the row $(X_{0,1} \to \dots \to X_{0,k-1} \to X_{0,k} =\joinrel= X_{0,k} =\joinrel= \dots =\joinrel= X_{0,k})$, and for each $1 \le k \le n-1$ the quotient $X^{k, k+1} := \cofib(X^k \hookrightarrow X^{k+1})$ satisfies
    \[
        \left(X^{k, k+1}\right)_{ij} =
            \begin{cases}
                X_{k,k+1}, & i \le k, j \ge k+1, \\
                0, & \text{ else,}
            \end{cases}
    \]
    i.e.\ is equivalent to $s_{k+1}^n(X_{k,k+1})$. Now $\Omega^\infty(\mathbf{R}Q)_n(X^{k,k+1}) = \Omega^\infty\mathbf{R}Q(X_{k,k+1}) = \qdb(X_{k,k+1})$, and similarly $\Omega^\infty \mathbf{R}(Q_n)(X^{k,k+1}) \simeq \qdb(X_{k,k+1})$, by Lemma \ref{ptwise}. By 2-excisivity we have identifications, for $1 le k \le n-1$,
    \[\begin{tiny}\hspace*{-2cm}
            \mathrm{fibt}\left(
            \begin{tikzcd}
                \Omega^\infty \mathbf{R}(Q_n)(X^{k+1}) \ar[r] \ar[d] & \Omega^\infty \mathbf{R}(Q_n)(X^k) \ar[d] \\
                \Map_\Delta(X^{k+1}, \D(X^k)) \ar[r] & \Map_\Delta(X^k, \D(X^k))
            \end{tikzcd}\right) \simeq \qdb(X_{k,k+1}) \simeq \mathrm{fibt}\left(
            \begin{tikzcd}
                \Omega^\infty(\mathbf{R}Q)_n(X^{k+1}) \ar[r] \ar[d] & \Omega^\infty(\mathbf{R}Q)_n(X^k) \ar[d] \\
                \Map_\Delta(X^{k+1}, \D(X^k)) \ar[r] & \Map_\Delta(X^k, \D(X^k))
            \end{tikzcd}\right).
    \end{tiny}\]
    Noting that the fibre of the lower row in each case is $\Map_\Delta(X^{k,k+1}, \D(X^k))$, we have a map of fibre sequences
    \[\begin{tikzcd}
        \qdb(X_{k,k+1}) \ar[r] \ar[d, "\rotatebox{270}{$\simeq$}"] & \fib\left(\Omega^\infty \mathbf{R}(Q_n)(X^{k+1}) \to \Omega^\infty \mathbf{R}(Q_n)(X^k)\right) \ar[r] \ar[d] & \Map_\Delta(X^{k,k+1}, \D(X^k)) \ar[d, "\rotatebox{270}{$\simeq$}"] \\
        \qdb(X_{k,k+1}) \ar[r] & \fib\left(\Omega^\infty(\mathbf{R}Q)_n(X^{k+1}) \to \Omega^\infty(\mathbf{R}Q)_n(X^k)\right) \ar[r] & \Map_\Delta(X^{k,k+1}, \D(X^k)),
    \end{tikzcd}\]
    so by induction on $k$, it suffices to show that the map $\Omega^\infty(\mathbf{R}Q)_n(X^1) \to \Omega^\infty \mathbf{R}(Q_n)(X^1)$ is an equivalence; but $X^1 = s^n_1(X_{0,1})$ is generated by the row $(X_{0,1} =\joinrel= \dots =\joinrel= X_{0,1})$, and the map in question evaluates to the equivalence
    \[
         \qdb(X_{0,1}) \xto{\simeq} \Omega^\infty\mathbf{R}Q(X_{0,1})
    \]
    if $n \le 2$; for $n > 2$, each of $\mathbf{R}(Q_n)(s^n_1(X_{0,1}))$ and $(\mathbf{R}Q)_n(s^n_1(X_{0,1}))$ are zero.\\
    For general $w$, write $\widetilde{\mathbf{R}}Q$ for the derived functor of $Q$ at the Frobenius equivalences, and $\widetilde{\mathbf{R}}(Q_n)$ for the derived functor of $Q_n$ at the pointwise Frobenius equivalences, so that by above $\widetilde{\mathbf{R}}(Q_n) \simeq (\widetilde{\mathbf{R}}Q)_n$. Now the derived functor $\mathbf{R}(Q_n)$ of $\widetilde{\mathbf{R}}(Q_n)$ at the pointwise weak equivalences is given by the left Kan extension along the localisation
    \[
        \gamma:\mathscr{S}_n(L_\mathrm{Frob}(\cal{E})) \to \mathscr{S}_n(L_w(\cal{E})),
    \]
    and we see that since the poset $\mathcal{T}$ of (\ref{pn}) is finite for each $n$, we have equivalences
    \[
        \mathbf{R}(Q_n) \simeq \gamma_!\widetilde{\mathbf{R}}(Q_n) \simeq \gamma_!(\widetilde{\mathbf{R}}Q)_n \simeq (\mathbf{R}Q)_n,
    \]
    since the colimit computing left Kan extension along $\gamma$ is filtered.
\end{proof}
Propositions \ref{lvl} and \ref{qncomp} then imply:
\begin{corollary}\label{sn}
    Let $(\cal{E}, Q, \D, \eta, w)$ be a complicial exact form category with weak equivalences and strong duality. Then for each $n \ge 1$, the map
    \[
        \wquad(S_n(\cal{E}), Q_n, w_\mathrm{pt}) \to \mathrm{Pn}(\mathscr{S}_n(L_w(\cal{E})), (\mathbf{R}Q)_n)
    \]
    induces an equivalence upon realisation.
\end{corollary}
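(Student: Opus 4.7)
The proof is essentially a packaging of Propositions \ref{lvl} and \ref{qncomp}, together with the functoriality results of \S\ref{functoriality}. The plan is to apply Proposition \ref{lvl} to the level-$n$ complicial exact form category $(S_n(\cal{E}), Q_n, \D_n, \eta_n, w_\mathrm{pt})$ (which is complicial with strong duality by the remark preceding Proposition \ref{qncomp}, since the $\mathscr{C}_{\zz}$-action on $\cal{E}$ is bi-exact and pointwise), and then identify the resulting Poincar\'e $\infty$-category with $(\mathscr{S}_n(L_w(\cal{E})), (\mathbf{R}Q)_n)$.

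First, Proposition \ref{lvl} applied to $(S_n(\cal{E}), Q_n, w_\mathrm{pt})$ produces an equivalence
\[
    \left|\wquad(S_n(\cal{E}), Q_n, w_\mathrm{pt})\right| \xrightarrow{\simeq} \left|\mathrm{Pn}(L_{w_\mathrm{pt}}(S_n(\cal{E})), \mathbf{R}(Q_n))\right|
\]
upon realisation. Next, I would invoke \cite[Lem.\ B.2.4]{CD23a} to identify $L_{w_\mathrm{pt}}(S_n(\cal{E}))$ with the stable $\infty$-categorical $S_\bullet$-construction $\mathscr{S}_n(L_w(\cal{E}))$ (this identification is what Proposition \ref{qncomp} was implicitly using throughout). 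Proposition \ref{qncomp} then provides a natural equivalence of quadratic functors $\mathbf{R}(Q_n) \xrightarrow{\simeq} (\mathbf{R}Q)_n$ on $\mathscr{S}_n(L_w(\cal{E}))^{\op}$. Since the duality on $S_n(\cal{E})$ induces the duality on $\mathscr{S}_n(L_w(\cal{E}))$ coming from that on $L_w(\cal{E})$ (and both structures are strong), these identifications assemble into a Poincar\'e equivalence
\[
    (L_{w_\mathrm{pt}}(S_n(\cal{E})), \mathbf{R}(Q_n)) \xrightarrow{\simeq} (\mathscr{S}_n(L_w(\cal{E})), (\mathbf{R}Q)_n)
\]
in $\cat^p_\infty$. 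Applying the functor $\mathrm{Pn}:\cat^p_\infty \to \cal{S}$, which preserves equivalences, yields the compatible equivalence on spaces of Poincar\'e objects.

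Finally, I would check that the resulting composite equivalence agrees with the functor named in the statement. This is a matter of tracing through the construction of \S\ref{functoriality}: the map $\wquad(S_n(\cal{E}), Q_n, w_\mathrm{pt}) \to \mathrm{Pn}(\mathscr{S}_n(L_w(\cal{E})), (\mathbf{R}Q)_n)$ is by definition the composite of the canonical map into $\mathrm{Pn}(L_{w_\mathrm{pt}}(S_n(\cal{E})), \mathbf{R}(Q_n))$ of Proposition \ref{lvl} with the Poincar\'e equivalence above, so commutativity is automatic.

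The genuine content here lies entirely in the preceding two propositions; in particular, the only nontrivial verification is that the comparison of quadratic functors from Proposition \ref{qncomp} intertwines the respective polarisations and induced dualities. This follows formally once one observes that both $\mathbf{R}(Q_n)$ and $(\mathbf{R}Q)_n$ are 2-excisive extensions of $Q_n$ and so agree by the essentially unique quadratic extension (Corollary \ref{uni}, or more precisely the uniqueness of 2-excisive extensions in the stable setting), which forces compatibility of the associated bilinear parts and dualities. With this in hand, the corollary is immediate.
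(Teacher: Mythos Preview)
Your proposal is correct and follows exactly the approach the paper intends: the corollary is stated immediately after Propositions~\ref{lvl} and~\ref{qncomp} with the one-line justification that it follows from them, and you have faithfully unpacked how. One small remark: the appeal to Corollary~\ref{uni} in your final paragraph is misplaced (that result is specific to $\kb(\cal{E})$), but it is also unnecessary, since Proposition~\ref{qncomp} already furnishes the equivalence $\mathbf{R}(Q_n)\simeq(\mathbf{R}Q)_n$ directly, and compatibility of polarisations and dualities is automatic from an equivalence of quadratic functors.
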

\subsection{Comparing Grothendieck-Witt spaces}\label{gwcomp}
We are now ready to prove the main result of the present paper.
\begin{theorem}\label{main-comp}
	Let $(\cal{E}, Q, \D, \eta, w)$ be a complicial exact form category with weak equivalences and strong duality, with associated Poincar\'e category $(L_w(\cal{E}), \mathbf{R}Q)$. Then the localisation functor $\cal{E} \to L_w(\cal{E})$ induces a natural equivalence of Grothendieck-Witt spaces
	\[
		\cal{GW}(\cal{E}, Q) \to \cal{GW}(L_w(\cal{E}), \mathbf{R}Q).
	\]
\end{theorem}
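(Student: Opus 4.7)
The plan is to reduce the statement to the levelwise comparison established in Corollary~\ref{sn}, combined with its analogue at the level of the underlying algebraic-$K$-theoretic simplicial spaces, and then to extract the hermitian comparison by passing to fibres. Recall from \S\ref{grothendieck-witt-theory} that $\cal{GW}(\cal{E}, Q, w)$ is by definition the homotopy fibre of
\[
    \left|\wquad(S^e_\bullet(\cal{E}), Q^e_\bullet, w)\right| \to \left|wS_\bullet(\cal{E})\right|,
\]
while $\cal{GW}(L_w(\cal{E}), \mathbf{R}Q)$ is the homotopy fibre of $\left|\mathrm{Pn}(\mathcal{Q}_\bullet(L_w(\cal{E})), (\mathbf{R}Q)_\bullet)\right| \to \left|\mathcal{Q}_\bullet(L_w(\cal{E}))^\simeq\right|$. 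It will thus suffice to produce a map between these fibre sequences of simplicial spaces whose restrictions to the total and base terms are levelwise weak equivalences.

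For the total comparison, I would combine the functorial assignment $(\cal{E}, Q, w) \mapsto (L_w(\cal{E}), \mathbf{R}Q)$ of \S\ref{functoriality}, the cofinal functor $\iota_n$ of \S\ref{mgw} identifying $\mathscr{S}^e_\bullet(L_w(\cal{E}))$ with $\mathcal{Q}_\bullet(L_w(\cal{E}))$, and the identification of the two edgewise subdivisions in Remark~\ref{subd}, to assemble Corollary~\ref{sn} into a natural transformation of simplicial spaces which is a levelwise equivalence. For the base, the same functoriality applied to the trivial quadratic functor exhibits the canonical map $S_{2s+1}(\cal{E}) \to \mathscr{S}_{2s+1}(L_w(\cal{E})) \simeq \mathcal{Q}_s(L_w(\cal{E}))$ as a Dwyer-Kan localisation at the pointwise weak equivalences, which by \cite[Cor.\ 7.6.9]{Cis19} induces a weak equivalence $|wS_{2s+1}(\cal{E})| \xto{\simeq} |\mathcal{Q}_s(L_w(\cal{E}))^\simeq|$ of classifying spaces for each $s$.

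Together these assemble into a map of commuting squares of simplicial spaces; taking geometric realisations and passing to homotopy fibres then yields the desired equivalence, which is moreover natural by construction. The most delicate step, I expect, will be to assemble the pointwise statement of Corollary~\ref{sn} into a genuine \emph{map of simplicial spaces}: the derived-form construction must commute in a homotopy-coherent sense both with the simplicial structure maps of $S_\bullet$ and with the edgewise subdivision $\mathrm{sd}_0$, and must further be compatible under $\iota_\bullet^*$ with the simplicial structure on the hermitian $\mathcal{Q}_\bullet$-construction. This should be guaranteed by the naturality of the derived-Poincar\'e assignment packaged in \S\ref{functoriality}, combined with the subdivision identification of Remark~\ref{subd}.
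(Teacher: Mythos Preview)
Your proposal is correct and follows essentially the same argument as the paper: use Corollary~\ref{sn} levelwise to compare the simplicial spaces of Poincar\'e objects, use the localisation statement for $S_n(\cal{E}) \to \mathscr{S}_n(L_w(\cal{E}))$ together with the $\iota_n^*$-identification to compare the base simplicial spaces, and then pass to fibres. The paper's proof is terser but packages the same ingredients, and your identification of the simplicial coherence as the point requiring care (handled via the functoriality of \S\ref{functoriality} and Remark~\ref{subd}) matches how the paper implicitly justifies assembling the levelwise equivalences into a map of simplicial spaces.
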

\begin{proof}
	Write $S^e_n(\cal{E}, Q, w)$ for the subdivided hermitian $S_\bullet$-construction on an exact form category with weak equivalences. Then by Proposition \ref{sn}, for each $n \ge 0$ we have equivalences
	\[
		|\wquad(S^e_n(\cal{E}, Q, w))| \xto{\simeq} \mathrm{Pn}(\mathcal{Q}_n(L_w(\cal{E})), \mathbf{R}Q_n),
	\]
	assembling into a levelwise equivalence of simplicial spaces
	\[
		\left([s] \mapsto |\wquad(S_{2s+1}(\cal{E}), Q_{2s+1})|\right) \simeq \mathrm{Pn}(\mathcal{Q}_\bullet(L_w(\cal{E})), \mathbf{R}Q_\bullet),
	\]
 and accordingly, since the composite $wS^e_\bullet(\cal{E}) \to \mathscr{S}^e_\bullet(L_w(\cal{E}))^\simeq \to \mathcal{Q}_\bullet(L_w(\cal{E})^\simeq)$ realises to an equivalence, the induced map on fibres
    \[\begin{tikzcd}
		\cal{GW}(\cal{E}), Q, w) \ar[d] \ar[r] & \ar[d] |\wquad(S^e_\bullet\cal{E}, Q^e_\bullet, w)| \ar[r] & \ar[d] |wS_\bullet\cal{E}| \ar[d] \\
		\cal{GW}(L_w(\cal{E}), \mathbf{R}Q) \ar[r] & {|\mathrm{Pn}(\mathcal{Q}_\bullet(L_w(\cal{E})), \mathbf{R}Q_\bullet)|} \ar[r] & {|(\mathcal{Q}_\bullet(L_w(\cal{E}))^{\simeq}|}
	\end{tikzcd}\]
 is an equivalence.
\end{proof}
In particular, we have the following.
\begin{corollary}\label{ex-comp}
    Suppose $(\cal{E}, Q, \D, \eta)$ is an exact form category with strong duality,  with derived Poincar\'e category $(\db(\cal{E}), \Qoppa)$; then there is a natural equivalence of Grothendieck-Witt spaces
    \[
        \cal{GW}(\cal{E}, Q) \to \cal{GW}(\db(\cal{E}), \Qoppa).
    \]
\end{corollary}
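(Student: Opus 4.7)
The plan is to reduce to Theorem \ref{main-comp} by passing through the category of bounded chain complexes, which serves as the bridge between the 1-categorical and $\infty$-categorical worlds. Concretely, I would factor the comparison as
\[
    \cal{GW}(\cal{E}, Q) \xto{\simeq} \cal{GW}(\chb(\cal{E}), Q_\mathrm{ch}, \mathbf{qis}) \xto{\simeq} \cal{GW}(L_\mathbf{qis}(\chb(\cal{E})), \mathbf{R}Q_\mathrm{ch}) \simeq \cal{GW}(\db(\cal{E}), \Qoppa),
\]
where the first arrow is the Gillet--Waldhausen-type equivalence recalled in \S\ref{excat-to-chain-complexes} (cf.\ \cite[Thm.\ 10.5]{Sch24}) associated with the form-functorial embedding $(\cal{E}, Q, \D, \eta, \mathbf{iso}) \hookrightarrow (\chb(\cal{E}), Q_\mathrm{ch}, \D, \eta, \mathbf{qis})$ of (\ref{gillet-waldhausen}).

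First I would verify that $(\chb(\cal{E}), Q_\mathrm{ch}, \D, \eta, \mathbf{qis})$ is indeed a complicial exact form category with weak equivalences and strong duality, so that Theorem \ref{main-comp} applies. The complicial structure extends the $\projz$-action on $\cal{E}$ pointwise as noted in \S\ref{poistr}; the duality is strong by the extension of $(\D,\eta)$ recalled in Appendix \ref{excat}; and $Q_\mathrm{ch}$ was already seen in \S\ref{poistr} to be quadratic left exact with the correct polarisation. Theorem \ref{main-comp} then delivers the second equivalence above.

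Next I would identify the target with $(\db(\cal{E}), \Qoppa)$. The Dwyer--Kan localisation $L_\mathbf{qis}(\chb(\cal{E})) \simeq \db(\cal{E})$ is standard (see Appendix \ref{db}), and the identification of derived Poincaré structures is precisely the content of Theorem \ref{dbpoi}: the derived functor $\mathbf{R}Q_\mathrm{ch}$ produced by the construction of \S\ref{dadd} on the complicial category $(\chb(\cal{E}), Q_\mathrm{ch}, \mathbf{qis})$ is by definition the Poincaré structure on $\db(\cal{E})$ obtained in Theorem \ref{dbpoi} by left Kan extending along $\pi:\kb(\cal{E}) \to \db(\cal{E})$, since $\mathbf{R}Q_\mathrm{ch} = \pi_!\widetilde{\mathbf{R}}Q_\mathrm{ch} = \pi_!\Qoppa$.

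The only nontrivial point is checking naturality of the whole chain in the form functor data, so that the composite equivalence is induced by the canonical embedding $\cal{E} \hookrightarrow \db(\cal{E})$. This follows from functoriality of each piece: the Gillet--Waldhausen form functor is natural in $(\cal{E}, Q)$ by construction, and the assignment $(\cal{E}, Q, w, \D, \eta) \mapsto (L_w(\cal{E}), \mathbf{R}Q)$ is the functor $\wcompformcat \to \cat^p_\infty$ established in \S\ref{functoriality}. I do not anticipate a serious obstacle here; the main conceptual work was done in proving Theorem \ref{main-comp} and Theorem \ref{dbpoi}, and the corollary is essentially an assembly of these with the classical Gillet--Waldhausen equivalence.
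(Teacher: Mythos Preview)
Your proposal is correct and follows essentially the same route as the paper: factor through the Gillet--Waldhausen equivalence $\cal{GW}(\cal{E}, Q) \simeq \cal{GW}(\chb(\cal{E}), Q_\mathrm{ch}, \mathbf{qis})$, then apply Theorem \ref{main-comp} to the complicial exact form category $(\chb(\cal{E}), Q_\mathrm{ch}, \mathbf{qis}, \D, \eta)$, using the identification $L_\mathbf{qis}(\chb(\cal{E})) \simeq \db(\cal{E})$ and $\mathbf{R}Q_\mathrm{ch} \simeq \Qoppa$ from \S\ref{poistr}. The paper's proof is simply a terser version of exactly this argument.
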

\begin{proof}
    We have equivalences
    \[
        \cal{GW}(\cal{E}, Q) \to \cal{GW}(\chb(\cal{E}), Q, \mathbf{qis}) \to \cal{GW}(\db(\cal{E}), \Qoppa),
    \]
    where the first is the Gillet-Waldhausen equivalence of \cite{Sch24}, and the second follows from Theorem \ref{main-comp} since $(\chb(\cal{E}), Q, \mathbf{qis}, \D, \eta)$ is a complicial exact form category with weak equivalences and strong duality.
\end{proof}
\section{Grothendieck-Witt spectra}\label{gwsp}
In this section we refine the equivalence of Theorem \ref{main-comp} to a stable equivalence of Grothendieck-Witt spectra. In both the form and Poincar\'e-categorical setting, the Grothendieck-Witt space admits a canonical delooping into a generally non-connective spectrum $\mathrm{GW}$; we first review the relevant formalism.
\subsection{Delooping $\cal{GW}$ of complicial exact form categories}
Suppose given a complicial exact from category with weak equivalences and strong duality $(\cal{E}, Q, w, \D, \eta)$; for any small category $I$, the functor category $\cal{E}^I:=\fun(I, \cal{E})$ inherits a pointwise complicial structure via
\[
    \otimes^I :\mathscr{C}_{\zz} \times \cal{E}^I \to \cal{E}^I, \quad (X, F) \mapsto (i \mapsto X \otimes F(i)),
\]
adjoint to $\otimes \circ (1 \times \ev_{\cal{E}}):\mathscr{C}_{\zz} \times \cal{E}^I \times I \to \mathscr{C}_{\zz} \times \cal{E} \to \cal{E}$, for $\ev_I$ the evaluation functor $(F, i) \mapsto F(i)$. For $I = [1]$, the arrow category $\arr(\cal{E}) := \cal{E}^{[1]}$ inherits a duality with
\[
    \D(x \xto{f} y) = (\D(y) \xto{\D(f)} \D(x)),
\]
and double dual identification $\eta_f = (\eta_x, \eta_y)$. Frobenius contractible objects of $\arr(\cal{E})$ are those maps $f$ which are retracts of cones on maps, i.e.\ admitting factorisations
\[\begin{tikzcd}
    x \ar[r] \ar[d, "f"] \ar[rr, bend left=4ex, "1_x"] & C\otimes u \ar[r] \ar[d, "C\otimes g"] & x \ar[d, "f"] \\
    y \ar[r] \ar[rr, bend right=4ex, "1_y"'] & C\otimes v \ar[r] & y,
\end{tikzcd}\]
and there are (generally strict) inclusions $w_\mathrm{Frob} \subset w_{\mathrm{Frob}_\mathrm{pt}} \subset \arr(\cal{E})$, where the former is the class of Frobenius equivalences internal to $\arr(\cal{E})$, and the latter the class of maps whose image under the source and targets functors are Frobenius equivalences in $\cal{E}$. Write also $w_\mathrm{pt} \subset w_\mathrm{cone}$ for the classes of maps which are pointwise in $w$, resp.\ such that the induced map on cones $C(f) \to C(g)$ is a weak equivalence in $\cal{E}$. We then have a sequence of complicial exact form categories
\[
    (\cal{E}, Q, w, \D\, \eta) \to (\arr(\cal{E}), Q^{\Delta^1}, w_\mathrm{pt}, \D, \eta) \to (\arr(\cal{E}), Q^{\Delta^1}, w_\mathrm{cone}, \D, \eta),
\]
where the first functor is $x \mapsto 1_x$ with duality compatibility the identity $1_{\D(x)} = \D(1_x)$, and the latter is the change of weak equivalences, with underlying functor the identity. We write $Q^{\Delta^1}:\mathrm{Ar}(\cal{E})^{\op} \to \Ab$ for the functor $Q_{[1]}$ of \S\ref{cgw}, where the poset $[1]$ is given the unique strong duality $i \mapsto 1-i$; note the potential clash of notation with the $\mathrm{D}_{\ge 0}(\zz)$-valued Frobenius derived functor $Q^{\Delta^\bullet}$ of $Q$. Explicitly, for a map $f:x \to y$ in $\cal{E}$, we have
\[
    Q^{\Delta^1}(f) = \{(\xi, a) \mid \xi \in Q(x), a:x \to \D(y), \rho(\xi) = \D(f)\circ a\},
\]
with $\D_{[1]}(f) = (\D(y) \xto{\D(f)} \D(x))$, which is strong with respect to $w_\mathrm{pt}$. Write $\arr(\cal{E})^\mathrm{cone} \subset \arr(\cal{E})$ for the subcategory of $w_\mathrm{cone}$-acyclic objects. Then the functor $(\cal{E}, Q, w, \D, \eta) \to (\arr(\cal{E}), Q^{\Delta^1}, w_\mathrm{pt}, \D, \eta)$ factors through the inclusion
\[
    (\arr(\cal{E})^\mathrm{cone}, Q^{\Delta^1}, w_\mathrm{pt}, \D, \eta) \hookrightarrow (\arr(\cal{E}), Q^{\Delta^1}, w_\mathrm{pt}, \D, \eta),
\]
and by \cite[Lem.\ 11.3]{Sch24} there is an induced equivalence
\[
    \left|\wquad(S_\bullet^e(\cal{E}, Q, w))\right| \xto{\simeq} \left|\wquad(S_\bullet^e(\arr(\cal{E})^\mathrm{cone}, Q^{\Delta^1}, w_\mathrm{pt}))\right| 
\]
Write $(\cal{E}^{[n]}, Q^{[n]}, w)$ for the complicial exact form category with weak equivalences and strong duality $(\cal{E}^{[n-1]}, Q^{[n-1]}, w)^{[1]}$ for $n \ge 2$, where $(\cal{E}, Q, w)^{[1]} = (\mathrm{Ar}(\cal{E}), Q^{\Delta^1}, w_\mathrm{cone})$. Then by \cite[Prop.\ 11.4]{Sch24}, there are canonical maps for each $n\ge 0$,
\begin{equation}\label{bond}
	\left|\wquad(\mathcal{R}^{(n)}(\cal{E}^{[n]}, Q^{[n]}, w))\right| \to \Omega\left|\wquad(\mathcal{R}^{(n+1)}(\cal{E}^{[n+1]}, Q^{[n+1]}, w))\right|,
\end{equation}
which for $n \ge 1$ are equivalences, and for $n=0$ factors as
\[
	\left|\wquad(\cal{E}, Q, w)\right| \to \cal{GW}(\cal{E}, Q, w) \xto{\simeq} \Omega\left|\wquad(\mathcal{R}^{(1)}_\bullet(\cal{E}^{[1]}, Q^{\Delta^1}))\right|.
\]
Here we write $\mathcal{R}^{(n)}_\bullet(\cal{E}, Q, w)$ for the diagonal of the $n^\mathrm{th}$ iterate of the subdivided $S_\bullet$-construction on $(\cal{E}, Q, w)$, i.e.\ the multisimplicial exact form category
\[
	(\bbDelta^{\op})^{\times n} \to \formcat, \quad ([k_1], \dots, [k_n]) \mapsto S^e_{k_1}S^e_{k_2}\dots S^e_{k_n}(\cal{E}, Q, w).
\]
\begin{definition}[{\cite[Def.\ 11.5]{Sch24}}]
	The Grothendieck-Witt spectrum associated to a complicial exact form category with weak equivalences is the positive $\Omega$-spectrum
	\[
		\mathrm{GW}(\cal{E}, Q, w) := (\left|\wquad(\cal{E}, Q, w)\right|, \left|\wquad(\mathcal{R}^{(1)}(\cal{E}^{[1]}, Q^{\Delta^1})\right|, \left|\wquad(\mathcal{R}^{(2)}(\cal{E}^{[2]}, Q^{[2]})\right|, \dots ),
	\]
	with bonding maps those given in (\ref{bond}). For $n \ge 0$, define the $n$-shifted Grothendieck-Witt spectrum $\mathrm{GW}^{[n]}(\cal{E}, Q, w) := \mathrm{GW}(\cal{E}^{[n]}, Q^{[n]}, w)$.
\end{definition}
\subsection{Delooping $\cal{GW}$ of Poincar\'e $\infty$-categories}
\begin{recollection}
	We have the Rezk adjunction
	\[\begin{tikzcd}
		s\cal{S} \ar[r, bend left=2ex, shift left=.4ex, "\mathrm{acat}", "\rotatebox{90}{$\vdash$}"'] \ar[r, hookleftarrow, bend right=2ex, shift right=.4ex, "N"'] & \cat_\infty
	\end{tikzcd}\]
	presenting $\cat_\infty$ of (small) $\infty$-categories as a reflective subcategory of the $\infty$-category of (small) simplicial spaces, with the essential image of the inclusion $N = \Hom_{\cat_\infty}(\Delta^\bullet, -)$ the $\infty$-category of complete Segal spaces; see \cite[\S2.1]{CD23b} for a review. We refer to the left adjoint $\mathrm{acat}$ as the \textit{associated category}. Given a Segal space $X_\bullet$ (not necessarily complete) and $x, y \in X_0$, we may recover the mapping space $\Hom_{\mathrm{acat}(X)}(x, y)$ as the pullback
	\begin{equation}\label{segal}\begin{tikzcd}
		\Hom_{\mathrm{acat}(X)}(x, y) \ar[r] \ar[rd, phantom, "\scalebox{1.5}{$\lrcorner$}", very near start] \ar[d] & X_1 \ar[d, "{(d_1, d_0)}"] \\
		\Delta^0 \ar[r, "{(x, y)}"] & X_0 \times X_0,
	\end{tikzcd}\end{equation}
by \cite[\S2.1]{CD23b}.
\end{recollection}
Recall \cite{CD23b} that a sequence
\begin{equation}\label{pvseq}
    (\cal{C}, \Qoppa) \xto{(i, \eta)} (\cal{D}, \Phi) \xto{(p, \vartheta)} (\cal{E}, \Psi)
\end{equation}
in $\cat^p_\infty$ is a \textit{Poincar\'e-Verdier sequence} if it is both a fibre and cofibre sequence; in this case the image of (\ref{pvseq}) under the canonical functor $\cat^p_\infty \to \cat^\mathrm{st}_\infty$ is a Verdier sequence, and we say (\ref{pvseq}) is \textit{split} if the underlying Verdier sequence splits, i.e.\ $p$ (equivalently $i$) admits both a left and right adjoint. In this situation, we refer to $(i, \eta)$ as a (split) Poincar\'e-Verdier inclusion, and $(p, \vartheta)$ as a (split) Poincar\'e-Verdier projection. A cartesian square
\[\begin{tikzcd}
    (\cal{C}, \Phi) \ar[r] \ar[d] & (\cal{D}, \Psi) \ar[d] \\
    (\cal{C}', \Phi') \ar[r] & (\cal{D}', \Psi')
\end{tikzcd}\]
is then (split) Poincar\'e-Verdier if each of the vertical functors are (split) Poincar\'e-Verdier projections.\\
A functor $\cal{F}:\cat^p_\infty \to \cal{S}$ is Verdier localising (resp.\ additive) if it sends Poincar\'e-Verdier (resp.\ split Poincar\'e-Verdier) squares to cartesian squares of spaces, and grouplike if it is additive, and the canonical lift to $\mathrm{Mon}_{\mathbb{E}_\infty}(\cal{S})$ arising from semi-additivity of $\cat^p_\infty$ takes values in the subcategory of grouplike $\mathbb{E}_\infty$-spaces; see \cite[\S1.5]{CD23b}. Given an additive functor $\cal{F}:\cat^p_\infty \to \cal{S}$, to any Poincar\'e category $(\cal{C},\Qoppa)$ we obtain an associated simplicial space $\cal{F}\mathcal{Q}_\bullet(\cal{C}, \Qoppa^{[1]})$ which is Segal for $\cal{F}$ additive, and additionally complete if $\cal{F}$ preserves limits. The split Poincar\'e-Verdier sequence
\[
    (\cal{C}, \Qoppa) \to \mathcal{Q}_1(\cal{C}, \Qoppa^{[1]}) \xto{(d_1, d_0)} \mathcal{Q}_0(\cal{C}, \Qoppa^{[1]})^{\times 2} = (\cal{C}, \Qoppa^{[1]})^{\times 2},
\]
where the left-hand map is informally given by $x \mapsto (0 \leftarrow x \to 0)$ with quadratic compatibility the canonical unit equivalence $\Qoppa(x) \to \Qoppa^{[1]}_1(0 \leftarrow x \to 0) = \Omega\Sigma\Qoppa(x)$, yields upon applying $\cal{F}$ a cartesian square of spaces
\begin{equation}\label{end0}\begin{tikzcd}
    \cal{F}(\cal{C}, \Qoppa) \ar[r] \ar[d] & \cal{F}\mathcal{Q}_1(\cal{C}, \Qoppa^{[1]}) \ar[d] \\
    \Delta^0 \ar[r, "{(0,0)}"] & \cal{F}(\cal{C}, \Qoppa^{[1]})^{\times 2}.
\end{tikzcd}\end{equation}
Writing $\cob^{\cal{F}}(\cal{C}, \Qoppa)$ for the $\infty$-category associated to the Segal space $\cal{F}\mathcal{Q}_\bullet(\cal{C}, \Qoppa^{[1]})$, the square (\ref{segal}) gives an identification, for $x, y \in \cal{F}(\cal{C}, \Qoppa^{[1]})$,
\[
    \Map_{\mathrm{Cob}(\cal{C}, \Qoppa)}(x, y) \simeq \fib\left(\cal{F}\mathcal{Q}_1(\cal{C}, \Qoppa^{[1]}) \to \cal{F}(\cal{C}, \Qoppa^{[1]})^{\times 2}\right),
\]
Accordingly, (\ref{end0}) induces a map $\cal{F}(\cal{C}, \Qoppa) \xto{\simeq} \Hom_{\cob^{\cal{F}}(\cal{C}, \Qoppa)}(0,0) \to \Omega\left|\cob^{\cal{F}}(\cal{C}, \Qoppa)\right|$. For additive $\cal{F}$, the functor $(\cal{C}, \Qoppa) \mapsto \left|\cob^{\cal{F}}(\cal{C}, \Qoppa)\right|$ is again additive, and the assignment $\cal{F} \mapsto \left|\cob^{\cal{F}}(-)\right|$ assembles into an endofunctor $\left|\cob^{(-)}(-)\right|$ of $\fun^\mathrm{add}(\cat_\infty^p, \cal{S})$. By \cite[Th.\ 3.3.4]{CD23b}, $\left|\cob^{\cal{F}}(-)\right|$ is a model for the suspension of $\cal{F}$ in $\fun^\mathrm{add}(\cat_\infty^p, \cal{S})$, and for $\cal{F}$ group-like, the map $\cal{F} \to \Omega\left|\cob^{\cal{F}}(-)\right|$ is an equivalence; moreover, the latter map exhibits the target as the group completion of $\cal{F}$. For $n\ge 0$, write
\[
	\mathcal{Q}^{(n)}_{\bullet, \dots, \bullet}(\cal{C}, \Qoppa):(\bbDelta^{\op})^{\times n} \to \cat^p_\infty, \quad ([k_1], \dots, [k_n]) \mapsto \mathcal{Q}_{k_1}\mathcal{Q}_{k_2}\dots\mathcal{Q}_{k_n}(\cal{C}, \Qoppa)
\]
for the $n$-simplicial hermitian $\mathcal{Q}$-construction on $(\cal{C}, \Qoppa)$, $\mathcal{Q}^{(n)}_\bullet$ for the diagonal, and $\mathrm{Cob}_n^{\cal{F}}(\cal{C}, \Qoppa) := \cal{F}\mathcal{Q}_\bullet^{(n)}(\cal{C}, \Qoppa^{[n]})$. We have $\mathrm{Cob}^{\cal{F}}_1(\cal{C}, \Qoppa) = \cal{F}\mathcal{Q}_\bullet(\cal{C}, \Qoppa^{[1]})$, and canonical identifications
\[
	\left|\mathrm{Cob}_1^{|\mathrm{Cob}_n^{\cal{F}}(-)|}(\cal{C}, \Qoppa)\right| \simeq \left|\cal{F}\mathcal{Q}_\bullet^{(n+1)}(\cal{C}, \Qoppa^{[n+1]})\right| = \left|\mathrm{Cob}_{n+1}(\cal{C}, \Qoppa)\right|.
\]
Then for additive $\cal{F}$, by \cite[Prop.\ 3.4.5]{CD23b}, the maps
\[
	\left|\mathrm{Cob}_n^{\cal{F}}(\cal{C}, \Qoppa)\right| \to \Omega\left|\mathrm{Cob}_{n+1}^{\cal{F}}(\cal{C}, \Qoppa)\right|
\]
adjoint to the equivalences $\Sigma|\mathrm{Cob}_n^{\cal{F}}(-)| \simeq |\mathrm{Cob}_1^{|\mathrm{Cob}_n^{\cal{F}}(-)|}|$ are equivalences for $n\ge 1$, and we write $\mathbb{C}\mathrm{ob}^{\cal{F}}(-)$ for the associated positive $\Omega$-spectrum, with $\mathbb{C}\mathrm{ob}^{\cal{F}}(\cal{C}, \Qoppa)_n = \left|\mathrm{Cob}^{\cal{F}}_n(\cal{C}, \Qoppa)\right|$. For $\cal{F}$ grouplike, $\mathbb{C}\mathrm{ob}^{\cal{F}}(\cal{C}, \Qoppa)$ is an $\Omega$-spectrum, and the group completion map $\cal{F} \to \cal{F}^\mathrm{grp}$ induces the spectrification map
\[
	\mathbb{C}\mathrm{ob}^{\cal{F}}(\cal{C}, \Qoppa) \to \mathbb{C}\mathrm{ob}^{\cal{F}^\mathrm{grp}}(\cal{C}, \Qoppa)
\]
for each $(\cal{C}, \Qoppa) \in \cat^p_\infty$. Recall that $\mathrm{Pn}:\cat^p_\infty \to \cal{S}$ is additive, with group completion $\cal{GW}$, and write $\mathrm{Cob}_n :=\mathrm{Cob}_n^\mathrm{Pn}$.
\begin{definition}
	The Grothendieck-Witt pre-spectrum associated to a Poincar\'e category $(\cal{C}, \Qoppa)$ is the positive $\Omega$-spectrum
\[
	\mathrm{GW}(\cal{C}, \Qoppa) := (\left|\mathrm{Cob}_0(\cal{C}, \Qoppa)\right|, \left|\mathrm{Cob}_1(\cal{C}, \Qoppa)\right|, \left|\mathrm{Cob}_2(\cal{C}, \Qoppa)\right|, \dots),
\]
and for $n \in \zz$ we set $\mathrm{GW}^{[n]}(\cal{C}, \Qoppa) := \mathrm{GW}(\cal{C}, \Qoppa^{[n]})$, the $n$-shifted Grothendieck-Witt pre-spectrum. As above, the map $\mathrm{Pn}(\cal{C}, \Qoppa) = \left|\mathrm{Cob}_0^\mathrm{Pn}(\cal{C}, \Qoppa)\right| \to \Omega\left|\mathrm{Cob}_1^\mathrm{Pn}(\cal{C}, \Qoppa)\right| \simeq \cal{GW}(\cal{C}, \Qoppa)$ is the universal map exhibiting the functor $\cal{GW}:\cat_\infty^p \to \cal{S}$ as the group completion of $\mathrm{Pn}$.
\end{definition}
\subsection{Comparing Grothendieck-Witt spectra}
Fix a complicial exact form category with weak equivalences and strong duality $(\cal{E}, Q, w, \D, \eta)$, with derived Poincar\'e category $(L_w(\cal{E}), \mathbf{R}Q)$. By \cite[Th.\ 7.6.17]{Cis19}, the functor $\mathrm{Ar}(\cal{E}) \to \mathrm{Ar}(L_w(\cal{E}))$ induces an equivalence $L_{w_\mathrm{pt}}(\mathrm{Ar}(\cal{E})) \xto{\simeq} \mathrm{Ar}(L_w(\cal{E}))$, and we write $(\mathrm{Ar}(L_w(\cal{E})), \mathbf{R}(Q^{\Delta^1}))$ for the derived Poincar\'e category associated to $(\mathrm{Ar}(\cal{E}), Q^{\Delta^1}, w_\mathrm{pt}, \D, \eta)$. Given a Poincar\'e category $(\cal{C}, \Qoppa)$, the arrow category $\mathrm{Ar}(\cal{C})$ can be equipped with a quadratic functor $\Qoppa_\mathrm{ar}$, defined by the cartesian square
\[\begin{tikzcd}
    \Qoppa_\mathrm{ar}(x \xto{f} y) \ar[r] \ar[d] & \Qoppa(x) \ar[d] \\
    B_\Qoppa(y, x) \ar[r] & B_\Qoppa(x, x).
\end{tikzcd}\]
The pair $(\mathrm{Ar}(\cal{C}), \Qoppa_\mathrm{ar})$ is Poincar\'e if $(\cal{C}, \Qoppa)$ is \cite[\S2.4]{CD23a}.
\begin{lemma}
    There is a natural Poincar\'e equivalence
    \[
        (\mathrm{Ar}(L_w(\cal{E})), \mathbf{R}(Q^{\Delta^1})) \simeq (\mathrm{Ar}(L_w(\cal{E})), (\mathbf{R}Q)_\mathrm{ar}).
    \]
\end{lemma}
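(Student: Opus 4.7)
The plan is to match polarisations and connective covers of the two quadratic functors, then invoke Proposition \ref{ext} to conclude. The Poincar\'e equivalence will follow automatically because both induce the same arrow duality $\D_\mathrm{ar}(x\to y) = (\D(y)\to \D(x))$ on $\mathrm{Ar}(L_w(\cal{E}))$ and their polarisations are canonically identified.

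I would first verify that both functors are reduced and 2-excisive on $\mathrm{Ar}(L_w(\cal{E}))^{\op}$. For $\mathbf{R}(Q^{\Delta^1})$, this is Theorem \ref{comppoi} applied to the complicial exact form category with weak equivalences $(\mathrm{Ar}(\cal{E}), Q^{\Delta^1}, w_\mathrm{pt}, \D, \eta)$, while for $(\mathbf{R}Q)_\mathrm{ar}$ it follows from \cite[\S2.4]{CD23a}. Both polarisations canonically identify with $\map_{\mathrm{Ar}(L_w(\cal{E}))}(f, \D_\mathrm{ar}(g))$: for the former this is Proposition \ref{ndg}; for the latter it follows from nondegeneracy of $(\mathbf{R}Q)_\mathrm{ar}$ together with the identification $B_{\mathbf{R}Q} \simeq \map_{L_w(\cal{E})}(-, \D(-))$ and the standard formula for mapping spectra in the arrow category.

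The comparison natural transformation is constructed from a pullback. The classical $Q^{\Delta^1}$ on $\mathrm{Ar}(\cal{E})$ is the pullback
\[
Q^{\Delta^1}(f:x\to y) \;\cong\; Q(x)\times_{\Hom_{\cal{E}}(x,\D(x))}\Hom_{\cal{E}}(x,\D(y)),
\]
and the unit transformations $Q \Rightarrow \tau_{\ge 0}\mathbf{R}Q\circ\gamma$ and $\Hom_{\cal{E}}(-,\D(-)) \Rightarrow \tau_{\ge 0}B_{\mathbf{R}Q}\circ(\gamma\times\gamma)$ induce a map $Q^{\Delta^1} \Rightarrow \gamma_\mathrm{ar}^*\tau_{\ge 0}(\mathbf{R}Q)_\mathrm{ar}$ of functors on $\mathrm{Ar}(\cal{E})^{\op}$. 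Via the identification $\mathbf{R}(Q^{\Delta^1}) = P_2\gamma_{\mathrm{ar},!}(Q^{\Delta^1})^{\Delta^\bullet}$ of \S\ref{functoriality} and the $P_2 \dashv i$ adjunction of Proposition \ref{ext}, this extends to the desired natural transformation $\mathbf{R}(Q^{\Delta^1}) \to (\mathbf{R}Q)_\mathrm{ar}$ of quadratic functors on $\mathrm{Ar}(L_w(\cal{E}))^{\op}$.

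To verify this is an equivalence I would appeal once more to Proposition \ref{ext}: it suffices to check the induced map on connective covers. Since $\tau_{\ge 0}$ preserves limits, the pullback description of $(\mathbf{R}Q)_\mathrm{ar}$ gives, for $f \in \mathrm{Ar}(\cal{E})$,
\[
\tau_{\ge 0}(\mathbf{R}Q)_\mathrm{ar}(f) \;\simeq\; \tau_{\ge 0}\mathbf{R}Q(x)\times_{\tau_{\ge 0}B_{\mathbf{R}Q}(x,x)}\tau_{\ge 0}B_{\mathbf{R}Q}(y,x),
\]
which by Propositions \ref{hoco} and \ref{ndg} is the levelwise pullback in simplicial abelian groups computing $(Q^{\Delta^1})^{\Delta^\bullet}(f) \simeq \tau_{\ge 0}\mathbf{R}(Q^{\Delta^1})(f)$. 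The main obstacle will be extending this pointwise identification coherently to all of $\mathrm{Ar}(L_w(\cal{E}))$; this is handled by a cofinality argument of the type employed in Proposition \ref{qncomp}, reducing the global comparison to the pointwise one on $\mathrm{Ar}(\cal{E})$.
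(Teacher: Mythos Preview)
Your overall strategy—construct a comparison map and verify it on connective covers via Proposition \ref{ext}—matches the paper's, but there is a genuine gap at the step where you assert
\[
(Q^{\Delta^1})^{\Delta^\bullet}(f) \simeq \tau_{\ge 0}\mathbf{R}(Q^{\Delta^1})(f).
\]
These are \emph{not} the same thing. The complicial structure on $\mathrm{Ar}(\cal{E})$ is pointwise, and its internal Frobenius equivalences form a \emph{strict} subclass of the pointwise Frobenius (or pointwise $w$-) equivalences; the paper flags this explicitly before the lemma. Consequently $(Q^{\Delta^1})^{\Delta^\bullet}$, which by Corollary \ref{LKE} models the derived functor at the \emph{internal} Frobenius equivalences of $\mathrm{Ar}(\cal{E})$, is a functor on $L_{\mathrm{Frob}}(\mathrm{Ar}(\cal{E}))$, whereas $\mathbf{R}(Q^{\Delta^1})$ lives on the further localisation $\mathrm{Ar}(L_w(\cal{E}))$. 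Bridging this gap is the content of the proof, not a formality, and Propositions \ref{hoco} and \ref{ndg} alone do not supply it.

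The paper closes this gap as follows (in the case $w=w_{\mathrm{Frob}}$, then passes to general $w$ by a filtered-colimit argument). First, for the special arrows $0 \to x$ and $1_x$ it shows that the canonical map $(Q^{\Delta^1})^{\Delta^\bullet} \to \gamma_!(Q^{\Delta^1})^{\Delta^\bullet}$ is already an equivalence, by exhibiting explicit adjunctions between $J_x$ and $J_{0\to x}$ (resp.\ $J_{1_x}$) which make the relevant inclusion cofinal. Second, for a general $f:x\to y$ it uses the fibre sequence $(0 \to y) \to f \to 1_x$ in $\mathrm{Ar}(L_{\mathrm{Frob}}(\cal{E}))$ together with $2$-excisivity of $\mathbf{R}(Q^{\Delta^1})$ to express $\Omega^\infty\mathbf{R}(Q^{\Delta^1})(f)$ as a total fibre, which after the special-case identifications becomes precisely the cartesian square defining $\Omega^\infty(\mathbf{R}Q)_\mathrm{ar}(f)$. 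Your gesture toward ``a cofinality argument of the type employed in Proposition \ref{qncomp}'' points in the right direction—the argument here is indeed structurally parallel to Lemma \ref{ptwise} plus Proposition \ref{qncomp}, with $0\to x$ and $1_x$ playing the role of the degenerate $S_n$-diagrams—but it is not a corollary of that proposition, and you have not identified the special arrows, the adjunctions, or the $2$-excisive decomposition that make the reduction work.

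A secondary wrinkle: your identification of the pullback $\tau_{\ge 0}\mathbf{R}Q(x)\times_{\tau_{\ge 0}B_{\mathbf{R}Q}(x,x)}\tau_{\ge 0}B_{\mathbf{R}Q}(y,x)$ with the ``levelwise pullback'' $(Q^{\Delta^1})^{\Delta^\bullet}(f)$ is not literally levelwise—the corners $\Map_\Delta(y,\D(x))$ and $[n \mapsto \Hom_{\cal{E}}(\Delta^n x, \D(\Delta^n y))]$ are only weakly equivalent, and the map $\rho$ out of $\qdb(x)$ need not be a levelwise surjection—so one must argue at the level of homotopy pullbacks. This is fixable, but again the paper sidesteps it entirely by working with the $2$-excisive decomposition rather than trying to match pullback diagrams on the nose.
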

\begin{proof}
    Assume for now that $w=w_\mathrm{Frob}$ is the class of Frobenius equivalences in $\cal{E}$. Then by the results of \S\ref{from-additivity-to-homotopy-coherence}, there are equivalences
    \[
        \Omega^\infty\mathbf{R}Q \simeq \qdb, \quad \Omega^\infty\mathbf{R}_\mathrm{Frob}(Q^{\Delta^1}) \simeq (Q^{\Delta^1})^{\Delta^\bullet},
    \]
    where we write $\mathbf{R}_\mathrm{Frob}(Q^{\Delta^1})$ for the right derived quadratic functor of $Q^{\Delta^1}$ at the Frobenius equivalences internal to $\mathrm{Ar}(\cal{E})$. Write $\gamma:L_\mathrm{Frob}(\mathrm{Ar}(\cal{E})) \to \mathrm{Ar}(L_\mathrm{Frob}(\cal{E}))$ for the Dwyer-Kan localisation at the pointwise Frobenius equivalences, so that $\mathbf{R}(Q^{\Delta^1})(f)= \gamma_!\mathbf{R}_\mathrm{Frob}(Q^{\Delta^1})(f)$ for a map $x \xto{f} y$. Writing $J_f \subset (\mathrm{Ar}(\cal{E}) \downarrow f)$ for the full subcategory of the slice spanned by (pointwise) trivial Frobenius deflations over $f$, by the pointwise formula for left Kan extensions we have
    \[
        \gamma_!(Q^{\Delta^1})^{\Delta^\bullet}(f) \simeq \underset{g \in J_f^{\op}}\colim\ (Q^{\Delta^1})^{\Delta^\bullet}(g).
    \]
    We claim (cf.\ Lemma \ref{frob-to-pt}) that the canonical maps
    \[
        (Q^{\Delta^1})^{\Delta^\bullet}(0 \to x) \to \gamma_!(Q^{\Delta^1})^{\Delta^\bullet}(0 \to x), \quad (Q^{\Delta^1})^{\Delta^\bullet}(1_x) \to \gamma_!(Q^{\Delta^1})^{\Delta^\bullet}(1_x)
    \]
    are equivalences in $\mathrm{D}_{\ge 0}(\zz)$. By Lemma \ref{hoco} we have
    \begin{align*}
        (Q^{\Delta^1})^{\Delta^\bullet}(0 \to x) & = Q^1(0 \to \Delta^\bullet x) \simeq \underset{y \in J_x^{\op}}\colim\ Q^1(0 \to y), \\
        (Q^{\Delta^1})^{\Delta^\bullet}(1_x) & = Q^1(1_{\Delta^\bullet x}) \simeq \underset{y \in J_x^{\op}}\colim\ Q^1(1_y).
    \end{align*}
    Now we have adjoint pairs
    \[\begin{tikzcd}[row sep=tiny]
        (y \stackrel{\sim}\epi x) \ar[r, mapsto] & (0 \to y \stackrel{\sim}\epi x) \\
        J_x \ar[r, "", "\rotatebox{90}{$\vdash$}"', bend left=1.5ex, shift left=.4ex] & J_{0 \to x} \ar[l, bend left=1.5ex, shift left=.4ex] \\
        (z \stackrel{\sim}\epi x) & \ar[l, mapsto] (y \to z \stackrel{\sim}\epi x),
    \end{tikzcd}\]
    and
    \[\begin{tikzcd}[row sep=tiny]
        (p:y \stackrel{\sim}\epi x) \ar[r, mapsto] & ((p,p) : (1_y \stackrel{\sim}\epi 1_x) \\
        J_x \ar[r, "", "\rotatebox{90}{$\vdash$}"', bend left=1.25ex, shift left=.4ex] & J_{1_x} \ar[l, bend left=1.25ex, shift left=.4ex] \\
        (p:y \stackrel{\sim}\epi x) & \ar[l, mapsto] ((p, q):(y\to z) \stackrel{\sim}\epi 1_x),
    \end{tikzcd}\]
    and since any right adjoint is cofinal, the claim follows upon taking opposite categories. Note accordingly that
    \[
        \Omega^\infty\gamma_!(Q^{\Delta^1})^{\Delta^\bullet}(0 \to x) \simeq Q^{\Delta^1}(0 \to \Delta^\bullet x) = 0, \quad \Omega^\infty\gamma_!(Q^{\Delta^1})^{\Delta^\bullet}(1_x) = Q^{\Delta^1}(1_{\Delta^\bullet x}) = \qdb(x).
    \]
    Now for $f \in \mathrm{Ar}(\cal{E})$, consider the map of fibre sequences in $\mathrm{Ar}(L_\mathrm{Frob}(\cal{E}))$ 
    \[\begin{tikzcd}
        & 0 \ar[rr] \ar[dd] && x \ar[dd, "f", near start] \ar[rr, equal] && x \ar[dd] \\
        0 \ar[ru, equal] \ar[rr] \ar[dd] && x \ar[ru, equal] && x \ar[from=ll, crossing over, equal] \ar[dd] \ar[ru, equal] \\
        & y \ar[rr, equal] && y \ar[rr] && 0, \\
        x \ar[rr, equal] \ar[ru, "f"] && x \ar[from=uu, crossing over, equal] \ar[ru, "f"] \ar[rr] && 0 \ar[ru, equal]
    \end{tikzcd}\]
    furnished by $f$. Since $\Omega^\infty\mathbf{R}(Q^{\Delta^1})$ is 2-excisive, there is an induced equivalence of total fibres
    \[\hspace*{-1cm}\begin{small}
        \mathrm{fibt}\left[
            \begin{tikzcd}
                \mathbf{R}(Q^{\Delta^1})(f) \ar[r] \ar[d] & \mathbf{R}(Q^{\Delta^1})(0 \to y) \ar[d] \\
                B_{\mathbf{R}(Q^{\Delta^1})}(f, 0 \to y) \ar[r] & B_{\mathbf{R}(Q^{\Delta^1})}(0 \to y, 0 \to y)
            \end{tikzcd}\right] \xto{\simeq} \mathrm{fibt}\left[
            \begin{tikzcd}
                \mathbf{R}(Q^{\Delta^1})(1_x) \ar[r] \ar[d] & \mathbf{R}(Q^{\Delta^1})(0 \to x) \ar[d] \\
                B_{\mathbf{R}(Q^{\Delta^1})}(1_x, 0 \to x) \ar[r] & B_{\mathbf{R}(Q^{\Delta^1})}(0 \to x, 0 \to x),
            \end{tikzcd}\right]
    \end{small}\]
    and upon passing to connective covers and commuting fibres, we obtain a cartesian square
    \[\begin{tikzcd}
        \Omega^\infty\mathbf{R}(Q^{\Delta^1})(f) \ar[r] \ar[d] & \Omega^\infty\mathbf{R}Q(x) \ar[d] \\
        \Map_\Delta(y, \D(x)) \ar[r] & \Map_\Delta(x, \D(x)),
    \end{tikzcd}\]
    and a natural equivalence $\Omega^\infty\mathbf{R}(Q^{\Delta^1})(f) \simeq \Omega^\infty(\mathbf{R}Q)_\mathrm{ar}(f)$, which by Proposition \ref{ext} promotes to an equivalence of quadratic $\mathrm{D}(\zz)$-valued functors $\mathbf{R}(Q^{\Delta^1}) \simeq (\mathbf{R}Q)_\mathrm{ar}$ on $\mathrm{Ar}(L_\mathrm{Frob}(\cal{E}))$. To conclude for general $w \subset \cal{E}$, we simply note that the definition of $(\mathbf{R}Q)_\mathrm{ar}$ is stable under filtered colimits, so the derived functor $\mathbf{R}(Q^{\Delta^1})$ of $Q^{\Delta^1}$ at the pointwise ($w$-)weak equivalences is naturally equivalent to $(\mathbf{R}Q)_\mathrm{ar}$ on $\mathrm{Ar}(L_w(\cal{E})) \simeq L_{w_\mathrm{pt}}(\mathrm{Ar}(\cal{E}))$.
\end{proof}
Recall that associated to a Poincar\'e category $(\cal{C}, \Qoppa)$ is the \textit{metabolic} Poincar\'e category $\mathrm{Met}(\cal{C}, \Qoppa)$, with underlying stable category $\mathrm{Ar}(\cal{C}) := \fun(\Delta^1, \cal{C})$ and quadratic functor
\[
    \Qoppa_\mathrm{met}(f) := \fib(\Qoppa(y) \xto{\Qoppa(f)} \Qoppa(x)),
\]
inducing a duality
\[
    (x \xto{f} y) \mapsto (\fib(\D(y) \to \D(x)) \to \D(y)).
\]
$\mathrm{Met}(\cal{C}, \Qoppa)$ fits into the metabolic (split) Poincar\'e-Verdier sequence
\[
    (\cal{C}, \Qoppa) \to \mathrm{Met}(\cal{C}, \Qoppa^{[1]}) \to (\cal{C}, \Qoppa^{[1]}),
\]
with the first functor informally given by $x \mapsto (x \to 0)$, and the second by $(x \to y) \mapsto y$, with quadratic compatibilities
\[
    \Qoppa(x) \xto{\simeq} \fib(0 \to \Sigma\Qoppa(x)), \quad \Sigma\fib\left(\Qoppa(y) \xto{\Qoppa(f)} \Qoppa(x)\right) \to \Sigma\Qoppa(y).
\]
Writing $S_2\cal{C} \subset \fun(\Delta^2, \cal{C})$ for the full subcategory on fibre-cofibre sequences in $\cal{C}$, by \cite[Lem.\ 2.4 .5]{CD23a} the zig-zag of equivalences
\[\begin{tikzcd}[row sep=tiny]
	\mathrm{Ar}(\cal{C}) \ar[r, leftarrow, "d_2"] & S_2(\cal{C}) \ar[r, "d_0"] & \mathrm{Ar}(\cal{C}) \\
	(x \to y) & \ar[l, mapsto] (x \to y \to z) \ar[r, mapsto] & (y \to z)
\end{tikzcd}\]
induces an equivalence of restricted quadratic functors
\[
    d_2^*\Qoppa_\mathrm{ar} \simeq d_0^*\Qoppa_\mathrm{met}^{[1]}.
\]
We thus have an equivalence of Poincar\'e-Verdier inclusions
\begin{equation}\label{poinc-verd-inc}\begin{tikzcd}
    (L_w(\cal{E}), \mathbf{R}Q) \ar[r, hookrightarrow] \ar[d, equal] & (\mathrm{Ar}(L_w(\cal{E})), (\mathbf{R}Q)_\mathrm{ar}) \ar[d, "\rotatebox{270}{$\simeq$}"] \\
    (L_w(\cal{E}), \mathbf{R}Q) \ar[r, hookrightarrow] & \mathrm{Met}(L_w(\cal{E}), \mathbf{R}Q)^{[1]},
\end{tikzcd}\end{equation}
with the right vertical arrow informally given by $(x \xto{f} y) \mapsto (y \to \cofib(f))$. The functor $L_w(\cal{E}) \to \mathrm{Ar}(L_w(\cal{E}))$, $x \mapsto 1_x$ clearly lands in the stable subcategory of cone-acyclic maps, and participates in the adjoint pair
\[\begin{tikzcd}
    L_w(\cal{E}) \ar[r, shift left=.4ex, "x \mapsto 1_x", bend left=1.25ex] & \ar[l, shift left=.4ex, "s", bend left=1.25ex, "\rotatebox{90}{$\vdash$}"'] \mathrm{Ar}(L_w(\cal{E}))
\end{tikzcd}\]
where $s$ is the source functor. Since $\mathrm{Ar}(L_w(\cal{E}))^\mathrm{cone}$ is simply the full subcategory spanned by the equivalences in $L_w(\cal{E})$, this restricts to an equivalence
\[
    L_w(\cal{E}) \simeq \mathrm{Ar}(L_w(\cal{E}))^\mathrm{cone},
\]
and accordingly the upper Poincar\'e-Verdier inclusion of (\ref{poinc-verd-inc}) is equivalent to
\[
    (\mathrm{Ar}(L_w(\cal{E})^\mathrm{cone}), (\mathbf{R}Q)_\mathrm{ar}\mid_{\mathrm{Ar}(L_w(\cal{E})^\mathrm{cone})}) \hookrightarrow (\mathrm{Ar}(L_w(\cal{E}), (\mathbf{R}Q)_\mathrm{ar}).
\]
Writing $\pi:\mathrm{Ar}(L_w(\cal{E})) \to L_\mathrm{cone}(\mathrm{Ar}(\cal{E}))$ for the localisation at $w_\mathrm{cone}$, we obtain:
\begin{corollary}\label{e[1]}
    There is a natural Poincar\'e equivalence
    \[
        \left(L_\mathrm{cone}(\mathrm{Ar}(\cal{E})), \pi_!(\mathbf{R}(Q^{\Delta^1}))\right) \simeq \left(L_w(\cal{E}), (\mathbf{R}Q)^{[1]}\right).
    \]
\end{corollary}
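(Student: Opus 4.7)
The plan is to present both Poincaré $\infty$-categories in the statement as the Poincaré-Verdier cofiber of a single Poincaré functor, and appeal to the uniqueness of such cofibers in $\cat^p_\infty$. The functor in question is the Poincaré inclusion
\[(L_w(\cal{E}), \mathbf{R}Q) \hookrightarrow (\mathrm{Ar}(L_w(\cal{E})), (\mathbf{R}Q)_\mathrm{ar}), \quad x \mapsto 1_x,\]
equipped with the quadratic compatibility furnished by the canonical equivalence $\mathbf{R}Q(x) \simeq (\mathbf{R}Q)_\mathrm{ar}(1_x)$ obtained by restricting the defining cartesian square of $(\mathbf{R}Q)_\mathrm{ar}$ along $f = 1_x$.

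On the underlying stable $\infty$-categories, this inclusion realises $L_w(\cal{E})$ as the thick subcategory $\mathrm{Ar}(L_w(\cal{E}))^\mathrm{cone}$ of equivalences, as already observed in the text preceding the statement; since this subcategory is closed under the arrow-category duality, the inclusion is a split Poincaré-Verdier inclusion. Its stable Verdier cofiber is precisely $\pi:\mathrm{Ar}(L_w(\cal{E})) \to L_\mathrm{cone}(\mathrm{Ar}(\cal{E}))$, and the cofiber quadratic functor is by definition the left Kan extension $\pi_!(\mathbf{R}Q)_\mathrm{ar} \simeq \pi_!\mathbf{R}(Q^{\Delta^1})$, the last equivalence by the preceding lemma. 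Transporting the inclusion across the identification $(\mathrm{Ar}(L_w(\cal{E})), (\mathbf{R}Q)_\mathrm{ar}) \simeq \mathrm{Met}(L_w(\cal{E}), \mathbf{R}Q)^{[1]}$ displayed in \eqref{poinc-verd-inc} via $(x \xto{f} y) \mapsto (y \to \cofib(f))$, the composite sends $x \mapsto 1_x \mapsto (x \to 0)$, with quadratic compatibility the canonical equivalence $\mathbf{R}Q(x) \simeq \Qoppa_\mathrm{met}^{[1]}(x \to 0)$ appearing in the shifted metabolic sequence. This is precisely the first functor of the Poincaré-Verdier sequence
\[(L_w(\cal{E}), \mathbf{R}Q) \to \mathrm{Met}(L_w(\cal{E}), \mathbf{R}Q)^{[1]} \to (L_w(\cal{E}), (\mathbf{R}Q)^{[1]}),\]
so the Poincaré-Verdier cofiber of our inclusion is identified with $(L_w(\cal{E}), (\mathbf{R}Q)^{[1]})$.

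Comparing the two descriptions of the cofiber yields the desired equivalence. The only delicate point is verifying that $x \mapsto 1_x$ is genuinely a Poincaré functor with the stated quadratic compatibility, and that under the identification of \eqref{poinc-verd-inc} this compatibility coincides with the suspension-loop equivalence supplying the initial map of the metabolic sequence; both reductions amount to diagram chases with the pullback square defining $(\mathbf{R}Q)_\mathrm{ar}$ and the definition of the shifted metabolic quadratic functor, and are manifestly functorial in the form-categorical input, producing the naturality asserted in the statement.
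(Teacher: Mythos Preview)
Your argument is correct and is essentially the one the paper has in mind: the corollary is stated without proof precisely because the preceding discussion has already identified the Poincar\'e--Verdier inclusion $(L_w(\cal{E}), \mathbf{R}Q) \hookrightarrow (\mathrm{Ar}(L_w(\cal{E})), (\mathbf{R}Q)_\mathrm{ar})$ both with the inclusion of cone-acyclics and, via (\ref{poinc-verd-inc}), with the first map of the shifted metabolic sequence. Passing to Poincar\'e--Verdier cofibres then yields the two descriptions you compare, exactly as you wrote.
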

\begin{corollary}\label{lvlsp}
    Let $(\cal{E}, Q, w, \D, \eta)$ be a complicial exact form category with weak equivalences, with derived Poincar\'e category $(L_w(\cal{E}), \Qoppa)$. Then for each $n \ge 0$, there is a natural equivalence of spaces
    \[
        \left|\wquad\mathcal{R}_\bullet^{(n)}(\cal{E}, Q, w)^{[n]}\right| \xto{\simeq} \left|\mathrm{Cob}_n(L_w(\cal{E}, \mathbf{R}Q))\right|
    \]
\end{corollary}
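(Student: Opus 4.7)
The plan is to combine Corollaries \ref{sn} and \ref{e[1]} with the naturality established in \S\ref{functoriality} to obtain a multisimplicial levelwise equivalence, then pass to realisations. The base case $n=0$ is precisely Proposition \ref{lvl}, and the general case is assembled inductively from two independent inputs: Corollary \ref{sn} handles each outer $S^e$-layer, while Corollary \ref{e[1]} handles the $[n]$-shift on the Poincar\'e structure.

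First, I would iterate Corollary \ref{e[1]} $n$ times to identify the derived Poincar\'e category associated to the $n$-fold iterated arrow form category $(\cal{E}, Q, w)^{[n]}$ with the $n$-shifted Poincar\'e category $(L_w(\cal{E}), (\mathbf{R}Q)^{[n]})$. This step is formal, using at each stage that the $w_\mathrm{cone}$-acyclic subcategory is stable under duality and under subsequent arrow constructions, so that Verdier localisation passes through the tower.

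Next, I would apply Corollary \ref{sn} once per outer $S^e$-layer: for each multi-index $(k_1, \dots, k_n) \in (\bbDelta^{\op})^{\times n}$ the iteration produces a natural equivalence on realisations
\[
\left|\wquad S^e_{k_1}\cdots S^e_{k_n}\left((\cal{E}, Q, w)^{[n]}\right)\right| \xto{\simeq} \left|\mathrm{Pn}\!\left(\mathscr{S}^e_{k_1}\cdots \mathscr{S}^e_{k_n}(L_w(\cal{E}), (\mathbf{R}Q)^{[n]})\right)\right|.
\]
At each iteration, the pointwise complicial structure inherited on $S^e_k(-)$ together with Proposition \ref{qncomp} identifies the derived quadratic functor on the outer $S^e_k$-construction with the $\mathscr{S}^e_k$ of the derived quadratic functor. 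The functorial assignment $\wcompformcat \to \cat^p_\infty$ constructed in \S\ref{functoriality} then guarantees that these pointwise equivalences are natural in $(k_1, \dots, k_n)$ and assemble into an equivalence of $n$-simplicial spaces.

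Finally, realisation of a multisimplicial space agrees with realisation of its diagonal. Taking diagonals of the equivalences above and composing with the levelwise Poincar\'e equivalence $\iota^*_\bullet : \mathscr{S}^e_\bullet(-) \simeq \mathcal{Q}_\bullet(-)$ (from the remark preceding Proposition \ref{lvl}) yields the desired
\[
\left|\wquad \mathcal{R}^{(n)}_\bullet\!\left((\cal{E}, Q, w)^{[n]}\right)\right| \xto{\simeq} \left|\mathrm{Pn}\mathcal{Q}^{(n)}_\bullet(L_w(\cal{E}), (\mathbf{R}Q)^{[n]})\right| = \left|\mathrm{Cob}_n(L_w(\cal{E}), \mathbf{R}Q)\right|.
\]

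The main obstacle will be the second step: verifying that iterating Corollary \ref{sn} yields coherent multisimplicial data with the correct identification of quadratic functors. Concretely, this amounts to proving a version of Proposition \ref{qncomp} that is natural in the multi-index $(k_1, \dots, k_n)$, which in turn requires that the derived Poincar\'e assignment of \S\ref{functoriality} be applied to the $n$-simplicial object in $\wcompformcat$ defined by iteration of the hermitian $S^e$-construction, and that this assignment commute (up to natural equivalence) with taking $S^e_k$ on either side. Once this coherent multisimplicial lift is in hand, the rest of the argument is routine.
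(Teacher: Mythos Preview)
Your proposal is correct and follows essentially the same approach as the paper: both iterate Corollary \ref{e[1]} to identify the derived Poincar\'e category of $(\cal{E}, Q, w)^{[n]}$ with $(L_w(\cal{E}), (\mathbf{R}Q)^{[n]})$, then apply Corollary \ref{sn} layer by layer to the iterated $S^e$-construction, appealing to the functoriality of \S\ref{functoriality} for the multisimplicial coherence, and finally realise. The paper's argument is a slightly more compressed version of exactly this, and the coherence concern you flag as the main obstacle is treated in the paper in the same implicit fashion via the naturality of the comparison in the input form category.
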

\begin{proof}
    We show inductively that the complicial exact form category $(\cal{E}, Q, w)^{[n]}$ has derived Poincar\'e category $(L_w(\cal{E}), \mathbf{R}Q^{[n]})$: the case $n=0$ holds by definition. By Corollary \ref{e[1]}, for $(\cal{E}, Q, w, \D, \eta)$ any complicial exact form category with weak equivalences with derived Poincar\'e category $(L_w(\cal{E}), \mathbf{R}Q)$, the derived Poincar\'e category of $(\cal{E}, Q, w)^{[1]}$ is (Poincar\'e-equivalent to) $\left(L_w(\cal{E}), (\mathbf{R}Q)^{[1]}\right)$. Since for any $n\ge 1$ and any indices $k_, \dots, k_n \in \mathbb{N}$, we have that $\mathcal{R}^{(n)}_{k_1, \dots, k_n}(\cal{E}, Q, w)$ is levelwise a complicial exact form category with weak equivalences, for $n\ge 1$, there are natural equivalences
    \begin{align*}
        \left|\wquad\mathcal{R}_\bullet^{(n)}(\cal{E}^{[n]}, Q^{[n]}, w^{[n]})\right| & \simeq \left|\wquad\mathcal{R}_\bullet^{(n)}(\cal{E}^{[n-1]}, Q^{[n-1]}, w^{[n-1]})^{[1]}\right| \\
        & \simeq \left|\mathrm{Pn}\mathcal{Q}^{(n)}_\bullet\left(L_{w_{n-1}}(\cal{E}^{[n-1]}), (\mathbf{R}Q_{n-1})^{[1]}\right)\right| \\
        & \simeq \left|\mathrm{Pn}\mathcal{Q}^{(n)}_\bullet(L_w(\cal{E}), (\mathbf{R}Q)^{[n]})\right| = \left|\mathrm{Cob}_n(L_w(\cal{E}), \mathbf{R}Q)\right|,
    \end{align*}
    where the second equivalence uses the natural identification
    \[
        |\wquad\mathcal{R}_\bullet(\cal{E}, Q, w)| \simeq |\mathrm{Pn}\mathscr{S}^e_\bullet(L_w(\cal{E}), \mathbf{R}Q)| \simeq |\mathrm{Pn}\mathcal{Q}_\bullet(L_w(\cal{E}), \mathbf{R}Q)|
    \]
    for $(\cal{E}, Q, w)$ any complicial exact form category with weak equivalences.
\end{proof}
For $n \ge 0$, write $\gamma_n : \left|\wquad\mathcal{R}_\bullet^{(n)}(\cal{E}, Q, w)^{[n]}\right| \to \Omega\left|\wquad\mathcal{R}_\bullet^{(n+1)}(\cal{E}, Q, w)^{[n+1]}\right|$ for the bonding maps constructed in \cite{Sch24}, and $\delta_n:\left|\cob_n(L_w(\cal{E}), \mathbf{R}Q)\right| \to \Omega\left|\cob_{n+1}(L_w(\cal{E}), \mathbf{R}Q)\right|$ for those of \cite{CD23b}. We make the following recollection: given a Poincar\'e category $(\cal{C},\Qoppa)$, the shifted metabolic sequence
\[
    (\cal{C}, \Qoppa^{[-1]}) \to \mathrm{Met}(\cal{C}, \Qoppa) \to (\cal{C}, \Qoppa)
\]
is split Poincar\'e-Verdier, and is hence taken by the additive functor $\left|\cob(-)\right|$ to the fibre sequence
\[
    \left|\cob(\cal{C}, \Qoppa^{[-1]})\right| \to \left|\cob(\mathrm{Met}(\cal{C}, \Qoppa))\right| \to \left|\cob(\cal{C}, \Qoppa)\right|.
\]
By \cite[Prop.\ 3.1.10]{CD23b}, the boundary map $\Omega\left|\cob(\cal{C}, \Qoppa)\right| \xto{\partial} \left|\cob(\cal{C}, \Qoppa^{[-1]})\right|$ fits into the diagram
\begin{equation}\label{boundary}\begin{tikzcd}
    & \mathrm{Pn}(\cal{C}, \Qoppa) \ar[ld, "\delta_0"'] \ar[rd] \\
    \Omega\left|\cob(\cal{C}, \Qoppa)\right| \ar[rr, "\partial"] && \left|\cob(\cal{C}, \Qoppa^{[-1]})\right|,
\end{tikzcd}\end{equation}
with the left-hand map the bonding map induced by the identification $\mathrm{Pn}(\cal{C}, \Qoppa) \simeq \Hom_{\cob(\cal{C}, \Qoppa)}(0,0)$, and the right-hand by the inclusion of $0$-simplices $(\cal{C}, \Qoppa^{[-1]}) = \mathcal{Q}_0(\cal{C}, \Qoppa) \to \mathcal{Q}_\bullet(\cal{C}, \Qoppa)$. Now, recall from \cite[\S11]{Sch24} that associated to a complicial exact form category with weak equivalences $(\cal{E}, Q, w, \D, \eta)$ is a commutative diagram
\[\begin{tikzcd}
    \wquad(\cal{E}, Q, w) \ar[rr] \ar[d, "\iota"] && 0 \ar[d] \\
    \wquad\mathcal{R}_\bullet(\cal{E}, Q, w) \ar[r, hookrightarrow, "I"] \ar[d] & \wquad\mathcal{R}_\bullet(\mathrm{Ar}(\cal{E}), Q^{\Delta^1}, w_\mathrm{pt}) \ar[d] \ar[r] & wS_\bullet\cal{E} \\
    \wquad\mathcal{R}_\bullet(\mathrm{Ar}(\cal{E})^\mathrm{cone}, Q^{\Delta^1}, w_\mathrm{cone}) \ar[r, hookrightarrow] & \wquad\mathcal{R}_\bullet(\mathrm{Ar}(\cal{E}), Q^{\Delta^1}, w_\mathrm{cone}),
\end{tikzcd}\]
where the functor $\iota$ is the inclusion of $0$-simplices, and $I$ is levelwise given by the form functor
\[
    (\cal{E}, Q, w) \to (\mathrm{Ar}(\cal{E}), Q^{\Delta^1}, w), \quad x \mapsto 1_x.
\]
That the map
\begin{equation}\label{additivity}
\left|\wquad(\mathcal{R}_\bullet(\arr(\cal{E}), Q^{\Delta^1}, w_\mathrm{pt}))\right| \to \left|wS_\bullet\cal{E}\right|
\end{equation}
is a weak equivalence follows ultimately for additivity for hermitian $K$-theory of exact form categories (with weak equivalences and strong duality) \cite[Th.\ 3.1 (8.1)]{Sch24}, and moreover, the maps in these theorems are natural in the input form category, as seen from inspection of the proof of \cite[Th.\ 3.1]{Sch24}. The map (\ref{additivity}) thus admits a functorial homotopy inverse, and accordingly the rectangle
\[\begin{tikzcd}
    \wquad(\cal{E}, Q, w) \ar[r] \ar[d, "\iota"] & 0 \ar[d] \\
    \wquad\mathcal{R}_\bullet(\cal{E}, Q, w) \ar[r, hookrightarrow] & \wquad\mathcal{R}_\bullet(\mathrm{Ar}(\cal{E}), Q^{\Delta^1}, w_\mathrm{pt})
\end{tikzcd}\]
commutes up to natural homotopy. Taking derived Poincar\'e categories, we obtain a diagram
\[\begin{tikzcd}
    \mathrm{Pn}(L_w(\cal{E}), \mathbf{R}Q) \ar[r] \ar[d] & 0 \ar[d] \\
    \mathrm{Pn}\mathcal{Q}_\bullet(L_w(\cal{E}),\mathbf{R}Q) \ar[r] \ar[d] & \mathrm{Pn}\mathcal{Q}_\bullet (\mathrm{Ar}(L_w(\cal{E})), (\mathbf{R}Q)_\mathrm{ar})^{[1]} \ar[d] \\
    0 \ar[r] & \mathrm{Pn}\mathcal{Q}_\bullet(L_w(\cal{E}), (\mathbf{R}Q)^{[1]})
\end{tikzcd}\]
of simplicial Poincar\'e categories commuting up to canonical homotopy, which we further identify via (\ref{poinc-verd-inc}) with
\[\begin{tikzcd}
    \mathrm{Pn}(L_w(\cal{E}), \mathbf{R}Q) \ar[r] \ar[d] & 0 \ar[d] \\
    \mathrm{Pn}\mathcal{Q}_\bullet(L_w(\cal{E}),\mathbf{R}Q) \ar[r] \ar[d] & \mathrm{Pn}\mathcal{Q}_\bullet\mathrm{Met}(L_w(\cal{E}), \mathbf{R}Q)^{[1]} \ar[d] \\
    0 \ar[r] & \mathrm{Pn}\mathcal{Q}_\bullet(L_w(\cal{E}), (\mathbf{R}Q)^{[1]}),
\end{tikzcd}\]
where the lower square encodes the fibre sequence associated to the metabolic Poincar\'e-Verdier sequence. Taking realisations, we obtain the essentially unique map of spaces $\gamma_0:\left|\mathrm{Pn}(L_w(\cal{E}), \mathbf{R}Q)\right| \to \Omega\left|\mathrm{Cob}(L_w(\cal{E}), \mathbf{R}Q)\right|$ rendering the diagram
\[\begin{tikzcd}
    \mathrm{Pn}(L_w(\cal{E}), \mathbf{R}Q) \ar[rd, "u"] \ar[rrd, bend left=2ex] \ar[rdd, bend right=2ex] \\
    & \Omega\left|\cob(L_w(\cal{E}), \mathbf{R}Q)\right| \ar[r] \ar[d, "\partial"] & 0 \ar[d] \\
    & \left|\cob(L_w(\cal{E}), \mathbf{R}Q^{[-1]})\right| \ar[r] \ar[d] & \left|\cob(\mathrm{Met}(L_w(\cal{E}), \mathbf{R}Q)\right| \ar[d] \\
    & 0 \ar[r] & \left|\cob(L_w(\cal{E}), \mathbf{R}Q)\right|
\end{tikzcd}\]
commutative, in which the top square encodes the rotated fibre sequence $\Omega\left|\cob(L_w(\cal{E}), \mathbf{R}Q^{[-1]})\right| \to \left|\cob(\mathrm{Met}(L_w(\cal{E}), \mathbf{R}Q)\right| \to \left|\cob(L_w(\cal{E}), \mathbf{R}Q)\right|$. The map $\gamma_0$ is determined up to contractible choice by the universal property of the limit, and accordingly is homotopic (up to contractible choice) to $\delta_0$. The same argument applies replacing $(\cal{E}, Q, w, \D, \eta)$ with the multisimplicial complicial exact form category with weak equivalences $\mathcal{R}_\bullet^{(n)}(\cal{E}, Q, w)^{[n]}$, and accordingly we obtain (a contractible space of) homotopies between the bonding maps $\gamma_n$ and $\delta_n$.
\begin{remark}
    Suppose given a sequence of pointed spaces $(X_n, *)_{n \ge 0}$, contractible subspaces $i_n:C_n \subset \Map_*((X_n, *), \Omega(X_{n+1}, *))$, for $\Map_*$ the pointed mapping space in the closed symmetric monoidal $\infty$-category $\cal{S}_*^\wedge$ of pointed spaces with the smash product. The maps $i_n$ adjoin to pointed maps $j_n:(X_n, *) \to \Map_*((C_n)_+, \Omega(X_{n+1}, *)$, and we set $\widetilde{X}_n := \Map_*(\bigwedge_{i < n}(C_n)_+, (X_n, *))$. We then have bonding maps $\alpha_n:\widetilde{X}_n \to \Omega\widetilde{X}_{n+1}$ given by the composite
    \begin{small}\[
        \Map_*(\bigwedge_{i < n}(C_n)_+, (X_n, *)) \xto{[1, j_n]} \Map_*(\bigwedge_{i < n} (C_i)_+, \Map_*((C_n)_+, \Omega(X_{n+1}, *))) \simeq \Omega\Map_*(\bigwedge_{i \le n}(C_i)_+, (X_{n+1}, *)).
    \]\end{small}
    This map is contravariantly functorial in the maps $i_n$: given for each $n \ge 0$ contractible subspaces $C_n, C'_n$ and a diagram
    \[\begin{tikzcd}
        C'_n \ar[r, hookrightarrow, "i'_n"] \ar[d, "f_n"] & \Map_*((X_n,*), \Omega(X_{n+1}, *)) \\
        C_n \ar[ru, "i_n", hookrightarrow],
    \end{tikzcd}\]
    we have a commuting diagram
    \[\begin{tikzcd}
        \Map_*(\bigwedge_{i < n} (C_i)_+, (X_n, *)) \ar[r, "{[1, j_n]}"] \ar[d, "{\left[\left(\bigwedge_{i<n}(f_i)_+\right), [(f_n)_+, 1]\right]}"] & \Map_*(\bigwedge_{i \le n}(C_i)_+, \Map_*((C_n)_+, \Omega(X_{n+1}, *))) \ar[d, "{\left[\left(\bigwedge_{i<n}(f_i)_+\right), 1\right]}"] \\
        \Map_*(\bigwedge_{i < n} (C'_i)_+, (X_n, *)) \ar[r, "{[1, j'_n]}"] & \Map_*(\bigwedge_{i \le n}(C'_i)_+, \Map_*((C_n)_+, \Omega(X_{n+1}, *))),
    \end{tikzcd}\]
    and accordingly we have maps of (pre-)spectra $(\widetilde{X}_n, \alpha_n)_n \to (\widetilde{X}'_n, \alpha'_n)_n$. In the case $C''_n, C'_n \to C_n$ are simply the inclusions of points corresponding to choices of bonding maps $\gamma_n, \delta_n$, we obtain a canonical zig-zag of weak equivalences of (pre-)spectra
    \[
        (X_n, \gamma_n)_n \stackrel{\simeq}\leftarrow (\widetilde{X}_n, \alpha_n)_n \xto{\simeq} (X_n, \delta_n)_n.
    \]
\end{remark}
The above discussion and Proposition \ref{lvlsp} imply:
\begin{theorem}\label{main-comp-sp}
    Suppose given a complicial exact form category with weak equivalences $(\cal{E}, Q, w, \D, \eta)$, with derived Poincar\'e category $(L_w(\cal{E}), \mathbf{R}Q)$. Then the localisation $\cal{E} \to L_w(\cal{E})$ induces a weak equivalence of Grothendieck-Witt spectra
    \[
        \mathrm{GW}(\cal{E}, Q, w) \xto{\simeq} \mathrm{GW}(L_w(\cal{E}), \mathbf{R}Q).
    \]
\end{theorem}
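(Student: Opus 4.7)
The plan is to assemble the theorem from the levelwise comparison of Corollary~\ref{lvlsp} together with the compatibility of the bonding maps in the two constructions of $\mathrm{GW}$. Concretely, for each $n \ge 0$, Corollary~\ref{lvlsp} produces a natural equivalence
\[
    \left|\wquad\mathcal{R}_\bullet^{(n)}(\cal{E}, Q, w)^{[n]}\right| \xto{\simeq} \left|\mathrm{Cob}_n(L_w(\cal{E}), \mathbf{R}Q)\right|,
\]
which identifies the underlying spaces of the two positive $\Omega$-spectra $\mathrm{GW}(\cal{E}, Q, w)$ and $\mathrm{GW}(L_w(\cal{E}), \mathbf{R}Q)$. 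What remains is to check that these equivalences intertwine the two systems of bonding maps $\gamma_n$ (from \cite{Sch24}) and $\delta_n$ (from \cite{CD23b}) up to coherent homotopy.

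For the compatibility, I would argue level-by-level, treating the essential case $n=0$ first and then iterating. The bonding map $\gamma_0$ is constructed from the natural homotopy-commutative square
\[\begin{tikzcd}
    \wquad(\cal{E}, Q, w) \ar[r] \ar[d, "\iota"] & 0 \ar[d] \\
    \wquad\mathcal{R}_\bullet(\cal{E}, Q, w) \ar[r, hookrightarrow] & \wquad\mathcal{R}_\bullet(\mathrm{Ar}(\cal{E}), Q^{\Delta^1}, w_\mathrm{pt})
\end{tikzcd}\]
produced by the additivity theorem for exact form categories, whereas $\delta_0$ comes from the fibre sequence induced by the metabolic Poincar\'e--Verdier sequence. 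Applying the derived-Poincar\'e-category functor to the above square and using the identification of Corollary~\ref{e[1]} together with the equivalence (\ref{poinc-verd-inc}) between arrow and metabolic Poincar\'e categories, the image square coincides with the one defining $\delta_0$. Both $\gamma_0$ and $\delta_0$ are thus maps into the loop of $|\mathrm{Cob}(L_w(\cal{E}), \mathbf{R}Q)|$ determined up to contractible choice by the universal property of the pullback against a common pair of null-homotopic compositions.

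To propagate this through all levels, I would invoke the abstract prespectrum lemma sketched in the remark preceding the theorem: given pointed spaces $(X_n, *)$, contractible subspaces $C_n, C'_n$ of $\Map_*((X_n,*), \Omega(X_{n+1},*))$ representing the two choices of bonding maps, and a common contractible $\widetilde{C}_n$ mapping to both, one obtains a zig-zag of prespectra $(X_n, \gamma_n) \xleftarrow{\simeq} (\widetilde{X}_n, \alpha_n) \xrightarrow{\simeq} (X_n, \delta_n)$ whose constituent arrows are levelwise equivalences. Applying the same analysis at each simplicial degree of the iterate $\mathcal{R}_\bullet^{(n)}(\cal{E}, Q, w)^{[n]}$ yields the compatibility of $\gamma_n$ with $\delta_n$ up to a contractible space of homotopies, functorially in the input form category.

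The main obstacle, I expect, is not in producing the individual levelwise equivalences nor in invoking the abstract prespectrum formalism, but rather in confirming that the additivity-based construction of $\gamma_n$ in \cite{Sch24} actually descends through the derivation functor $\wcompformcat \to \cat^p_\infty$ of~\S\ref{functoriality} to the metabolic fibre sequence defining $\delta_n$. This reduces to checking that the form functor $(\cal{E}, Q, w) \hookrightarrow (\arr(\cal{E}), Q^{\Delta^1}, w_\mathrm{pt})$, $x \mapsto 1_x$, has as its derived incarnation the split Poincar\'e--Verdier inclusion $(L_w(\cal{E}), \mathbf{R}Q) \hookrightarrow \mathrm{Met}(L_w(\cal{E}), \mathbf{R}Q)^{[1]}$, which is precisely the content of the preceding corollaries together with the natural identification $L_w(\cal{E}) \simeq \arr(L_w(\cal{E}))^\mathrm{cone}$. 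Once this identification is established functorially, the zig-zag of prespectrum equivalences assembles into the desired equivalence of Grothendieck--Witt spectra.
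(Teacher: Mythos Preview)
Your proposal is correct and follows essentially the same approach as the paper: invoke Corollary~\ref{lvlsp} for the levelwise equivalences, trace the additivity-based bonding maps $\gamma_n$ of \cite{Sch24} through the derivation functor to identify them (via Corollary~\ref{e[1]} and the arrow/metabolic comparison (\ref{poinc-verd-inc})) with the metabolic-sequence bonding maps $\delta_n$ of \cite{CD23b} up to contractible choice, and then assemble the prespectrum equivalence using the zig-zag lemma from the preceding remark. The paper's ``proof'' is in fact just the sentence ``The above discussion and Proposition~\ref{lvlsp} imply,'' with all the substance contained in the preceding discussion you have accurately summarised.
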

\section{Genuine symmetric Poincar\'e structures}\label{genuine-structures}
In this section we give a treatment of the formalism of genuine Poincar\'e structures of \cite{CD23a} and \cite{CHN24}, showing in cases of interest that these are precisely the derived Poincar\'e structures of exact form categories.
\subsection{Bounded derived Poincar\'e categories}
Given an exact category with strong duality $(\cal{E}, \D, \eta)$ embedding exactly into the complicial exact category with weak equivalences and strong duality $(\chb(\cal{E}), \mathbf{qis}, \D, \eta)$, we have an associated duality-preserving Verdier-projection
\[
	(\kb(\cal{E}), \D, \eta) \to (\db(\cal{E}), \D', \eta'),	
\]
where $\D'$ is the left Kan extension of the composite $\pi\circ\D$ along the localisation $\pi^{\op}:\kb(\cal{E})^{\op} \to \db(\cal{E})^{\op}$, which exists since $\D:\chb(\cal{E})^{\op} \to \chb(\cal{E})$ preserves acyclic complexes. Writing $(\kb(\cal{E}), \Qoppa_\oplus^s)$ and $(\db(\cal{E}), \Qoppa^s)$ for the corresponding symmetric Poincar\'e-structures, one may ask whether in general this promotes to a Poincar\'e-Verdier projection
\[
	(\kb(\cal{E}), \Qoppa_\oplus^s) \to (\db(\cal{E}), \Qoppa^s),
\]
or equivalently \cite[Cor.\ 1.1.6]{CD23b} if the induced map $\Qoppa_\oplus^s \to \pi^*\Qoppa^s$ exhibits $\Qoppa^s$ as the left Kan extension of $\Qoppa_\oplus^s$ along $\pi^{\op}$. This will not be the case for a general duality-preserving Verdier quotient of stable $\infty$-categories, since a priori there is no reason for the commutation of the filtered colimit computing the mapping spectra $\map_{\cal{D}}(-,-)$ and the formation of $C_2$-homotopy fixed points. The situation at hand is however sufficiently finitary enough to permit this.
\begin{remark}
\hspace*{1pt}\\
	\begin{enumerate}[label=(\roman*)]
	\vspace*{-12pt}\item The below essentially amounts to showing that $\kb(\cal{E}) \to \db(\cal{E})$ is a bounded Karoubi projection of $\dperf(\zz)$-linear stable $\infty$-categories in the sense of \cite[\S4]{CHN24}. Since we only need that mapping spectra in $\kb(\cal{E})$ can be computed (up to equivalence) as mapping complexes in the dg-category $\chb(\cal{E})$, we dispense with the full $\dperf(\zz)$-linearity; it is the case however that any complicial exact category localises to a $\zz$-linear stable $\infty$-category, giving us an honest tensoring over $\dperf(\zz)$.
		\item For the remainder of the section we assume $\cal{E}$ is \textbf{weakly idempotent complete }(i.e., every retract admits a kernel). Since by \cite[Lem.\ 10.6]{Sch24} for any exact form category with strong duality $(\cal{E}, Q, \D, \eta)$ the weak idempotent completion $\cal{E} \to \cal{E}^\flat$ enhances to an exact form functor
	\[
		(\cal{E}, Q, \D, \eta) \to (\cal{E}^\flat, Q^\flat, \D^\flat, \eta^\flat)
	\]
	of exact form categories with strong duality, such that the induced map on Grothendieck-Witt spaces is an equivalence, this constitutes no real loss of generality. The category $\chb(\cal{E}^\flat)$ is also idempotent complete, and the induced functor $\chb(\cal{E}) \to \chb(\cal{E}^\flat)$ promotes to an exact functor of complicial exact categories with weak equivalences and duality.
	\end{enumerate}
\end{remark}
\begin{lemma}\label{bddverdier}
	For $\cal{E}$ weakly idempotent complete, the functor $(\kb(\cal{E}), \Qoppa_\oplus^s) \to (\db(\cal{E}), \Qoppa^s)$ is a Poincar\'e-Verdier projection.
\end{lemma}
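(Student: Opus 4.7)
The plan is to separately verify the two conditions that characterise a Poincar\'e-Verdier projection (cf.\ \cite[Cor.\ 1.1.6]{CD23b}): first, that the underlying functor $\pi: \kb(\cal{E}) \to \db(\cal{E})$ is a Verdier projection of stable $\infty$-categories, and second, that the induced map $\pi_!\Qoppa_\oplus^s \to \Qoppa^s$ is an equivalence of quadratic functors on $\db(\cal{E})$.

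The first condition is essentially standard: $\db(\cal{E})$ is realised as the Verdier quotient of $\kb(\cal{E})$ by the full stable subcategory $\acb(\cal{E})$ of acyclic complexes, with weak idempotent completeness of $\cal{E}$ ensuring that $\acb(\cal{E}) \subset \kb(\cal{E})$ is closed under retracts. Since $\D:\chb(\cal{E})^{\op} \to \chb(\cal{E})$ is exact and preserves quasi-isomorphisms, $\acb(\cal{E})$ is closed under the duality on $\kb(\cal{E})$; by \cite[Ex.\ 1.1.7]{CD23b} (cf.\ Remark \ref{ddual}) this already guarantees that $(\db(\cal{E}), \pi_!\Qoppa_\oplus^s)$ is a Poincar\'e $\infty$-category, with duality induced from that of $\kb(\cal{E})$.

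For the second condition, the plan is to compare $\pi_!\Qoppa_\oplus^s$ and $\Qoppa^s$ via their polarizations and linear parts. Both are reduced and 2-excisive (the former since left Kan extension along an exact functor of stable $\infty$-categories preserves quadraticity by \cite[Lem.\ 1.4.1(iii)]{CD23a}). The polarizations agree: by the formula for mapping spectra in a Verdier quotient \cite[Th.\ I.3.3]{NS18}, together with the fact that the polarization is a pointwise colimit and so commutes with $\pi_!$, one obtains
\[
    B_{\pi_!\Qoppa_\oplus^s}(X,Y) \simeq (\pi_! \times \pi_!)B_{\Qoppa_\oplus^s}(X,Y) \simeq \map_{\db(\cal{E})}(X, \D Y) \simeq B_{\Qoppa^s}(X,Y).
\]
Meanwhile, the linear part of $\Qoppa^s$ is tautologically zero: the canonical map $\Qoppa^s(X) \to B_{\Qoppa^s}(X,X)^{hC_2}$ is the identity on $\map_{\db(\cal{E})}(X, \D X)^{hC_2}$.

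The main obstacle is then the vanishing of the linear part of $\pi_!\Qoppa_\oplus^s$, which reduces to showing that $\pi_!$ commutes with the homotopy $C_2$-fixed-point construction applied to the bilinear functor $B_{\Qoppa_\oplus^s}$ — a commutation which fails in general, since $(-)^{hC_2}$ is a limit while $\pi_!$ is a colimit. I expect to resolve this by exploiting the finiteness afforded by weak idempotent completeness and by the boundedness of chain complexes in $\kb(\cal{E})$: the Verdier quotient formula expresses mapping spectra in $\db(\cal{E})$ as filtered colimits of $\kb(\cal{E})$-mapping spectra indexed by categories of acyclic lifts, and in each homological degree the relevant $hC_2$ Postnikov tower truncates to a finite limit which does commute with this filtered colimit. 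Combined with the agreement of polarizations, this identifies $\pi_!\Qoppa_\oplus^s \simeq \Delta^*B_{\Qoppa^s}^{hC_2} \simeq \Qoppa^s$, yielding the desired Poincar\'e-Verdier projection.
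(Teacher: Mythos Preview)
Your overall architecture matches the paper's: reduce to showing $\pi_!\Qoppa_\oplus^s \simeq \Qoppa^s$, and isolate the crux as the commutation of the filtered colimit computing $\pi_!$ with the homotopy $C_2$-fixed points. The identification of polarisations and the vanishing of $\Lambda_{\Qoppa^s}$ are fine.

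The gap is in your resolution of the commutation step. Your claim that ``in each homological degree the relevant $hC_2$ Postnikov tower truncates to a finite limit'' is not sufficient as stated. For a single bounded-above spectrum $M$, it is true that $\pi_n(M^{hC_2})$ depends only on a finite Postnikov truncation of $M$; but the bound on that truncation depends on the coconnectivity of $M$. Over the indexing category $\cal{I}_x$ of quasi-isomorphisms $y \to x$, the complexes $y$ can have arbitrarily low connectivity, so $\hom_{\kb(\cal{E})}(y,\D y)$ can be arbitrarily highly coconnective, and there is no single finite limit that computes $\pi_n((-)^{hC_2})$ uniformly over the diagram. Without such uniformity, filtered-colimit/finite-limit commutation does not apply.

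The paper fills this gap by a cofinality argument that actually uses weak idempotent completeness in an essential way: for $x$ connective, any quasi-isomorphism $y \to x$ can be replaced by one with $y$ concentrated in degrees $\ge -2$ (because weak idempotent completeness forces acyclic complexes, in particular $\mathrm{cone}(f)$, to be \emph{strictly} acyclic, allowing an explicit truncation of $y$). This shows $\cal{I}_x^{\ge -2} \hookrightarrow \cal{I}_x$ is cofinal. Over $\cal{I}_x^{\ge -2}$ the mapping spectra $\hom_{\kb(\cal{E})}(y,\D y)$ are now \emph{uniformly} bounded above, and one can invoke the general fact (e.g.\ \cite[Cor.\ 4.2.10]{CHN24}) that filtered colimits of uniformly bounded-above diagrams in $\mathrm{D}(\zz)$ commute with finite-type limits such as $(-)^{hC_2}$. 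Your proposal gestures at ``finiteness afforded by weak idempotent completeness'' but never makes the uniform bound explicit; that cofinal replacement is the missing idea.
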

\begin{proof}
	Write $\gamma:\chb(\cal{E}) \to \kb(\cal{E})$ for the Dwyer-Kan localisation at the Frobenius (chain homotopy) equivalences, and $\pi:\kb(\cal{E}) \to \db(\cal{E})$ for the Verdier projection. The pointwise formula for left Kan extensions gives for $x \in \kb(\cal{E})$ an identification
	\[
		(\pi_!\Qoppa_\oplus^s)(\pi(x)) \simeq \colim\left(\cal{I}_x^{\op} \to \kb(\cal{E}) \xto{\Qoppa_\oplus^s} \mathrm{D}(\zz)\right) = \underset{y \in \cal{I}_x^{\op}}\colim\ \hom_{\kb(\cal{E})}(y, \D(y))^{\hct},
	\]
	for $\cal{I}_x \subset (\kb(\cal{E})\downarrow x)$ the full subcategory of quasi-isomorphisms over $x$. Now by \cite[Prop.\ 1.3.5.21]{HA}, the standard $t$-structure on $\mathrm{D}(\zz)$ is right-separated with $\mathrm{D}_{\le 0}(\zz)$ stable under filtered colimits, so by \cite[Cor.\ 4.2.10]{CHN24} we see that filtered colimits of uniformly bounded above diagrams in $\mathrm{D}(\zz)$ commute with finite type limits, i.e.\ limits indexed by simplicial sets $K$ with finitely many non-degenerate simplices in each degree. We observe that the duality $\D:\chb(\cal{E})^{\op} \to \chb(\cal{E})$ restricts to an equivalence $\mathrm{Ch}_{[a, b]}(\cal{E})^{\op} \simeq \mathrm{Ch}_{[-b,-a]}(\cal{E})$ for each $b\ge a$, and accordingly for $y \in \mathrm{Ch}_{[a, b]}(\cal{E})$, the mapping complex $\underline\Hom(y, \D(y))$ is concentrated in degrees $[-2b,-2a]$. The mapping spectrum $\map_{\kb(\cal{E})}(\gamma(y), \D(\gamma(y))$ hence lies in $\mathrm{D}_{[-2b, -2a]}(\zz) = \mathrm{D}_{\ge -2b}(\zz) \cap \mathrm{D}_{\le -2a}(\zz)$, with respect to the standard $t$-structure on $\mathrm{D}(\zz)$. Given a family of complexes $\{y_j\}_{j \in \cal{J}} \subset \kb(\cal{E})$, uniformly bounding the coconnectivity of the family $\{\hom_{\kb(\cal{E})}(y_j, \D(y_j))\}_{j \in \cal{J}}$ then amounts to uniformly bounding the connectivity of the $y_j$.\\
	Suppose $x \in \mathrm{Ch}_{\ge 0, \mathrm{b}}(\cal{E})$; the general case follows by shifting. Then a quasi-isomorphism over $\gamma(x)$ in $\kb(\cal{E})$ is represented by some map $f:y\to x$ in $\chb(\cal{E})$ with acyclic cone; since $\cal{E}$ is weakly idempotent complete, any acyclic complex is strictly acyclic \cite[Prop.\ 10.14]{Buh10}, and accordingly $\mathrm{cone}(f)$ admits factorisations
	\[\begin{tikzcd}
		\dots \ar[r] & x_1 \oplus y_0 \ar[rd, twoheadrightarrow, "{\begin{psmallmatrix} p_0 \amsamp q_0\end{psmallmatrix}}"] \ar[rr, "{\begin{psmallmatrix}d_1 \amsamp -f_0 \\ 0 \amsamp -d_0\end{psmallmatrix}}"] && x_0 \oplus y_{-1} \ar[rd, "{\begin{psmallmatrix} p_{-1} \amsamp q_{-1}\end{psmallmatrix}}", twoheadrightarrow] \ar[rr, "{\begin{psmallmatrix}0 \amsamp -d_{-1}\end{psmallmatrix}}"] && y_{-2} \ar[r] & \dots \\
		&& w_0 \ar[ru, rightarrowtail, "{\begin{psmallmatrix}i_0 \\ j_0 \end{psmallmatrix}}"'] && w_{-1} \ar[ru, rightarrowtail, "j_{-1}"'] 
	\end{tikzcd}\]
	with the sequence $w_0 \mono x_0\oplus y_{-1} \epi w_{-1}$ a conflation in $\cal{E}$. Since $j_{-1}$ is monic, the map $q_{-1}$ is necessarily zero, and accordingly for $y_{\ge -2}$ the complex $\dots \to y_0 \to y_{-1} \epi w_{-1} \to 0 \to \dots$, the composite map $y_{\ge -2} \mono y \to x$ has acyclic cone given by
	\[
		\dots \to x_1 \oplus y_0 \to x_0 \oplus y_{-1} \epi w_{-2} \to 0 \dots
	\]
	(we note that this truncation is by no means sharp). Write $j_x:\cal{I}_x^{\ge -2} \hookrightarrow \cal{I}_x$ for the full subcategory on quasi-isomorphisms $y \to x$ in $\kb(\cal{E})$ with $y$ concentrated in degrees $\ge -2$. Then we claim the inclusion $j_x^{\op}$ is cofinal: since $\kb(\cal{E})$ admits finite limits and colimits, that the same is true for the iterated comma categories $((y, f) \downarrow j_x^{\op}) = (j_x \downarrow (y, f))^{\op}$ follows from the stability of the class of quasi-isomorphisms under pushout and pullbacks. These are hence contractible as soon as they are inhabited; but this follows from above, and Quillen's theorem A completes the proof.
\end{proof}
\subsection{Orientations on the derived category}
Suppose given a small exact category with duality $(\cal{E}, \D, \eta)$, and write $(\cal{E}_\oplus, \D, \eta)$ for the underlying additive category with duality. The exact duality-preserving functor $(\cal{E}_\oplus, \D, \eta) \to (\cal{E}, \D, \eta)$ induces a duality-preserving functor of stable $\infty$-categories with perfect duality $(\kb(\cal{E}), \D) \to (\db(\cal{E}), \D)$. For $\cal{C}$ a stable $\infty$-category with category of $\mathrm{Ind}$-objects $\mathrm{Ind}(\cal{C})$, we refer to the data of a $t$-structure on $\mathrm{Ind}(\cal{C})$ as an \textit{orientation}, following \cite{CHN24}.\\
For $x \in \cal{E}$, write $P_x$ for the set of deflations over $x$. Then the families $(P_x)_{x \in \cal{E}}$ generate a Grothendieck topology on $x$, for which a presheaf of abelian groups $\cal{E}^{\op} \to \Ab$ is a sheaf precisely if it sends conflations in $\cal{E}$ to left-exact sequences of abelian groups \cite[App.\ A]{Buh10}. Write $\elex := \mathrm{Sh}^{\Ab}(\cal{E})$ for the Gabriel-Quillen abelian hull of such sheaves, a Grothendieck abelian category with system of compact generators given by the image under the Yoneda embedding of $\cal{E}$; in the case $\cal{E}$ is split exact, these generators are additionally projective. With the above Grothendieck topology, $\cal{E}$ is an additive $\infty$-site in the sense of \cite[\S2]{Pst23}; since for each $x, y \in \cal{E}$ the sequence $x \mono x \oplus y \epi y$ is a conflation, each sheaf is necessarily additive, i.e.\ the map $X(x \oplus y) \to X(x) \oplus X(y)$ induced by the canonical inclusions is an equivalence. Following \cite{Pst23}, we call an additive  presheaf \textit{spherical}. By the universal properties of $\kb(\cal{E})$ and $\db(\cal{E})$, there are identifications
\begin{align*}
    \mathrm{Ind}(\kb(\cal{E})) = \fun^\mathrm{ex}(\kb(\cal{E})^{\op}, \Sp) \simeq \fun^\oplus(\cal{E}^{\op}, \Sp) = \cal{P}^{\Sp}_\Sigma(\cal{E}), \\
    \mathrm{Ind}(\db(\cal{E})) = \fun^\mathrm{ex}(\db(\cal{E})^{\op}, \Sp) \simeq \fun^\mathrm{ex}(\cal{E}^{\op}, \Sp) = \sh^{\Sp}_\Sigma(\cal{E}),
\end{align*}
where we use \cite[Th.\ 2.8]{Pst23} to identify spherical sheaves of spectra on $\cal{E}$ with those spherical presheaves sending conflations to fibre sequences of spectra. Note that the Yoneda embedding $\cal{E}_\oplus \hookrightarrow \cal{E}_\oplus^\mathrm{lex} = \fun^\oplus(\cal{E}^{\op}, \Ab)$ induces by \cite[Lem.\ 2.61]{Pst23} an equivalence
\[
	\cal{P}^{\Sp}_\Sigma(\cal{E}) \xto{\simeq} \mathrm{D}(\cal{E}_\oplus^\mathrm{lex})
\]
between the the $\infty$-category of spherical presheaves of spectra on $\cal{E}$ and unbounded derived $\infty$-category of $\cal{E}_\oplus^\mathrm{lex}$. Given a spherical sheaf of spectra $X$ on $\cal{E}$, write $\pi_n^\dagger(X) : \cal{E}^{\op} \to \Ab$ for the sheaf associated to the assignment $c \mapsto \pi_nX(c)$; call $X$ connective if $\pi_n^\dagger(X) = 0$ for each $n < 0$, and coconnective if the sheaf of spaces $\Omega^\infty X$ given by postcomposition with $\Omega^\infty:\Sp \to \cal{S}$ is discrete. By \cite[Prop.\ 2.16]{Pst23}, the full subcategories $\sh^{\Sp}_\Sigma(\cal{E})_{\ge 0}$ and $\sh^{\Sp}_\Sigma(\cal{E})_{\le 0}$ of connective and coconnective spherical sheaves respectively define a $t$-structure on $\sh^{\Sp}_\Sigma(\cal{E})$, and we denote by $\tau_{\ge 0}$, $\tau_{\le 0}$ the respective truncation functors.
\begin{lemma}\label{filt}
    The truncation functor $\tau_{\ge 0}:\sh^{\Sp}_\Sigma(\cal{E}) \to \sh^{\Sp}_\Sigma(\cal{E})_{\ge 0}$ commutes with filtered colimits of spherical sheaves.
\end{lemma}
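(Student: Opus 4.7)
The plan is to reduce the statement to a closure property of the coconnective part of the $t$-structure, and then to verify this by computing homotopy-sheaf obstructions. Recall the standard fact that, in a presentable stable $\infty$-category equipped with an accessible $t$-structure, the connective cover $\tau_{\ge 0}$ preserves filtered colimits if and only if the full subcategory $\cal{C}_{\le -1}$ is closed under filtered colimits. This follows from splicing the fibre sequence $\tau_{\ge 0}X \to X \to \tau_{\le -1}X$ through a filtered diagram and invoking uniqueness of truncation triangles; filtered colimits of connective objects are connective a priori, so the obstruction lives entirely on the coconnective side. Accordingly I would first record this reduction and then prove that the subcategory $\sh^{\Sp}_\Sigma(\cal{E})_{\le -1}$ is stable under filtered colimits in $\sh^{\Sp}_\Sigma(\cal{E})$.

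The key remaining ingredient is that the homotopy-sheaf functor $\pi^\dagger_n$ commutes with filtered colimits. To see this, I would first note that filtered colimits in $\sh^{\Sp}_\Sigma(\cal{E})$ agree with those computed pointwise in the presheaf category $\cal{P}^{\Sp}_\Sigma(\cal{E})$: the defining condition that a spherical presheaf take each conflation $x \mono y \epi z$ to a fibre sequence of spectra is preserved by filtered colimits, since such colimits commute with finite limits in $\Sp$. Hence the inclusion $\sh^{\Sp}_\Sigma(\cal{E}) \hookrightarrow \cal{P}^{\Sp}_\Sigma(\cal{E})$ preserves filtered colimits. In the presheaf category, the ordinary homotopy-group presheaf $c \mapsto \pi_nX(c)$ is computed pointwise and clearly commutes with filtered colimits since $\pi_n$ does on $\Sp$. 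Postcomposition with sheafification $\cal{P}^{\Ab}(\cal{E}) \to \elex$ is a left adjoint and therefore preserves all colimits, so the composite $\pi^\dagger_n$ commutes with filtered colimits in $\sh^{\Sp}_\Sigma(\cal{E})$. Given a filtered diagram $(X_i)_{i \in \cal{I}}$ in $\sh^{\Sp}_\Sigma(\cal{E})_{\le -1}$, one then has $\pi^\dagger_n(\colim_i X_i) \simeq \colim_i \pi^\dagger_n X_i = 0$ for each $n \ge 0$, which gives the desired closure and hence the lemma.

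The main obstacle I anticipate is the verification that the inclusion $\sh^{\Sp}_\Sigma(\cal{E}) \hookrightarrow \cal{P}^{\Sp}_\Sigma(\cal{E})$ preserves filtered colimits, i.e., that spherical-sheafification is not actually required to compute such colimits. This is really a statement about the finitary nature of the Grothendieck topology on $\cal{E}$ generated by single-deflation covers: on such sites, the sheaf condition is testable against diagrams of finite type, and these commute with filtered colimits of spectra. If one prefers to avoid finitariness entirely, an alternative is to invoke the equivalence $\sh^{\Sp}_\Sigma(\cal{E}) \simeq \mathrm{D}(\elex)$ analogous to [Pst23, Lem.\ 2.61] and appeal to the fact that, in the derived $\infty$-category of any Grothendieck abelian category, filtered colimits are $t$-exact (because they are exact on the heart), which is the direct categorical translation of the lemma.
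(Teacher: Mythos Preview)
Your approach differs from the paper's. The paper argues directly: by \cite[Prop.\ 2.19]{Pst23} the adjunction $\Sigma^\infty \dashv \Omega^\infty$ between $\sh_\Sigma(\cal{E})$ and $\sh^{\Sp}_\Sigma(\cal{E})$ has $\Sigma^\infty$ fully faithful with essential image the connective aisle, so by uniqueness of adjoints $\Omega^\infty$ factors as $\tau_{\ge 0}$ followed by an equivalence $\sh^{\Sp}_\Sigma(\cal{E})_{\ge 0}\simeq\sh_\Sigma(\cal{E})$; since both $\Omega^\infty$ and filtered colimits of spherical sheaves are computed levelwise, $\tau_{\ge 0}$ commutes with filtered colimits.

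Your reduction to closure of the coconnective aisle under filtered colimits is correct, as is the verification that filtered colimits of spherical sheaves are computed pointwise and that $\pi^\dagger_n$ commutes with them. The gap is in the last inference: from $\pi^\dagger_n(\colim_i X_i)=0$ for $n\ge 0$ you conclude $\colim_i X_i\in\sh^{\Sp}_\Sigma(\cal{E})_{\le -1}$, but coconnectivity is defined here by the condition that $\Omega^\infty X$ be \emph{discrete}. Since $\Omega^\infty$ is computed levelwise and a sheaf of spaces is $0$-truncated precisely when each of its values is, membership in $\sh^{\Sp}_\Sigma(\cal{E})_{\le -1}$ unwinds to the \emph{pointwise} vanishing $\pi_n X(c)=0$ for all $c\in\cal{E}$ and $n\ge 0$, not merely to vanishing of the sheafified homotopy $\pi^\dagger_n X$. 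Passing from the latter to the former is exactly left-separatedness of the $t$-structure, which you have not established.

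The repair is immediate and essentially collapses to the paper's argument: drop the sheafification and use that both filtered colimits and $\Omega^\infty$ are computed levelwise, so the pointwise condition defining coconnectivity is manifestly preserved. Your proposed alternative via an equivalence $\sh^{\Sp}_\Sigma(\cal{E})\simeq\mathrm{D}(\elex)$ would also close the gap (the standard $t$-structure on $\mathrm{D}(A)$ is by construction detected on homology), but that equivalence is not recorded in the paper and would itself require justification.
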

\begin{proof}
    By \cite[Th.\ 2.8]{Pst23}, filtered colimits of spherical sheaves are computed levelwise. Writing $\sh_\Sigma(\cal{E})$ for the $\infty$-category of spherical sheaves of spaces on $\cal{E}$, there is by \cite[Prop.\ 2.19]{Pst23} an adjunction
\[\begin{tikzcd}
    \sh_\Sigma(\cal{E}) \ar[r, bend left=2ex, "\Sigma^\infty"] & \sh^{\Sp}_\Sigma(\cal{E}) \ar[l, bend left=2ex, "\rotatebox{90}{$\vdash$}"', "\Omega^\infty"]
\end{tikzcd}\]
with $\Omega^\infty$ computed levelwise, and with $\Sigma^\infty$ fully faithful with essential image the connective aisle $\sh^{\Sp}_\Sigma(\cal{E})_{\ge 0}$. Write
\[\begin{tikzcd}
    \sh_\Sigma(\cal{E}) \ar[r, bend left=2ex, "\sigma"] & \sh^{\Sp}_\Sigma(\cal{E})_{\ge 0} \ar[l, bend left=2ex, "\rotatebox{90}{$\vdash$}"', "\omega"]
\end{tikzcd}\]
for the induced adjoint equivalence; then by uniqueness of adjoints, there is a natural equivalence $\omega\tau_{\ge 0} \simeq \Omega^\infty$, and since $\Omega^\infty$ commutes with filtered colimits, we are done.
\end{proof}
\subsection{Genuine symmetric structures}
For a hermitian category $(\cal{C}, \Qoppa)$, recall that the first excisive approximation $P_1\Qoppa = \colim_n\Omega^n\Qoppa\Sigma^n$ is computed as the cofibre
\[
    \left[\Delta^*B_\Qoppa\right]_{\hct} \to \Qoppa \to P_1\Qoppa,
\]
since by \cite[Prop.\ 1.1.13]{CD23a} this cofibre is 1-excisive, $P_1$ is left exact, and by \cite[Lem.\ 1.3.1]{CD23a} the functor $\left[\Delta^*B_\Qoppa\right]_{\hct}$ has vanishing 1-excisive approximation. We may view $P_1\Qoppa$ as an object of $\mathrm{Ind}(\cal{C}) = \fun^\mathrm{lex}(\cal{C}^{\op}, \cal{S}) \simeq \fun^\mathrm{ex}(\cal{C}^{\op}, \Sp)$, and if $\cal{C}$ is equipped with an orientation $(\mathrm{Ind}_{\ge 0}(\cal{C}), \mathrm{Ind}_{\le 0}(\cal{C}))$, we say that $\Qoppa$ is \textit{$m$-connective} if $P_1\Qoppa \in \mathrm{Ind}(\cal{C})$ is $m$-connective. By \cite[Lem.\ 3.3.1]{CHN24}, the inclusion $\fun_{\ge m}^\mathrm{q}(\cal{C}) \subset \fun^\mathrm{q}(\cal{C})$ of $m$-connective quadratic functors with respect to this orientation admits a right adjoint $\Qoppa \mapsto \Qoppa \underset{P_1\Qoppa}\times \tau_{\ge m}P_1\Qoppa$, where the connective cover is taken in $\mathrm{Ind}(\cal{C})$. In the case of an exact category with duality $(\cal{E}, \D, \eta)$, write $Q^s:\cal{E}^{\op} \to \Ab$ for the symmetric forms functor, and $\Qoppa_{\kb}^s:\db(\cal{E})^{\op} \to \Sp$ for the (homotopy) symmetric Poincar\'e structure on $\db(\cal{E})$, with linear part
\[
    \Lambda^s := \hom_{\db(\cal{E})}(-, \D(-))^{\tct}.
\]
Write $(\sh_\Sigma^{\Sp}(\cal{E})_{\ge 0}, \sh^{\Sp}_\Sigma(\cal{E})_{\le 0})$ for the $t$-structure of the previous section, and for $m \in \zz$ write $\Qoppa^{\ge m} := \Qoppa^s \underset{\tau_{\ge m}\Lambda^s}\times \Lambda^s$ for the associated truncated hermitian structure.
\begin{lemma}
    For $(\cal{E}, \D, \eta)$ a weakly idempotent complete- exact category with strong duality, the genuine symmetric Poincar\'e structure $\Qoppa^{\ge 0}$ coincides with the derived quadratic functor of $Q^s$ on $\db(\cal{E})$.
\end{lemma}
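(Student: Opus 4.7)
The plan is to exhibit both functors as 2-excisive extensions of $HQ^s$ from $\cal{E}$ to $\db(\cal{E})$ with equivalent polarisations, and to conclude via uniqueness of quadratic extensions (Corollary~\ref{uni}). Write $\mathbf{R}Q^s := \pi_!\Qoppa$ for the derived quadratic functor from Theorem~\ref{dbpoi}, where $\pi:\kb(\cal{E}) \to \db(\cal{E})$ is the Verdier projection, which is a Poincar\'e-Verdier projection by Lemma~\ref{bddverdier}.

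First, I would verify that the polarisations of $\Qoppa^{\ge 0}$ and $\mathbf{R}Q^s$ both coincide with $\hom_{\db(\cal{E})}(-,\D(-))$ equipped with the swap $C_2$-action. For $\mathbf{R}Q^s$, this is Proposition~\ref{ndg}. For $\Qoppa^{\ge 0}$, the key observation is that $\Lambda^s = P_1\Qoppa^s$ is 1-excisive, as is its connective cover $\tau_{\ge 0}\Lambda^s$ (truncation preserves direct sums by Lemma~\ref{filt}); both therefore have vanishing polarisation, and the polarisation of the pullback $\Qoppa^s \times_{\Lambda^s} \tau_{\ge 0}\Lambda^s$ coincides with that of $\Qoppa^s$.

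Next, I would check that both functors restrict to the Eilenberg--MacLane spectrum $HQ^s$ under the embedding $\cal{E} \hookrightarrow \db(\cal{E})$. For $\mathbf{R}Q^s$, this combines Theorem~\ref{dbpoi} with the identification $\Qoppa|_{\cal{E}^{\op}} \simeq HQ^s$ established in the discussion preceding Corollary~\ref{uni}. For $\Qoppa^{\ge 0}$, a direct Tate-construction computation: for $x \in \cal{E}$ the mapping spectrum $A := \hom_{\db(\cal{E})}(x, \D(x))$ is discrete, so the norm cofibre sequence $A_{\hct} \to A^{\hct} \to A^{\tct}$ has connective leftmost term, whence the comparison map $\tau_{<0}A^{\hct} \to \tau_{<0}A^{\tct}$ is an equivalence. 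Consequently the pullback
\[
\Qoppa^{\ge 0}(x) \simeq A^{\hct} \times_{A^{\tct}} \tau_{\ge 0}A^{\tct}
\]
kills the negative homotopy of $A^{\hct}$ while preserving $\pi_0 = A^{C_2} = Q^s(x)$, yielding $HQ^s(x)$.

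Finally, I would invoke uniqueness. Pulling back along $\pi$, both $\pi^*\Qoppa^{\ge 0}$ and $\pi^*\mathbf{R}Q^s \simeq \widetilde{\mathbf{R}}Q^s$ are quadratic extensions of $HQ^s:\cal{E}^{\op} \to \mathrm{D}(\zz)$ to $\kb(\cal{E})$ with the nondegenerate polarisation $\hom_{\kb(\cal{E})}(-,\D(-))$; by Corollary~\ref{uni} they are canonically equivalent. Since $\pi$ is a localisation, $\pi^*$ is fully faithful on 2-excisive functors inverting acyclic complexes, so from $\pi^*\Qoppa^{\ge 0} \simeq \pi^*\mathbf{R}Q^s$ we conclude $\Qoppa^{\ge 0} \simeq \mathbf{R}Q^s$. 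The main technical hurdle is the Tate-construction pullback calculation in the penultimate step, together with the careful identification of $P_1\Qoppa^s$ with the Tate construction that underlies the formula for $\Lambda^s$.
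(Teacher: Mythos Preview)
Your approach is essentially the paper's argument for the split-exact case, and works there; the gap is in the passage to general exact $\cal{E}$. Two interlocking issues arise. First, for $x \in \cal{E}$ the mapping spectrum $A = \hom_{\db(\cal{E})}(x,\D(x))$ is only coconnective, not discrete: its negative homotopy groups are $\mathrm{Ext}^n_{\cal{E}}(x,\D(x))$. Consequently neither $\Qoppa^{\ge 0}$ nor $\mathbf{R}Q^s$ restricts to $HQ^s$ along $\cal{E} \hookrightarrow \db(\cal{E})$, so the input to Corollary~\ref{uni} is unavailable. Relatedly, your claim $\pi^*\mathbf{R}Q^s \simeq \widetilde{\mathbf{R}}Q^s$ asserts that the unit $\id \to \pi^*\pi_!$ is an equivalence at $\Qoppa$, which fails whenever $\acb(\cal{E}) \ne 0$.

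Second, and more fundamentally, your formula $\Qoppa^{\ge 0}(x) \simeq A^{\hct} \times_{A^{\tct}} \tau_{\ge 0}(A^{\tct})$ silently replaces the truncation in $\mathrm{Ind}(\db(\cal{E})) \simeq \sh_\Sigma^{\Sp}(\cal{E})$ by pointwise truncation in $\mathrm{D}(\zz)$. For non-split $\cal{E}$ the sheaf $t$-structure involves sheafification and is not objectwise; establishing that this $\tau_{\ge 0}$ commutes with $\pi_!$ (i.e.\ with sheafification) is precisely the technical core of the paper's argument. The paper proceeds instead by first settling the split case on $\kb(\cal{E})$ via your uniqueness argument, and then proving directly that $\pi_!\Qoppa_\oplus^{\ge 0} \simeq \Qoppa^{\ge 0}$: Lemma~\ref{bddverdier} gives $\pi_!\Qoppa_\oplus^s \simeq \Qoppa^s$, the commuting square~(\ref{shv}) gives $\pi_!\tau_{\ge 0}P_1\Qoppa_\oplus^s \simeq \tau_{\ge 0}P_1\Qoppa^s$, and the defining pullback is stable under the filtered colimit computing $\pi_!$. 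Since the split case identifies $\Qoppa_\oplus^{\ge 0}$ with the derived quadratic functor on $\kb(\cal{E})$, applying $\pi_!$ yields the general result.
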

\begin{proof}
    We first treat the split-exact case. For $\cal{E} = \cal{E}_\oplus$ an additive category, the $t$-structure on $\cal{P}_\Sigma^{\Sp}(\cal{E})$ has connective part those spherical presheaves of spectra factoring through $\Sp_{\ge 0} \subset \Sp$, and since postcomposition with the truncation functor associated with the canonical $t$-structure on spectra sends spherical presheaves of spectra to spherical presheaves of connective spectra, we have for each $x \in \cal{E}$ and $\cal{F} \in \cal{P}_\Sigma^{\Sp}(\cal{E})$ that $(\tau_{\ge 0}\cal{F})(x) = \tau_{\ge 0}(\cal{F}(x))$. Accordingly, the cartesian square
    \[\begin{tikzcd}
        \Qoppa^{\ge 0}(x) \ar[r] \ar[d] & \tau_{\ge 0}\hom_{\kb(\cal{E})}(x, \D(x))^{\tct} \ar[d] \\
        \hom_{\kb(\cal{E})}(x, \D(x))^{\hct} \ar[r] & \hom_{\kb(\cal{E})}(x, \D(x))^{\tct}
    \end{tikzcd}\]
    induces a fibre sequence
    \[
        \Qoppa^{\ge 0}(x) \to \hom_{\kb(\cal{E})}(x, \D(x))^{\hct} \to \tau_{\le -1}\hom_{\kb(\cal{E})}(x, \D(x))^{\tct}
    \]
    for each $x \in \cal{E} \subset \kb(\cal{E})$. Since the duality $\D:\kb(\cal{E})^{\op} \to \kb(\cal{E})$ preserves discrete complexes and $\cal{E} \subset \kb(\cal{E})$ is fully faithful, $\hom_{\kb(\cal{E})}(x, \D(x))^{\hct}$ is coconnective, and the associated long exact sequence on homotopy groups implies that $\Qoppa^{\ge 0}(x)$ is concentrated in degree 0, with
    \begin{align*}
        \pi_0\Qoppa^{\ge 0}(x) & \cong \pi_0\Qoppa^s(x) \\
        & = \pi_0\left[\hom_{\kb(\cal{E})}(x, \D(x))^{\hct}\right] \\
        & \cong \left[\pi_0\hom_{\kb(\cal{E})}(x, \D(x))\right]^{\fct} \\
        & \cong \Hom_{\cal{E}}(x, \D(x))^{\fct} = Q^s(x),
    \end{align*}
    and we conclude by \cite[Th.\ 2.19]{BGMN22}.\\
    For $\cal{E}$ a general exact category, write $\Qoppa^s_\oplus:\kb(\cal{E})^{\op} \to \Sp$ and $\Qoppa^s:\db(\cal{E})^{\op} \to \Sp$ for the respective symmetric Poincar\'e structures, with truncations $\Qoppa^{\ge 0}_\oplus$ and $\Qoppa^{\ge 0}$. The statement for general $\cal{E}$ follows from the claim that the left Kan extension of $\tau_{\ge 0}\Qoppa_\oplus$ along the Verdier projection is equivalent to $\Qoppa^{\ge 0}$. By \cite[Rem.\ 6.2.2.12]{HTT}, the sheafification functor $(-)^\dagger:\cal{P}_\Sigma(\cal{E}^{\Sp}) \to \sh_\Sigma^{\Sp}(\cal{E})$ is computed as a filtered colimit
    \[
        \cal{F}^\dagger(x) = \underset{y \epi x}\colim\ \cal{F}(y),
    \]
    and coincides with left Kan extension along $\pi^{\op}$ under the identifications $\cal{P}_\Sigma^{\Sp}(\cal{E}) \simeq \mathrm{Ind}(\kb(\cal{E}))$, $\sh^{\Sp}_\Sigma(\cal{E}) \simeq \mathrm{Ind}(\db(\cal{E}))$; we abuse notation by writing $\pi_!$ for each of these functors below. We claim that the square
    \begin{equation}\label{shv}\begin{tikzcd}
        \cal{P}_\Sigma^{\Sp}(\cal{E}) \ar[r] \ar[d, "\tau_{\ge 0}"] & \sh_\Sigma^{\Sp}(\cal{E}) \ar[d, "\tau_{\ge 0}"] \\
        \cal{P}^{\Sp}_\Sigma(\cal{E})_{\ge 0} \ar[r] & \sh^{\Sp}_\Sigma(\cal{E})_{\ge 0},
    \end{tikzcd}\end{equation}
    commutes. Since $\Omega^\infty:\Sp \to \cal{S}$ commutes with filtered colimits, the square
    \[\begin{tikzcd}
        \cal{P}_\Sigma^{\Sp}(\cal{E}) \ar[r] \ar[d, "\Omega^\infty"] & \sh_\Sigma^{\Sp}(\cal{E}) \ar[d, "\Omega^\infty"] \\
        \cal{P}_\Sigma(\cal{E}) \ar[r] & \sh_\Sigma(\cal{E}),
    \end{tikzcd}\]
    commutes, where $\Omega^\infty$ is computed levelwise; but by \cite[Prop.\ 2.19]{Pst23}, the adjunction
    \[\begin{tikzcd}
        \sh_\Sigma(\cal{E}) \ar[r, bend left=2ex, "\Sigma^\infty"] & \sh^{\Sp}_\Sigma(\cal{E}) \ar[l, "\Omega^\infty", "\rotatebox{90}{$\vdash$}"', bend left=2ex]
    \end{tikzcd}\]
    where again $\Omega^\infty$ is computed levelwise, has $\Sigma^\infty$ fully faithful, with essential image the connective aisle $\sh^{\Sp}_\Sigma(\cal{E})_{\ge 0}$. We thus have a factorisation
    \[\begin{tikzcd}
    		\cal{P}_\Sigma(\cal{E}) \ar[r, "\simeq"] \ar[rd, hookrightarrow, "\Sigma^\infty"'] & \cal{P}_\Sigma^{\Sp}(\cal{E})_{\ge 0} \ar[d, hookrightarrow] \\
    		& \cal{P}^{\Sp}_\Sigma(\cal{E}),
    \end{tikzcd}\]
    and so by uniqueness of adjoints, $\Omega^\infty$ is homotopic up to postcomposition with an equivalence to the truncation $\tau_{\ge 0}$ and in particular preserves filtered colimits, so that (\ref{shv}) commutes. We thus have for each excisive functor $\Lambda:\kb(\cal{E})^{\op} \to \Sp$ that $\pi_!\tau_{\ge 0}\Lambda^s \simeq \tau_{\ge 0}\pi_!\Lambda$, and in particular
    \[
        \pi_!\tau_{\ge 0}P_1\Qoppa_\oplus^s \simeq \tau_{\ge 0}\pi_!P_1\Qoppa_\oplus^s \simeq \tau_{\ge 0}P_1\Qoppa^s,
    \]
    where the last equivalence follows from Lemma \ref{bddverdier} and the fact that $P_1$ is a left adjoint. In particular, the fibre sequence
    \[
        \pi_!(\left(\hom_{\kb(\cal{E})}(-, \D(-))_{\hct}\right)(x) \to \pi_!\left(\hom_{\kb(\cal{E})}(-, \D(-))^{\hct}\right)(x) \to \pi_!\left(\hom_{\kb(\cal{E})}(-, \D(-))^{\tct}\right)(x)
    \]
    coincides for each $x\in \db(\cal{E})$ with
    \[
        \hom_{\db(\cal{E})}(x, \D(x))_{\hct} \to \hom_{\db(\cal{E})}(x, \D(x))^{\hct} \to \hom_{\db(\cal{E})}(x, \D(x))^{\tct} = P_1\Qoppa^s(x).
    \]
    Now the cartesian square defining $\Qoppa_\oplus^{\ge 0}$ is stable under the filtered colimit computing left Kan extension along the Verdier projection $\kb(\cal{E}) \to \db(\cal{E})$, and so
    \[
        \pi_!\Qoppa_\oplus^{\ge 0} = \pi_!\left[\Qoppa_\oplus^s \underset{\Lambda_\oplus^s}\times \tau_{\ge 0}\Lambda^s_\oplus \right] \simeq \left[\pi_!\Qoppa_\oplus^s\right] \underset{\left[\pi_!P_1\Qoppa_\oplus^s\right]}\times (\left[\pi_!\tau_{\ge 0}P_1\Qoppa^s_\oplus\right] \simeq \Qoppa^{\ge 0},
    \]
    and we are done.
\end{proof}
\begin{appendices}
\section{Deriving exact categories}\label{appa}
\subsection{Complicial exact categories}\label{excat}
In this section we review some results on complicial exact categories \cite[Def.\ 3.2.2]{Sch08}. These are similar in spirit to the complicial biWaldhausen categories of Thomason-Trobaugh \cite{TT90}; the notable example is that of (sub)categories of chain complexes over an exact category. We start with following observation.\\
Any additive category admits a canonical action of the closed symmetric monoidal category of finitely generated free $\zz$-modules $\projz$, encoding the direct sum. If $\cal{E}$ is exact, this promotes by virtue of the fact that conflations are stable under direct sum to a biexact functor $\otimes:\projz \times \cal{E} \to \cal{E}$ (with respect to the split-exact structure on $\projz$). It is natural to ask whether this action extends via the canonical inclusion $\projz \hookrightarrow \chb(\projz)$ to an action of the symmetric monoidal category of bounded complexes over $\projz$.\\
Write $\mathscr{C}_{\zz} := \mathrm{Ch}_\mathrm{b}(\projz)$. Then the canonical examples of such an extension is the action $\mathscr{C}_{\zz} \times \chb(\cal{E}) \to \chb(\cal{E})$, defined as follows: for bounded complexes $K_* \in \mathscr{C}_{\zz}$ and $X_* \in \chb(\cal{E})$, $K \otimes X$ is the complex given in degree $n$ by
\[
	(K \otimes X)_n = \bigoplus_i K_i \otimes X_{n-i},
\]
with differential given on the summand $K_i \otimes X_{n-i}$ by $d^K_i \otimes 1_{X_{n-i}} + (-1)^i \otimes d^X_{n-i}$. Given maps of complexes $(A_*, f_*):(K_*, X_*) \to (L_*, Y_*)$, where $A_*$ is degreewise the matrix $(a^n_{\alpha\beta})_{\alpha\beta} \in \mathrm{Mat}_{l_n\times k_n}(\zz)$, the induced map $K \otimes X \to L \otimes Y$ is given in degree $n$ on the summand $K_i \otimes X_{n-i} = X_{n-i}^{\oplus k_i}$ by the matrix $(a^i_{\alpha\beta}\cdot f_{n-i}):X^{\oplus k_i} \to Y^{\oplus l_i}$. Since the tensor product of complexes preserves connectivity, this also restricts to an action $\mathscr{C}_{\zz, \ge 0} \times \mathrm{Ch}_{\mathrm{b}, \ge 0}(\cal{E}) \to \mathrm{Ch}_{\mathrm{b}, \ge 0}(\cal{E})$.\\
Now the category $\chb(\cal{E})$ inherits a canonical exact structure from $\cal{E}$ in which conflations are defined degreewise, and moreover comes equipped with a notion of weak equivalence: a complex $X_*$ is said to be strictly acyclic if the differentials admit factorisations $d_n:X_n \stackrel{p_n}\epi Z_{n-1} \stackrel{i_{n-1}}\mono X_{n-1}$, such that the sequences $Z_n \stackrel{i_n}\mono X_n \stackrel{p_n}\epi Z_{n-1}$ are conflations in $\cal{E}$; $X_*$ is then acyclic if chain homotopy equivalent to a strictly acyclic complex. If $\cal{E}$ is weakly idempotent complete (i.e.\ any retract admits a kernel), the classes of acyclic and strictly acyclic complexes coincide \cite[Prop.\ 10.14]{Buh10}. A map of complexes $X_* \to Y_*$ is then said to be a \textit{quasi-isomorphism} if its mapping cone is acyclic. The degreewise conflations and quasi-isomorphisms equip $\chb(\cal{E})$ with the structure of an exact category with weak equivalences, and the tensor product $\otimes:(\mathscr{C}_{\zz}, \mathbf{ch.htp.}) \times (\chb(\cal{E}), \mathbf{qis}) \to (\chb(\cal{E}), \mathbf{qis})$ is an exact functor of such categories, where $\mathscr{C}_{\zz}$ is equipped with the degreewise split-exact structure and the class of corresponding quasi-isomorphisms, which coincides with the chain homotopy equivalences. The symmetric monoidal structure on $\mathscr{C}_{\zz}$ is compatible with this structure in the sense that the tensor product preserves chain homotopy equivalences in each variable and is biexact; we call such a category a symmetric monoidal exact category with weak equivalences, and note that the inclusion $\projz \hookrightarrow \mathscr{C}_{\zz}$ is monoidal.
\begin{notation}
	Write $\mathbb{1} := \zz[0]$ for the unit of the monoidal structure on $\mathscr{C}_{\zz}$, and $C$ for the complex $0 \to \zz =\joinrel= \zz \to 0$ in degrees $[0,1]$, sitting in the conflation $\mathbb{1} \mono C \epi T$. Dually, for $P := C[-1]$, we have a conflation $\Omega \mono T \epi \mathbb{1}$. Write $[-, -]$ for the mapping complex in $\mathscr{C}_{\zz}$, given degreewise by
	\[
		[X, Y]_n = \prod_i \Hom_{\projz}(X_i, Y_{i+n}),
	\]
	with differential $\partial(f) = d \circ f - (-1)^{|f|}f \circ d$. The closed symmetric monoidal structure $(\mathscr{C}_{\zz}, \otimes, [-,-], \mathbb{1})$ induces for each $n \in \zz$ a strong duality $K \mapsto [K, \mathbb{1}[n]]$, with double dual identification $\mathrm{can}:K \to [[K, \mathbb{1}[n]], \mathbb{1}[n]]$ the adjunct of the evaluation map $\mathrm{ev}_{\mathbb{1}[n]}:[K, \mathbb{1}[n]] \otimes K \to \mathbb{1}[n]$.
\end{notation}
\begin{definition}
	A \textit{complicial} structure on an exact category with weak equivalences $(\cal{E}, w)$ is the data of a biexact action of the symmetric monoidal exact category with weak equivalences $(\mathscr{C}_{\zz}, \mathbf{ch.htp}, \otimes, \mathbb{1})$, i.e.\ a functor
	\[
		\otimes:\mathscr{C}_{\zz} \times \cal{E} \to \cal{E}
	\]
	which is exact in each variable as a functor of exact categories with weak equivalences, and is coherently unital and associative in the sense of \cite{Gra76}. In particular, we note that given maps $f_*:K_* \to L_*$ in $\mathscr{C}_{\zz}$ and $p:x \to y$ in $\cal{E}$, the induced map $f \otimes p:K \otimes x \to L \otimes y$ lies in $w$ as soon as $f$ is a chain homotopy equivalence and $p \in w$. An exact functor of complicial exact categories with weak equivalences $f:(\cal{E}, w) \to (\cal{E}', w')$ is simply an exact functor of the underlying exact categories with weak equivalences; $f$ is said to be complicial exact if it is compatible with the action of $\mathscr{C}_{\zz}$ in the sense that there is a natural isomorphism rendering the diagram
	\[\begin{tikzcd}
		\mathscr{C}_{\zz} \times \cal{E} \ar[r, "\otimes"] \ar[d, "1 \otimes f"] & \cal{E} \ar[d, "f"] \\
		\mathscr{C}_{\zz} \times \cal{E}' \ar[r, "\otimes'"] & \cal{E}'
	\end{tikzcd}\]
	commutative, satisfying the constraints of \cite{Gra76}. We will generally abuse terminology by referring to an exact category with a complicial structure as a complicial exact category. For $K \in \mathscr{C}_{\zz}$ and $x \in \cal{E}$, we may write $Kx := K \otimes x$.
\end{definition}
We may refine the above definition to the case where $(\cal{E}, w, \D, \eta)$ is an exact category with weak equivalences and duality; recall that this implies in particular that $\D:\cal{E}^{\op} \to \cal{E}$ is an exact functor of exact categories with weak equivalences.
\begin{definition}\label{comdual}
	 An exact category with weak equivalences and duality $(\cal{E}, w, \D, \eta)$ is complicial if the tensor product $\otimes:\mathscr{C}_{\zz} \times \cal{E} \to \cal{E}$ refines to a duality-preserving form functor of exact categories with duality, i.e.\ if the diagram
    \[\begin{tikzcd}
        \mathscr{C}_{\zz}^{\op} \times \cal{E}^{\op} \ar[r, "{[-,\mathbb{1}] \otimes \D}"] \ar[d, "\otimes^{\op}"] & |[alias=cze]|\mathscr{C}_{\zz} \times \cal{E} \ar[d, "\otimes"] \\
        |[alias=e]|\cal{E}^{\op} \ar[r, "\D"] & \cal{E}
        \ar[Rightarrow,from=cze, to=e,shorten >=5mm,shorten <=5mm]
    \end{tikzcd}\]
    commutes up to indicated natural isomorphism. That this is the case for $\mathscr{C}_{\zz}$ with complicial structure induced by monoidality, with duality $X \mapsto [X, \mathbb{1}]$, follows from the natural isomorphism
    \[
        [X, A] \otimes [Y, B] \to [X \otimes Y, A \otimes B], \quad f \otimes g \mapsto (f \otimes g: x \otimes y \mapsto (-1)^{|x|||g|}f(x) \otimes g(y)).
    \]
\end{definition}
\begin{remark}
	We note that the opposite of a complicial exact category is complicial exact, with structure map furnished by the duality in $\mathscr{C}_{\zz}$:
	\begin{equation}\label{opcomp}
		\mathscr{C}_{\zz} \times \cal{E}^{\op} \xto{[-,\mathbb{1}] \times 1_{\cal{E}}} \mathscr{C}_{\zz}^{\op} \times \cal{E}^{\op} \xto{\otimes^{\op}} \cal{E}^{\op}.
	\end{equation}
\end{remark}
\begin{recollection}
	For $\cal{E}$ any exact category, an object $z \in \cal{E}$ is said to be injective if for each inflation $i:x \mono y$, the map $i^*:\Hom_{\cal{E}}(y, z) \to \Hom_{\cal{E}}(x, z)$ is a surjection; dually, $z$ is projective if for any deflation $p:y \epi x$, the map $p_*\Hom_{\cal{E}}(z, y) \to \Hom_{\cal{E}}(z, x)$ is a surjection. $\cal{E}$ is said to have enough injectives (projectives) if each $x \in \cal{E}$ admits an inflation $x \mono I_x$ into an injective object (a deflation $P_x \epi x$ from a projective object). A \textit{Frobenius exact category} is then an eaxct category with enough injectives and projectives, and in which the classes of injectives and projectives coincide.\\
	Recall \cite[Lem.\ A.2.16]{Sch08} that any complicial exact category has an underlying Frobenius exact category, as follows: declare an inflation $i:x \mono y$ to be Frobenius if for each $u \in \cal{E}$, the map $i^*:\Hom_{\cal{E}}(y, C\otimes u) \to \Hom_{\cal{E}}(x, C \otimes u)$ is surjective. Dually, a deflation $p:y \epi x$ is Frobenius if for each $u \in \cal{E}$, $p_*:\Hom_{\cal{E}}(C\otimes u, y) \to \Hom_{\cal{E}}(C \otimes u, x)$ is a surjection. The Frobenius inflations and deflations equip $\cal{E}$ with a Frobenius exact structure, in which an object is injective-projective if it is a retract of $C \otimes u$ for some $u$.
\end{recollection}
\begin{definition}
	Given a Frobenius exact category $\cal{E}$, call morphisms $f, g:x \to y$ Frobenius homotopic if their difference factors through an injective-projective object, and call a map $f:x \to y$ a Frobenius equivalence if it admits a two-sided inverse up to Frobenius homotopy. Frobenius homotopy then defines an equivalence relation on $\Hom_{\cal{E}}(x, y)$ for each $x, y\in \cal{E}$, and the corresponding quotient category is the \textit{stable category} $\underline{\cal{E}}$. This can be equipped with a canonical triangulated structure \cite[\S2.14]{Sch08}, as follows: choose for each $x$ an inflation into an injective-projective $x \mono I_x$; then a triangle in $\underline{\cal{E}}$ is distinguished if it is isomorphic to one of the form
	\[
		 x \xto{f} y \to I_x \cup_x y \to I_x/x.
	\]
\end{definition}
\begin{example}
	The Frobenius exact structure on $\chb(\cal{E})$ arising from its complicial exact structure comprises the degreewise-split monomorphisms and epimorphisms. A map of complexes is then a Frobenius equivalence if and only if it is a chain homotopy equivalence.
\end{example}
\begin{remark}\label{frobhtp}
	Any complicial exact category $\cal{E}$ supports a notion of simplicial homotopy: call maps $f, g:x \to y$ simplicially homotopic if there exists a map $H:\Delta^1 \otimes x \to y$ rendering the diagram
	\[\begin{tikzcd}
		x \ar[d, "d^1"'] \ar[rrd, bend left=2ex, "f"] \\
		\Delta^1x \ar[rr, "H"] && y \\
		x \ar[u, "d^0"] \ar[rru, bend right=2ex, "g"']
	\end{tikzcd}\]
	commutative; here we write $\Delta^1$ for the image of the standard 1-simplex under the functor $N_*\zz[-]:\sset \to \mathscr{C}_{\zz}$, and $d^0, d^1$ respectively for the maps induced by the inclusions $\begin{psmallmatrix} 0\\ 1\end{psmallmatrix}, \begin{psmallmatrix}1 \\ 0\end{psmallmatrix}:\zz[0] = \Delta^0 \to \Delta^1$. Maps $f, g:x \to y$ are simplicially homotopic if and only if they are Frobenius homotopic: a Frobenius homotopy $f-g:x \to Cu \to y$, factors by injectivity of $Cu$ over the Frobenius inflation $x \mono Cx$ induced by the map $\mathbb{1} \mono C$. With reference to the retract diagram
	\[\begin{tikzcd}
		C \ar[d, "i"] & 0 \ar[r] & \zz \ar[r, equal] \ar[d, equal] & \zz \ar[r] \ar[d, "{\begin{psmallmatrix}-1 \\ 1\end{psmallmatrix}}"] & 0 \\
		\Delta^1 \ar[d, "p"] & 0 \ar[r] & \zz \ar[r, "{\begin{psmallmatrix}-1 \\ 1\end{psmallmatrix}}"] \ar[d, equal] & \zz \oplus \zz \ar[r] \ar[d, "{\begin{psmallmatrix}-1 \amsamp 0\end{psmallmatrix}}"] & 0 \\
		C & 0 \ar[r] & \zz \ar[r, equal] & \zz \ar[r] & 0,
	\end{tikzcd}\]
	we then have a simplicial homotopy
	\[\begin{tikzcd}
		x \ar[d, "d^1"] \ar[rrd, bend left=2ex, "g-f"] \\
		\Delta^1 \ar[r, "p"] & Cx \ar[r] & y, \\
		x \ar[u, "d^0"] \ar[rru, bend right=2ex, "0"']
	\end{tikzcd}\]
	and hence from $f$ to $g$ by the evident additivity of simplicial homotopy. Conversely, given a simplicial homotopy $f \sim g$, the difference $d^0-d^1:x \to \Delta^1x$ factors through the inclusion $Cx \to \Delta^1x$, and we have a factorisation $g-f:x \xto{d^0-d^1} Cx \xto{Hi} y$.
\end{remark}
Let $(\cal{E}, w)$ be an exact category with weak equivalences. An object $x \in \cal{E}$ is said to be ($w$-)\textit{acyclic} if the unique map $0 \to x$ is a weak equivalence. By \cite[Lem.\ 7.1]{Sch24}, an inflation $x \mono y$ is a weak equivalence if and only if its cokernel is acyclic, and dually a deflation $y \epi x$ is a weak equivalence if and only if its kernel is acyclic (we call such inflations or deflations ($w$-)trivial). Accordingly, the full subcategory $\cal{E}^w \subset \cal{E}$ of acyclic objects is exact and closed under extensions in $\cal{E}$.\\
Write $(w)\mathrm{Inf}$ and $(w)\mathrm{Def}$ for the classes of (trivial) inflations and deflations in $\cal{E}$. Then the pairs $(w\mathrm{Inf}, \mathrm{Def})$ and $(\mathrm{Inf}, w\mathrm{Def})$ each form a functorial factorisation system on $\cal{E}$: given a map $f:x \to y$, we have a commutative diagram
\begin{equation}\begin{tikzcd}\label{factsys}
	x \ar[r, rightarrowtail, "{\begin{psmallmatrix}\iota_x \\ f\end{psmallmatrix}}"] \ar[d, rightarrowtail, "{\begin{psmallmatrix}1_x \\ 0\end{psmallmatrix}}"', "\rotatebox{90}{$\sim$}"] \ar[rd, "f"'] & Cx \oplus y \ar[d, "{\begin{psmallmatrix}0 \amsamp 1_y\end{psmallmatrix}}", "\rotatebox{90}{$\sim$}"', twoheadrightarrow] \\
	x \oplus Py \ar[r, "{\begin{psmallmatrix}0 \amsamp \pi_y \end{psmallmatrix}}"', twoheadrightarrow] & y,
\end{tikzcd}\end{equation}
where we write $\iota_x:x \mono Cx$ and $\pi_y:Py \epi y$ for the maps induced respectively by $\mathbb{1} \mono C$ and $P \epi \mathbb{1}$ in $\mathscr{C}_{\zz}$. Note that each vertical map is a Frobenius equivalence by contractibility of the respective kernel or cokernel. 
\begin{remark}\label{Frob-in-w}
	For a map $f:x \to y$ in $\cal{E}$, define the mapping cone $C(f)$ as the pushout
\[\begin{tikzcd}
	x \ar[r, "f"] \ar[d, rightarrowtail, "\iota_x"'] & y \ar[d, rightarrowtail] \\
	Cx \ar[r] & C(f),
\end{tikzcd}\]
and dually define the mapping cocone $F(f)$ as the pullback
\[\begin{tikzcd}
	F(f) \ar[r] \ar[d, twoheadrightarrow] & Py \ar[d, "\pi_y", twoheadrightarrow] \\
	x \ar[r, "f"] & y.
\end{tikzcd}\]
	From the diagram
	\[\begin{tikzcd}
	& Cx \ar[d, rightarrowtail] \\
		x \ar[r, rightarrowtail] \ar[rd, "f"'] & Cx \oplus y \ar[d, "\rotatebox{270}{$\sim$}", twoheadrightarrow] \ar[r, twoheadrightarrow] & C(f) \\
		& y
	\end{tikzcd}\]
	in which the horizontal and vertical sequences are conflations, we see that $f \in w\cal{E}$ if and only if $C(f)$ is acyclic, and dualising, if and only if $F(f)$ is acyclic. Given a Frobenius equivalence $f:x\to y$, it follows from the map of distinguished triangles
	\[\begin{tikzcd}
		x \ar[d, equal] \ar[r, equal] & x \ar[r] \ar[d, "f"] & Cx \cong 0 \ar[r, "f"] \ar[d] & Tx \ar[d, equal] \\
		x \ar[r, "f"] & y \ar[r] & C(f) \ar[r] & Tx,
	\end{tikzcd}\]
	in $\underline{\cal{E}}$ that $C(f) \cong 0$ in $\underline{\cal{E}}$, so that $C(f)$ is a retract of some $Cu$ in $\cal{E}$; but $Cu$ is acyclic since the map $0 \to Cu$ is the tensor product $(0 \to C) \otimes 1_u$ of weak equivalences, and $w\cal{E}$ is closed under retracts, so that any Frobenius equivalence lies in $w\cal{E}$.
\end{remark}
Writing $w_\mathrm{Frob} \subset w$ for the Frobenius equivalences, the pair $(\cal{E}, w_\mathrm{Frob})$ is again an exact category with weak equivalences: given a Frobenius equivalence $f:x \to z$, and a deflation $q:z' \epi z$, we have by virtue of the factorisation system $(w\mathrm{Inf}, \mathrm{Def})$ a diagram of cartesian squares
\[\begin{tikzcd}
	x' \ar[d, twoheadrightarrow] \ar[r, rightarrowtail, "i'"] & y' \ar[d, twoheadrightarrow] \ar[r, twoheadrightarrow, "p'"] & z' \ar[d, twoheadrightarrow] \\
	x \ar[r, rightarrowtail, "i"] & y \ar[r, twoheadrightarrow, "p"] & z,
\end{tikzcd}\]
with the left-hand square bicartesian. $\ker(p') \cong \ker(p)$ and $\coker(i') \cong \coker(i)$ are thus Frobenius contractible, so that the pullback $f' = p'i'$ of $f$ along $q$ is a Frobenius equivalence. The argument for stability under pushouts is dual, and the closure of $w_\mathrm{Frob}$ follows from the closure of $\cal{E}^w$ under retracts and the fact that $f:x \to y$ is a Frobenius equivalence precisely when its cone is acyclic. We accordingly have exact inclusions
\[
	(\cal{E}_\mathrm{Frob}, w_\mathrm{Frob}) \subset (\cal{E}, w_\mathrm{Frob}) \subset (\cal{E}, w)
\]
of exact categories with weak equivalences.\\
For $n \ge 0$, write $\Delta^n \in \mathscr{C}_{\zz}$ for image of the standard $n$-simplex under the composite $\sset \xto{\zz[-]} \sab \xto{N} \mathrm{Ch}_{\ge 0}(\zz)$. Then we have a mapping space bifunctor
\[
	\cal{E}^{\op} \times \cal{E} \to \sab, \quad (x, y) \mapsto \Map_\Delta(x, y) := \Hom_{\cal{E}}(\Delta^\bullet x, y),
\]
equipping $\cal{E}$ with a simplicial enrichment such that the resulting simplicial category $\cal{E}_\Delta$ is fibrant in the Bergner model structure. By Remark \ref{frobhtp}, we have $\pi_0\Map_\Delta(x, y) \cong \Hom_{\underline{\cal{E}}}(x, y)$, and writing $L_\mathrm{Frob}(\cal{E})$ for the Dwyer-Kan localisation at the Frobenius equivalences, we have a factorisation
\[\begin{tikzcd}
	\cal{E} \ar[rd] \ar[r, "\gamma"] & L_\mathrm{Frob}(\cal{E}) \ar[d, dashed, "f"] \\
	& N_\Delta(\cal{E}_\Delta),
\end{tikzcd}\]
with $f$ clearly essentially surjective, and the functor $\cal{E} \to N_\Delta(\cal{E}_\Delta)$ induced by the inclusion of $0$-simplices. Now it follows from Proposition \ref{ftil} that $\Map_\Delta(-,-)$ sends Frobenius equivalences in each variable to homotopy equivalences. Applying Proposition \ref{hoco} to the functors $\Map_\Delta(-,y)$ and $\Map_\Delta(x, -)$ yields natural equivalences
\[
	\Map_\Delta(x, y) \simeq \underset{x' \in J^{\op}_{x, \mathrm{Frob}}}\colim\ \Hom(x', y),
\]
where we write $J_{x, \mathrm{Frob}} \subset (\cal{E} \downarrow x)$ for the full subcategory spanned by trivial Frobenius deflations over $x$ which lie in $w_\mathrm{Frob}$, and the colimit is take in $\cal{S}$. Since the maps $\Hom_{\cal{E}}(x, y) \to \Map_{L_\mathrm{Frob}(\cal{E})}(\gamma(x), \gamma(y))$ and $\Hom_{\cal{E}}(x, y) \to \Map_\Delta(x, y)$ each identify with the canonical inclusions
\[
	\Hom_{\cal{E}}(x, y) \to \underset{x' \in J_{x, \mathrm{Frob}}^{\op}}\colim\ \Hom_{\cal{E}}(x', y),
\]
we see that the functor $f$ is fully faithful, i.e.\ $N_\Delta(\cal{E}_\Delta)$ is a model for the Dwyer-Kan localisation $L_\mathrm{Frob}(\cal{E})$.
\begin{proposition}\label{comstab}
	The Dwyer-Kan localisation of any complicial exact catgeory with weak equivalences is a stable $\infty$-category. Moreover, an exact functor between such induces an exact functor of stable $\infty$-categories.
\end{proposition}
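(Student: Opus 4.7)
The plan is to verify the three axioms of a stable $\infty$-category in the sense of Definition \ref{exdual} and \cite[Def.\ 1.1.1.9]{HA}: pointedness, the existence of finite (co)fibres, and the coincidence of fibre and cofibre sequences. For pointedness, the zero object $0 \in \cal{E}$ is both initial and terminal, and the presentation of mapping spaces in $L_w(\cal{E})$ as a filtered colimit over the cofinal subcategory $J_{x, \mathrm{Frob}}$ (Proposition \ref{hoco}) preserves contractibility of representables out of $0$; this persists after further Verdier quotient to $L_w(\cal{E})$.

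For the existence of fibres, I would invoke Lemma \ref{fib}, which exhibits $(\cal{E}, w, \mathrm{Fib})$ as an $\infty$-category of fibrant objects in the sense of Cisinski, and then apply \cite[Prop.\ 7.5.6, Th.\ 7.5.18]{Cis19} to conclude that $L_w(\cal{E})$ admits finite limits, computed via fibrant models. Concretely, given a map $f:x\to y$ in $L_w(\cal{E})$, the factorisation system (\ref{factsys}) represents $f$ by a deflation $\tilde{f}:x'\twoheadrightarrow y$ in $\cal{E}$, and $\ker(\tilde{f})$ computes the fibre. Since the opposite of a complicial exact category with weak equivalences is again such (see (\ref{opcomp})), the dual argument applied to $\cal{E}^{\op}$ yields finite colimits in $L_w(\cal{E})$, with cofibres computed as cokernels of inflation representatives.

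For the coincidence of fibre and cofibre sequences — the main obstacle — I would argue as follows. Any conflation $a \mono b \epi c$ in $\cal{E}$ descends simultaneously to a fibre and cofibre sequence in $L_w(\cal{E})$ by the previous step, since $a = \ker(b \epi c)$ and $c = \coker(a \mono b)$. Given a null-composite sequence $x \to y \to z$ in $L_w(\cal{E})$, represent $x\to y$ by an inflation and $y \to z$ by a deflation via (\ref{factsys}) (replacing $y$ by a weakly equivalent $y'$), extending to a conflation; by uniqueness of (co)limits, any fibre (resp.\ cofibre) sequence is equivalent to the resulting one, hence is both. Alternatively, one obtains a cleaner route by noting that since $C$ and $P$ are Frobenius contractible, the functors $T\otimes -$ and $\Omega\otimes -$ become mutually inverse suspension/loop functors on $L_w(\cal{E})$ from the conflations $\mathbb{1}\mono C \epi T$ and $\Omega\mono P \epi \mathbb{1}$; combining this invertibility of $\Sigma$ and $\Omega$ with the existence of finite (co)limits and pointedness yields stability via \cite[Cor.\ 1.4.2.27]{HA}.

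For the functoriality claim, an exact functor $F:(\cal{E},w)\to(\cal{E}',w')$ of complicial exact categories with weak equivalences preserves conflations and weak equivalences by definition, and hence induces a functor $L_w(\cal{E}) \to L_{w'}(\cal{E}')$ sending conflations to conflations. Since fibre-cofibre sequences in the localisations arise precisely from conflations (by the argument above), $F$ preserves fibre-cofibre sequences and the zero object, and is therefore an exact functor of stable $\infty$-categories.
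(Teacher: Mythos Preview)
Your overall strategy matches the paper's: invoke Lemma \ref{fib} together with \cite[Prop.\ 7.5.6]{Cis19} to obtain finite limits in $L_w(\cal{E})$ with $\gamma$ left exact, dualise via (\ref{opcomp}) for finite colimits, and deduce functoriality from preservation of conflations. The paper's pointedness argument is simpler than yours: since $\gamma$ is left exact it preserves terminal objects, and dually initial objects, so $\gamma(0)$ is a zero object; there is no need to invoke Proposition \ref{hoco} or any Verdier quotient.

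Your first argument for the coincidence of fibre and cofibre sequences is imprecise. An arbitrary null-composite sequence in $L_w(\cal{E})$ need not be equivalent to the image of a conflation, so ``extending to a conflation'' is not a well-defined step. The paper's argument is the correct version of what you seem to intend: since every map in $L_w(\cal{E})$ is equivalent to the image of an inflation (via the factorisation (\ref{factsys})) and $\gamma$ is right exact, every \emph{cofibre} sequence is equivalent to the image of a conflation; conflations are bicartesian in $\cal{E}$ and $\gamma$ is also left exact, so this image is simultaneously a fibre sequence. The dual argument via $\cal{E}^{\op}$ handles the other direction.

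Your alternative route --- showing that $T\otimes -$ and $\Omega\otimes -$ descend to mutually inverse equivalences on $L_w(\cal{E})$ and then invoking \cite[Cor.\ 1.4.2.27]{HA} --- is correct and is a genuinely different (and arguably slicker) way to finish than the paper's direct fibre-equals-cofibre argument. It trades the bicartesian-conflation observation for the contractibility of $C$ and $P$, which is immediate from the complicial structure.
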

\begin{proof}
	This is in spirit the same as \cite[Prop.\ 2.7]{BC20}, but we provide the details. We first note that $(\cal{E}, w)$ is an $\infty$-category of fibrant objects (Lemma \ref{fib}), so that by \cite[Prop.\ 7.5.6]{Cis19}, the localisation $L_w(\cal{E})$ admits finite limits, and the localisation $\gamma:\cal{E} \to L_w(\cal{E})$ is left exact. The same argument applied to the complicial exact category with weak equivalences $(\cal{E}^{\op}, w^{\op})$ furnishes us with finite colimits, and since $0$ is both initial and final, $L_w(\cal{E})$ is in addition pointed.\\
	Now each map in $L_w(\cal{E})$ is up to equivalence the image of an inflation under $\gamma$, and hence every cofibre sequence is up to equivalence the image under $\gamma$ of a pushout
	\[\begin{tikzcd}
		x \ar[r, rightarrowtail] \ar[d, twoheadrightarrow] & y \ar[d, twoheadrightarrow] \\
		0 \ar[r, rightarrowtail] & z.
	\end{tikzcd}\]
	Since $x \mono y \epi z$ is a conflation, this is also a pullback and hence localises to a fibre sequence in $L_w(\cal{E})$. The same argument applied to $(\cal{E}^{\op}, w^{\op})$, yields that fibre and cofibre sequences coincide in $L_w(\cal{E})$, which is then stable. For the last claim, an exact functor $(\cal{E}, w) \to (\cal{E}', w')$ between complicial exact categories with weak equivalences preserves zero objects and conflations by definition, and hence by \cite[Prop.\ 7.5.6]{Cis19} and its dual descends to an exact functor $L_w(\cal{E}) \to L_{w'}(\cal{E}')$.
\end{proof}
\begin{remark}
    By the universal property of the derived $\infty$-category \cite[Cor.\ 7.4.12]{BCKW19}, the localisation $\gamma:\cal{E} \to L_w(\cal{E})$ factors over the canonical inclusion $\cal{E} \hookrightarrow \db(\cal{E})$. Write $\cal{E}^w \subset \cal{E}$ for the inclusion of the exact subcategory of $w$-acyclics, inducing an inclusion $\db(\cal{E})^w) \subset \db(\cal{E})$. Then we claim that the functor $\db(\cal{E}) \to L_w(\cal{E})$ exhibits $L_w(\cal{E})$ as the Verdier quotient $\db(\cal{E})/\db(\cal{E}^w)$: write $\fun^\mathrm{ex}_w(\cal{E}, \cal{C})$ for the full subcategory of exact functors inverting weak equivalences, and $\fun^\mathrm{ex}_{\db(\cal{E}^w)}(\db(\cal{E}), \cal{C})$ for the full subcategory of functors annihilating $\db(\cal{E}^w)$. We claim that $\fun_w^\mathrm{ex}(\cal{E}, \cal{C})$ coincides with the image of $\fun^\mathrm{ex}_{\db(\cal{E}^w)}(\db(\cal{E}), \cal{E}) \subset \fun^\mathrm{ex}(\db(\cal{E}), \cal{C}) \simeq \fun^\mathrm{ex}(\cal{E}, \cal{C})$: given a functor $F:\cal{E} \to \cal{C}$ inverting $w$, then since any $w$-acyclic $x$ arises as the kernel of a trivial deflation, say $\begin{psmallmatrix}0 & 1\end{psmallmatrix}:x \oplus x \to x$, we have that the composite $\cal{E}^w \subset \cal{E} \to \cal{C}$ vanishes, and accordingly the induced functor $\db(\cal{E}^w) \to \cal{C}$ also. Conversely, given a functor $G:\db(\cal{E}) \to \cal{C}$ vanishing on $\db(\cal{E}^w)$, the composite $\cal{E}^w \subset \db(\cal{E}) \to \cal{C}$ vanishes, and since any weak equivalence in $\cal{E}$ can be written as the composite of a trivial inflation followed by a trivial deflation, of which the cokernel and kernel vanish in $\cal{C}$, we see that $G\mid_{\cal{E}}$ inverts maps in $w$. Accordingly, the horizontal map
    \[\begin{tikzcd}
        \fun^\mathrm{ex}(L_w(\cal{E}), \cal{C}) \ar[r] \ar[rd, "\simeq"] & \fun^\mathrm{ex}_{\db(\cal{E}^w)}(\db(\cal{E}), \cal{C}) \ar[d, "\rotatebox{270}{$\simeq$}"] \\
        & \fun_w^\mathrm{ex}(\cal{E}, \cal{C})
    \end{tikzcd}\]
    is an equivalence by 2-of-3.
\end{remark}
\begin{lemma}\label{map}
	The bifunctor $\Map_\Delta(-, -):\cal{E}^{\op} \times \cal{E} \to \sab$ sends Frobenius conflations in each variable to homotopy fibre sequences.
\end{lemma}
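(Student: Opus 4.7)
The plan is to reduce the statement to the fact that mapping spaces in a stable $\infty$-category preserve fibre sequences in each variable, leveraging the identification of $\Map_\Delta(x,y)$ (as a Kan complex, after forgetting the abelian group structure) with $\Map_{L_\mathrm{Frob}(\cal{E})}(\gamma(x),\gamma(y))$ that is established in the discussion preceding the lemma via the equivalence $N_\Delta(\cal{E}_\Delta) \simeq L_\mathrm{Frob}(\cal{E})$.

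First I would observe that a Frobenius conflation $x \mono y \epi z$ in $\cal{E}$ is in particular a conflation in the underlying exact category $\cal{E}$, and so by (the proof of) Proposition \ref{comstab} localises to a fibre/cofibre sequence $\gamma(x) \to \gamma(y) \to \gamma(z)$ in the stable $\infty$-category $L_\mathrm{Frob}(\cal{E})$. Applying exactness of $\Map_{L_\mathrm{Frob}(\cal{E})}(-,-)$ in each variable and translating along the identification above then yields fibre sequences of spaces
\[
    \Map_\Delta(z,w) \to \Map_\Delta(y,w) \to \Map_\Delta(x,w), \quad \Map_\Delta(w,x) \to \Map_\Delta(w,y) \to \Map_\Delta(w,z)
\]
for every $w \in \cal{E}$.

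To upgrade these to homotopy fibre sequences of simplicial abelian groups, I would note that the forgetful functor $U:\sab \to \sset$ is right adjoint to the free simplicial abelian group functor $\zz[-]$, and these assemble into a Quillen pair with respect to the Kan--Quillen model structure. In particular, $U$ preserves homotopy limits of fibrant objects, every simplicial abelian group is Kan, and $U$ both preserves and detects weak equivalences. Consequently, the canonical comparison map $A \to \mathrm{hofib}_{\sab}(B \to C)$ is a weak equivalence in $\sab$ if and only if its image under $U$ is a weak equivalence in $\sset$, so the fibre sequences of spaces produced above in fact arise from homotopy fibre sequences in $\sab$.

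The only real technical input is the identification of $\Map_\Delta$ with the mapping space of $L_\mathrm{Frob}(\cal{E})$, which is already in hand, so I do not anticipate any serious obstacle; the proof is essentially a formal consequence of stability of $L_\mathrm{Frob}(\cal{E})$ combined with the Quillen equivalence-like behaviour of $U:\sab \to \sset$ at the level of homotopy limits.
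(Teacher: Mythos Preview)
Your argument is correct but takes a genuinely different route from the paper's. The paper works directly in $\sab$: tensoring the given Frobenius conflation with the levelwise-split conflation $\tilde\Delta^\bullet \mono \Delta^\bullet \epi \Delta^0$ in $\mathscr{C}_{\zz}$ produces a $3\times3$ grid of cosimplicial objects in $\cal E$, and applying $\Hom_{\cal E}(-,v)$ (resp.\ $\Hom_{\cal E}(v,-)$) gives a grid of simplicial abelian groups in which each column and the kernel row are levelwise split exact (because each $\tilde\Delta^n$ is contractible), hence homotopy fibre sequences; the middle row is then forced. This keeps Lemma~\ref{map} entirely elementary and logically prior to the identification $N_\Delta(\cal E_\Delta)\simeq L_{\mathrm{Frob}}(\cal E)$. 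Your approach is more conceptual but reverses that dependency, so you should make explicit that no circularity arises: the mapping-space identification you invoke is established in the paper via Proposition~\ref{hoco}, which cites Lemma~\ref{ssec} (appearing textually \emph{after} Lemma~\ref{map}, with a proof referring to something ``from above''). For your argument to stand you must verify that Lemma~\ref{ssec} does not itself rely on Lemma~\ref{map} --- and indeed it does not, needing only that $\Map_\Delta(x,-)$ inverts Frobenius equivalences, which already follows from the simplicial enrichment together with Remark~\ref{frobhtp}.
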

\begin{proof}
	Let $x \mono y \epi z$ in $\cal{E}$ be a Frobenius conflation. There is a diagram of cosimplicial conflations
	\begin{equation}\begin{tikzcd}\label{seqs}
		\tilde{x}^\bullet \ar[r, rightarrowtail] \ar[d, rightarrowtail] & \tilde{y}^\bullet \ar[r, twoheadrightarrow] \ar[d, rightarrowtail] & \tilde{z}^\bullet \ar[d, rightarrowtail] \\
		\Delta^\bullet x \ar[r, rightarrowtail] \ar[d, twoheadrightarrow] & \Delta^\bullet y \ar[r, twoheadrightarrow] \ar[d, twoheadrightarrow] & \Delta^\bullet z \ar[d, twoheadrightarrow] \\
		cx \ar[r, rightarrowtail] & cy \ar[r, twoheadrightarrow] & cz,
	\end{tikzcd}\end{equation}
	where the vertical conflations are induced levelwise by
	\[
		\tilde{\Delta}^n \mono \Delta^n \epi \Delta^0
	\]
	in $\mathscr{C}_{\zz}$. Since $\tilde{x}^n$, $\tilde{y}^n$, $\tilde{z}^n$ are Frobenius contractible, the vertical conflations and the top row split. For any $v \in \cal{E}$, the induced diagram
	\[\begin{tikzcd}
		c\Hom_{\cal{E}}(z, v) \ar[r, rightarrowtail] \ar[d, rightarrowtail] & c\Hom_{\cal{E}}(y, v) \ar[r, twoheadrightarrow] \ar[d, rightarrowtail] & c\Hom_{\cal{E}}(x, v) \ar[d, rightarrowtail] \\
		\Hom_{\cal{E}}(\Delta^\bullet z, v) \ar[r, rightarrowtail] \ar[d, twoheadrightarrow] & \Hom_{\cal{E}}(\Delta^\bullet y, v) \ar[r, twoheadrightarrow] \ar[d, twoheadrightarrow] & \Hom_{\cal{E}}(\Delta^\bullet x, v) \ar[d, twoheadrightarrow] \\
		\Hom_{\cal{E}}(\tilde{z}^\bullet, v) \ar[r, rightarrowtail] & \Hom_{\cal{E}}(\tilde{y}^\bullet, v) \ar[r, twoheadrightarrow] & \Hom_{\cal{E}}(\tilde{x}^\bullet, v),
	\end{tikzcd}\]
	has the lower row and each column a homotopy fibre sequence in simplicial abelian groups (in fact, in $\hzmod$). The top row is clearly a homotopy fibre sequence, since maps of discrete simplicial sets are Kan fibrations, and hence so is the middle row. The same holds for $\Map_\Delta(v, -)$.
\end{proof}
\begin{lemma}\label{ssec}
	The space of sections of any trivial Frobenius deflation is a contractible Kan complex.
\end{lemma}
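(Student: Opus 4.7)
The plan is to identify the simplicial set of sections with a mapping space into the kernel of $p$, and then exploit the Frobenius contractibility of that kernel. Let $p: y \epi x$ be a trivial Frobenius deflation with kernel $k$. Since $0 \to k$ is a Frobenius equivalence, the cone computation of Remark \ref{Frob-in-w} shows that $k \cong C(0 \to k)$ is a retract of some $C \otimes u$, hence injective-projective for the Frobenius exact structure on $\cal{E}$. The kernel conflation $k \mono y \epi x$ therefore splits, and I may fix an isomorphism $y \cong k \oplus x$ under which $p$ corresponds to the projection onto the second factor.

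Under such a splitting, an $n$-simplex of $\mathrm{Sec}(p)$ --- a map $s : \Delta^n x \to y$ whose composite with $p$ is the collapse $\Delta^n x \to x$ --- is uniquely determined by its $k$-component, so the assignment $s \mapsto \mathrm{pr}_k \circ s$ furnishes a natural isomorphism of simplicial sets $\mathrm{Sec}(p) \cong \Map_\Delta(x, k)$, clearly compatible with faces and degeneracies. Since $\Map_\Delta(x, k)$ is a simplicial abelian group, it is in particular a Kan complex, establishing the Kan condition on $\mathrm{Sec}(p)$.

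For contractibility I invoke the observation (recorded in the paragraph immediately preceding Lemma \ref{map}, and deduced from Proposition \ref{ftil}) that the enrichment $\Map_\Delta(-, -)$ sends Frobenius equivalences in each variable to simplicial homotopy equivalences. Applying this covariantly to the Frobenius equivalence $0 \to k$ gives an equivalence $0 = \Map_\Delta(x, 0) \xto{\simeq} \Map_\Delta(x, k)$, and combined with the identification above this yields $\mathrm{Sec}(p) \simeq 0$.

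No real obstacle is anticipated: the argument reduces to (i) the splitting of the kernel conflation, which rests on the standard fact that Frobenius acyclics are injective-projective, and (ii) the already-established functoriality of the simplicial enrichment $\Map_\Delta(-,-)$. The only point of potential care is the compatibility of the splitting-induced identification with the simplicial structure maps, which is straightforward since the splitting is a genuine categorical isomorphism in $\cal{E}$ and $\Delta^\bullet \otimes (-)$ preserves direct sums.
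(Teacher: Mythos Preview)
Your argument is correct and takes a somewhat different route from the paper's. The paper does not choose a splitting of $y$; instead it identifies $\mathrm{Sec}(p)$ as the (strict) pullback
\[
\begin{tikzcd}
\mathrm{Sec}(p) \ar[r] \ar[d] & \Map_\Delta(x,y) \ar[d, "p_*"] \\
\Delta^0 \ar[r, "1_x"] & \Map_\Delta(x,x),
\end{tikzcd}
\]
observes that $p_*$ is a degreewise split surjection (via any section $s$) and a simplicial homotopy equivalence (since $p$ is a Frobenius equivalence), hence a trivial Kan fibration, and concludes that the left vertical map is itself a trivial fibration. Your approach fixes a splitting $y \cong k \oplus x$ to obtain a direct isomorphism $\mathrm{Sec}(p) \cong \Map_\Delta(x,k)$, then uses the Frobenius equivalence $0 \to k$ to deduce contractibility. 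Both arguments rest on the same ingredient (invariance of $\Map_\Delta$ under Frobenius equivalence in each variable); yours trades the model-categorical pullback step for an explicit identification, which is arguably more transparent but requires the auxiliary choice of splitting.
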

\begin{proof}
    The space of such sections is nonempty since $\ker(p)$ is Frobenius contractible and hence injective-proj; since $p_*$ is split by $s_*$ for any such section, we see from above that $p_*:\Map_\Delta(x, y) \to \Map_\Delta(x, x)$ is a degreewise surjective homotopy equivalence of simplicial abelian groups, and hence a trivial fibration in the classical model structure on $\sab$. Accordingly, the pullback
	\[\begin{tikzcd}
		\mathrm{Sec}(p) \ar[r] \ar[d] & \Map_\Delta(x, y) \ar[d, "p_*", "\rotatebox{270}{$\sim$}"', twoheadrightarrow] \\
		\Delta^0 \ar[r, "1_x"] & \Map_\Delta(x, x)
	\end{tikzcd}\]
	is homotopy cartesian, and the left vertical arrow is thus a trivial fibration.
\end{proof}
For $(\cal{E}, w)$ a complicial exact category with duality and $x \in \cal{E}$, write $J_x \subset I_x \subset (\cal{E}\downarrow x)$ for the subcategories of the slice category spanned by trivial deflations resp.\ weak equivalences over $x$.
\begin{remark}\label{ixjx}
    Recall Lemma \ref{fib} that $\cal{E}$ is naturally an $\infty$-category of fibrant objects in the sense of \cite{Cis19}. For each $x \in \cal{E}$, the category $(\cal{E} \downarrow x)$ acquires the structure of an $\infty$-category with weak equivalences and fibrations by declaring a map $y \to y'$ over $x$ to be a fibration resp.\ weak equivalence in $(\cal{E} \downarrow x)$ if its image under the canonical right fibration
    \[
        (\cal{E} \downarrow x) \to \cal{E}
    \]
    is. The subcategory of acyclic objects $(\cal{E} \downarrow x)^w$ then coincides with $I_x$, which inherits the structure of an $\infty$-category with weak equivalences and fibrations, in which a fibrant object is precisely an object of $J_x$. The inclusion $J_x \subset I_x$ is then final by \cite[Prop.\ 7.6.8]{Cis19}.
\end{remark}
Write $\gamma:\cal{E} \to L_\mathrm{Frob}(\cal{E})$ for the Dwyer-Kan localisation at the Frobenius equivalences. Writing $L_\mathrm{Frob}(\cal{E})^w$ for the essential image under $\gamma$ of $\cal{E}^w$, we see that $L_\mathrm{Frob}(\cal{E})^w \subset L_\mathrm{Frob}(\cal{E})$ is a stable subcategory (since the inclusion $\cal{E}^w \subset \cal{E}$ is exact), closed under retracts by virtue of the fact that $w$ is closed under retracts. By for instance \cite[Prop.\ A.1.9]{CD23b}, the functor $L_\mathrm{Frob}(\cal{E})^w \to L_\mathrm{Frob}(\cal{E})$ is a Verdier inclusion, and we write $\pi:L_\mathrm{Frob}(\cal{E}) \to L_w(\cal{E})$ for the cofibre in $\cat^\mathrm{st}_\infty$.
\begin{proposition}\label{rf}
	Given a functor $F:\cal{E}^{\op} \to \Ab$, for $\cal{E} = (\cal{E}, w)$ complicial exact with weak equivalences, the right derived functor $\mathbf{R}F:L_w(\cal{E})^{\op} \to \mathrm{D}_{\ge 0}(\zz)$ satisfies
	\[
		\mathbf{R}F(x) \simeq \underset{J_x^{\op}}\colim\ F
	\]
    in $\mathrm{D}_{\ge 0}(\zz)$.
\end{proposition}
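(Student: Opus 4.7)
The argument directly extends that of Corollary \ref{LKE}, which handled the case $w = w_\mathrm{Frob}$. The strategy is to exhibit the subcategory $J_x \subset (\cal{E}\downarrow x)$ of trivial deflations over $x$ as a right calculus of fractions in the sense of Cisinski \cite[Def.\ 7.2.14]{Cis19}, whence the desired formula will follow from \cite[Cor.\ 7.2.9]{Cis19} applied to the composite $F:\cal{E}^{\op} \to \Ab \hookrightarrow \mathrm{D}_{\ge 0}(\zz)$.

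First I would verify that $(\cal{E}, \mathrm{Fib}, w)$ is an $\infty$-category of fibrant objects, for $\mathrm{Fib}$ the class of deflations. The proof of Lemma \ref{fib} applies \emph{mutatis mutandis}: pullbacks of deflations are deflations with isomorphic kernel, which remains $w$-acyclic if the original deflation was a weak equivalence; and any map $f:x \to y$ admits a factorisation through the Frobenius (hence $w$-)equivalence $x \hookrightarrow x \oplus Py$ followed by the deflation $x \oplus Py \twoheadrightarrow y$ furnished by Quillen's obscure axiom. The class $v$ of trivial deflations with respect to $w$ is then manifestly closed under composition and pullbacks.

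The crucial step is to show that the saturation $\overline{v}$ contains $w$. This goes exactly as in Corollary \ref{LKE}: by Brown's lemma \cite[Prop.\ 7.4.13]{Cis19}, any $f:x \to y$ admits a factorisation $x \xto{j} z \xto{g} y$ in which $j$ has a retraction $r:z \to x$ which is a trivial fibration, and $g$ is a fibration; if $f \in w$, then by 2-of-3 both $j$ and $g$ are weak equivalences, hence $g, r \in v$. Any functor inverting $v$ inverts $r$ and hence $j$ (being a section of an isomorphism), and so $f$ by composition. By \cite[Th.\ 7.2.16, Cor.\ 7.2.18]{Cis19} it follows that $J_x$ is a right calculus of fractions at $x$, and \cite[Cor.\ 7.2.9]{Cis19} gives the identification
\[
    \mathbf{R}F(x) = \gamma_! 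F(\gamma(x)) \simeq \underset{J_x^{\op}}\colim\ F
\]
in $\mathrm{D}_{\ge 0}(\zz)$, for $\gamma:\cal{E} \to L_w(\cal{E})$ the Dwyer-Kan localisation. Since the argument is essentially a bookkeeping generalisation of the Frobenius case verified in detail in Corollary \ref{LKE}, I anticipate no serious obstacle; the only subtlety is the factorisation-and-retraction argument showing $w \subset \overline{v}$, but this is precisely where Brown's lemma in the $\infty$-category of fibrant objects formalism is tailored to apply.
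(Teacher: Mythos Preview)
Your approach is correct and is genuinely more direct than the paper's. You rerun the argument of Corollary~\ref{LKE} verbatim with $w$ in place of $w_\mathrm{Frob}$: since Lemma~\ref{fib} already establishes that $(\cal{E},\mathrm{Fib},w)$ is an $\infty$-category of fibrant objects for arbitrary $w$, the Brown-factorisation argument showing $w\subset\overline{v}$ and the appeal to \cite[Th.\ 7.2.16, Cor.\ 7.2.18, Cor.\ 7.2.9]{Cis19} go through without modification.

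The paper takes a different route: it factors the localisation as $\cal{E}\to L_\mathrm{Frob}(\cal{E})\xrightarrow{\pi} L_w(\cal{E})$, uses the already-established Frobenius case to identify $F^{\Delta^\bullet}$ with the derived functor at the first stage, and then computes $\pi_!F^{\Delta^\bullet}$ via the Nikolaus--Scholze formula for left Kan extension along a Verdier projection. This produces a double colimit over $J_x^{\op}$ and $J_{y,\mathrm{Frob}}^{\op}$, which collapses to the single colimit over $J_x^{\op}$ by Remark~\ref{ixjx} and Proposition~\ref{hoco}. Your argument bypasses all of this. The paper's two-step decomposition does fit its broader narrative---the reduction of general $w$ to the Frobenius case via Verdier quotients recurs throughout (e.g.\ Theorem~\ref{comppoi}, Proposition~\ref{qncomp})---and keeps the explicit simplicial model $F^{\Delta^\bullet}$ in view, but for the bare statement of this proposition your direct argument is cleaner and involves strictly less machinery.
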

\begin{proof}
	By definition, $\mathbf{R}F$ is the left Kan extension of the composite $\cal{E}^{\op} \to \sab \to \mathrm{D}_{\ge 0}(\zz)$  along the localisation $\cal{E} \to L_w(\cal{E})$. Since $F^{\Delta^\bullet}$ is a model for the derived functor of $F$ along $\gamma$, we may consider the left Kan extension of the latter along along the Verdier quotient $\pi:L_{\mathrm{Frob}}(\cal{E}) \to L_w(\cal{E})$. The structure of an $\infty$-category with weak equivalences and fibrations is inherited by the subcategory of weak equivalences $w\cal{E} \subset \cal{E}$, and so Corollary \ref{cof} implies that for $x \in \cal{E}$, the functor
	\[
		I_x^{\op} = (w\cal{E} \downarrow x)^{\op} \to (L_{\mathrm{Frob}}(w\cal{E})\downarrow \gamma(x))^{\op}
	\] 
	identifies with the localisation $I_x^{\op} \to L(w\cal{E} \downarrow x)^{\op}$ and in particular is cofinal. The formula for left Kan extension along a Verdier projection from \cite[Th.\ I.3.3]{NS18} then gives
	\begin{align*}
		\mathbf{R}F \simeq & \underset{L_\mathrm{Frob}(\cal{E}^w)^{\op}}\colim\ F^{\Delta^\bullet} \simeq \underset{L(I_x)^{\op}}\colim\ F^{\Delta^\bullet} \\
		\simeq & \underset{I_x^{\op}}\colim\ F^{\Delta^\bullet} \stackrel{\ref{ixjx}}\simeq \underset{J_x^{\op}}\colim\ F^{\Delta^\bullet} \\
		\stackrel{\ref{hoco}}\simeq & \underset{(y \stackrel{\sim}\epi x) \in J_x^{\op}}\colim\ \underset{J_{y, \mathrm{Frob}}^{\op}}\colim\ F \simeq \underset{J_x^{\op}}\colim\ F,
	\end{align*}
	where as in \ref{hoco} we denote by $J_{y, \mathrm{Frob}}$ the subcategory of the slice spanned by trivial Frobenius deflations over $y$.
\end{proof}
\begin{recollection}
	Suppose $\cal{C}$ is a small category, and $F:\cal{C} \to \cat$ is a pseudofunctor. The \textit{Grothendieck construction} on $F$ is the category $\int_{\cal{C}}F$, with objects pairs $(x \in \cal{C}, a \in F(x))$, and maps
	\[
		(x, a) \xto{(f, \gamma)} (y, b)
	\]
	the data of arrows $f:x \to y$, and $\gamma:F(f)(a) \to b$ in $F(y)$. The composition
	\[
		(x, a) \xto{(f, \gamma)} (y, b) \xto{(g, \delta)} (z, c)
	\]
	is $(gf, \delta \circ F(g)(\gamma)):(x, a) \to (z, c)$. There is a tautological functor $\int_{\cal{C}}F \to \cal{C}$, $(x, a) \mapsto x$, which on nerves is a right fibration.
\end{recollection}
\begin{proposition}\label{global}
	Suppose given a complicial exact category with weak equivalences $(\cal{E}, w)$, and some simplicial presheaf $F:\cal{E}^{\op} \to \sset$. Then there is a weak equivalence
	\[
		\underset{w\cal{E}^{\op}}\colim\ F \to \underset{w\cal{E}^{\op}}\colim\ \mathbf{R}F.
	\]
\end{proposition}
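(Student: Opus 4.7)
The plan is to reduce the statement to a cofinality claim via a Grothendieck construction. By Proposition \ref{rf} (whose argument adapts verbatim to simplicial-set-valued presheaves), we have $\mathbf{R}F(x) \simeq \underset{J_x^{\op}}\colim\ F$, where $J_x$ is the subcategory of trivial deflations over $x$. Since kernels are preserved under pullback and acyclicity is detected on kernels by \cite[Lem.\ 7.1]{Sch24}, trivial deflations are stable under pullback, so that $x \mapsto J_x^{\op}$ is covariantly functorial in $x \in w\cal{E}^{\op}$. Writing $\cal{H}$ for the associated Grothendieck construction, with objects the pairs $(x, y \stackrel{\sim}\epi x)$, and $s:\cal{H} \to w\cal{E}^{\op}$, $(x, y \stackrel{\sim}\epi x) \mapsto y$ for the source functor, the associativity of colimits yields
\[
\underset{w\cal{E}^{\op}}\colim\ \mathbf{R}F \simeq \underset{x \in w\cal{E}^{\op}}\colim\ \underset{J_x^{\op}}\colim\ F \simeq \underset{\cal{H}}\colim\ s^*F.
\]

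The diagonal $\Delta: w\cal{E}^{\op} \to \cal{H}$, $y \mapsto (y, 1_y)$, is a section of $s$, and the canonical transformation $F \to \mathbf{R}F$ (induced pointwise by the initial object $1_x \in J_x^{\op}$) identifies the comparison map of the proposition with the morphism on colimits induced by $\Delta$. It therefore suffices to show $\Delta$ is cofinal.

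By the cofinality criterion \cite[Th.\ 4.1.3.1]{HTT}, we must verify that for each object $h = (x, z \stackrel{\sim}\epi x) \in \cal{H}$, the comma category $w\cal{E}^{\op} \times_\cal{H} \cal{H}_{h/}$ is weakly contractible. Unpacking the Grothendieck construction, an object of this comma category is a pair $(y, \alpha: y \stackrel{\sim}\to z)$ (the accompanying compatible map $y \to x$ is forced by the deflation $z \stackrel{\sim}\epi x$), and a morphism $(y, \alpha) \to (y', \alpha')$ is a weak equivalence $\gamma: y' \to y$ satisfying $\alpha' = \alpha \circ \gamma$. This is the opposite of the 1-category of weak equivalences into $z$ (with weak equivalences over $z$ as morphisms), which evidently admits $(z, 1_z)$ as a terminal object; consequently our comma category admits $(z, 1_z)$ as an initial object, and is hence weakly contractible.

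The main obstacle is the variance bookkeeping in the Grothendieck construction of $J^{\op}_{(-)}$, which has to be set up correctly for the identifications above to make sense; the resulting cofinality condition then reduces to the unambiguous existence of a terminal object in a 1-categorical slice.
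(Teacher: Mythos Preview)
Your argument is correct and follows essentially the same route as the paper: both set up the Grothendieck construction $\cal{H} = \int_{w\cal{E}^{\op}} J_\bullet^{\op}$, use Fubini/associativity of colimits to rewrite $\colim_{w\cal{E}^{\op}} \mathbf{R}F$ as a colimit over $\cal{H}$, and then show that the diagonal section $\Delta: w\cal{E}^{\op} \to \cal{H}$, $y \mapsto (y, 1_y)$, is cofinal. The only cosmetic difference is that the paper packages the cofinality as an adjunction (observing that $s_0^{\op} \dashv d_1^{\op}$ after identifying $\cal{H}$ with a subcategory of $\mathrm{Ar}(w\cal{E})^{\op}$), whereas you verify Quillen's Theorem A directly by exhibiting $(z, 1_z)$ as an initial object of each comma category---these are two ways of saying the same thing.
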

\begin{proof}
	This follows from the Fubini theorem for unstraightening: given a functor $F:\cal{C} \to \cat$ classifying a left fibration $\int_{\cal{C}}F \to \cal{C}$, and a functor $G: \int_{\cal{C}}F \to \sset$, there is a natural weak equivalence
	\begin{equation}\label{thom}
		\underset{\int_{\cal{C}}F}\hocolim\ G \to \underset{x \in \cal{C}}\hocolim\ \underset{F(x)}\hocolim\ G_x,
	\end{equation}
	where $G_x$ is the restriction of $G$ to the fibre $F(x)$, and the homotopy colimit is with respect to the Kan-Quillen model structure on $\sset$; see for instance \cite[Th.\ 26.8]{CS02} (or \cite[Th.\ 4.4]{Du23} for a $\infty$-categorical generalisation). Consider now the pseudofunctor
	\[
		J_\bullet^{\op}:w\cal{E}^{\op} \to \cat, \quad x \mapsto J_x^{\op},
	\]
	where a map $f:x \to y$ induces a functor $J_y^{\op} \to J_x^{\op}$ given by pullback along $f$. The Grothendieck construction on this functor has objects pairs $(x, y \stackrel{\sim}\twoheadrightarrow x)$, and maps
	\[
		(x, y \stackrel{\sim}\twoheadrightarrow x) \to (x', y' \stackrel{\sim}\twoheadrightarrow x')
	\]
	given by a pair $(f, \gamma)$, where $f:x' \xto{\simeq} x$ is a weak equivalence in $\cal{E}$, and $\gamma: y' \to x' \times_x y$ is a map over $y$ in $\cal{E}$; by the universal property of the pullback, this is the data of a commutative diagram
	\[\begin{tikzcd}
	y' \ar[r, "\sim"] \ar[d, "\rotatebox{270}{$\sim$}", twoheadrightarrow] & x'. \ar[d, "\rotatebox{270}{$\sim$}", twoheadrightarrow] \\
	y \ar[r, "\sim"] & x
	\end{tikzcd}\]
	The category $\int_{w\cal{E}^{\op}}J_\bullet^{\op}$ thus identifies with the full subcategory of $\mathrm{Ar}(w\cal{E})^{\op}$ spanned by the trivial deflations. The degeneracy $s_0 : w\cal{E} \subset \mathrm{Ar}(w\cal{E})$, $x \mapsto (x =\joinrel= x)$ admits a right adjoint given by the source functor $d_1$, and this restricts upon taking opposites to an adjoint pair
	\[\begin{tikzcd}
		w\cal{E}^{\op} \ar[r, shift left=.6ex, bend left=.75ex, "s_0^{\op}", "\rotatebox{270}{$\vdash$}"'] & \int_{w\cal{E}^{\op}} J_\bullet^{\op} \ar[l, shift left=.6ex, bend left=.75ex, "d_1^{\op}"],
	\end{tikzcd}\]
	so in particular the functor $w\cal{E}^{\op} \subset \int_{w\cal{E}^{\op}} J_\bullet^{\op}$ is cofinal. The weak equivalence (\ref{thom}) and Proposition \ref{rf}, and \cite[Th.\ 4.2.4.1]{HTT} then give a natural equivalence
	\[
		\underset{w\cal{E}^{\op}}\colim\ F \xto{\simeq} \underset{\int_{w\cal{E}^{\op}}J_\bullet^{\op}}\colim\ F \xto{\simeq} \underset{x \in w\cal{E}^{\op}}\colim\ \underset{J_x^{\op}}\colim\ F = \underset{w\cal{E}^{\op}}\colim\ \mathbf{R}F,
	\]
	and chasing through the various identifications we see that this is induced by the canonical map $F \Rightarrow \mathbf{R}F$.
\end{proof}
\subsection{The bounded derived $\infty$-category of an exact category}\label{db}
Given a small exact category $\cal{E}$, the classical construction of the (bounded) derived category goes as follows: we first embed $\cal{E}$ into its category of bounded chain complexes, equipped with the class of quasi-isomorphisms as weak equivalences. Taking the quotient by chain homotopy equivalence, we obtain the homotopy category $\cal{K}_\mathrm{b}(\cal{E})$, which has a canonical triangulated structure. We may then take the Verdier quotient by the triangulated subcategory $\cal{A}\mathrm{c_b}(\cal{E}) \subset \cal{K}_\mathrm{b}(\cal{E})$ of acyclic complexes \cite[\S10]{Buh10} to obtain the bounded derived category $\db(\cal{E})$.\\
The homotopy-coherent refinement of the above goes as follows: taking the embedding $\cal{E} \subset \chb(\cal{E})$ as before, we note that $\chb(\cal{E})$ is a complicial exact category, with Frobenius localisation
\[
	\chb(\cal{E}) \to L_{\mathrm{Frob}}(\chb(\cal{E})) =: \kb(\cal{E}),
\]
a stable $\infty$-category (see \cite[Prop.\ 2.7]{BC20}) whose homotopy category recovers $\cal{K}_\mathrm{b}(\cal{E})$. We may consider the full subcategory $\acb(\cal{E}) \subset \kb(\cal{E})$ of objects in the essential image of $\acb(\cal{E})$ under the localisation functor $\gamma:\chb(\cal{E}) \to \kb(\cal{E})$; this is a stable subcategory closed under retracts, and taking the Verdier quotient, we obtain a stable $\infty$-category
\[
	\db(\cal{E}) := \kb(\cal{E})/\acb(\cal{E}) = L_{\mathbf{qis}}(\chb(\cal{E})).
\]
\begin{proposition}\label{stab}
    \begin{enumerate}[label=(\roman*)]
        \item \label{stabadd} \cite[Thm.\ 7.4.9]{BCKW19} For $\cal{A}$ an additive 1-category considered as an exact category with the split exact structure, $\db(\cal{A}) \simeq \kb(\cal{A})$, and the quasi-isomorphisms coincide with the chain homotopy equivalences. Moreover, $\kb(\cal{A})$ is the initial stable $\infty$-category receiving an additive embedding from $\cal{A}$: that is, for a stable $\infty$-category $\cal{C}$, composition with $\cal{A} \subset \kb(\cal{A})$ induces an equivalence of $\infty$-categories
        \[
            \fun^{\mathrm{ex}}(\kb(\cal{A}), \cal{C}) \to \fun^\oplus(\cal{A}, \cal{C}),
        \]
        where $\fun^\oplus$ is the full subcategory of additive functors, i.e.\ for which the natural map $F(x \oplus y) \to F(x) \oplus F(y)$ is an equivalence.
        \item \label{stabex} \cite[Cor.\ 7.4.12]{BCKW19} The embedding $\cal{E} \subset \db(\cal{E})$ enjoys the following universal property: for any stable $\infty$-category $\cal{C}$, precomposition with $\cal{E} \to \db(\cal{E})$ induces an equivalence of $\infty$-categories
        \[
            \fun^{\mathrm{ex}}(\db(\cal{E}), \cal{C}) \to \fun^{\mathrm{ex}}(\cal{E}, \cal{C}),
        \]
        where the latter $\fun^{\mathrm{ex}}$ denotes the full subcategory of additive functors sending conflations in $\cal{E}$ to fibre sequences in $\cal{C}$. In particular, a conflation $x \mono y \epi z$ in $\cal{E}$ is sent to a fibre sequence in $\db(\cal{E})$.
        \item \label{stabk} \cite[Thm.\ I.3.3]{NS18} The Verdier quotient $\kb(\cal{E}) \xto{\pi} \db(\cal{E})$ is an exact functor of stable $\infty$-categories. Given a functor $f:\kb(\cal{E}) \to \cal{C}$, for $\cal{C}$ a stable presentable $\infty$-category, the left Kan extension of $f$ along $\pi$ is given by the formula
        \[
            \pi_!f(\pi(x)) = \varinjlim_{y \in (\acb(\cal{E}))_{/x}}f(\cofib(y \to x) \simeq \varinjlim_{(y \xto{\simeq} x) \in I_x}f(y),
        \]
        where $I_x \subset \kb(\cal{E})_{/x}$ is the full subcategory of the slice category spanned by quasi-isomorphisms over $x$.
    \end{enumerate}
\end{proposition}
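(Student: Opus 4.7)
The plan is to treat the three parts sequentially, exploiting the complicial formalism of Appendix \ref{excat} and general facts about Verdier quotients. Each part is essentially a reformulation or application of results in \cite{BCKW19} and \cite{NS18}, but I would want to explain the essential mechanism in the language of the present paper.

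For \ref{stabadd}, I would first observe that in the split exact structure on $\cal{A}$, every conflation splits, so any acyclic complex in $\chb(\cal{A})$ is strictly acyclic with split differentials and hence Frobenius contractible with respect to the canonical complicial structure on $\chb(\cal{A})$. By Remark \ref{Frob-in-w} applied to the class of quasi-isomorphisms, and the fact that a map is a quasi-isomorphism iff its cone is acyclic, this identifies Frobenius equivalences with quasi-isomorphisms; thus $\acb(\cal{A}) = 0$ inside $\kb(\cal{A})$ and $\kb(\cal{A}) \simeq \db(\cal{A})$ by the very definition $\db(\cal{A}) = \kb(\cal{A})/\acb(\cal{A})$. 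For the universal property, I would use that any bounded complex $X$ can be built from its entries $X_i \in \cal{A}$ by a finite sequence of cofibre-cone constructions in $\kb(\cal{A})$ (extending one degree at a time), so an additive functor $f:\cal{A} \to \cal{C}$ into a stable target extends essentially uniquely by sending $X$ to the corresponding iterated cofibre; coherence would be packaged via the dg/simplicial enrichment of \S\ref{excat}, noting that $\kb(\cal{A}) = N_\Delta(\chb(\cal{A})_\Delta)$ and that pointwise addition makes this the free stable $\infty$-category on $\cal{A}$ as an additive category.

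For \ref{stabex}, I would factor the required equivalence through the chain $\cal{E} \subset \kb(\cal{E}) \to \db(\cal{E})$. Part \ref{stabadd} applied to the underlying additive category of $\cal{E}$ (viewed with the split exact structure) gives $\fun^\mathrm{ex}(\kb(\cal{E}), \cal{C}) \simeq \fun^\oplus(\cal{E}, \cal{C})$. The crux is to identify the exact-on-$\cal{E}$ subcategory on the right with the subcategory on the left of exact functors annihilating $\acb(\cal{E})$: any conflation $x \mono y \epi z$ in $\cal{E}$ produces in $\kb(\cal{E})$ a canonical equivalence $\cofib(x \to y) \simeq z$, and conversely acyclic complexes are generated under finite (co)limits in $\kb(\cal{E})$ by the total complexes of conflations. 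The universal property of Verdier quotients in $\cat^\mathrm{st}_\infty$ then matches exact functors out of $\db(\cal{E})$ with those out of $\kb(\cal{E})$ killing $\acb(\cal{E})$, yielding the claim.

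For \ref{stabk}, exactness of $\pi$ is immediate from its definition as the cofibre in $\cat^\mathrm{st}_\infty$. The left Kan extension formula is the stable version of the standard description: by \cite[Th.\ I.3.3]{NS18}, mapping spectra in the quotient satisfy $\map_{\db(\cal{E})}(\pi(x), \pi(y)) \simeq \colim_{(z \to x) \in (\acb(\cal{E}))_{/x}}\map_{\kb(\cal{E})}(\cofib(z \to x), y)$, and the same colimit formula controls left Kan extension of any $\cal{C}$-valued functor along $\pi$ for $\cal{C}$ stable presentable. The main obstacle — really the only nonformal step — is verifying that the two indexing categories $(\acb(\cal{E}))_{/x}$ and $I_x$ give the same colimit: to each $z \to x$ with $z$ acyclic one associates $\cofib(z \to x) \xto{\simeq} x$, and to each quasi-isomorphism $y \xto{\simeq} x$ one takes its fibre, which is acyclic. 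I expect to show that these assignments extend to cofinal functors between the two slice categories, so the colimit computation is unchanged; this is the step I would allocate the most care to, as it demands keeping track of the cofibre--fibre symmetry in the stable setting.
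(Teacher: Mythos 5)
The paper offers no proof of Proposition \ref{stab}: all three parts are recollections cited directly to \cite{BCKW19} and \cite{NS18}, so there is no in-text argument to compare your sketch against. Evaluating it on its own terms, the outlines for \ref{stabadd} and \ref{stabex} are reasonable in shape, though you should flag that the assertion in \ref{stabex} that $\acb(\cal{E})$ is generated under finite (co)limits by total complexes of conflations is itself the nontrivial content (a bounded strictly acyclic complex is an iterated extension of disc complexes $D_i(Z_i)$, and identifying these in $\kb(\cal{E})$ with shifts of totalized conflations requires a Gillet--Waldhausen-style d\'evissage, with extra care when $\cal{E}$ is not weakly idempotent complete so that acyclic and strictly acyclic need not coincide).

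The proposed cofinality comparison in \ref{stabk}, however, contains a genuine directional error. For $z \to x$ with $z$ acyclic, the canonical quasi-isomorphism associated to $\cofib(z \to x)$ goes \emph{from} $x$: the cofibre sequence $z \to x \to \cofib(z\to x)$ exhibits $x \to \cofib(z\to x)$ as the map whose cofibre $z[1]$ is acyclic. So $\cofib(z \to x)$ does not naturally lie in $I_x \subset \kb(\cal{E})_{/x}$, and the assignment $(z \to x) \mapsto (\cofib(z\to x) \to x)$ you propose is not a functor $(\acb(\cal{E}))_{/x} \to I_x$. Nor does the reverse assignment $(y \xrightarrow{\simeq} x) \mapsto \fib(y \to x)$ recover $y$ after applying $\cofib(- \to x)$: taking $y = x \oplus z$ with the projection to $x$ gives $\fib = z$ with the zero map to $x$, whose cofibre is $x \oplus z[1] \ne y$. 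The correct route is not to seek a direct cofinal functor between $(\acb(\cal{E}))_{/x}$ and $I_x$, but to exhibit each (with the functor $z \mapsto \cofib(z\to x)$ equipped with the induced equivalence in $\db(\cal{E})$ in the first case, and the forgetful functor in the second) as cofinal in the comma $\infty$-category $(\pi \downarrow \pi(x))$ appearing in the pointwise formula for the left Kan extension along $\pi$; that both are cofinal there is the substance of \cite[Th.\ I.3.3]{NS18}.
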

\begin{remark}
	For $\cal{A}$ an additive $\infty$-category, there is a universal stable $\infty$-category $\stab(\cal{A})$ receiving a fully faithful additive functor from $\cal{A}$, which we call the \textit{stabilisation}. If $\cal{A}$ is an ordinary additive category, \ref{stabadd} gives an identification $\stab(\cal{A}) \simeq \kb(\cal{A})$.
\end{remark}
\subsection{The underlying $\infty$-category of a model category}\label{mod}
Write $\cat_\Delta$ for the category of small simplicially enriched categories and enriched functors. $\cat_\Delta$ admits a right-proper cofibrantly generated model structure \cite{Ber07}, the \textit{Bergner model structure}, in which the weak equivalences are the Dwyer-Kan equivalences, i.e.\ those simplicial functors $f:\cal{C} \to \cal{D}$ inducing a weak equivalence on mapping spaces and an equivalence $\pi_0(f) :\pi_0(\cal{C}) \to \pi_0(\cal{D})$ on homotopy categories. Write $\sset_{\mathrm{Joyal}}$ for the Joyal model structure on simplicial sets, in which the fibrant objects are the quasi-categories, i.e.\ those simplicial sets having the right lifting property with respect to all inner horn inclusions $\Lambda^n_i \subset \Delta^n$ for each $n \geq 2$. There is a Quillen equivalence \cite[Th.\ 2.2.5.1]{HTT}
\[\begin{tikzcd}
	\mathfrak{C}[-] : \sset_{\mathrm{Joyal}} \ar[r, shift left=.4ex, "\rotatebox{90}{$\vdash$}"', bend left=1ex] & \ar[l, shift left =.4ex, bend left=1ex] \cat_\Delta : N_\Delta,
\end{tikzcd}\]
where the right Quillen adjoint $N_\Delta$ is the homotopy-coherent nerve. Bergner-fibrant simplicially enriched categories are precisely those enriched over Kan complexes, and accordingly for a $\Kan$-enriched category $\cal{C}$, $N_\Delta(\cal{C})$ is a quasi-category.
\begin{example}
	For $\cal{M}$ a simplicial model category \cite[\S9]{Hir03}, the subcategory $\cal{M}^\circ \subset \cal{M}$ of fibrant-cofibrant objects is $\Kan$-enriched; the homotopy-coherent nerve $N_\Delta(\cal{M}^\circ)$ is the \textit{underlying $\infty$-category} of $\cal{M}$. The underlying $\infty$-category of a general model category $\cal{C}$ is given by the (derived) homotopy-coherent nerve of the simplicial localisation at the weak equivalences $N_\Delta(L^H(\cal{C}, w)^{\mathrm{fib}})$. For details, we refer the reader to \cite[\S1.3]{Hin16}.
\end{example}
There is a notion of homotopy (co)limits in a general simplicially enriched category $\cal{C}$ which generalises the corresponding notion for a diagram with values in a simplicial model category: given an enriched functor $F:\cal{I} \to \cal{A}$ with $\cal{A}$ Bergner-fibrant, a pair $(x, \eta)$ consisting of $x \in \cal{A}$ and $\eta:F \Rightarrow \underline{x}$ a cone under $F$ is said to be a \textit{homotopy colimit} for $F$ if for each $y \in \cal{A}$, the maps
\[
	\Map_{\cal{A}}(x, y) \to \underset{i \in I}\lim\ \Map_{\cal{A}}(F(i), y)
\]
exhibit $\Map_{\cal{A}}(x, y)$ as a homotopy limit of the diagram
\[
	\Map_{\cal{A}}(F(-), y):\cal{I} \to \sset
\]
with respect to the Kan-Quillen model structure on $\sset$, and dually for homotopy limits in $\cal{A}$ (see \cite[\S A.3.3]{HTT}). The following theorem translates between this classical notion of homotopy (co)limits, and $\infty$-(co)limits of the diagrams obtained passing to homotopy-coherent nerves.
\begin{theorem}[{\cite[Th.\ 4.2.4.1]{HTT}}]
	Suppose given a simplicial functor $F:\cal{I} \to \cal{C}$ of Bergner-fibrant simplicially enriched categories. Then for an object $c \in \cal{C}$ and a compatible family of maps $\{\eta_i : F(i) \to c\}_{i \in \cal{I}}$, the following are equivalent:
	\begin{enumerate}[label=(\roman*)]
		\item The maps $\eta_i$ exhibit $c$ as a homotopy colimit of $F$;
		\item Write $f:N_\Delta(\cal{I}) \to N_\Delta(\cal{C})$ be the image of $F$ under the homotopy-coherent nerve, and write $\overline{f} : N_\Delta(\cal{I})^\vartriangleright$ be the extension determined by the $\eta_i$. Then $f$ is a colimit diagram in $N_\Delta(\cal{C})$.
	\end{enumerate}
\end{theorem}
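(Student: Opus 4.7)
The plan is to reduce both conditions to the same mapping-space statement: for every $y \in \cal{C}$, the canonical map
\[
	\Map_{\cal{C}}(c, y) \to \holim_{i \in \cal{I}} \Map_{\cal{C}}(F(i), y)
\]
induced by the cone $\{\eta_i\}$ is a weak equivalence of Kan complexes. Condition (i) is by definition this assertion, interpreted in the Bergner-enriched world with the Kan--Quillen homotopy limit on the right. For (ii), I would use the standard characterization of $\infty$-categorical colimits in terms of mapping spaces (cf.\ \cite[Lem.\ 4.2.4.3]{HTT}): the cone $\overline{f}$ is a colimit diagram in $N_\Delta(\cal{C})$ iff for each $y$ the induced map from $\Map_{N_\Delta(\cal{C})}(c, y)$ to the $\infty$-categorical limit over $N_\Delta(\cal{I})$ of $\Map_{N_\Delta(\cal{C})}(f(-), y)$ is an equivalence in $\cal{S}$.

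The next step is to identify the two formulations via the Quillen equivalence $\mathfrak{C}[-] \dashv N_\Delta$. Since $\cal{C}$ is Bergner-fibrant, the unit $\cal{C} \to \mathfrak{C}[N_\Delta(\cal{C})]$ is a Dwyer--Kan equivalence, and there is a natural zig-zag of weak equivalences between $\Map_{\cal{C}}(x, y)$ and the Joyal mapping space $\Map_{N_\Delta(\cal{C})}(x, y)$ (for instance via the rigidification model or through the path-space description of quasi-categorical hom-spaces). These equivalences are natural in both variables, so the mapping-space bifunctors on the two sides of the comparison are equivalent as functors $\cal{I}^{\op} \to \Kan$, compatibly with the cone data supplied by the $\eta_i$.

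The final step is to identify simplicially enriched homotopy limits of diagrams $\cal{I} \to \Kan$ with $\infty$-categorical limits of the corresponding diagrams $N_\Delta(\cal{I}) \to \cal{S}$. This is essentially the statement that $N_\Delta:\Kan \to \cal{S}$ is the localisation at the class of weak equivalences and preserves the relevant universal constructions; combined with cofibrant resolution of $\cal{I}$ in the projective model structure on simplicial functors, one recovers the $\infty$-categorical limit as the homotopy-coherent end. The main obstacle will be bookkeeping: making sure that the zig-zag identifying mapping spaces is compatible, up to coherent homotopy, with the concrete cone data $\{\eta_i\}_{i \in \cal{I}}$, so that the two mapping-space conditions do not merely agree up to a contractible ambiguity but in fact witness the same cone as exhibiting the colimit. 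Once this naturality is in place, (i) $\Leftrightarrow$ (ii) is formal, as detailed in \cite[\S 4.2.4]{HTT}.
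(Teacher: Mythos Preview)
The paper does not supply its own proof of this statement: it is quoted verbatim as \cite[Th.\ 4.2.4.1]{HTT} and left unproved, serving only as background in Appendix~\ref{mod}. There is therefore nothing in the paper to compare your proposal against.

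That said, your sketch is essentially the strategy of \cite[\S4.2.4]{HTT} itself: reduce both (i) and (ii) to the mapping-space criterion, and then use the compatibility of the Quillen equivalence $\mathfrak{C}[-]\dashv N_\Delta$ with mapping spaces and with homotopy limits in $\Kan$ versus limits in $\cal{S}$. Your identification of the ``bookkeeping'' step (coherent compatibility of the mapping-space zig-zag with the cone data $\{\eta_i\}$) is exactly where the work lies in Lurie's argument, handled there via \cite[Lem.\ 4.2.4.3]{HTT} and the surrounding machinery. So your outline is sound and faithful to the cited source, though as written it remains a sketch rather than a proof.
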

\begin{example}
	Consider the subcategory $\Kan \subset \sset$ spanned by the fibrant-cofibrant objects with respect to the Kan-Quillen simplicial model structure on simplicial sets. The homotopy-coherent nerve $N_\Delta(\Kan) =: \cal{S}$ is the $\infty$-category of spaces, (co)limits in which correspond to homotopy (co)limits with respect to this model structure. In particular, pullbacks of spaces can be computed as 1-categorical pullbacks along fibrations.
\end{example}
\begin{remark}
	Given a map of simplicial sets $F:K \to \cal{C}$ with $\cal{C}$ an $\infty$-category, there exists a category $\cal{I}$ and a cofinal map $i:N(\cal{I}) \to K$ by \cite[Prop.\ 4.2.3.14]{HTT}. By \cite[Prop.\ 4.2.4.7]{HTT}, we may then find a Bergner-fibrant simplicially enriched category $\cal{A}$, an enriched functor $f:\cal{I} \to \cal{A}$, a categorical equivalence of simplicial sets $\cal{C} \to N_\Delta(\cal{A})$, and an equivalence in $\fun(N_\Delta(\cal{I}), N_\Delta(\cal{A}))$ between $N_\Delta(f)$ and $jFi$:
	\[\begin{tikzcd}
		N_\Delta(\cal{I}) \ar[r, "i"] \ar[d, "N_\Delta(f)"'{name=Nf}] & |[alias=K]| K \ar[d, "F"{name=F}] \\
		N_\Delta(\cal{A}) & \ar[l, "j"] \cal{C}.
		\ar[Rightarrow,from=F, to=Nf,shorten >=7.5mm,shorten <=7.5mm]
	\end{tikzcd}\]
	We may thus compute the colimit of $F:K \to \cal{C}$ as the homotopy colimit in the strictified diagram $f:\cal{I} \to \cal{A}$, since $i$ is cofinal, and colimits are invariant under categorical equivalence. In practise, the diagrams we consider will be the image under $N_\Delta$ of enriched diagrams.
\end{remark}
\section{Polynomial functors}\label{poly}
In this section we briefly review the Goodwillie-Lurie calculus of functors, and the 1-categorical analogue of polynomial functors on additive categories. Of particular importance is the bridge between these notions: an $n$-polynomial functor with valued in abelian groups on an additive category admits an essentially unique $n$-excisive extension valued in spectra on its stabilisation. We refer the interested reader to \cite[\S6]{HA} as a general reference, and \cite[\S2]{BGMN22}, \cite[\S3]{BM23}, \cite[\S4.2]{CD23a} for further details on $n$-excisive extensions.
\subsection{Polynomial and $n$-excisive functors}\label{n-exc}
\begin{definition}
	Let $\cal{A}, \cal{B}$ be additive $\infty$-categories with $\cal{B}$ weakly idempotent complete. A functor $F:\cal{A} \to \cal{B}$ is \textit{polynomial of degree $\le 0$} if it is a constant functor. For $d \ge 1$, $F$ is \textit{polynomial of degree $\le d$} if for each $x \in \cal{A}$, the functor
	\[
		D_xF:\cal{A} \to \cal{B}, \quad y \mapsto \ker(F(x\oplus y) \to F(y))
	\]
	is polynomial of degree $\le d-1$. Note that the kernel exists in this case, since $F(x \oplus y) \to F(y)$ admits a section induced by the inclusion $y \to x \oplus y$. For general $\cal{B}$ with weak idempotent completion $\cal{B} \to \cal{B}^\flat$, a functor $F:\cal{A} \to \cal{B}$ is polynomial of degree $\le d$ if the composite $\cal{A} \xto{F} \cal{B} \to \cal{B}^\flat$ is. Denote by $\fun^{n\mathrm{-poly}}(\cal{A}, \cal{B})$ for the full subcategory spanned by functors which are polynomial of degree $\le n$, and $\fun_*^{n\mathrm{-poly}}(\cal{A}, \cal{B})$ of such functor which are reduced, i.e.\ preserve zero objects.
\end{definition}
Call a functor $B:\cal{A}^{\op} \times \cal{A}^{\op} \to \Ab$ \textit{symmetric bilinear} if $B$ preserves direct sums in either variable, and there is a natural isomorphism $\sigma_{x, y}:B(x, y) \cong B(y, x)$ for each $x, y \in \cal{A}$.
\begin{lemma}[{\cite[Lem.\ A.10]{Sch21}}]\label{quad}
	Given an additive category $\cal{A}$, a functor $F:\cal{A}^{\op} \to \Ab$ is quadratic if and only if there is a symmetric bilinear functor $B:\cal{A}^{\op} \times \cal{A}^{\op} \to \Ab$, and natural $C_2$-equivariant diagrams of abelian groups
	\[
		B(x, x) \xto{\tau_x} F(x) \xto{\rho_x} B(x, x),
	\]
	such that $\rho_x\tau_x = 1+\sigma_{x, x}$, and for parallel maps $f, g:x \to y$ and $\xi \in F(y)$, we have
	\[
		(f+g)^\bullet(\xi) = f^\bullet(\xi) + g^\bullet(\xi) + \tau_x(B(f, g)(\rho_x(\xi))) \in F(x),
	\]
	where we write $f^\bullet := F(f)$ etc.
\end{lemma}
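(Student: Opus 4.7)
The proof proceeds by verifying the two implications separately; the reverse direction follows quickly from the structural analysis in the forward direction. For the forward direction, assume $F$ is quadratic, so that its polarisation $B_F(x, y) := \ker(F(x \oplus y) \to F(x) \oplus F(y))$ is bilinear. The map $F(x \oplus y) \to F(x) \oplus F(y)$ is split by $(\alpha_1, \alpha_2) \mapsto F(p_1)(\alpha_1) + F(p_2)(\alpha_2)$ (using that $F$ is reduced and $p_i \iota_j = \delta_{ij} \cdot 1_x$ or $0$), giving a natural decomposition $F(x \oplus y) \cong F(x) \oplus F(y) \oplus B_F(x, y)$. Set $B := B_F$, with symmetry from the swap $x \oplus y \cong y \oplus x$, and define
\[ \tau_x : B(x, x) \hookrightarrow F(x \oplus x) \xrightarrow{F(\Delta_x)} F(x), \qquad \rho_x : F(x) \xrightarrow{F(\nabla_x)} F(x \oplus x) \twoheadrightarrow B(x, x), \]
where the final projection is the retraction $\xi \mapsto \xi - F(p_1)F(\iota_1)(\xi) - F(p_2)F(\iota_2)(\xi)$. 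The $C_2$-equivariance of $\tau_x$ and $\rho_x$ follows from $\sigma \Delta_x = \Delta_x$, $\nabla_x \sigma = \nabla_x$, and the interchange identities $p_1 \sigma = p_2$, $p_2 \sigma = p_1$, where $\sigma$ denotes the swap.

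The heart of the argument is the polarisation formula. Factoring $f + g = \nabla_y \circ (f \oplus g) \circ \Delta_x$ for parallel $f, g : x \to y$ and tracking $\xi \in F(y)$ through the decomposition of $F(y \oplus y)$: first, $F(\nabla_y)(\xi) = F(p_1)(\xi) + F(p_2)(\xi) + \rho_y(\xi)$, since $F(\iota_j)F(\nabla_y)(\xi) = F(\nabla_y \iota_j)(\xi) = \xi$ and $\rho_y(\xi)$ is by definition the $B_F$-component; second, $F(f \oplus g)$ sends the $F(y)$-summands to $F(x)$-summands via $F(f)$ and $F(g)$ respectively and restricts on $B_F(y, y)$ to $B_F(f, g) : B_F(y, y) \to B_F(x, x)$, as checked by applying $F(\iota_j^x)$ and using $(f \oplus g)\iota_j^x = \iota_j^y \circ \{f, g\}$; third, $F(\Delta_x)$ collapses the $F(x)^{\oplus 2}$ part to $F(x)$ by addition (since $p_j \Delta_x = 1_x$) and sends $B_F(x, x)$ to $F(x)$ via $\tau_x$. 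Assembling these yields $(f + g)^\bullet(\xi) = f^\bullet(\xi) + g^\bullet(\xi) + \tau_x(B_F(f, g)(\rho_y(\xi)))$.

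The identity $\rho_x \tau_x = 1 + \sigma_{x, x}$ is then deduced by decomposing $F(\Delta_x \nabla_x)(\beta)$ for $\beta \in B_F(x, x)$ in two ways. Directly, the $B_F(x, x)$-component of $F(\Delta_x \nabla_x)(\beta) = F(\nabla_x)(\tau_x(\beta))$ is $\rho_x \tau_x(\beta)$, while the $F(x)$-components are both $\tau_x(\beta)$. Alternatively, using the identity $\Delta_x \nabla_x = 1_{x \oplus x} + \sigma$ in $\mathrm{End}_{\cal{A}}(x \oplus x)$ and the polarisation formula with $f = 1$, $g = \sigma$ gives $F(\Delta_x \nabla_x)(\beta) = \beta + \sigma_{x, x}(\beta) + \tau_{x \oplus x}(B_F(1, \sigma)(\rho_{x \oplus x}(\beta)))$. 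Applying $F(\iota_j^x)$ to the correction term and using bilinearity of $B_F$ shows it lies in the image of $F(p_1) + F(p_2) : F(x)^{\oplus 2} \to F(x \oplus x)$, so it contributes nothing to the $B_F(x, x)$-component; comparing yields $\rho_x \tau_x(\beta) = \beta + \sigma_{x, x}(\beta)$.

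For the converse, given $(B, \tau, \rho)$ satisfying the listed axioms, the polarisation formula applied to the decomposition $1_{x \oplus y} = \iota_x p_x + \iota_y p_y$ shows that the $B_F$-component of any $\xi \in F(x \oplus y)$ equals $\tau_{x \oplus y}(B(\iota_x p_x, \iota_y p_y)(\rho_{x \oplus y}(\xi)))$. Bilinearity of $B$ and the identity $\rho \tau = 1 + \sigma$ combine to identify this subgroup naturally with $B(x, y)$, so $B_F \cong B$ is bilinear and $F$ is quadratic. The main obstacle throughout is the verification of $\rho_x \tau_x = 1 + \sigma_{x, x}$: because $F$ is not additive on morphisms, one cannot simply expand $F$ over the sum $\Delta_x \nabla_x = 1 + \sigma$, and one must deploy the polarisation formula together with a careful splitting argument to isolate the $B_F$-component of the resulting correction term.
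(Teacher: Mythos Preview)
The paper does not supply its own proof of this lemma: it is stated with a citation to \cite[Lem.\ A.10]{Sch21} and no argument is given. There is therefore nothing in the present paper to compare your proposal against.

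That said, your approach is the standard one and matches what one finds in the cited reference: extract the polarisation $B_F$ from the splitting of $F(x\oplus y)\to F(x)\oplus F(y)$, define $\tau$ and $\rho$ via the diagonal and codiagonal, and derive the polarisation identity by factoring $f+g=\nabla_y\circ(f\oplus g)\circ\Delta_x$ and tracking the three summands. Two minor remarks. First, the statement in the paper contains a typo: $\rho_x(\xi)$ should read $\rho_y(\xi)$, since $\xi\in F(y)$; you silently corrected this, which is right. Second, your verification of $\rho_x\tau_x=1+\sigma_{x,x}$ is the only place where the argument is genuinely delicate, and your sketch is a little compressed: you assert that the correction term $\tau_{x\oplus x}(B_F(1,\sigma)(\rho_{x\oplus x}(\beta)))$ lies in the image of $F(p_1)+F(p_2)$, but to conclude you also need to check that its $F(x)$-components are each $\tau_x(\beta)$ (matching the left-hand side), not merely that its $B_F(x,x)$-component vanishes. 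This follows from unwinding $\rho_{x\oplus x}(\beta)$ through the bilinear decomposition $B_F(x\oplus x,x\oplus x)\cong B_F(x,x)^{\oplus 4}$ and tracing where $B_F(1,\sigma)$ sends it, but the details deserve a line or two more. The converse direction is fine.
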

\begin{remark}
	For quadratic $F$, we call the functor $B=B_F$ above the \textit{polarisation} of $F$.
\end{remark}
\begin{definition}
	Write $\mathcal{P}(n)$ for the power set of $[n] = \{0 < 1 < \dots < n\}$, considered as a poset under inclusion. An \textit{$(n+1)$-cube} in an $\infty$-category $\cal{C}$ is a functor $f:N\mathcal{P}(n) \to \cal{C}$, and such a cube is said to be \textit{(co)cartesian} if it is a limit (colimit) diagram, and \textit{strongly cocartesian} if it is the left Kan extension of its restriction to $N\mathcal{P}_{\le 1}(n)$, for $\mathcal{P}_{\le 1}(n)$ the subposet on subsets of cardinality at most 1 (equivalently, any 2-face of $f$ is a pushout).
\end{definition}
\begin{definition}
Let $\cal{C}, \cal{D}$ be $\infty$-categories admitting finite colimits and limits, respectively. A functor $F:\cal{C} \to \cal{D}$ is said to be \textit{$n$-excisive} if for any strongly cocartesian $(n+1)$-cube $\mathcal{P}(n) \to \cal{C}$, the cube
	\[
		\mathcal{P}(n) \to \cal{C} \xto{F} \cal{D}
	\]
	is cartesian in $\cal{D}$. In the case $\cal{C}$, $\cal{D}$ are pointed, a 2-excisive functor $f:\cal{C} \to \cal{D}$ is \textit{quadratic} if it is reduced, i.e.\ preserves zero objects.
\end{definition}
\begin{example}
	For $\cal{C}, \cal{D}$ stable, a reduced functor $\cal{C} \to \cal{D}$ is 1-excisive precisely when it is exact, i.e.\ preserves finite limits and colimits.
\end{example}
\begin{remark}\label{poly-characterisation}
	For $m \ge 0$, write $\iota_m:\bbDelta^{\op}\mid_{\le m} \subset \bbDelta^{\op}$ for the inclusion of the full subcategory of $\bbDelta^{\op}$ spanned by ordinals $n \le m$. A simplicial object $X:\bbDelta^{\op} \to \cal{C}$ is \textit{$m$-truncated} if $X$ is the left Kan-extension of its restriction $\iota_m^*X:\bbDelta^{\op}\mid_{\le m} \to \cal{C}$, i.e.\ if the counit map $(\iota_m)_!\iota_m^*X \to X$ is an equivalence of functors $\bbDelta^{\op} \to \cal{C}$; $X$ is said to be \textit{finite} if it is $n$-truncated for some $n$, and the colimit of such an $X$ is said to be a \textit{finite geometric realisations}. Then by \cite[Prop.\ 2.15]{BGMN22} a functor $f:\cal{C} \to \cal{D}$ between stable $\infty$-categories is $n$-excisive if and only if the induced functor $\mathrm{Ho}(f):\mathrm{Ho}(\cal{C}) \to \mathrm{Ho}(\cal{D})$ is polynomial of degree $\le n$ as a functor of additive categories, and if $f$ preserves finite geometric realisations.
\end{remark}
\subsection{Extending $n$-polynomial functors on additive categories}\label{extq}
Write $\cat^\mathrm{add}_\infty$ resp.\ $\cat_\infty^\mathrm{st}$ for the large $\infty$-categories of small additive resp.\ stable $\infty$-categories and additive resp.\ exact functors. The forgetful functor $\cat^\mathrm{st}_\infty \to \cat_\infty^\mathrm{add}$ sending a stable category to its underlying additive category admits a left adjoint
\[
	\stab:\cat_\infty^\mathrm{add}\to\cat_\infty^\mathrm{st},
\]
the \textit{stable envelope}, and moreover restriction along the unit $\cal{A} \to \stab(\cal{A})$ induces an equivalence of functor categories
\[
	\fun^{1\mathrm{-exc}}(\stab(\cal{A}), \cal{E}) \simeq \fun^\oplus(\cal{A}, \cal{E}),
\]
for any stable $\infty$-category $\cal{E}$, where we write $\fun^\oplus$ for the full subcategory of additive functors; see \cite[Th.\ 7.4.9]{BCKW19}. Restricted to the full subcategory $\mathrm{Add}_\infty \subset \cat_\infty^\mathrm{add}$ of additive 1-categories and additive functors, there is a natural equivalence $\stab(-) \simeq \kb(-)$, for $\kb(\cal{A})$ the localisation of the category of bounded chain complexes at the chain homotopy equivalences, a stable $\infty$-category by for instance \cite[Prop.\ 2.7]{BC20}. The following polynomial extension of this holds (see \cite[\S2]{BGMN22} or \cite[Prop.\ 4.2.18]{CD23a}), essentially by the characterisation of Remark \ref{poly-characterisation}.
\begin{theorem}[{\cite[Th.\ 2.19]{BGMN22}}]
	For $\cal{A}$ an additive $\infty$-category with stabilisation $\iota_{\cal{A}}:\cal{A} \to \stab(\cal{A})$ and $\cal{C}$ a stable $\infty$-category, there is an equivalence of $\infty$-categories
	\[
		\iota_{\cal{A}}^* : \fun^{n\mathrm{-exc}}(\stab(\cal{A}), \cal{C}) \xto{\simeq} \fun^{n\mathrm{-poly}}(\cal{A}, \cal{C})
	\]
\end{theorem}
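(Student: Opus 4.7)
The plan is to prove the equivalence by induction on $n$, constructing an explicit inverse to $\iota^*_{\cal{A}}$ via left Kan extension followed by the Goodwillie $n$-excisive approximation. First I verify that $\iota^*_{\cal{A}}$ indeed restricts to a functor $\fun^{n\mathrm{-exc}}(\stab(\cal{A}), \cal{C}) \to \fun^{n\mathrm{-poly}}(\cal{A}, \cal{C})$: for $x_0,\dots,x_n \in \cal{A}$, the cube $S \mapsto \bigoplus_{i \in S} x_i$ is strongly cocartesian in $\cal{A}$, and remains so under the additive embedding into $\stab(\cal{A})$; $n$-excisivity of $F$ forces the $F$-image to be cartesian, and unfolding into iterated kernels yields exactly the recursive definition of $n$-polynomiality of $F \circ \iota_{\cal{A}}$. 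The base case $n = 0$ is immediate (both sides are constant functors), and the case $n = 1$ is the universal property of the stabilisation recalled in Proposition \ref{stab}\ref{stabadd}: reduced additive functors $\cal{A} \to \cal{C}$ correspond to reduced exact functors $\stab(\cal{A}) \to \cal{C}$, with the non-reduced case reducing to this by splitting off the constant summand $F(0)$.

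For the inductive step, I construct an inverse to $\iota^*_{\cal{A}}$ as follows. Given $G:\cal{A} \to \cal{C}$ of polynomial degree $\le n$, form the left Kan extension $(\iota_{\cal{A}})_!G:\stab(\cal{A}) \to \cal{C}$, which exists since every object of $\stab(\cal{A})$ is built from $\cal{A}$ by finite colimits and shifts and $\cal{C}$ admits such, and then set $\tilde{G} := P_n(\iota_{\cal{A}})_!G$, for $P_n$ the $n$-excisive approximation of \cite[\S 6.1.1]{HA}. The identity $\iota^*_{\cal{A}}\tilde{G} \simeq G$ follows because $\iota_{\cal{A}}$ is fully faithful (so the unit $G \to \iota^*_{\cal{A}}(\iota_{\cal{A}})_!G$ is an equivalence), and because $P_n$ leaves the restriction to $\cal{A}$ unchanged: by Remark \ref{poly-characterisation}, the condition cutting $n$-excisive functors out among all functors between stable categories is the conjunction of $n$-polynomiality on homotopy categories (which $G$, and hence the extension along the fully faithful $\iota_{\cal{A}}$, already satisfies on $\cal{A}$) and preservation of finite geometric realisations (which $P_n$ enforces). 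The counit $P_n(\iota_{\cal{A}})_!\iota^*_{\cal{A}}F \to F$ is an equivalence for $F$ $n$-excisive because $F$ preserves the finite colimits that generate $\stab(\cal{A})$ from $\cal{A}$, so it is determined by $\iota^*_{\cal{A}}F$ together with the $n$-excisivity constraint.

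The main obstacle is making the counit equivalence rigorous, since na\"ively $\stab(\cal{A})$ is not a finite-colimit cocompletion of $\cal{A}$ in the ordinary sense (it also involves the stable structure). My intended route is to exploit the Goodwillie tower: for $F$ $n$-excisive, one has a fibre sequence $D_nF \to F \to P_{n-1}F$ where $P_{n-1}F$ is handled by the inductive hypothesis and $D_nF$ is $n$-homogeneous, hence determined by a symmetric multilinear functor $\stab(\cal{A})^{\times n} \to \cal{C}$, which in turn (by $n$ applications of the $n=1$ universal property) is determined by a symmetric multilinear functor $\cal{A}^{\times n} \to \cal{C}$ — namely the $n$-fold cross-effect of $\iota^*_{\cal{A}}F$. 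Compatibility of this reconstruction with the candidate inverse $P_n(\iota_{\cal{A}})_!$ is the delicate point, and will likely be accomplished by checking both sides have the same $n$-homogeneous layer and the same $(n-1)$-excisive approximation.
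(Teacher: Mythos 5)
Your first reduction — checking that $\iota_{\cal{A}}^*$ lands in the correct target category, and the base cases $n=0,1$ via Proposition \ref{stab}\ref{stabadd} with a splitting-off-constants argument — is fine. The inductive step, however, has a genuine gap that undermines the candidate inverse. You propose $\tilde G := P_n(\iota_{\cal{A}})_!G$, but the left Kan extension $(\iota_{\cal{A}})_!G$ need not exist: the pointwise formula requires colimits indexed over the slice categories $(\cal{A}\downarrow z)$ for $z \in \stab(\cal{A})$, which are infinite in general, while $\cal{C}$ is only assumed stable and so is guaranteed only finite colimits. The justification you offer — that objects of $\stab(\cal{A})$ are built from $\cal{A}$ by finite colimits and shifts — does not imply the Kan extension exists, since Kan extension is not computed by iterating the same finite colimits that present $z$.

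Even granting existence, the claim that $P_n$ ``leaves the restriction to $\cal{A}$ unchanged'' is unjustified and in general false: $T_n H(x)$ depends on $H$'s values at $\Sigma^k x$ in $\stab(\cal{A})$ for $k>0$, so $P_n$ can and does change values at objects of $\cal{A}$ unless $(\iota_{\cal{A}})_!G$ is already $n$-excisive — which would itself require proof (left Kan extension does not preserve polynomiality). Your fallback route via the Goodwillie tower is closer in spirit to the actual argument in \cite{BGMN22}, but as you acknowledge it is not carried out: the fibre sequence $D_nF \to F \to P_{n-1}F$ presupposes $F$ and reduces essential surjectivity to a lifting/gluing problem (reconstructing the extension datum joining the homogeneous layer to the $(n-1)$-excisive part from data on $\cal{A}$ alone) that is precisely the content of the theorem. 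The argument that avoids these pitfalls in \cite{BGMN22} does not proceed via Kan extension: it uses the fact that $\stab(\cal{A})$ is generated from $\cal{A}$ under finite geometric realisations and $\Sigma$-inversion, together with the characterisation of Remark \ref{poly-characterisation}, to extend an $n$-polynomial $G$ directly by forcing preservation of finite geometric realisations — the extension is then unique because both conditions of that characterisation are detected on $\cal{A}$.
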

\subsection{$n$-excisive approximation}\label{approx}
Suppose $\cal{C}, \cal{D}$ are $\infty$-categories, that $\cal{C}$ admits finite colimits, and that $\cal{D}$ is \textit{differentiable} in the sense of Lurie \cite[Def.\ 6.1.1.6]{HA}\footnote{$\cal{D}$ is differentiable if it admits finite limits and sequential colimits, and if the formation of the two commute, i.e.\ if the functor $\colim:\fun(N(\zz_{\ge 0}), \cal{D}) \to \cal{D})$ is left exact.}. Then the inclusion $\fun^{n\mathrm{-exc}}(\cal{C}, \cal{D}) \subset \fun(\cal{C}, \cal{D})$ admits a left-exact left adjoint $P_n$, the $n$-excisive approximation, by \cite[Th.\ 6.1.1.10]{HA}.
\begin{example}
	For $\cal{C}$ and $\cal{D}$ as above and $F :\cal{C} \to \cal{D}$ a reduced functor, the 1-excisive approximation of $F$ is given by
	\[
		P_1F := \varinjlim_n \left(\Omega^n\circ F \circ \Sigma^n\right) : \cal{C} \to \cal{D}.
	\]
\end{example}
If $\cal{C}, \cal{D}$ are stable $\infty$-categories and $\cal{D}$ is moreover differentiable (by \cite[Ex.\ 6.1.1.7]{HA} this is equivalent to $\cal{D}$ admitting countable coproducts), \cite[Cons.\ 1.1.26]{CD23a} gives a formula for the 2-excisive approximation of a reduced functor $\cal{R}:\cal{C}^{\op} \to \cal{D}$.
\begin{example}
	For each $x \in \cal{C}$, the fibre sequence $\Omega x \to * \to x$ induces a nullcomposite sequence
	\[
		\cal{R}(x) \to \Omega\cal{R}(\Omega x) \to \Omega B_{\cal{R}}(\Omega x, \Omega x) 
	\]
	which in the case $\cal{R}$ is 2-excisive is a fibre sequence by \cite[Lem.\ 1.1.19]{CD23a}. In general, there is a natural map
	\[
		\cal{R}(x) \to \Omega\fib(\cal{R}(\Omega x) \to B_{\cal{R}}(\Omega x, \Omega x)),
	\]
	and this assignment assembles into a functor $T_2\cal{R} : \cal{C}^{\op} \to \cal{D}$, admitting a natural transformation $\cal{R} \to T_2\cal{R}$. Iterating this, we obtain a model for the 2-excisive approximation as the sequential colimit
	\[
		P_2\cal{R}(x) = \colim(\cal{R}(x) \to T_2\cal{R}(x) \to T_2^2\cal{R}(x) \to \dots).
	\]
\end{example}
\end{appendices}
\printbibliography
\end{document}